\newtheorem{theorem}{Theorem}[section]
\newtheorem{lemma}[theorem]{Lemma}
\newtheorem{proposition}[theorem]{Proposition}
\newtheorem{corollary}[theorem]{Corollary}
\newtheorem{theorem-definition}[theorem]{Theorem-Definition}
\theoremstyle{definition}
\newtheorem{definition}[theorem]{Definition}
\newtheorem{notation}[theorem]{Notation}
\newtheorem{problem}[theorem]{Problem}
\newtheorem{question}[theorem]{Question}
\newtheorem{example}[theorem]{Example}
\theoremstyle{remark}
\newtheorem{remark}[theorem]{Remark}
\numberwithin{equation}{section}
\def\aa{{\mathfrak{B}}}
\def\bb{{\mathcal{B}}}
\def\dd{{\mathcal{D}}}
\def\cc{{{C}}}
\def\gh{{\mathcal{H}}}
\def\hc{{\mathfrak{H}}}
\def\mk{{\mathfrak{m}}}
\def\mm{{{\widetilde{\mathfrak{m}}}_c}}
\def\m2{{{\widetilde{\mathfrak{m}}}_2}}
\def\mb{{\boldsymbol{l}}}
\def\nb{{\boldsymbol{m}}}
\def\nn{{\mathcal{N}}}
\def\pp{{\mathcal{P}}}
\def\rs{{\delta}}
\def\sss{{\mathcal{S}}}
\DeclareSymbolFont{cyss}{OT2}{wncyss}{m}{n}
\DeclareMathSymbol{\sh}{\mathbin}{cyss}{`x}
\begin{document}

\baselineskip 16pt 

\title[Complex multiple zeta-functions and $p$-adic multiple $L$-functions]{Desingularization of complex multiple zeta-functions, fundamentals of $p$-adic multiple $L$-functions, and 
evaluation of their special values}


\author[H. Furusho, Y. Komori, K. Matsumoto, and H. Tsumura]{Hidekazu Furusho, Yasushi Komori, Kohji Matsumoto, and Hirofumi Tsumura}

\date{February 16, 2014}

\subjclass[2010]{Primary 11M32, 11S40, 11G55; Secondary 11M41, 11S80}
\keywords{Complex multiple zeta-function, desingularization, 
$p$-adic multiple $L$-function, 
$p$-adic multiple polylogarithm, 
multiple Bernoulli numbers,
multiple Kummer congruence,
Coleman's $p$-adic integration theory}

\maketitle

\begin{abstract}
This paper deals with a multiple version of zeta- and $L$-functions
both in the complex case and in the $p$-adic case:\\
(I). Our motivation in the complex case is to find 
suitable rigorous meaning
of the values of multivariable multiple zeta-functions at non-positive integer points.\\
\ \ (a). A desingularization of multiple
zeta-functions (of the generalized Euler-Zagier type):
We reveal that multiple zeta-functions (which are known to be 
meromorphic in the whole space
with  whose singularities lying on infinitely many hyperplanes)
turn to be entire on the whole space after taking the desingularization.
Further we show that the desingularized function is given by
a suitable finite `linear' combination of multiple zeta-functions with some arguments shifted.
It is also shown that specific combinations of Bernoulli numbers attain the
special values at their non-positive integers of the desingularized
ones.\\
\ \ (b). Twisted multiple zeta-functions:  Those can be continued to entire functions,
and their special values at non-positive integer points can be explicitly calculated.\\
(II). Our work in the $p$-adic case is to develop the study on
analytic side of the Kubota-Leopoldt $p$-adic $L$-functions into the
multiple setting.
We construct $p$-adic multiple $L$-functions, multivariable versions of
their $p$-adic $L$-functions, by using a specific $p$-adic
measure.
We establish their various fundamental properties: \\
\ \ (a). Evaluation at non-positive integers: 
We establish their intimate connection with
the above complex multiple zeta-functions 
by showing that the special values of the $p$-adic multiple $L$-functions at non-positive integers
are expressed by 
the twisted multiple Bernoulli numbers, the special values of the complex multiple  zeta-functions
at non-positive integers.  \\
\ \ (b). Multiple Kummer congruence: We extend Kummer congruence for  Bernoulli numbers 
to congruences for the twisted multiple Bernoulli numbers.\\
\ \ (c). Functional relations with a parity condition: We extend the vanishing
property of the Kubota-Leopoldt $p$-adic $L$-functions with odd
characters to our $p$-adic multiple $L$-functions. \\
\ \ (d). Evaluation at positive integers: 
We establish their close relationship with  the $p$-adic twisted multiple polylogarithms
by showing that the special values of the $p$-adic multiple $L$-functions at positive integers
are described by those of the $p$-adic twisted multiple polylogarithms at roots of
unity,
which generalizes the previous result of Coleman  in the single variable case.
\end{abstract} 
\tableofcontents

\setcounter{section}{-1}
\section{Introduction} \label{sec-1}

The aim of the present paper is to consider multiple zeta- and $L$-functions
both in the complex case and in the $p$-adic case, and especially study their special values
at integer points.

Let our story begin with the {\bf multiple zeta-function of 
the generalized Euler-Zagier type} defined by
\begin{align}
&\zeta_r((s_j);(\gamma_j))= 
\zeta_r(s_1,\ldots,s_r;\gamma_1,\ldots,\gamma_r):=\sum_{\substack{m_1=1}}^\infty\cdots \sum_{\substack{m_r=1}}^\infty
    \prod_{j=1}^r
    \left(m_1\gamma_1+\cdots+m_j\gamma_j\right)^{-s_j}   
\label{gene-EZ}
\end{align}
for complex variables $s_1,\ldots,s_r$, where
$\gamma_1,\ldots,\gamma_r$ are complex parameters whose real parts are all positive. 
Series \eqref{gene-EZ} converges absolutely 
in the region
\begin{equation}                                                                      
\dd_r=\{(s_1,\ldots,s_r)\in \mathbb{C}^r~|~\Re (s_{r-k+1}+\cdots+s_r)>k\ 
(1\leqslant k\leqslant r)\}. \label{region-Z}                                         
\end{equation}
The first work which established the meromorphic continuation of 
\eqref{gene-EZ} is Essouabri's thesis \cite{Ess}. 
The third-named author \cite[Theorem 1]{MaJNT} showed that \eqref{gene-EZ} can be continued meromorphically to the whole complex space with infinitely many 
(possible) 
singular hyperplanes.   

A special case of \eqref{gene-EZ} is the {\bf multiple zeta-function of 
Euler-Zagier type} defined by

\begin{equation}                                                                        
\zeta_r((s_j))=
\zeta_r(s_1,s_2,\ldots,s_r)=\sum_{m_1,\ldots,m_r=1}^\infty \prod_{j=1}^{r}
\left(m_1+\cdots+m_j\right)^{-s_j}, \label{MZF-def}                              
\end{equation}
which is absolutely convergent in $\dd_r$.

Note that $\zeta_r((s_j))=\zeta_r((s_j);(1)).$
Its special value $\zeta_r(n_1,\dots,n_r)$ when $n_1,\dots,n_r$ are positive integers 
makes sense when $n_r>1$.
It is called the {\bf multiple zeta value} (abbreviated as MZV),
history of whose  study goes back to the work of Euler \cite{Eu} published in 1776
\footnote{                                                                              
You can find  several literatures which cite the paper  saying as if it were published 
in 1775.                                                                                
But according to Euler archive                                                          
{\tt http://eulerarchive.maa.org/},                                                     
it was written in 1771, presented in 1775 and published in 1776.                        
}.
For a couple of these decades, it has been intensively studied
in various fields including  number theory, algebraic geometry,
low dimensional topology
and mathematical physics.

On the other hand, after the work of meromorphic continuation of \eqref{gene-EZ} 
mentioned above, 
it is natural to ask how is the behavior of $\zeta_r(-n_1,\dots,-n_r)$ when 
$n_1,\dots,n_r$ are positive (or non-negative) integers.
However, as we will mention in Section \ref{sec-2-1}, in most cases these points are
on singular loci, and hence they are points of indeterminacy.
Therefore we can raise the following fundamental problem.
\begin{problem}\label{prob}
Are there any \lq rigorous' ways to give a meaning of $\zeta_r(-n_1,\dots,-n_r)$
for $n_1,\ldots,n_r\in{\mathbb Z}_{\geqslant 0}$?
\end{problem}

Several approaches to this problem have been done so far.
Guo and Zhang \cite{GZ},  Manchon and Paycha \cite{MP} 
and also Guo, Paycha and Zhang \cite{GPZ}
discussed a kind of renormalization method.
In the present paper we will develop yet another approach,
called the \textit{desingularization}, in Subsection \ref{c-1-zeta}.
The Riemann zeta-function $\zeta(s)$ is a meromorphic function on the complex plane
$\mathbb{C}$ with a simple and unique pole at $s=1$.   Hence $(s-1)\zeta(s)$ is an 
entire function. This simple fact may be regarded as a technique to resolve a 
singularity of $\zeta(s)$ and yield an entire function.  
Our desingularization method is motivated by this simple observation.
For $r\geqslant 2$, multiple
zeta-functions have infinitely many singular loci (see {\sc{Figure}} \ref{fig:0} and
\eqref{ex-04-3} for the case $r=2$). 
We will show that
a suitable \textit{finite} sum of multiple zeta-functions will cause cancellations of all of those singularities to produce an entire function
whose special values at non-positive integers are described explicitly
in terms of Bernoulli numbers.

\begin{figure}[h]
  \centering
  \includegraphics[bb=0 0 232 113]{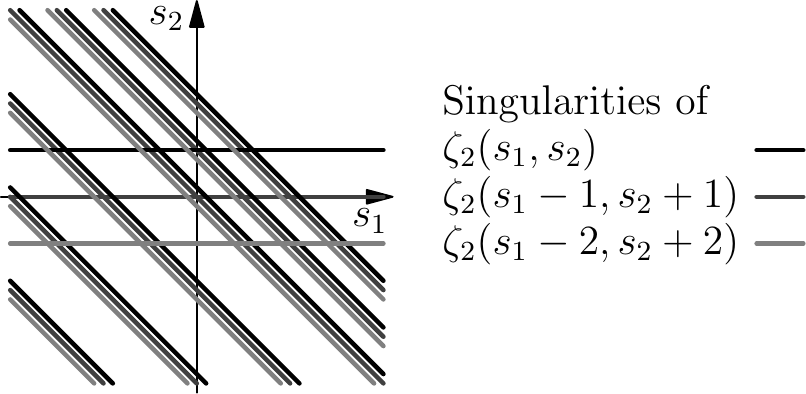}
  \caption{Singularities of $\zeta_2$'s}
  \label{fig:0}
\end{figure}

Another possible approach to the above Problem \ref{prob} is to consider the twisted
multiple series.
Let $\xi_1,\ldots,\xi_r\in \mathbb{C}$ be roots of unity. For 
$\gamma_1,\ldots,\gamma_r\in \mathbb{C}$ with
$\Re \gamma_j >0$ ($1\leqslant j\leqslant r$), 
define the {\bf multiple zeta-function of the generalized Euler-Zagier-Lerch type} by
\begin{equation}                                                                        
\label{Barnes-Lerch}                                                                    
\zeta_r((s_j);(\xi_j);(\gamma_j)):=                                                     
    \sum_{\substack{m_1=1}}^\infty\cdots \sum_{\substack{m_r=1}}^\infty                 
    \prod_{j=1}^r \xi_j^{m_j}
    (m_1\gamma_1+\cdots+m_j\gamma_j )^{-s_j},                               
\end{equation}
which is absolutely convergent in the region $\dd_r$ defined by \eqref{region-Z}. 
We note that the multiple zeta-function of  the generalized Euler-Zagier type 
\eqref{gene-EZ}
is its special case, that is,
$$\zeta_r((s_j);(\gamma_j))=\zeta_r((s_j);(1);(\gamma_j)).$$
Because of the existence of the twisting factor $\xi_1,\ldots,\xi_r$, we can see 
(in Theorem \ref{T-multiple} below) that,
if all $\xi_j$ is not equal to 1, series \eqref{Barnes-Lerch} can be continued to
an entire function, hence its values at non-positive integer points have a rigorous
meaning.    Moreover we will show that those values can be written explicitly in
terms of twisted multiple Bernoulli numbers.

The above second approach naturally leads us to the theory of $p$-adic multiple
$L$-functions.   Our next main theme in the present paper is to search for a
multiple analogue of Kubota-Leopoldt $p$-adic $L$-functions.

In the 1960s, Kubota and Leopoldt \cite{K-L} gave the first construction of the $p$-adic analogue of the Dirichlet $L$-function $L(s,\chi)$ associated with the Dirichlet character $\chi$, which is called the $p$-adic $L$-function denoted by $L_p(s;\chi)$. This can be regarded as a $p$-adic interpolation of values of the Dirichlet $L$-function at non-positive integers, based on Kummer's congruences for Bernoulli numbers. 

Iwasawa \cite{Iwasawa69} proposed a different way of constructing 
$p$-adic $L$-functions, based on the study of the arithmetic of Galois modules associated with certain towers of algebraic number fields called $\mathbb{Z}_p$-extensions (see also \cite{Iwasawa72}). In particular, his method shows that the $p$-adic $L$-function can be expressed by use of the power series. His study, called the Iwasawa theory, developed spectacularly with recognition of the importance of $p$-adic $L$-functions. 

In the 1970s, other constructions of $p$-adic $L$-functions were given, for example, by Amice and Fresnel \cite{AF}, Coates \cite{Co}, Koblitz \cite{Kob79} and Serre \cite{Se}. More generally $p$-adic $L$-functions over totally real number fields were constructed by Barsky \cite{Ba}, Cassou-Nogu\`es \cite{Cass}, 
Deligne and Ribet \cite{DR}. 
These works are based on the $p$-adic properties of values of abelian $L$-functions 
over totally real number fields at non-positive integers. In fact, the approach of 
Deligne and Ribet were built on the theory of $p$-adic Hilbert modular forms, while
those of Barsky and of Cassou-Nogu\`es were built on the pioneering work of 
Shintani \cite{Shi}.
Shintani's work also inspired the theory of multivariable $p$-adic $L$-functions.
In fact, motivated by \cite{Shi}, Imai \cite{Imai81} constructed certain 
multivariable $p$-adic $L$-functions and Hida \cite{Hida} constructed $p$-adic 
analogues of Shintani's $L$-functions.

Recently, the second, the third and the fourth named author \cite{KMT-IJNT} 
introduced  a certain double analogue of the Kubota-Leopoldt $p$-adic $L$-function,
which could also be seen as a $p$-adic analogue of the double ($r=2$) zeta function
of a specific case.
Its evaluation at non-positive integers and a certain functional relation with
the Kubota-Leopoldt $p$-adic $L$-function were shown.


In the present paper we generalize the method in \cite{KMT-IJNT} to construct 
the multivariable {\bf $p$-adic multiple $L$-function}, 
\begin{equation}\label{pMLF}
L_{p,r}(s_1,\ldots,s_r;\omega^{k_1},\ldots,\omega^{k_r};\gamma_1,\ldots,\gamma_{r};c)
\end{equation}
It is a $p$-adic valued function for $p$-adic variables $s_1,\dots, s_r$,
which is attached to each 
$\omega^{k_1},\ldots,\omega^{k_r}$
($\omega$: the Teichm\"{u}ller character, $k_1,\ldots,k_r\in \mathbb{Z}$),
$\gamma_1,\ldots,\gamma_r\in \mathbb{Z}_p$
and $c\in \mathbb{N}_{>1}$ with $(c,p)=1$.

It can be seen 
as a multiple analogue of
the Kubota-Leopoldt $p$-adic $L$-function
and the above mentioned $p$-adic double $L$-function
and also regarded as a $p$-adic 
analogues of the complex multivariable multiple zeta-functions \eqref{Barnes-Lerch}
in a sense.
Actually it can be constructed by a multiple analogue of 
the $p$-adic gamma transform of a $p$-adic  measure of Koblitz \cite{Kob79}.
We  investigate $p$-adic properties  of \eqref{pMLF},
particularly its $p$-adic  analyticity  in the parameter $s_1,\ldots,s_r$ and 
its $p$-adic continuity in the parameter $c$.

Evaluation of \eqref{pMLF} at integral points is one of our main themes of this paper:
By explicitly describing its special values at all non-positive  integer points
in terms on twisted Bernoulli numbers,
we show that our $p$-adic multiple $L$-function \eqref{pMLF} is closely connected to
the complex multiple zeta function \eqref{Barnes-Lerch}
of the generalized Euler-Zagier-Lerch type
via their special values at non-positive integers.
Our evaluation yields  two particular results;
multiple Kummer congruence and functional relations. 
The multiple Kummer congruence is a certain $p$-adic congruence among the special values,
which includes  an ordinal Kummer congruence for Bernoulli numbers as a very special case
and where we also discover a newly-looked (or not?)  type of congruence for Bernoulli numbers.
The functional relations are linear relations among $p$-adic multiple $L$-functions
as a function, 
which reduce \eqref{pMLF} as a linear sum of \eqref{pMLF} with lower $r$.
The relations is attached to a parity of $k_1+\cdots+k_r$.
It extends  the known fact in the single variable case that 
the Kubota-Leopoldt $p$-adic $L$-function with odd character
is identically zero function.
As the double variable case, we recover the result of \cite{KMT-IJNT}
that a certain $p$-adic double $L$-function  is  equal to 
the Kubota-Leopoldt $p$-adic $L$-function 
up to a minor factor. 

Whilst, as for evaluation of \eqref{pMLF}  at all positive integers,
we show that  the special value of $p$-adic multiple $L$-function \eqref{pMLF} 
with $\gamma_1=\cdots=\gamma_r=1$
at  any positive integer points $(n_1,\dots,n_r)$ is given by  
the special values of
{\bf $p$-adic twisted multiple polylogarithms}
($p$-adic TMPL's, in short)
\begin{equation}\label{pTMPL}
Li^{(p)}_{n_1,\dots,n_r}(\xi_1,\dots,\xi_r ;z)
\end{equation}
at unity.
The above  $p$-adic TMPL  \eqref{pTMPL} is a $p$-adic valued function for $p$-adic variable $z$,
which is attached to each  positive integers $n_1,\dots,n_r$
and certain $p$-adic parameters $\xi_1,\dots,\xi_r$
(which are occasionally roots of unity.)
We construct the function  by using Coleman's $p$-adic iterated integration theory \cite{C}.
The construction generalizes that of $p$-adic polylogarithm by Coleman \cite{C}
and that of $p$-adic multiple polylogarithm and $p$-adic multiple zeta ($L$-)values
by the first-named author \cite{Fu1, F2} and  Yamashita \cite{Y}.
Our result here shows that there is a close connection between the theory of $p$-adic
multiple $L$-functions of the Kubota-Leopoldt type initiated in \cite{KMT-IJNT},  
and the theory of $p$-adic multiple polylogarithms developed by
the first-named author \cite{Fu1,F2}
which were introduced under a very different motivation.
It also generalizes the previous result of Coleman \cite{C}
which connects the Kubota-Leopoldt $p$-adic $L$-function with his $p$-adic polylogarithm.
In Remark 4.4 of \cite{KMT-IJNT} it is written that it is unclear whether there is 
some connection between these two theories or not. 
Our results in the present paper give an answer to this question.

We remark that our multivariable $p$-adic multiple $L$-functions are different from 
$p$-adic multiple zeta functions by Tangedal and Young \cite{TY},
which are one variable $p$-adic functions stemming from the works of
Shintani, Cassou-Nogu\`{e}s, Yoshida and Kashio,
though pursuing relationship between our works and theirs might be 
a significant research.
A relationship to
non-commutative Iwasawa theory (Coates, Fukaya, Kato, Sujatha  and Venjakob \cite{CFKSV}, etc)
might be another direction.
The theory is 
a generalization of Iwasawa theory but it
looks lacking  its analytic side.
It is not clear whether our work in this paper lead any direction  related to this or not,
which might be worthy to discuss further.

Here is the plan of this paper:
In Section \ref{sec-2}, we will introduce the \textit{twisted $r$-ple Bernoulli numbers} 
$\{\aa((n_j);( \xi_j);( \gamma_j))\}$ associated with 
integers $\{n_j\}_{1\leqslant j\leqslant r}$ such that $n_j\geqslant -1$,
roots of unity $\{\xi_j\}_{1\leqslant j\leqslant r}$
and complex numbers $\gamma_j$ $(1\leqslant j\leqslant r)$
(Definition \ref{Def-M-Bern})
as generalizations of ordinary Bernoulli numbers. 
We will show that $\zeta_r((s_j);(\xi_j);(\gamma_j))$ 
is analytically continued to the whole space as
an entire function and interpolates $\{\aa((n_j);( \xi_j);( \gamma_j))\}$
at non-positive integers when all $\xi_j$ are not $1$ (Theorem \ref{T-multiple}). 
On the other hand, when $\xi_j$ are all equal to $1$,
then $\zeta_r((s_j);(1);(\gamma_j))=\zeta_r((s_j);(\gamma_j))$
is meromorphically continued to the whole space
with  whose singularities lying on infinitely many hyperplanes.
We will introduce and develop the method of \textit{desingularization},
which is to resolve all singularities of $\zeta_r((s_j);(\gamma_j))$.
We will construct a function $\zeta^{\rm des}_r((s_j);(\gamma_j))$,
called \textit{the desingularized multiple zeta-function}
(Definition \ref{def-MZF-2}), which will be shown to be entire 
(Theorem \ref{T-c-1-zeta}). 
Furthermore  we will show that $\zeta^{\rm des}_r((s_j);(\gamma_j))$ can be expressed 
as a finite \lq linear' combination of $\zeta_r((s_j+m_j);(\gamma_j))$,
the multiple zeta-functions of the generalized Euler-Zagier type
whose arguments are appropriately shifted by integers $m_j$ 
$(1\leqslant j\leqslant r)$ (Theorem \ref{Th-ex}).
It is where we stress that
summing up suitable {\it finite} combinations of $\zeta_r((s_j+m_j);(\gamma_j))$ causes 
that a marvelous cancellation of all of their  {\it infinitely} many 
singular hyperplanes 
occurs and consequently $\zeta^{\rm des}_r((s_j);(\gamma_j))$ turns to be entire. 
We will  also describe the special values of $\zeta^{\rm des}_r((s_j);(\gamma_j))$ 
at non-positive integers 
in terms of ordinary Bernoulli numbers (Theorem \ref{C-Zr}).

In Section \ref{sec-3}, 
we will construct the \textit{$p$-adic $r$-ple $L$-function} 
$L_{p,r}((s_j);(\omega^{k_j});(\gamma_j);c)$ 
associated  with $\gamma_j\in \mathbb Z_p$, $k_j\in\mathbb Z$
$(1\leqslant j\leqslant r)$ and a positive integer $c(\geqslant 2)$ with $(c,p)=1$
(Definition \ref{Def-pMLF}).
The case $r=1$ essentially coincides with the Kubota-Leopoldt $p$-adic $L$-function 
(Example \ref{example for r=1}). 
Also the case $(r,c)=(2,2)$ coincides with the $p$-adic double $L$-function introduced in \cite{KMT-IJNT} as mentioned above
(Example \ref{example for r=2}).
Our main technique of construction is due to Koblitz \cite{Kob79}. 
By using a specific $p$-adic measure, we will define $L_{p,r}((s_j);(\omega^{k_j});(\gamma_j);c)$ as a multiple version of the $p$-adic $\Gamma$-transform that provides a $p$-adic analyticity in $(s_j)$ (Theorem \ref{Th-pMLF}). 
We will further show a non-trivial fact that the map $c\mapsto L_{p,r}((s_j);(\omega^{k_j});(\gamma_j);c)$ can be continuously extended to any $p$-adic integer $c$ 
as a $p$-adic continuous function (Theorem \ref{continuity theorem}).

In Section \ref{sec-4}, we will describe the values of $L_{p,r}((s_j);(\omega^{k_j});(\gamma_j);c)$ at non-positive integers as a sum of $\{\aa((n_j);( \xi_j);( \gamma_j))\}$ 
(Theorem \ref{T-main-1}). 
We will see that our $L_{p,r}((s_j);(\omega^{k_j});(\gamma_j);c)$ is a $p$-adic interpolation of
a certain sum \eqref{Int-P} of the complex multiple zeta-functions  $\zeta_r((s_j);(\xi_j);(\gamma_j))$
(Remark \ref{rem-intpln}).
As an application of the theorem, we will obtain a multiple version of the Kummer congruences (Theorem \ref{Th-Kummer}). 
In fact, the case $r=1$ coincides with the ordinary Kummer congruences for
Bernoulli numbers. 
Also we will give some functional relations with a parity condition among $p$-adic multiple $L$-functions (Theorem \ref{T-6-1}). 
The functional relations can be regarded as multiple versions of the well-known fact that $L_p(s;\omega^{2k+1})$ is the zero-function (see Example \ref{Rem-zero-2}). 
We will see that they also recover 
the functional relations shown in \cite{KMT-IJNT} as a special case $(r,c)=(2,2)$
(Example \ref{P-Func-eq-1}).

In Section \ref{sec-5},  we will describe the special values of
$L_{p,r}((s_j);(\omega^{k_j});(1);c)$ at positive integers.
In Theorem \ref{L-Li theorem-2}
we will establish their close  relation to those
of \textit{$p$-adic TMPL's} 
$Li^{(p)}_{n_1,\dots,n_r}(\xi_1,\dots,\xi_r ;z)$
(cf. Definition \ref{Def-TMPL})
at roots of unity,
which is an extension of the previous result of Coleman \cite{C}.
For this aim, we will introduce \textit{$p$-adic rigid TMPL's}
$\ell^{(p)}_{n_1,\dots,n_r}(\xi_1,\dots,\xi_{r};z)$
and 
\textit{$p$-adic partial TMPL's}
$\ell^{\equiv (\alpha_1,\dots,\alpha_r),(p)}_{n_1,\dots,n_r}(\xi_1,\dots,\xi_{r};z)$
(Definition \ref{def of pMMPL} and  \ref{def of pPMPL} respectively)
as in-between functions.
In Subsection \ref{sec-5-3}
the above special values at positive integers
will be shown to be connected with  the special values of $p$-adic rigid
TMPL's at roots unity
(Theorem \ref{L-ell theorem}).
Basic properties of these in-between functions will be presented in Subsection \ref{sec-5-4}.
We will show
an explicit relationship between $p$-adic rigid TMPL's
and $p$-adic TMPL's (Theorem \ref{ell-Li theorem})
by transmitting through their connections with  $p$-adic partial TMPL's
to obtain Theorem \ref{L-Li theorem-2}
in Subsection \ref{sec-5-5}.

\ 

\section{Complex multiple zeta-functions}\label{sec-2}

In this section, we will first recall the analytic properties of complex multiple zeta-functions of Euler-Zagier type \eqref{MZF-def}
and of the zeta-function of Lerch type \eqref{Lerch-zeta}
which interpolates the twisted Bernoulli numbers \eqref{def-tw-Ber}.
Next we will introduce multiple
twisted Bernoulli numbers (Definition \ref{Def-M-Bern}) which are connected with
multiple zeta-functions of the generalized Euler-Zagier-Lerch type \eqref{Barnes-Lerch}.
It will be shown that
the functions in the non-unity case \eqref{non-unity assumption}
are analytically continued to the whole space as entire functions
and
interpolate these numbers at non-positive integers (Theorem \ref{T-multiple}).
In the final subsection, 
we will develop our method of desingularization  (Definition \ref{def-MZF-2}) of 
multiple zeta-functions of  the generalized Euler-Zagier type
\eqref{gene-EZ}.
They are meromorphically continued to the whole space
with  whose singularities lying on infinitely many hyperplanes.
Our desingularization is a method to reduce them into entire functions (Theorem \ref{T-c-1-zeta}).
We will further show that the desingularized
functions are given by
a suitable finite `linear' combination of multiple zeta-functions
\eqref{gene-EZ} with some arguments shifted (Theorem \ref{Th-ex}).
It is where we see 
a miraculous cancellation of all of their {\it infinitely} many 
singular hyperplanes 
occurring there
by taking a suitable {\it finite} combination of these functions.
We will prove that certain combinations of Bernoulli numbers attain the
special values at their non-positive integers of the desingularized
functions (Theorem \ref{C-Zr}).
Our method might be said as a multiple series analogue of
the procedure reducing the Riemann zeta function $\zeta(s)$
into the entire function $(s-1)\zeta(s)$ (Example \ref{Exam-SH}).
These observations lead to the construction of $p$-adic multiple $L$-functions 
which will be discussed in the next section.

\subsection{Basic facts}\label{sec-2-1}

Let $\mathbb{N}$, $\mathbb{N}_0$, $\mathbb{Z}$, $\mathbb{Q}$, $\mathbb{R}$ and $\mathbb{C}$ be the set of natural numbers, non-negative integers, rational integers, rational numbers, real numbers and complex numbers, respectively. Let $\overline{\mathbb{Q}}$ be the algebraic closure of $\mathbb{Q}$. For $s\in \mathbb{C}$, denote by $\Re s$ and $\Im s$ the real and the imaginary parts of $s$, respectively.

Let $\chi$ be a primitive Dirichlet character and denote the conductor of $\chi$ by $f_\chi$. The Dirichlet $L$-function associated with $\chi$ is defined by 
$$L(s,\chi)=\sum_{m=1}^\infty \frac{\chi(m)}{m^s}.$$
In the case $\chi=\chi_0$, namely the trivial character with $f_{\chi_0}=1$, $L(s,\chi_0)$ is equal to $\zeta(s)$. 

It is well-known that $L(s,\chi)$ is an entire function when $\chi\not=\chi_0$, and $\zeta(s)$ is a meromorphic function on $\mathbb{C}$ with a simple pole at $s=1$, and satisfies
\begin{equation}
\begin{split}
&\zeta(1-k)=
\begin{cases} 
-\frac{B_k}{k} & (k\in \mathbb{N}_{>1})\\
-\frac{1}{2}  & (k=1),
\end{cases}
\\
&L(1-k,\chi)=-\frac{B_{k,\chi}}{k}\quad (k\in \mathbb{N};\ \chi\not=\chi_0),
\end{split}
\label{1-1-2}
\end{equation}
where $\{B_n\}$ and $\{B_{n,\chi}\}$ are the Bernoulli numbers
\footnote{                                                                               
or better to be called Seki-Bernoulli numbers, because
Takakazu (Kowa) Seki published the work on these numbers, independently,
before Jakob Bernoulli.
}
and the generalized Bernoulli numbers associated with $\chi$ defined by 
\begin{align*}
& \frac{t}{e^t-1}=\sum_{n=0}^\infty B_n\frac{t^n}{n!},\\
& \sum_{a=1}^{f} \frac{\chi(a)te^{at}}{e^{f t}-1}=\sum_{n=0}^\infty B_{n,\chi}\frac{t^n}{n!}\qquad (f=f_\chi),
\end{align*}
respectively (see \cite[Theorem 4.2]{Wa}). Note that $B_{n,\chi_0}=B_n$ ($n\in \mathbb{N}_0$) except for $B_{1,\chi_0}=-B_1=\frac{1}{2}$. 


The multiple zeta-function of Euler-Zagier type is defined by \eqref{MZF-def}.
As was mentioned in the Introduction, the research of \eqref{MZF-def} goes back to
a paper of Euler.
In the late 1990s, several authors 
investigated its analytic properties, 
though their results have not been published (for the details, see the survey article \cite{M2010}). 
In the early 2000s, Zhao \cite{Zh2000} and Akiyama, Egami and Tanigawa \cite{AET} independently showed that the multiple zeta-function \eqref{MZF-def} can be meromorphically continued to $\mathbb{C}^r$. Furthermore, 
the \textit{exact} locations of singularities of \eqref{MZF-def} were explicitly determined as follows.

\begin{theorem}[{\cite[Theorem 1]{AET}}]\label{T-AET}
The multiple zeta-function \eqref{MZF-def} can be meromorphically continued to $\mathbb{C}^r$ with infinitely many 
singular hyperplanes
\begin{align}
& s_r=1,\quad s_{r-1}+s_{r}=2,1,0,-2,-4,-6, \ldots, \notag\\
& s_{r-k+1}+s_{r-k+2}+\cdots+s_r=k-n\quad (3\leqslant k\leqslant r,\ n\in \mathbb{N}_0).\label{EZ-sing}
\end{align}
\end{theorem}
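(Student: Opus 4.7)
The plan is to proceed by induction on $r$, reducing the analysis of $\zeta_r$ to that of $\zeta_{r-1}$ by means of a Mellin--Barnes integral representation. The base case $r=1$ is the classical continuation of $\zeta(s)$ with its single simple pole at $s=1$, which matches the statement vacuously. For the inductive step, I would apply the Mellin--Barnes identity
\begin{equation*}
(A+B)^{-s}=\frac{1}{2\pi i}\int_{(c)}\frac{\Gamma(s+z)\Gamma(-z)}{\Gamma(s)}\,A^{-s-z}B^{z}\,dz
\end{equation*}
to the innermost factor $(m_1+\cdots+m_r)^{-s_r}$, with $A=m_1+\cdots+m_{r-1}$ and $B=m_r$, on a vertical contour $\Re z=c$ satisfying $-\Re s_r<c<-1$. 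Interchanging summation and integration (justified in an appropriate subregion of $\dd_r$), the inner sum over $m_r$ produces $\zeta(-z)$ and the outer sum reconstitutes $\zeta_{r-1}$ at a shifted argument, yielding
\begin{equation*}
\zeta_r(s_1,\ldots,s_r)=\frac{1}{2\pi i}\int_{(c)}\frac{\Gamma(s_r+z)\Gamma(-z)}{\Gamma(s_r)}\,\zeta(-z)\,\zeta_{r-1}(s_1,\ldots,s_{r-2},s_{r-1}+s_r+z)\,dz.
\end{equation*}

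The next step is to shift the contour rightward past $\Re z=N$, collecting the residue at the simple pole $z=-1$ of $\zeta(-z)$ and at the simple poles $z=0,1,\ldots,N$ of $\Gamma(-z)$. The $z=-1$ residue yields $-\frac{1}{s_r-1}\,\zeta_{r-1}(s_1,\ldots,s_{r-2},s_{r-1}+s_r-1)$, while each $z=n$ with $n\geqslant 0$ contributes a constant multiple of $\frac{\Gamma(s_r+n)}{\Gamma(s_r)}\zeta(-n)\,\zeta_{r-1}(s_1,\ldots,s_{r-2},s_{r-1}+s_r+n)$; the coefficient in $s_r$ here is polynomial, so the only explicit pole in $s_r$ is at $s_r=1$. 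Standard Stirling estimates on vertical strips, combined with polynomial bounds for $\zeta_{r-1}$ in vertical directions (guaranteed inductively), show that the residual integral is holomorphic in an expanding region of $\mathbb{C}^r$, and letting $N\to\infty$ delivers meromorphic continuation to all of $\mathbb{C}^r$.

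Identifying the singular hyperplanes then becomes an accounting exercise based on the inductive hypothesis. The prefactor $(s_r-1)^{-1}$ supplies $s_r=1$. For the $\zeta_{r-1}$-inherited poles, the shift $s_{r-1}\mapsto s_{r-1}+s_r+n$ converts the pole $s_{r-1}=1$ of $\zeta_{r-1}$ into $s_{r-1}+s_r=1-n$, and a pole $s_{r-k+2}+\cdots+s_{r-1}=(k-1)-\ell$ of $\zeta_{r-1}$ (for $k\geqslant 3$, $\ell\in\mathbb{N}_0$) into $s_{r-k+1}+\cdots+s_r=k-n-\ell$. The key vanishing $\zeta(-2j)=0$ for $j\geqslant 1$ kills the contributions from even $n\geqslant 2$, so $n$ effectively ranges over $\{-1,0,1,3,5,\ldots\}$. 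A direct enumeration over these values and over $\ell\in\mathbb{N}_0$ recovers exactly the set $\{2,1,0,-2,-4,\ldots\}$ for $k=2$ and every integer $\leqslant k$ for $k\geqslant 3$, matching \eqref{EZ-sing}.

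The main obstacle is to verify that the listed hyperplanes are genuinely singular, and not accidentally cancelled when several residues contribute to the same hyperplane (as happens, for example, for $s_{r-2}+s_{r-1}+s_r=0$, to which the residues at $n=0,1,3,\ldots$ all contribute). The cleanest route is to compute the residue along each hyperplane using the inductive description of residues of $\zeta_{r-1}$ -- which are expressible, up to explicit $\Gamma$-factors, in terms of lower-dimensional zeta-values and Bernoulli-number combinations -- and to isolate a leading term, for instance the distinguished $n=-1$ contribution for the hyperplanes of the form $s_{r-k+1}+\cdots+s_r=k$, from which non-vanishing as a function of the remaining free variables can be read off. The other technical point is uniform control of the remainder integral during each contour shift, which reduces to routine Stirling-type bounds once one fixes a compact range for $s_r$.
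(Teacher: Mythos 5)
The paper itself offers no proof of this statement: it is quoted verbatim from Akiyama--Egami--Tanigawa \cite[Theorem 1]{AET}, whose argument runs through iterated application of the Euler--Maclaurin summation formula. Your Mellin--Barnes route is a genuinely different (and by now equally standard) approach, essentially that of the third-named author in \cite{MaJNT}; the recursion you derive, namely
$\zeta_r((s_j))=\frac{1}{s_r-1}\zeta_{r-1}(s_1,\ldots,s_{r-2},s_{r-1}+s_r-1)+\sum_{n=0}^{N}\binom{-s_r}{n}\zeta(-n)\,\zeta_{r-1}(s_1,\ldots,s_{r-2},s_{r-1}+s_r+n)+(\text{remainder integral})$,
is correct, the contour bookkeeping ($-\Re s_r<c<-1$, crossing $z=-1$ and then $z=0,1,\ldots,N$) is right, and the enumeration of candidate hyperplanes, including the role of $\zeta(-2j)=0$ in deleting $s_{r-1}+s_r=-1,-3,-5,\ldots$, reproduces \eqref{EZ-sing} exactly. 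What the two approaches buy is different: Euler--Maclaurin hands you the residues along each hyperplane rather directly, whereas Mellin--Barnes gives a cleaner inductive continuation but makes the singular locus appear only as a union of \emph{possible} singularities (which is precisely why \cite[Theorem 1]{MaJNT} is stated with ``possible'' singularities while the exactness is credited to \cite{AET}).

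That last point is where your proposal is incomplete rather than wrong. You correctly identify the obstacle — for $k\geqslant 3$ infinitely many pairs $(\ell,n)$ with $\ell+n=m-1$ deposit residues on the same hyperplane $s_{r-k+1}+\cdots+s_r=k-m$, so cancellation must be excluded — but the resolution is only sketched. ``Isolating the $n=-1$ contribution'' settles the hyperplanes $\sum s_j=k$ (where it is the unique contribution), but for the lower hyperplanes one genuinely needs a closed inductive formula for the residue (a finite Bernoulli-number combination depending on the remaining free variables) and a non-vanishing argument for it as a function on the hyperplane; this is the actual content of \cite{AET} beyond meromorphic continuation, and it does not follow from ``routine Stirling bounds.'' As a proof of the continuation plus an upper bound on the singular set your argument is complete; as a proof that the listed hyperplanes are \emph{exactly} the singularities it has a real gap at this step.
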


Multiple zeta values, namely
the special values of  \eqref{MZF-def}
at positive integers,
are equal to the special values of multiple polylogarithm
$$
Li_{n_1,\dots,n_r}(z_1,\dots,z_r):=
{\underset{0<k_1<\cdots<k_{r}}{\sum}}
\frac{z_1^{k_1}\cdots z_r^{k_r}}{k_1^{n_1}\cdots k_r^{n_r}}
$$
(which is an $r$-variable complex analytic function converging on polyunit disk)
at unity, namely
\begin{equation}\label{zeta=Li}
\zeta_r(n_1,\dots,n_r)=Li_{n_1,\dots,n_r}(1,\dots,1)
\end{equation}
for $n_1,\dots,n_r\in {\mathbb N}$ with $n_r>1$.

In the last section of this paper 
a $p$-adic analogue of the equality \eqref{zeta=Li}
will be attained.

As stated above, the multiple zeta-function 
\eqref{MZF-def} but for the single variable case ($r=1$) has infinitely many singular 
hyperplanes and almost all non-positive integer points lie there.
It causes an indeterminacy 
of its special values at non-positive integers.
For example, according to \cite{AET,AT},
\begin{align*}
\lim_{\varepsilon_1\to 0}\lim_{\varepsilon_2\to 0}\zeta_2(\varepsilon_1,\varepsilon_2)&=\frac{1}{3},\\
\lim_{\varepsilon_2\to 0}\lim_{\varepsilon_1\to 0}\zeta_2(\varepsilon_1,\varepsilon_2)&=\frac{5}{12},\\
\lim_{\varepsilon\to 0}\zeta_2(\varepsilon,\varepsilon)&=\frac{3}{8}.
\end{align*}

There are some other explicit formulas for the values at those
non-positive integer points as limit values when the way of  
approaching those points are fixed (\cite{AET}, \cite{Ko2010},
\cite{Onozuka} et al.)

\subsection{Twisted multiple Bernoulli numbers}\label{sec-2-2}
We will review Koblitz' definition of twisted Bernoulli numbers.
Then we  will introduce twisted multiple Bernoulli numbers,
their multiple analogue, in Definition \ref{Def-M-Bern}
and investigate their expression  as combinations of twisted Bernoulli numbers
in Proposition \ref{prop-M-Bern}.

\begin{definition}[{\cite[p.\,456]{Kob79}}]
For any root of unity $\xi$, we define the {\bf twisted Bernoulli numbers} $\{ \aa_n(\xi)\}$ by
\begin{equation}
\hc(t;\xi)=\frac{1}{1-\xi e^{t}}=\sum_{n=-1}^\infty \aa_n(\xi)\frac{t^n}{n!}, \label{def-tw-Ber}
\end{equation}
where we formally let $(-1)!=1$.
\end{definition}

\begin{remark}
Koblitz \cite{Kob79} generally defined the twisted Bernoulli numbers associated with primitive 
Dirichlet characters. 
The above $\{\aa_n(\xi)\}$ correspond to $\chi_0$. 
\end{remark}

In the case $\xi=1$, we have 
\begin{equation}
\aa_{-1}(1)=-1,\qquad \aa_n(1)=-\frac{B_{n+1}}{n+1}\quad (n\in \mathbb{N}_0). \label{Ber-01}
\end{equation}
In the case $\xi\not=1$, 
we have $\aa_{-1}(\xi)=0$ and 
$\aa_n(\xi)=\frac{1}{1-\xi}H_n\left(\xi^{-1}\right)$ $(n\in \mathbb{N}_0)$, where $\{H_n(\lambda)\}_{n\geqslant 0}$ are what is called the Frobenius-Euler numbers associated with $\lambda$ defined by
$$\frac{1-\lambda}{e^t-\lambda}=\sum_{n=0}^\infty H_n(\lambda)\frac{t^n}{n!}$$
(see Frobenius \cite{Fro}). 
We obtain from \eqref{def-tw-Ber} that $\aa_n(\xi)\in \mathbb{Q}(\xi)$. For example, 
\begin{equation}
\begin{split}
& \aa_0(\xi)=\frac{1}{1-\xi},\quad \aa_1(\xi)=\frac{\xi}{(1-\xi)^2},\quad \aa_2(\xi)=\frac{\xi(\xi+1)}{(1-\xi)^3},\\
& \aa_3(\xi)=\frac{\xi(\xi^2+4\xi+1)}{(1-\xi)^4},\quad \aa_4(\xi)=\frac{\xi(\xi^3+11\xi^2+11\xi+1)}{(1-\xi)^5},\ldots
\end{split}
 \label{TBN-exam}
\end{equation}

Let $\mu_k$ be the group of $k$th roots of unity. 
Using the relation
\begin{equation}
\frac{1}{X-1}-\frac{k}{X^{k}-1}=\sum_{\xi\in \mu_k\atop \xi\not=1}\frac{1}{1-\xi X}\qquad (k\in \mathbb{N}_{>1}) \label{log-der}
\end{equation}
for an indeterminate $X$, 
we obtain the following.

\begin{proposition}
Let $c\in \mathbb{N}_{>1}$. For $n\in \mathbb{N}_0$, 
\begin{equation}
\left(1-c^{n+1}\right)\frac{B_{n+1}}{n+1}=\sum_{\xi^c=1\atop \xi\not=1}\aa_n(\xi). \label{2-0-1}
\end{equation}
\end{proposition}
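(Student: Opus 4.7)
The plan is to read off both sides of \eqref{2-0-1} as coefficients of $t^n$ in the generating function identity obtained by substituting $X=e^t$ into \eqref{log-der} with $k=c$.

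First I would take the sum $\sum_{\xi^c=1,\xi\neq 1}\hc(t;\xi)$ of the generating series \eqref{def-tw-Ber}. By \eqref{log-der} applied to $X=e^t$ this equals
\begin{equation*}
\sum_{\xi^c=1,\xi\neq 1}\frac{1}{1-\xi e^t} \;=\; \frac{1}{e^t-1}-\frac{c}{e^{ct}-1}.
\end{equation*}
On the right hand side, use the defining generating function of the ordinary Bernoulli numbers,
\begin{equation*}
\frac{1}{e^t-1}=\frac{1}{t}+\sum_{m=0}^{\infty}\frac{B_{m+1}}{(m+1)!}t^m,\qquad
\frac{c}{e^{ct}-1}=\frac{1}{t}+\sum_{m=0}^{\infty}\frac{B_{m+1}\,c^{m+1}}{(m+1)!}t^m,
\end{equation*}
so the $t^{-1}$ parts cancel and we obtain the clean expansion
\begin{equation*}
\frac{1}{e^t-1}-\frac{c}{e^{ct}-1}=\sum_{m=0}^{\infty}(1-c^{m+1})\,\frac{B_{m+1}}{(m+1)!}\,t^m.
\end{equation*}

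Next I would expand the left hand side using \eqref{def-tw-Ber}. Since $\aa_{-1}(\xi)=0$ whenever $\xi\neq 1$ (as noted just after \eqref{Ber-01}), the potentially singular $t^{-1}$ terms in each summand vanish, and
\begin{equation*}
\sum_{\xi^c=1,\xi\neq 1}\hc(t;\xi)=\sum_{n=0}^{\infty}\Bigl(\sum_{\xi^c=1,\xi\neq 1}\aa_n(\xi)\Bigr)\frac{t^n}{n!}.
\end{equation*}
Comparing the coefficient of $t^n$ on the two sides yields
\begin{equation*}
\frac{1}{n!}\sum_{\xi^c=1,\xi\neq 1}\aa_n(\xi)=(1-c^{n+1})\,\frac{B_{n+1}}{(n+1)!},
\end{equation*}
which is exactly \eqref{2-0-1} after multiplying both sides by $n!$.

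There is no genuine obstacle here; the only thing to check with care is that the pole-at-$t=0$ contributions coming from $\aa_{-1}$ and from the $1/t$ terms of the Bernoulli expansions all disappear, so that one may equate power-series coefficients term by term. This works because (i) $\aa_{-1}(\xi)=0$ for every nontrivial $c$-th root of unity, and (ii) the constant $B_0=1$ produces the same $1/t$ contribution on both $\frac{1}{e^t-1}$ and $\frac{c}{e^{ct}-1}$, so their difference is a genuine power series in $t$. The remainder is purely formal coefficient extraction.
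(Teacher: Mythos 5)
Your proof is correct and follows essentially the same route as the paper, which derives \eqref{2-0-1} precisely by substituting $X=e^{t}$ into \eqref{log-der} and comparing Maclaurin coefficients via \eqref{def-tw-Ber}. The care you take with the cancellation of the $1/t$ terms and the vanishing of $\aa_{-1}(\xi)$ for $\xi\neq 1$ is exactly the point the paper leaves implicit.
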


\begin{remark}
Let $\xi$ be a root of unity. 
As an analogue of \eqref{1-1-2}, it holds that
\begin{equation}
\phi(-k;\xi)=\aa_k(\xi)\quad (k\in \mathbb{N}_0),\label{phi-val}
\end{equation}
where $\phi(s;\xi)$ is the {\bf zeta-function of Lerch type}
defined by the meromorphic continuation of the series
\begin{equation}
\phi(s;\xi)=\sum_{m\geqslant 1}\xi^{m}m^{-s} \qquad (\Re s>1)
\label{Lerch-zeta}
\end{equation}
(cf.\ {\cite[Chapter 2,\,Section 1]{Kob}}).
\end{remark}

We see that \eqref{2-0-1} can also be given from the relation 
\begin{equation}
\left(c^{1-s}-1\right)\zeta(s)=\sum_{\xi^c=1\atop \xi\not=1} \phi(s;\xi). \label{rel-phi}
\end{equation}

Now we define certain multiple analogues of twisted Bernoulli numbers. 

\begin{definition}\label{Def-M-Bern}
Let $r\in \mathbb{N}$, 
$\gamma_1,\ldots,\gamma_r\in \mathbb{C}$ 
and let $\xi_1,\ldots,\xi_r\in \mathbb{C}$ be roots of unity. 
Set 
\begin{align}
    \mathfrak{H}_r  (( t_j );( \xi_j); ( \gamma_j))&:=
    \prod_{j=1}^{r} \mathfrak{H}(\gamma_j (\sum_{k=j}^r t_k);\xi_j)=\prod_{j=1}^{r} \frac{1}{1-\xi_j \exp\left(\gamma_j \sum_{k=j}^r t_k\right)}\label{Def-Hr}
\end{align}
and define
{\bf twisted multiple Bernoulli numbers}
\begin{footnote}{We are not sure which is better, ``twisted multiple'', or ``multiple twisted''. 
But we will skip this problem because
it looks that these two adjectives are ``commutative'' here.}
\end{footnote}$\{\aa(n_1,\ldots,n_r;( \xi_j);( \gamma_j))\}$
by 
\begin{align}
    \mathfrak{H}_r  (( t_j );( \xi_j); ( \gamma_j))
    =\sum_{n_1=-1}^\infty
    \cdots
    \sum_{n_r=-1}^\infty
    \aa(n_1,\ldots,n_r;( \xi_j);( \gamma_j))
    \frac{t_1^{n_1}}{n_1!}
    \cdots
    \frac{t_r^{n_r}}{n_r!},
\label{Fro-def-r}
\end{align}
where we note $(-1)!=1$ as mentioned before. 
In the case $r=1$, we have $\aa_n(\xi_1)=\aa(n;\xi_1;1)$. Note that if $\xi_j\not=1$ $(1\leqslant j\leqslant r)$ then $\mathfrak{H}_r  (( t_j );( \xi_j); ( \gamma_j))$ is holomorphic around the origin with respect to the parameters $t_1,\dots, t_r$,
hence the singular part does not appear on the right-hand side of \eqref{Fro-def-r}.
\end{definition}

We immediately obtain the following from \eqref{def-tw-Ber}, \eqref{Def-Hr} and \eqref{Fro-def-r}.

\begin{proposition}\label{prop-M-Bern}
Let $\gamma_1,\ldots,\gamma_r\in \mathbb{C}$ 
and $\xi_1,\ldots,\xi_r\in \mathbb{C}$ be roots of unity. Then 
$\aa(n_1,\ldots,n_r;( \xi_j);( \gamma_j))$ can be expressed as a polynomial in $\{ \aa_{n}(\xi_j)\,|\,1\leqslant j\leqslant r,~{n\geqslant 0}\}$ and $\{\gamma_1,\ldots,\gamma_r\}$ with $\mathbb{Q}$-coefficients, that is, a rational function in $\{\xi_j\}$ and $\{\gamma_j\}$ with $\mathbb{Q}$-coefficients. 
\end{proposition}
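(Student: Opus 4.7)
The plan is to prove the proposition by a direct term-by-term expansion of the generating series $\mathfrak{H}_r$ from its product form \eqref{Def-Hr}, combined with the multinomial theorem. The key structural observation is that the $j$-th factor depends only on $t_j, t_{j+1}, \ldots, t_r$, so when coefficients are extracted the resulting system of indexing constraints is triangular and hence has only finitely many non-negative integer solutions.

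First I would substitute $u_j := \gamma_j(t_j+\cdots+t_r)$ into the one-variable expansion \eqref{def-tw-Ber}, yielding
\[
\mathfrak{H}(u_j;\xi_j) \;=\; \sum_{m_j\geqslant -1} \frac{\aa_{m_j}(\xi_j)\,\gamma_j^{m_j}}{m_j!}\Bigl(\sum_{k=j}^{r} t_k\Bigr)^{m_j}.
\]
When $\xi_j\neq 1$ we have $\aa_{-1}(\xi_j)=0$, so only $m_j\geqslant 0$ contribute; when $\xi_j=1$ the exceptional value $\aa_{-1}(1)=-1$ is just a rational scalar, which merges into the eventual $\mathbb{Q}$-coefficient. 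Next I would apply the multinomial theorem to each $(\sum_{k=j}^{r} t_k)^{m_j}$ and multiply the resulting expansions over $j=1,\ldots,r$; the outcome is a formal sum indexed by a triangular array $(a_{j,k})_{1\leqslant j\leqslant k\leqslant r}$ of non-negative integers, whose generic term is a rational multiple of $\prod_j \aa_{m_j}(\xi_j)\gamma_j^{m_j}\cdot\prod_k t_k^{n_k}$ with $m_j=\sum_{k\geqslant j} a_{j,k}$ and $n_k=\sum_{j\leqslant k} a_{j,k}$.

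Finally, reading off the coefficient of $\prod_k t_k^{n_k}/n_k!$ amounts to imposing the constraints $\sum_{j=1}^{k} a_{j,k}=n_k$ for each $k$, which admit only finitely many solutions in non-negative integers. This exhibits $\aa(n_1,\ldots,n_r;(\xi_j);(\gamma_j))$ as a finite $\mathbb{Q}$-linear combination of monomials $\prod_j \aa_{m_j}(\xi_j)\gamma_j^{m_j}$, proving the polynomial claim. The rational function statement in $\xi_j,\gamma_j$ then follows immediately from $\aa_n(\xi_j)\in\mathbb{Q}(\xi_j)$, visible from the explicit formulas $\aa_n(\xi)=(1-\xi)^{-1}H_n(\xi^{-1})$ for $\xi\neq 1$ and $\aa_n(1)=-B_{n+1}/(n+1)\in\mathbb{Q}$ recalled just after \eqref{def-tw-Ber}.

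The main obstacle, such as it is, will be purely bookkeeping: organizing the triangular index array $(a_{j,k})$ cleanly so that the finiteness of the sum and the explicit form of the rational coefficient are transparent. Conceptually the proof is a routine multinomial computation, and I do not anticipate any substantive difficulty beyond fixing index conventions.
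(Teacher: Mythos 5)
Your proposal is correct and is exactly the computation the paper has in mind: the authors declare the result immediate from \eqref{def-tw-Ber}, \eqref{Def-Hr} and \eqref{Fro-def-r}, and Example \ref{Exam-DH} carries out precisely your substitute-and-multinomially-expand argument in the case $r=2$, arriving at \eqref{Eur-exp1}. The only point glossed over (by you and by the paper alike) is that when some $\xi_j=1$ with $j<r$ the term $\aa_{-1}(1)\bigl(\gamma_j\sum_{k\geqslant j}t_k\bigr)^{-1}$ is not a power series in the $t_k$ and does not yield to the multinomial theorem, so the argument really concerns the coefficients with all $n_j\geqslant 0$ and is cleanest under the assumption $\xi_j\neq 1$ for all $j$.
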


\begin{example}\label{Exam-DH}
We consider the case $r=2$ and $\xi_j\neq 1$ $(j=1,2)$. 
Substituting \eqref{def-tw-Ber} into \eqref{Def-Hr} in the case $r=2$, we have
\begin{align*}
& \mathfrak{H}_2(t_1,t_2;\xi_1,\xi_2;\gamma_1,\gamma_2)=\frac{1}{1-\xi_1 \exp\left(\gamma_1(t_1+t_2)\right)}\frac{1}{1-\xi_2 \exp\left(\gamma_2 t_2\right)}\\
    &=\left(\sum_{m=0}^\infty \aa_m(\xi_1)\frac{\gamma_1^m (t_1+t_2)^m}{m!}\right) \left(\sum_{n=0}^\infty \aa_n(\xi_2)\frac{\gamma_2^n t_2^n}{n!}\right)
    \\
    &=\sum_{m=0}^\infty\sum_{n=0}^\infty \aa_m(\xi_1)\aa_n(\xi_2)\left(\sum_{k,j\geqslant 0 \atop k+j=m}\frac{t_1^k t_2^j}{k!j!}\right)\gamma_1^m\gamma_2^n\frac{t_2^n}{n!}.
\end{align*}
Putting $l:=n+j$, we have 
\begin{align*}
\mathfrak{H}_2(t_1,t_2;\xi_1,\xi_2;\gamma_1,\gamma_2)
    &=\sum_{k=0}^\infty\sum_{l=0}^\infty \sum_{j=0}^{l}\binom{l}{j}\aa_{k+j}(\xi_1)\aa_{l-j}(\xi_2)\gamma_1^{k+j}\gamma_2^{l-j}\frac{t_1^k}{k!}\frac{t_2^l}{l!},
\end{align*}
which gives 
\begin{equation}
\begin{split}
\aa(k,l;\xi_1,\xi_2;\gamma_1,\gamma_2)&=\sum_{j=0}^{l}\binom{l}{j}\aa_{k+j}(\xi_1)\aa_{l-j}(\xi_2)\gamma_1^{k+j}\gamma_2^{l-j}\quad (k,l\in \mathbb{N}_0).
\end{split}
\label{Eur-exp1}
\end{equation}
For example, we can obtain from \eqref{TBN-exam} that 
\begin{align*}
&\aa(0,0;\xi_1,\xi_2;\gamma_1,\gamma_2)=\frac{1}{(1-\xi_1)(1-\xi_2)},\quad \aa(1,0;\xi_1,\xi_2;\gamma_1,\gamma_2)=\frac{\xi_1\gamma_1}{(1-\xi_1)^2(1-\xi_2)},\\
&\aa(0,1;\xi_1,\xi_2;\gamma_1,\gamma_2)=\frac{\xi_1\gamma_1+\xi_2\gamma_2-\xi_1\xi_2(\gamma_1+\gamma_2)}{(1-\xi_1)^2(1-\xi_2)^2},\\
&\aa(1,1;\xi_1,\xi_2;\gamma_1,\gamma_2)=\frac{\xi_1^2\gamma_1(\gamma_1-\xi_2(\gamma_1+\gamma_2))+\xi_1\gamma_1(\gamma_1-\xi_2(\gamma_1-\gamma_2))}{(1-\xi_1)^3(1-\xi_2)^2},\ldots
\end{align*}
\end{example}

The following series will be treated in our desingularization method in subsection \ref{c-1-zeta}.

\begin{definition}\label{define-tilde-H}
For $c\in \mathbb{R}$ and 
$\gamma_1,\ldots,\gamma_r\in \mathbb{C}$ with $\Re \gamma_j >0 \ (1\leqslant j\leqslant r)$, define
\begin{align}
\widetilde{\mathfrak{H}}_r  (( t_j ); ( \gamma_j);c)
& =\prod_{j=1}^{r} \left( \frac{1}{\exp\left(\gamma_j \sum_{k=j}^r t_k\right)-1}-\frac{c}{\exp\left(c\gamma_j \sum_{k=j}^r t_k\right)-1}\right)\notag\\
& =\prod_{j=1}^{r} \left(\sum_{m=1}^\infty  \left(1-c^m\right)B_m\frac{\left(\gamma_j \sum_{k=j}^r t_k\right)^{m-1}}{m!}\right).\label{def-tilde-H}
\end{align}
In particular when $c\in \mathbb{N}_{>1}$, by use of \eqref{log-der}, we have
\begin{align}
\widetilde{\mathfrak{H}}_r  (( t_j ); ( \gamma_j);c)&=\prod_{j=1}^{r} \sum_{\xi_j^c=1 \atop \xi_j\not=1}\frac{1}{1-\xi_j \exp\left(\gamma_j \sum_{k=j}^r t_k\right)}\notag\\
& =\sum_{\xi_1^c=1 \atop \xi_1\not=1}\cdots \sum_{\xi_r^c=1 \atop \xi_r\not=1}\mathfrak{H}_r  (( t_j );( \xi_j); ( \gamma_j)).\label{tilde-H}
\end{align}
\end{definition}

\begin{remark}
We note that $\widetilde{\mathfrak{H}}_r  (( t_j ); ( \gamma_j);c)$ 
is holomorphic around the origin with respect to the parameters $(t_j)$,
and tends to $0$ as $c \to 1$. 
We also note that 
the Bernoulli numbers appear in the Maclaurin expansion of the limit
\begin{equation*}
\lim_{c\to 1}\frac{1}{(c-1)^r}\widetilde{\mathfrak{H}}_r  (( t_j ); ( \gamma_j);c).\label{limit-H}
\end{equation*}
These are important points in our arguments  on desingularization methods
developed in  Subsection \ref{c-1-zeta}.
\end{remark}

\begin{example}\label{Exam-DH-2}
Similarly to Example \ref{Exam-DH}, we obtain from
\eqref{def-tilde-H} 
with any $c\in \mathbb{R}$ that
\begin{align}
& \widetilde{\mathfrak{H}}_2  (t_1,t_2; \gamma_1,\gamma_2;c) \notag\\
& =\sum_{k,l=0}^\infty \left\{\sum_{j=0}^{l}\binom{l}{j}\left(1-c^{k+j+1}\right)\left(1-c^{l-j+1}\right)\frac{B_{k+j+1}}{k+j+1}\frac{B_{l-j+1}}{l-j+1}\gamma_1^{k+j}\gamma_2^{l-j}\right\} \frac{t_1^kt_2^l}{k! l!}. \label{Convo-Bern}
\end{align}
Therefore it follows from \eqref{Fro-def-r} and \eqref{tilde-H} that
\begin{equation}
\begin{split}
&\sum_{\xi_1\in \mu_c\atop \xi_1\not=1}\sum_{\xi_2\in \mu_c\atop \xi_2\not=1}\aa(k,l;\xi_1,\xi_2;\gamma_1,\gamma_2)\\
&\quad =\sum_{j=0}^{l}\binom{l}{j}\left(1-c^{k+j+1}\right)\left(1-c^{l-j+1}\right)\frac{B_{k+j+1}}{k+j+1}\frac{B_{l-j+1}}{l-j+1}\gamma_1^{k+j}\gamma_2^{l-j}\quad (k,l\in \mathbb{N}_0)
\end{split}
\label{Eur-exp2}
\end{equation}
for $c\in \mathbb{N}_{>1}$. 
\end{example}


\begin{remark}\label{poly-Bernoulli}
Kaneko \cite{Kaneko1997} defined the poly-Bernoulli numbers $\{B_n^{(k)}\}_{n\in \mathbb{N}_0}$ $(k\in \mathbb{Z})$ by use of the polylogarithm of order $k$. 
Explicit relations between twisted multiple Bernoulli numbers and poly-Bernoulli numbers are not clearly known. 
It is noted that, for example, 
$$  B_l^{(2)} = \sum_{j=0}^l \binom{l}{j} \frac{B_{l-j}B_j}{j+1} \quad (l\in \mathbb{N}_0),$$
which resembles \eqref{Eur-exp1} and \eqref{Eur-exp2}.
\end{remark}

\subsection{Multiple zeta-functions}\label{MZF}

Corresponding to the twisted multiple Bernoulli numbers
$\{\aa((n_j);( \xi_j);( \gamma_j))\}$ is the
multiple zeta-function of the generalized Euler-Zagier-Lerch type
\eqref{Barnes-Lerch} defined in the Introduction, 
which is a multiple analogue of $\phi(s;\xi)$.
This function can be continued analytically to the
whole space and interpolates $\aa((n_j);( \xi_j);( \gamma_j))$ at non-positive integers (Theorem \ref{T-multiple}). 

Assume $\xi_j \neq 1$ $(1\leqslant j \leqslant r)$. 
Using the well-known relation 
\begin{equation*}
  u^{-s}=\frac{1}{\Gamma(s)}\int_0^\infty e^{-ut}{t^{s-1}} dt,
\end{equation*}
we obtain
  \begin{align}
    & \zeta_r((s_j);(\xi_j);(\gamma_j))\notag\\
    &=
    \sum_{\substack{m_1=1}}^\infty\cdots \sum_{\substack{m_r=1}}^\infty
    \Bigl(    
    \prod_{j=1}^r \xi_j^{ m_j}\Bigr) 
    \left(\prod_{k=1}^r
    \frac{1}{\Gamma(s_k)}\right)\int_{[0,\infty)^{r}}
    \prod_{k=1}^r
    \exp(-t_k(\sum_{\substack{j\leqslant k}} m_j\gamma_j)) 
    \prod_{k=1}^r t_k^{s_k-1}dt_k\notag
    \\
    &=
    \left(\prod_{k=1}^r \frac{1}{\Gamma(s_k)}\right)\int_{[0,\infty)^{r}}\prod_{j=1}^r
    \frac{\xi_j \exp(-\gamma_j (\sum_{k=j}^r t_k))}{1-\xi_j \exp(-\gamma_j (\sum_{k=j}^r t_k))}
    \prod_{k=1}^r t_k^{s_k-1}dt_k\notag
    \\
    &=
    \left(\prod_{k=1}^r \frac{1}{(e^{2\pi i s_k}-1)\Gamma(s_k)}\right)\int_{\mathcal{C}^{r}}
    \prod_{j=1}^r 
    \frac{\xi_j \exp(-\gamma_j (\sum_{k=j}^r t_k))}{1-\xi_j \exp(-\gamma_j (\sum_{k=j}^r t_k))}
    \prod_{k=1}^r t_k^{s_k-1}dt_k\notag\\
    &=(-1)^r 
    \left(\prod_{k=1}^r \frac{1}{(e^{2\pi i s_k}-1)\Gamma(s_k)}\right)\int_{\mathcal{C}^{r}}
    \mathfrak{H}_r  (( t_j ),( \xi_j^{-1}), ( \gamma_j))\prod_{k=1}^r t_k^{s_k-1}dt_k,\label{Cont}
  \end{align}
where $\mathcal{C}$ is the Hankel contour, that is, the path consisting of the positive real axis (top side), a circle around the origin of radius $\varepsilon$ (sufficiently small), and the positive real axis (bottom side) 
(see  {\sc{Figure}} \ref{fig:1}).
Note that the third equality holds because we can let $\varepsilon \to 0$ 
on the fourth member of \eqref{Cont}. In fact, 
the integrand of the fourth member is holomorphic around the origin with respect to the parameters $(t_j)$ because of $\xi_j \neq 1$ $(1\leqslant j \leqslant r)$. Here we can easily show that the integral on the last member of \eqref{Cont} is absolutely convergent in a usual manner with respect to the Hankel contour. Hence 
we obtain the following.
\begin{figure}
  \centering
  \includegraphics[bb=0 0 113 73]{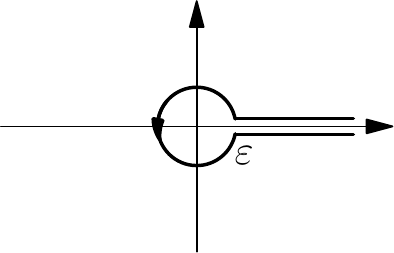}
  \caption{The Hankel contour $\mathcal{C}$}
  \label{fig:1}
\end{figure}

\begin{theorem}\label{T-multiple}
Let $\xi_1,\ldots,\xi_r\in \mathbb{C}$ 
be roots of unity and 
$\gamma_1,\ldots,\gamma_r\in \mathbb{C}$ with $\Re \gamma_j >0 \ (1\leqslant j\leqslant r)$. 
Assume that 
\begin{equation}\label{non-unity assumption}
\xi_j\neq 1  \quad\text{ for all } j \ (1\leqslant j\leqslant r).
\end{equation}
Then, with the above notation, 
$\zeta_r((s_j);(\xi_j);(\gamma_j))$ can be analytically continued to $\mathbb{C}^r$ as an entire function in $(s_j)$. For $n_1,\ldots,n_r\in \mathbb{N}_0$, 
\begin{equation}
\zeta_r((-n_j);(\xi_j);(\gamma_j))=(-1)^{r+n_1+\cdots+n_r}\aa((n_j);( \xi_j^{-1});( \gamma_j)). \label{multi-val}
\end{equation}
\end{theorem}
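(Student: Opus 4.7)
The plan is to take equation \eqref{Cont}, already established in the excerpt for $(s_j)$ in the domain of absolute convergence $\dd_r$, as the starting point, and use it to (i) analytically continue $\zeta_r((s_j);(\xi_j);(\gamma_j))$ to an entire function on $\mathbb{C}^r$ and (ii) compute the special values at $(s_j) = (-n_j)$.

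For (i), I would first observe that the multiple Hankel contour integral
$$
I(s_1,\dots,s_r) := \int_{\mathcal{C}^{r}} \mathfrak{H}_r((t_j);(\xi_j^{-1});(\gamma_j))\prod_{k=1}^r t_k^{s_k-1}dt_k
$$
defines an entire function of $(s_1,\ldots,s_r)$. This uses two facts: (a) the hypothesis $\xi_j \neq 1$ ensures that $\mathfrak{H}_r$ is holomorphic near $(t_1,\dots,t_r)=(0,\dots,0)$, so the small-circle part of each $\mathcal{C}$ contributes in a controlled way; (b) the bound $|1-\xi_j^{-1}e^{\gamma_j u}|^{-1} \leq (e^{(\Re\gamma_j)u}-1)^{-1}$ for $|\xi_j|=1$ and $u>0$ sufficiently large gives exponential decay along the real rays, so differentiation under the integral sign with respect to each $s_k$ is justified uniformly on compacta. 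Next, the prefactor $\prod_k [(e^{2\pi i s_k}-1)\Gamma(s_k)]^{-1}$ has apparent simple poles on the hyperplanes $s_k \in \mathbb{Z}$, but those at $s_k \in \mathbb{Z}_{\leqslant 0}$ are already cancelled by the simple zeros of $1/\Gamma(s_k)$. The remaining apparent poles on $s_k \in \mathbb{Z}_{>0}$ are removed by simple zeros of $I$: when $s_k = n \in \mathbb{Z}_{>0}$ the factor $t_k^{n-1}$ is single-valued, so the two straight segments of the Hankel contour cancel, while the small circle vanishes as $\varepsilon \to 0$ because $\mathfrak{H}_r$ is holomorphic at $t_k = 0$.

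For (ii), at $s_k = -n_k$ with $n_k \in \mathbb{N}_0$ the standard Laurent expansions $\Gamma(s_k)^{-1} = (-1)^{n_k}n_k!(s_k+n_k)+O((s_k+n_k)^2)$ and $(e^{2\pi i s_k}-1)^{-1} = (2\pi i(s_k+n_k))^{-1}+O(1)$ yield
$$
\lim_{s_k \to -n_k} \frac{1}{(e^{2\pi i s_k}-1)\Gamma(s_k)} = \frac{(-1)^{n_k} n_k!}{2\pi i}.
$$
Substituting the Taylor expansion \eqref{Fro-def-r} of $\mathfrak{H}_r$ and applying the elementary identity $\int_{\mathcal{C}} t^{m-n-1} dt = 2\pi i \cdot \delta_{m,n}$ (valid for $m,n \in \mathbb{N}_0$) term by term, the iterated Hankel integral collapses to
$$
\int_{\mathcal{C}^{r}} \mathfrak{H}_r((t_j);(\xi_j^{-1});(\gamma_j))\prod_{k=1}^r t_k^{-n_k-1}dt_k = (2\pi i)^{r}\,\frac{\aa((n_j);(\xi_j^{-1});(\gamma_j))}{n_1! \cdots n_r!}.
$$
Multiplying the prefactor limits with the integral value, the $2\pi i$ factors and the $n_k!$ factors all cancel, and together with the overall $(-1)^r$ in \eqref{Cont} one obtains exactly $(-1)^{r+n_1+\cdots+n_r}\aa((n_j);(\xi_j^{-1});(\gamma_j))$, proving \eqref{multi-val}.

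The main technical obstacle I expect is the uniformity needed in (i): one must combine the holomorphicity of $\mathfrak{H}_r$ at the origin (which relies crucially on \emph{every} $\xi_j \neq 1$, otherwise a $1/t_k$-type singularity would produce additional poles) with exponential decay along the outgoing rays of each contour, and verify that all bounds are locally uniform in $(s_1,\dots,s_r)$ so that entireness of the integrand in each $s_k$ transfers to entireness of the iterated integral. Once this uniform control is in place, the remainder of the argument is a direct residue computation.
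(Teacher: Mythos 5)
Your proposal is correct and follows essentially the same route as the paper's own proof: both start from the Hankel-contour representation \eqref{Cont}, observe that the hypothesis $\xi_j\neq 1$ makes $\mathfrak{H}_r$ holomorphic at the origin so that the only possible singularities come from the prefactor at $s_k\in\mathbb{N}$, kill those by showing the contour integral vanishes there (single-valuedness of $t_k^{l_k-1}$ plus Cauchy/residue theorem on the small circle), and then evaluate at $(-n_j)$ by inserting the expansion \eqref{Fro-def-r} and the limit $\lim_{s\to -n}\bigl((e^{2\pi is}-1)\Gamma(s)\bigr)^{-1}=(-1)^n n!/(2\pi i)$. The only difference is that you spell out the decay estimate and the uniformity needed for holomorphy of the iterated integral, which the paper leaves implicit.
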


\begin{proof}
Since the contour integral on the right-hand side of \eqref{Cont} is holomorphic for all $(s_k)\in \mathbb{C}^r$, we see that $\zeta_r((s_j);(\xi_j);(\gamma_j))$ can be meromorphically continued to $\mathbb{C}^r$ and its possible singularities are located on hyperplanes $s_k=l_k\in \mathbb{N}$ $(1\leqslant k \leqslant r)$ outside of the region of convergence because $(e^{2\pi i s_k}-1)\Gamma(s_k)$ does not vanish at $s_k\in \mathbb{Z}_{\leqslant 0}$. Furthermore, for $s_k=l_k\in \mathbb{N}$, the integrand of the contour integral with respect to $t_k$ on the last member of \eqref{Cont} is holomorphic around $t_k=0$. Therefore, for $l_k\in \mathbb{N}$, we see that
\begin{align*}
    &\lim_{s_k\to l_k}\int_{\mathcal{C}}
    \mathfrak{H}_r  (( t_j ),( \xi_j^{-1}), ( \gamma_j))
    t_k^{s_k-1}dt_k
    =\int_{C_\varepsilon}
\mathfrak{H}_r  (( t_j ),( \xi_j^{-1}), ( \gamma_j))
    t_k^{l_k-1}dt_k
     =0,
\end{align*}
because of the residue theorem, where $C_\varepsilon=\{\varepsilon e^{i\theta}\,|\,0\leqslant \theta\leqslant 2\pi\}$ for any sufficiently small $\varepsilon$. Consequently this implies that $\zeta_r((s_j);(\xi_j);(\gamma_j))$ has no singularity on $s_k=l_k$, namely $\zeta_r((s_j);(\xi_j);(\gamma_j))$ is entire. 
Finally, substituting \eqref{Fro-def-r} into \eqref{Cont}, setting $(s_j)=(-n_j)$ and 
using
\begin{equation*}
  \lim_{s\to -n}\frac{1}{(e^{2\pi is}-1)\Gamma(s)}=\frac{(-1)^n n!}{2\pi i}\quad (n\in \mathbb{N}_0),
\end{equation*}
we obtain \eqref{multi-val}. 
Thus we complete the proof of Theorem \ref{T-multiple}.
\end{proof}

The partial cases of Theorem \ref{T-multiple} 
can be recovered by the special cases of the results in 
Matsumoto-Tanigawa \cite{MT2003}, Matsumoto-Tsumura \cite{MaTsAA}, and de Crisenoy \cite{Cr2006}.

In \cite[Theorem 1]{MaJNT}, it is shown that the multiple zeta-function $\zeta_r((s_j);(\xi_j);(\gamma_j))$ 
of  the generalized Euler-Zagier-Lerch type \eqref{Barnes-Lerch} with all $\xi_j=1$ 
is meromorphically continued to the whole space $\mathbb C^r$
with \textit{possible} singularities. 
A more general type of multiple zeta-function is 
treated in \cite{Ko2010}, 
where equation \eqref{multi-val} without the assertion of being an entire function 
is shown in the case of 
$\xi_j\neq 1$ for all $j$ 
and the meromorphic continuation of 
$\zeta_r((s_j);(\xi_j);(\gamma_j))$ is also given. 



\begin{remark}\label{Rem-forthcoming}
Without the assumption \eqref{non-unity assumption},
it should be noted that \eqref{Cont} does not hold generally, more strictly the third equality on the right-hand side does not hold because the Hankel contours necessarily 
cross the singularities of the integrand. 
\end{remark}

In the forthcoming paper \cite{FKMT02}, we will show 
the necessary and sufficient condition that 
$\zeta_r((s_j);(\xi_j);(\gamma_j))$ is entire, and will determine the exact locations of singularities when it is not entire:

\begin{theorem}\label{Forth-coming} 
Let $\xi_1,\ldots,\xi_r\in \mathbb{C}$ 
be roots of unity and 
$\gamma_1,\ldots,\gamma_r\in \mathbb{C}$ with $\Re \gamma_j >0 \ (1\leqslant j\leqslant r)$. Then 
$\zeta_r((s_j);(\xi_j);(\gamma_j))$ can be entire if and only if the condition \eqref{non-unity assumption} holds. 
When it is not entire, one of the following cases occurs:
\begin{enumerate}[{\rm (i)}]
\item The function $\zeta_r((s_j);(\xi_j);(\gamma_j))$ has infinitely many simple singular hyperplanes when $\xi_j=1$ for some $j$ $(1\leqslant j \leqslant r-1)$.
\item The function $\zeta_r((s_j);(\xi_j);(\gamma_j))$ has a unique simple singular hyperplane $s_r=1$ when $\xi_j\neq 1$ for all $j$ $(1\leqslant j \leqslant r-1)$ and $\xi_r=1$.
\end{enumerate}
\end{theorem}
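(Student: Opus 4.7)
The sufficiency of \eqref{non-unity assumption} for entirety is exactly Theorem \ref{T-multiple}. For the converse implication and the description of singular hyperplanes, I would proceed by induction on $r$. The base case $r=1$ is classical: $\phi(s;\xi)$ is entire when $\xi\neq 1$, and has a unique simple pole at $s=1$ when $\xi=1$.

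For the inductive step, the strategy is a Mellin--Barnes reduction peeling off the last summation index. Apply
$$(A+B)^{-s_r}=\frac{1}{2\pi i}\int_{(c)}\frac{\Gamma(s_r+w)\Gamma(-w)}{\Gamma(s_r)}A^{-s_r-w}B^{w}\,dw$$
to the last factor $(m_1\gamma_1+\cdots+m_r\gamma_r)^{-s_r}$ with $A=m_r\gamma_r$ and $B=m_1\gamma_1+\cdots+m_{r-1}\gamma_{r-1}$, along a vertical contour $\Re w=c$ chosen in a strip where the resulting iterated integral converges absolutely. If necessary, split the $m_r$-sum into $m_r\leqslant N$ and $m_r>N$ to respect the MB convergence condition $|B|<|A|$, and treat the finite remainder separately. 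Stirling estimates justify the interchange of summation and integration, and one obtains a representation of $\zeta_r((s_j);(\xi_j);(\gamma_j))$ as a contour integral whose integrand combines $\Gamma$-factors, the Lerch zeta function $\phi(s_r+w;\xi_r)$, and $\zeta_{r-1}(\widetilde s;(\xi_j)_{j<r};(\gamma_j)_{j<r})$, where $\widetilde s=(s_1,\ldots,s_{r-2},s_{r-1}-w)$.

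Meromorphic continuation in $(s_j)$ and the singular structure are obtained by translating the contour to $\Re w=M$ large and collecting residues from three sources: (a) the simple poles of $\Gamma(-w)$ at $w\in\{0,1,2,\ldots\}$; (b) the pole $w=1-s_r$ of $\phi(s_r+w;\xi_r)=\zeta(s_r+w)$, present exactly when $\xi_r=1$; and (c) the singular $w$-hyperplanes of $\zeta_{r-1}(\widetilde s;\ldots)$ supplied by the induction hypothesis. In Case (ii), $\xi_r=1$ and $\xi_1,\ldots,\xi_{r-1}\neq 1$, source (c) is absent; for $n\geqslant 1$, the Pochhammer factor $\Gamma(s_r+n)/\Gamma(s_r)=s_r(s_r+1)\cdots(s_r+n-1)$ vanishes at $s_r=1-n$ and annihilates the would-be pole of $\zeta(s_r+n)$, so the only surviving contribution comes from the confluent pole at $(w,s_r)=(0,1)$ and yields the simple hyperplane $s_r=1$. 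In Case (i), some $\xi_j=1$ with $j<r$, the (c)-residue at the singular $w$-locus $w=w_0(s_1,\ldots,s_{r-1})$ of $\zeta_{r-1}$ introduces the factor $\Gamma(s_r+w_0)$; since $w_0$ depends non-trivially on $(s_1,\ldots,s_{r-1})$, this $\Gamma$-factor has infinitely many poles in $(s_j)$, generating the claimed infinite family of simple singular hyperplanes of $\zeta_r$. The (a)- and (b)-residues cancel certain subordinate singularities that appear alongside but do not annihilate the principal infinite family, because the corresponding linear forms in $s_1,\ldots,s_r$ are pairwise affine-independent.

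The main obstacle will be the careful orchestration of signs and cancellations in the contour-shift analysis, in particular the Case (ii) confluent-pole analysis at $(w,s_r)=(0,1)$, where the naive individual residues at $w=0$ and at $w=1-s_r$ merge as $s_r\to 1$ and must be expanded in Laurent series in two variables to extract the true pole of $\zeta_r$ with its correct residue. This parallels the exact singular-hyperplane determination of Theorem \ref{T-AET} for the untwisted case, and the bookkeeping of the non-cancelling residues in Case (i) then follows by the same contour technique.
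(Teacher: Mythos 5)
First, a point of orientation: the paper you are working from does not actually prove this statement --- Theorem \ref{Forth-coming} is quoted from the forthcoming paper \cite{FKMT02}, so there is no in-paper argument to measure yours against. Your Mellin--Barnes induction is nonetheless the natural route (it is the method of \cite{MaJNT}, which the paper cites for the meromorphic continuation), and your treatment of case (ii) is essentially right: the only possible pinch is between the pole of $\Gamma(-w)$ at $w=0$ and the pole of $\phi(s_r+w;1)=\zeta(s_r+w)$ at $w=1-s_r$, the collisions at $w=n\geqslant 1$ being killed by the vanishing of $(s_r)_n=\Gamma(s_r+n)/\Gamma(s_r)$ at $s_r=1-n$, and the surviving residue along $s_r=1$ is $\gamma_r^{-1}\zeta_{r-1}((s_j)_{j<r};(\xi_j)_{j<r};(\gamma_j)_{j<r})$, which is entire and not identically zero. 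That part of the plan, once the two-variable Laurent expansion and the growth estimates for the contour shift are written out, should go through.

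The genuine gap is in case (i) and, more broadly, in the ``only if'' direction: you must show that infinitely many of the candidate hyperplanes are \emph{actually} singular, i.e.\ that the total residue along each is not identically zero, and your plan treats this as automatic. It is not. The residue attached to the pinch of a singular $w$-locus $w=w_0(s_1,\dots,s_{r-1})$ of $\zeta_{r-1}$ against a pole $w=-s_r-n$ of $\Gamma(s_r+w)$ carries the factor $\phi(s_r+w_0;\xi_r)$ evaluated at $s_r+w_0=-n$, which by \eqref{phi-val} is the twisted Bernoulli number $\aa_n(\xi_r)$ (up to inversion of $\xi_r$), and these vanish for infinitely many $n$ in general: for instance $\aa_n(-1)=0$ for all even $n\geqslant 2$, since $1/(1+e^t)-\tfrac12$ is an odd function of $t$. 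Likewise the trivial zeros of $\zeta$ delete candidate hyperplanes in the untwisted factors --- this is precisely why Theorem \ref{T-AET} lists $s_{r-1}+s_r=2,1,0,-2,-4,\dots$ rather than all integers $\leqslant 2$. So both the exact family of surviving hyperplanes and the mere assertion ``infinitely many'' require a concrete non-vanishing argument that your plan does not contain. Relatedly, your stated reason that the subordinate cancellations cannot annihilate the principal family --- pairwise affine-independence of the linear forms --- is not the right criterion: the cancellations that do occur (e.g.\ the spurious poles along $s_{r-1}=m+1$ coming from $\Gamma(1-s_{r-1})$ in the moving-pole residue against the poles of $\zeta_{r-1}(s_1,\dots,s_{r-1}-m)$ in the $w=m$ residues) take place between terms supported on the \emph{same} hyperplane, and, as just noted, some members of the ``principal family'' genuinely disappear. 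What is needed is an explicit computation of the total residue on each candidate hyperplane, together with an induction hypothesis strong enough to carry vertical-strip growth bounds and non-vanishing data for the lower-depth residues; as written, the plan only bounds the singular set from above.
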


\subsection{Desingularization of multiple zeta-functions
}\label{c-1-zeta}
In this subsection we introduce and develop our method of  desingularization.
In our previous subsection we saw that 
the multiple zeta-function 
$\zeta_r((s_j);(\gamma_j))$
of  the generalized Euler-Zagier type \eqref{gene-EZ}
is meromorphically  continued to the whole space with `true' singularities 
whilst the multiple zeta function $\zeta_r((s_j);(\xi_j);(\gamma_j))$
of  the generalized Euler-Zagier-Lerch type \eqref{Barnes-Lerch} 
under the non-unity assumption \eqref{non-unity assumption} 
is analytically continued to $\mathbb C^r$ as an entire function.
Our desingularization is a technique to resolve all singularities  of $\zeta_r((s_j);(\gamma_j))$
by making use of the holomorphicity of $\zeta_r((s_j);(\xi_j);(\gamma_j))$
and to produce an entire function  $\zeta^{\rm des}_r((s_j);(\gamma_j))$.
Consider the following expression:
\begin{equation}\label{nonsense equation}
\zeta^{\rm des}_r((s_j);(\gamma_j)):=\lim_{c \to 1}
\frac{1}{(c-1)^r}
\sum_{\xi_1^c=1 \atop \xi_1\not=1}\cdots \sum_{\xi_r^c=1 \atop \xi_r\not=1}
\zeta_r((s_j);(\xi_j);(\gamma_j))
\end{equation}
This is surely nonsense, because $c\in \mathbb{N}_{>1}$ on the right-hand side.
However our fundamental idea is symbolized in this primitive expression. 
Our idea is motivated from a very simple observation 
$$(1-s)\zeta(s)=
\lim_{c\to 1}\,\frac{1}{c-1}\,\left(c^{1-s}-1\right)\zeta(s).
$$
Here on  the left-hand side we find  an entire function $(1-s)\zeta(s)$,
which is merely a product of $(1-s)$ and the meromorphic function $\zeta (s)$
with a simple pole at $s=1$.
While 
on the right hand-side, when $c\in \mathbb{N}_{>1}$,  we may associate a decomposition
\begin{equation*}
\frac{1}{c-1}\,\left(c^{1-s}-1\right)\zeta(s)=
\frac{1}{c-1}\sum_{\xi^c=1 \atop \xi\not=1}\phi(s;\xi)
\end{equation*}
into a sum of entire functions $\phi(s;\xi)=\zeta_1(s;\xi;1)$.

Our desingularization method, 
a rigorous mathematical formulation to
give  a meaning of \eqref{nonsense equation}
will be settled in Definition \ref{def-MZF-2}.
An application of desingularization to the Riemann zeta function $\zeta(s)$
is given in Example \ref{Exam-SH}.
We will see in Theorem \ref{T-c-1-zeta}
that our $\zeta^{\rm des}_r((s_j);(\gamma_j))$  is entire on the whole space ${\mathbb C}^r$. 
We stress that $\zeta_r^{\rm des}((s_j);(\gamma_j))$ is worthy of 
an important object from the viewpoint of the analytic theory of 
multiple zeta-functions. In fact, its values at not only all positive 
or all non-positive integer points but also arbitrary integer points are 
fully determined (see Example \ref{Exam-1-33}).  

Theorem \ref{C-Zr} will prove that suitable combinations of Bernoulli numbers attain the 
special values at non-positive integers of $\zeta^{\rm des}_r((s_j);(\gamma_j))$.
Theorem \ref{Th-ex}, which is the most important theorem in this subsection, 
will reveal that our desingularized  multiple zeta-function $\zeta^{\rm des}_r((s_j);(\gamma_j))$
is actually given by a finite \lq linear' combination of 
the multiple zeta-function $\zeta_r((s_j+m_j);(\gamma_j))$ 
with some arguments appropriately shifted by $m_j\in{\mathbb Z}$ ($1\leqslant j \leqslant r$).
We will see there that {\it infinitely} many 
singular hyperplanes 
of  $\zeta_r((s_j);(\gamma_j))$
are miraculously canceled by taking only  the {\it finite} \lq linear' combination of
appropriately shifted ones.
Example \ref{Z-EZ-double} and Remark \ref{EZ-double}
are our specific observations for double variable case.
%

\begin{definition} \label{def-MZF-2}
For $\gamma_1,\ldots,\gamma_r\in \mathbb{C}$ with $\Re \gamma_j >0 \quad (1\leqslant j\leqslant r)$, the \textbf{desingularized multiple zeta-function},
which we also call the \textbf{desingularization of} $\zeta_r((s_j);(\gamma_j))$,
is defined by
  \begin{align}
    & \zeta^{\rm des}_r((s_j);(\gamma_j)) \notag\\
    &:=\underset{c\in \mathbb{R}\setminus \{1\}}
{\lim_{c\to 1}}\frac{(-1)^r}{(c-1)^r}
    \prod_{k=1}^r \frac{1}{(e^{2\pi i s_k}-1)\Gamma(s_k)}\int_{\mathcal{C}^{r}}
    \widetilde{\mathfrak{H}}_r  (( t_j ); ( \gamma_j);c)\prod_{k=1}^r t_k^{s_k-1}dt_k\label{Cont-2-1} 
\end{align}
for $(s_j)\in \mathbb{C}^r$, 
where $\mathcal{C}$ is the Hankel contour used in \eqref{Cont}. Note that \eqref{Cont-2-1} is well-defined because the convergence of the contour integral and of the limit with respect to $c\to 1$ can be justified from Theorem \ref{T-c-1-zeta} (see below).
\end{definition}

\begin{remark}\label{conceptual idea}
By \eqref{Cont} and \eqref{tilde-H}, we may say that
equation \eqref{Cont-2-1} is a rigorous way to make sense of the nonsense equation
\eqref{nonsense equation}. 
We will discuss 
equation \eqref{nonsense equation}
again in the $p$-adic case (Subsection \ref{sec-3-2}).
\end{remark}

\begin{example}\label{Exam-SH}
In the case $r=1$, set $(r,\gamma_1)=(1,1)$ in \eqref{Cont-2-1}. Similarly to \cite[Theorem 4.2]{Wa}, 
we can easily see that 
\begin{align}
 \zeta^{\rm des}_1(s;1)& =\lim_{c \to 1}\frac{(-1)}{c-1}\cdot 
    \frac{1}{(e^{2\pi i s}-1)\Gamma(s)}\int_{\mathcal{C}}
\left(\frac{1}{e^t-1}-\frac{c}{e^{ct}-1}\right)t^{s-1}dt\notag\\
& =\lim_{c\to 1}\frac{(-1)}{c-1}\left(\zeta(s)-c\sum_{m=1}^\infty \frac{1}{(cm)^s}\right) \notag\\
         & =\lim_{c\to 1}\frac{(-1)}{c-1}\left(1-c^{1-s}\right)\zeta(s)=(1-s)\zeta(s).\label{def-Z1}
\end{align}
Hence $\zeta^{\rm des}_1(s;1)$ can be analytically continued to $\mathbb{C}$.
\end{example}

More generally we can prove the following theorem.

\begin{theorem}\label{T-c-1-zeta}
For $\gamma_1,\ldots,\gamma_r\in \mathbb{C}$ with $\Re \gamma_j >0 \quad (1\leqslant j\leqslant r)$, 
\begin{align}
    & \zeta^{\rm des}_r((s_j);(\gamma_j)) \notag\\
    &=
    \prod_{k=1}^r \frac{1}{(e^{2\pi i s_k}-1)\Gamma(s_k)}\int_{\mathcal{C}^{r}}
    \lim_{c \to 1}\frac{(-1)^r}{(c-1)^r}\widetilde{\mathfrak{H}}_r  (( t_j ); ( \gamma_j);c)\prod_{k=1}^r t_k^{s_k-1}dt_k\notag\\
    &=\prod_{k=1}^r \frac{1}{(e^{2\pi i s_k}-1)\Gamma(s_k)}\notag\\
    & \ \times \int_{\mathcal{C}^{r}}
    \prod_{j=1}^{r} \lim_{c \to 1}\frac{(-1)}{c-1}\left( \frac{1}{\exp\left(\gamma_j \sum_{k=j}^r t_k\right)-1}-\frac{c}{\exp\left(c\gamma_j \sum_{k=j}^r t_k\right)-1}\right)\prod_{k=1}^r t_k^{s_k-1}dt_k,\label{Cont-2}
  \end{align}
which can be analytically continued to $\mathbb{C}^r$ as an entire function in $(s_j)$. 
\end{theorem}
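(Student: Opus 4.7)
The plan is to move the limit $c\to 1$ inside the $r$-fold contour integral and then inside the product, yielding the two rewritten forms in \eqref{Cont-2}, and finally to derive entirety from the last representation. First, I identify the pointwise limit of each factor of $(c-1)^{-1}\widetilde{\mathfrak{H}}_r$. Writing $u_j:=\gamma_j\sum_{k=j}^r t_k$, the Maclaurin series in \eqref{def-tilde-H} gives
\begin{equation*}
\frac{-1}{c-1}\left(\frac{1}{e^{u_j}-1}-\frac{c}{e^{cu_j}-1}\right)=\sum_{m=1}^\infty\frac{c^m-1}{c-1}\,B_m\,\frac{u_j^{m-1}}{m!},
\end{equation*}
and since $(c^m-1)/(c-1)=1+c+\cdots+c^{m-1}\to m$ as $c\to 1$, each such factor converges locally uniformly (in $(t_j)$ around the origin) to the holomorphic germ $\sum_{m\geqslant 1}B_m\,u_j^{m-1}/(m-1)!$. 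Taking the product over $j$ produces the pointwise limit of $(-1)^r(c-1)^{-r}\widetilde{\mathfrak{H}}_r((t_j);(\gamma_j);c)$, which is holomorphic in a neighborhood of $(t_j)=(0)$.

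Next I justify the exchange of $\lim_{c\to 1}$ with the contour integration over $\mathcal{C}^r$. On the small circles $|t_k|=\varepsilon$ the convergence above is uniform for $c$ in a real punctured neighborhood of $1$, which handles those pieces. Along the outgoing rays, each factor $\tfrac{1}{e^{u_j}-1}-\tfrac{c}{e^{cu_j}-1}$ can be expanded as $\sum_{n\geqslant 1}\bigl(e^{-nu_j}-c\,e^{-ncu_j}\bigr)$; a term-by-term mean value estimate bounds the sum divided by $c-1$ by an exponentially decaying function of the real parts of the $t_k$'s, uniform for $c$ near $1$. Combined with standard estimates on $t_k^{s_k-1}$, this provides a dominating function and allows dominated convergence, giving the first equality of \eqref{Cont-2}. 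The second equality follows by expanding the product and applying the limit factor-by-factor, once more by local uniform convergence.

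Finally I prove entirety. The integrand in the last display of \eqref{Cont-2} is holomorphic around $(t_j)=(0)$, so the contour integral $\int_{\mathcal{C}^r}(\cdots)\prod_k t_k^{s_k-1}\,dt_k$ is an entire function of $(s_j)\in\mathbb{C}^r$ by the standard Hankel-contour argument. The prefactor $\prod_k\bigl((e^{2\pi i s_k}-1)\Gamma(s_k)\bigr)^{-1}$ can have poles only at positive integers $s_k=l_k\in\mathbb{N}$, but at any such point, shrinking $\mathcal{C}$ to the small circle $C_\varepsilon$ (the top and bottom of the real axis cancel for single-valued integrand) and applying Cauchy's theorem gives $\int_{C_\varepsilon}(\cdots)t_k^{l_k-1}\,dt_k=0$, exactly as in the proof of Theorem \ref{T-multiple}. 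The apparent poles thereby cancel and $\zeta^{\rm des}_r((s_j);(\gamma_j))$ extends to an entire function on $\mathbb{C}^r$.

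The main obstacle I anticipate is the dominated-convergence estimate on the infinite portions of the Hankel contour: one must control $(c-1)^{-1}$ times the difference of the two meromorphic factors uniformly in $c$ near $1$, and simultaneously ensure that the straight portions of $\mathcal{C}^r$ stay away from the poles of the $c$-shifted factor $\tfrac{c}{e^{cu_j}-1}$ (by choosing $\varepsilon$ and the slopes of the rays appropriately, uniformly as $c$ varies in a small real punctured neighborhood of $1$). Once these analytic technicalities are in place, the entirety statement is essentially a formal consequence of the Hankel-contour mechanism already exploited in the proof of Theorem \ref{T-multiple}.
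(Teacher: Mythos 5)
Your proposal is correct and follows essentially the same route as the paper's proof: compute the pointwise limit of $(c-1)^{-r}\widetilde{\mathfrak{H}}_r$ from the Bernoulli expansion, produce a $c$-independent exponentially decaying dominating function on $\mathcal{C}^r$ (the paper does this via the identity $e^{cy}-ce^y+c-1=e^{cy}-e^y+(1-c)(e^y-1)$ and the bound $|(e^{ay}-1)/a|\leqslant |y|e^{|ay|}$ on a set $\nn(1)\cup\sss(\theta)$ containing all $\gamma_j\sum_{k\geqslant j}t_k$, rather than your term-by-term estimate of $\sum_{n}(e^{-nu_j}-ce^{-ncu_j})$, but the two yield the same bound $Ae^{-\Re u_j/2}$), apply dominated convergence, and then deduce entirety by the same residue-theorem cancellation at $s_k=l_k\in\mathbb{N}$ used for Theorem \ref{T-multiple}. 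The geometric concern you flag about the rays avoiding the poles of $c/(e^{cu_j}-1)$ is exactly what the paper's Lemma \ref{L-1-4-1} (the sector condition) is for.
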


For the proof of \eqref{Cont-2}, it is enough to prove that 
if $|c-1|$ is sufficiently small, then there exists a function $F:\,\mathcal{C}^{r} \to \mathbb{R}_{>0}$ independent of $c$ such that 
\begin{gather}
  |(c-1)^{-r}\widetilde{\mathfrak{H}}_r  ((t_j);(\gamma_j);c)|\leqslant F((t_j)) \qquad((t_j) \in \mathcal{C}^{r}), \label{eq-01}\\
  \int_{\mathcal{C}^{r}} F((t_j))\prod_{k=1}^r |t_k^{s_k-1}dt_k|<\infty.\label{eq-02}
\end{gather}
Now we aim to construct $F((t_j))$ which satisfies these conditions. 
Let $\nn(\varepsilon)=\{z\in\mathbb{C}~|~|z|\leqslant \varepsilon\}$ and 
$\sss(\theta)=\{z\in\mathbb{C}~|~|\arg z|\leqslant \theta\}$.

Let $\gamma_1,\ldots,\gamma_r\in \mathbb{C}$ with $\Re \gamma_j >0 \quad (1\leqslant j\leqslant r)$. 
Then the following lemma is obvious.

\begin{lemma}\label{L-1-4-1}
There exist $\varepsilon>0$ and $0<\theta<\pi/2$ such that 
\begin{equation}
  \gamma_j \sum_{k=j}^r t_k\in \nn(1)\cup \sss(\theta) 
\end{equation}
for any $(t_j)\in \mathcal{C}^{r}$, where $\mathcal{C}$ is the Hankel contour involving a circle around the origin of radius $\varepsilon$ (see \eqref{Cont}). 
\end{lemma}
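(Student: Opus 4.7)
The plan is to decompose the sum $\sigma_j := \sum_{k=j}^{r} t_k$ according to which piece of the Hankel contour each $t_k$ lies on. On the two straight portions $t_k$ is a positive real number with $t_k \geqslant \varepsilon$ (so it contributes purely real, non-negative terms), while on the circular part $|t_k| = \varepsilon$ (so $|\Re t_k|, |\Im t_k| \leqslant \varepsilon$). This dichotomy should show that $\sigma_j$ is either close to the origin or has real part dominating its imaginary part, which is exactly the kind of conclusion needed to land in $\nn(1) \cup \sss(\theta)$ after multiplication by $\gamma_j$.

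First I would set $M := \max_{1\leqslant j\leqslant r}|\gamma_j|$ and $\theta_0 := \max_{1\leqslant j\leqslant r}|\arg \gamma_j|$. The hypothesis $\Re \gamma_j > 0$ forces $\theta_0 < \pi/2$, so I may fix any $\theta$ with $\theta_0 < \theta < \pi/2$. Next, I would constrain $\varepsilon>0$ by the two smallness conditions
\begin{equation*}
4rM\varepsilon < 1 \qquad \text{and}\qquad 2rM\varepsilon < \tan(\theta - \theta_0),
\end{equation*}
both of which can be achieved simultaneously since the right-hand sides are strictly positive.

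With these choices, I would fix an arbitrary $(t_1,\ldots,t_r) \in \mathcal{C}^r$ and index $j$, and then split into two cases. If $|\sigma_j| \leqslant 1/M$, then $|\gamma_j\sigma_j| \leqslant |\gamma_j|/M \leqslant 1$, so $\gamma_j\sigma_j \in \nn(1)$. Otherwise $|\sigma_j| > 1/M$; writing $\sigma_j = u + v$, where $u \geqslant 0$ is the sum of the $t_k$ (for $k \geqslant j$) lying on the straight rays and $v$ is the sum of those on the circular arc, we have $|v| \leqslant r\varepsilon < 1/(4M)$ by the first smallness condition. The triangle inequality then gives $u \geqslant |\sigma_j| - |v| > 3/(4M)$, whence $\Re \sigma_j \geqslant u - |v| > 1/(2M)$ while $|\Im \sigma_j| \leqslant |v| \leqslant r\varepsilon$. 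The second smallness condition now yields $|\arg \sigma_j| \leqslant \arctan(2rM\varepsilon) < \theta - \theta_0$, and consequently $|\arg(\gamma_j \sigma_j)| \leqslant |\arg \gamma_j| + |\arg \sigma_j| < \theta_0 + (\theta - \theta_0) = \theta$, so $\gamma_j \sigma_j \in \sss(\theta)$.

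The only point requiring a bit of care is the transition between the two regimes, and this is exactly what the union $\nn(1) \cup \sss(\theta)$ is designed to absorb: no sharp estimate is needed, because the ``short-sum'' regime is caught by the disk while the ``long-sum'' regime is caught by the sector. The argument is otherwise purely geometric, consistent with the author's billing of the lemma as obvious.
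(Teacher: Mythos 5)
Your proof is correct. The paper gives no argument at all for this lemma (it is dismissed with ``the following lemma is obvious''), so there is nothing to compare against; your case split --- $|\sigma_j|\leqslant 1/M$ absorbed by the disk $\nn(1)$, and $|\sigma_j|>1/M$ forced into the sector because the circular-arc contribution is bounded by $r\varepsilon$ while the straight-ray contribution is a nonnegative real, so that $\Re\sigma_j>1/(2M)$ dominates $|\Im\sigma_j|\leqslant r\varepsilon$ --- is exactly the elementary geometric argument the authors intend, and your two explicit smallness conditions on $\varepsilon$, together with the observation that $|\arg(\gamma_j\sigma_j)|\leqslant|\arg\gamma_j|+|\arg\sigma_j|<\theta$ with no branch ambiguity since both summands lie below $\pi/2$, make it fully rigorous.
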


Further we prove the following lemma.

\begin{lemma}\label{L-1-4-2}
Let $c\in \mathbb{R}\setminus \{1\}$ satisfying that $|c-1|$ is sufficiently small. Then there exists a constant $A>0$ independent of $c$ such that 
  \begin{equation}
    |c-1|^{-1}\Bigl|\frac{1}{e^y-1}-\frac{c}{e^{cy}-1}\Bigr|<Ae^{-\Re y/2}
  \end{equation}
for any 
  $y\in \nn(1)\cup \sss(\theta)$.
\end{lemma}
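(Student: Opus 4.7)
The plan is to convert the required inequality into a uniform bound on a first derivative in an auxiliary real parameter. Set
\begin{equation*}
g(\tau, y) := \frac{\tau}{e^{\tau y} - 1},
\end{equation*}
so that $\frac{1}{e^y-1} - \frac{c}{e^{cy}-1} = g(1,y) - g(c,y)$. Since $c \in \mathbb{R}\setminus\{1\}$, the fundamental theorem of calculus applied on the real segment $[1,c]$ gives $g(1,y) - g(c,y) = -\int_1^c \partial_\tau g(\tau, y)\, d\tau$, hence
\begin{equation*}
|c-1|^{-1}\left|\frac{1}{e^y-1} - \frac{c}{e^{cy}-1}\right| \leq \sup_{\tau\in I_c}|\partial_\tau g(\tau, y)|,
\end{equation*}
where $I_c$ denotes the closed interval between $1$ and $c$. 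Thus the lemma reduces to proving a uniform estimate $|\partial_\tau g(\tau, y)| \leq A e^{-\Re y/2}$ for $\tau$ in a fixed small neighborhood of $1$ and all $y \in \nn(1) \cup \sss(\theta)$.

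A direct computation gives
\begin{equation*}
\partial_\tau g(\tau, y) = \frac{e^{\tau y} - 1 - \tau y\, e^{\tau y}}{(e^{\tau y}-1)^2} = \phi(\tau y),
\end{equation*}
where $\phi(u) := (e^u - 1 - u e^u)/(e^u-1)^2$. The Taylor expansions $e^u - 1 - u e^u = -u^2/2 + O(u^3)$ and $(e^u-1)^2 = u^2 + O(u^3)$ show that the apparent singularity of $\phi$ at $u=0$ is removable with $\phi(0) = -1/2$. Moreover, the remaining poles $u \in 2\pi i \mathbb{Z}\setminus\{0\}$ are avoided on our parameter region: once $|c-1|\leq \delta$ is small, one has $|u|=|\tau y| \leq (1+\delta) < 2\pi$ for $y \in \nn(1)$, and $\arg u = \arg y \in [-\theta, \theta]\subset(-\pi/2, \pi/2)$ for $y \in \sss(\theta)$. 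Hence $\phi(\tau y)$ is continuous on the whole relevant domain.

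I then split the estimate into two regimes. For $y$ in the compact set $\nn(1)\cup(\sss(\theta)\cap \nn(M))$ with a fixed large $M$, continuity of $\phi(\tau y)$ yields $|\partial_\tau g(\tau, y)|\leq A_1$ for some constant $A_1$; this suffices because $e^{-\Re y/2}$ is itself bounded between two positive constants on this set. For $y \in \sss(\theta)$ with $|y|>M$ (choosing $M$ large enough), the elementary estimates $|e^u - 1|\geq \tfrac{1}{2}e^{\Re u}$ and $|e^u - 1 - u e^u|\leq (2+|u|)e^{\Re u}$ yield $|\phi(u)|\leq 4(2+|u|)e^{-\Re u}$. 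Since $\Re u = \tau \Re y$ with $\tau\geq 1-\delta$ and $|u|\leq(1+\delta)|y|\leq(1+\delta)\Re y/\cos\theta$, once $\delta$ is taken small enough that $1-\delta > 1/2$, the function $(2+|u|)\,e^{-\tau\Re y + \Re y/2}$ remains bounded uniformly in $y$, giving the desired inequality $|\partial_\tau g(\tau, y)|\leq A\,e^{-\Re y/2}$.

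The principal obstacle is the sector tail estimate, where a polynomial-in-$|y|$ factor from the numerator must be absorbed by the exponential decay $e^{-\Re u}$ in the denominator. This hinges crucially on $\theta < \pi/2$, so that $\Re y$ and $|y|$ are of the same order on $\sss(\theta)$, and on $|c-1|$ being sufficiently small that $\tau$ stays strictly above $1/2$, leaving enough room in the exponent to absorb the polynomial factor and still produce the target decay $e^{-\Re y/2}$. These two ingredients together yield both the uniformity in $c$ and the exponential decay in $\Re y$ required by the lemma.
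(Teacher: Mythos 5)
Your proof is correct, but it reaches the bound by a different mechanism than the paper. The paper estimates the finite difference directly: it rewrites $\frac{1}{e^y-1}-\frac{c}{e^{cy}-1}$ via the algebraic identity $e^{cy}-ce^y+c-1=e^{cy}-e^y+(1-c)(e^y-1)$, splits off the term $1/|e^{cy}-1|$, and controls the remaining difference quotient through the elementary inequality $\bigl|\frac{e^{ay}-1}{a}\bigr|\leqslant |y|e^{|ay|}$ together with $|y|\leqslant \Re y/\cos\theta$ on the sector. You instead apply the fundamental theorem of calculus in the real parameter $\tau$, reducing everything to a uniform bound on $\partial_\tau g(\tau,y)=\phi(\tau y)$ with $\phi(u)=(e^u-1-ue^u)/(e^u-1)^2$ — which is precisely the limit kernel $E(y)$ that the paper later uses in the proof of Theorem 2.19 — and then bound $\phi$ by continuity on a compact set (after checking the removable singularity at $0$ and that the poles $2\pi i\mathbb{Z}\setminus\{0\}$ are avoided on $\nn(1)\cup\sss(\theta)$) plus an explicit asymptotic estimate $|\phi(u)|\leqslant 4(2+|u|)e^{-\Re u}$ in the sector tail. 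Both arguments use the same two essential geometric inputs ($\theta<\pi/2$ so that $|y|\asymp\Re y$, and $|c-1|$ small so that there is slack in the exponent), and your uniformity in $c$ is clear since the supremum is taken over the fixed interval $[1-\delta,1+\delta]$. Your route has the structural advantage that the bound on $\phi$ simultaneously establishes the domination needed for the limit integrand in Theorem 2.16; the paper's route avoids any discussion of removable singularities and pole locations, staying entirely with elementary inequalities. The only point handled implicitly (as in the paper, which treats $y=0$ as a limit) is that the FTC reduction requires $y\neq 0$, but continuity of $\phi$ at $0$ covers this.
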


\begin{proof}
It is noted that there exists a constant $C>0$ such that 
  \begin{equation*}
    |c-1|^{-1}\Bigl|\frac{1}{e^y-1}-\frac{c}{e^{cy}-1}\Bigr|<C\quad (y\in \nn(1)),
  \end{equation*}
where we interpret this inequality for $y=0$ as that for $y\to 0$. 
Also, for any $y\in \sss(\theta)\setminus \nn(1)$, we have
  \begin{equation*}
    \begin{split}
      |c-1|^{-1}
      \Bigl|\frac{1}{e^y-1}-\frac{c}{e^{cy}-1}\Bigr|
      &=
      |c-1|^{-1}
      \Bigl|\frac{e^{cy}-ce^y+c-1}{(e^y-1)(e^{cy}-1)}\Bigr|
      \\
      &=
      |c-1|^{-1}
      \Bigl|\frac{e^{cy}-e^y+(1-c)(e^y-1)}{(e^y-1)(e^{cy}-1)}\Bigr|
      \\
      &\leqslant \
      |c-1|^{-1}
      \frac{|e^{cy}-e^y|}{|e^y-1||e^{cy}-1|}
      +\frac{1}{|e^{cy}-1|}.
    \end{split}
  \end{equation*}
Hence it is necessary to estimate 
\begin{equation*}
  |c-1|^{-1}
  \frac{|e^{cy}-e^y|}{|e^y-1||e^{cy}-1|}.
\end{equation*}
We note that
\begin{equation*}
  \begin{split}
    \Bigl|\frac{e^{ay}-1}{a}\Bigr|&=\Bigl|\sum_{j=1}^\infty \frac{a^{j-1}y^j}{j!}\Bigr|
    \leqslant \
    |y|\sum_{l=0}^\infty \frac{|ay|^{l}}{l!}
\leqslant \
    |y|e^{|ay|}.
  \end{split}
\end{equation*}
Since $|y|\leqslant \Re y/\cos\theta$, we have
  \begin{equation*}
    \begin{split}
      |c-1|^{-1}
      \frac{|e^{cy}-e^y|}{|e^y-1||e^{cy}-1|}
      &=
      \frac{1}{|1-e^{-y}||e^{cy}-1|}
      \frac{|e^{(c-1)y}-1|}{|c-1|}
      \\
      &\leqslant
      \frac{1}{|1-e^{-y}||e^{cy}-1|}
      |y|e^{|(c-1)y|}
      \\
      &\leqslant
      \frac{|y|e^{\Re y(|c-1|/\cos\theta)}}{|1-e^{-y}||e^{cy}-1|}.
    \end{split}
  \end{equation*}
Therefore, if $|c-1|$ is sufficiently small, then there exists a constant $A>0$ such that 
  \begin{equation*}
      |c-1|^{-1}
      \frac{|e^{cy}-e^y|}{|e^y-1||e^{cy}-1|}
      \leqslant
      Ae^{-\Re y/2}.
  \end{equation*}
This completes the proof.
\end{proof}

\begin{proof}[Proof of Theorem \ref{T-c-1-zeta}]
With the notation provided in Lemmas \ref{L-1-4-1} and \ref{L-1-4-2}, we set
\begin{equation*}
  \begin{split}
    F((t_j))&=A^r \prod_{j=1}^r \exp\left(-\Re (\gamma_j \sum_{k=j}^r t_k/2)\right)
    =A^r \exp\left(-\sum_{j=1}^r\Re (\gamma_j \sum_{k=j}^r t_k/2)\right)
    \\
    &=A^r \exp\left(-\sum_{k=1}^r\Re (t_k(\sum_{j=1}^k \gamma_j /2))\right)
    =A^r \prod_{k=1}^r\exp\left(-\Re (t_k(\sum_{j=1}^k \gamma_j /2))\right).
  \end{split}
\end{equation*}
Then it is clear that $F((t_j))$ satisfies 
\eqref{eq-01} and \eqref{eq-02}.
Hence, by Lebesgue's convergence theorem we see that \eqref{Cont-2} holds. 

Similarly to the proof of Theorem \ref{T-multiple}, 
since the contour integral on the right-hand side of \eqref{Cont-2} is holomorphic for all $(s_k)\in \mathbb{C}^r$, we see that $\zeta^{\rm des}_r((s_j);(\gamma_j))$ can be meromorphically continued to $\mathbb{C}^r$ and its possible singularities are located on hyperplanes $s_k=l_k\in \mathbb{N}$ $(1\leqslant k \leqslant r)$ outside of the region of convergence because $(e^{2\pi i s_k}-1)\Gamma(s_k)$ does not vanish at $s_k\in \mathbb{Z}_{\leqslant 0}$. Furthermore, for $s_k=l_k\in \mathbb{N}$, the integrand of the contour integral with respect to $t_k$ on the right-hand side of \eqref{Cont-2} is holomorphic around $t_k=0$. Therefore, for $l_k\in \mathbb{N}$, we see that
\begin{align*}
    &\lim_{s_k\to l_k}\int_{\mathcal{C}}
\lim_{c \to 1}\frac{(-1)}{c-1}\left( \frac{1}{\exp\left(\gamma_j \sum_{\nu=j}^r t_\nu\right)-1}-\frac{c}{\exp\left(c\gamma_j \sum_{\nu=j}^r t_\nu\right)-1}\right) t_k^{s_k-1}dt_k\\
    &=-\int_{C_\varepsilon}
\lim_{c \to 1}\frac{1}{c-1}\left( \frac{1}{\exp\left(\gamma_j \sum_{\nu=j}^r t_\nu\right)-1}-\frac{c}{\exp\left(c\gamma_j \sum_{\nu=j}^r t_\nu\right)-1}\right) t_k^{l_k-1}dt_k\\
& =0,
  \end{align*}
because of the residue theorem, where $C_\varepsilon=\{\varepsilon e^{i\theta}\,|\,0\leqslant \theta\leqslant 2\pi\}$ for any sufficiently small $\varepsilon$. Consequently this implies that $\zeta^{\rm des}_r((s_j);(\gamma_j))$ has no singularity on $s_k=l_k$, namely $\zeta^{\rm des}_r((s_j);(\gamma_j))$ is entire. 
Thus we complete the proof of Theorem \ref{T-c-1-zeta}.
\end{proof}

\begin{theorem}\label{C-Zr}
For $\gamma_1,\ldots,\gamma_r\in \mathbb{C}$ with $\Re \gamma_j >0 \quad (1\leqslant j\leqslant r)$, 
\begin{equation}
\begin{split}
& \prod_{j=1}^{r} \frac{\left(1-\gamma_j \sum_{k=j}^r t_k\right)\exp\left(\gamma_j \sum_{k=j}^r t_k\right)-1}{\left( \exp\left(\gamma_j \sum_{k=j}^r t_k\right)-1\right)^2} \\
& \quad =\sum_{m_1,\ldots,m_r=0}^\infty (-1)^{m_1+\cdots+m_r}\zeta^{\rm des}_r((-m_j);(\gamma_j))\prod_{j=1}^{r}\frac{t_j^{m_j}}{m_j!}.
\end{split}
\label{cont-gene}
\end{equation}
Hence, for $(k_j)\in \mathbb{N}_0^r$, 
\begin{align}
    \zeta^{\rm des}_r((-k_j);(\gamma_j))& =\prod_{l=1}^r (-1)^{k_l}k_l!\notag\\
    &\quad \times 
\sum_{\nu_{11}\geqslant 0 \atop {\nu_{12},\,\nu_{22}\geqslant 0 \atop {\,\cdots \atop {\nu_{1r},\,\ldots,\,\nu_{rr}\geqslant 0 \atop {\sum_{d=1}^j \nu_{dj}=k_j \atop (1\leqslant j\leqslant r)}}}}}\prod_{j=1}^{r} \left(B_{1+\sum_{l=j}^r\nu_{jl}}\,\gamma_j^{\sum_{l=j}^r \nu_{jl}}\frac{1}{\prod_{d=1}^{j}\nu_{dj}!}\right). \label{Cont-2-02}
  \end{align}
\end{theorem}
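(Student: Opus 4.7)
My plan is to apply the contour integral representation of $\zeta^{\rm des}_r((s_j);(\gamma_j))$ established in Theorem \ref{T-c-1-zeta} directly at the non-positive integer points $(s_j)=(-k_j)$ and extract Maclaurin coefficients by a residue calculation. The first task is to carry out the inner limit in $c$ for each $j$: setting $y=\gamma_j\sum_{k=j}^r t_k$ and invoking the Laurent expansion from \eqref{def-tilde-H},
\[
\frac{1}{e^{y}-1}-\frac{c}{e^{cy}-1}=\sum_{m=1}^\infty (1-c^m)B_m\frac{y^{m-1}}{m!},
\]
together with $\frac{c^m-1}{c-1}\to m$ as $c\to 1$, one finds
\[
\lim_{c\to 1}\frac{-1}{c-1}\left(\frac{1}{e^{y}-1}-\frac{c}{e^{cy}-1}\right)=\sum_{m=1}^\infty \frac{B_m\,y^{m-1}}{(m-1)!}=\frac{d}{dy}\!\left(\frac{y}{e^y-1}\right)=\frac{(1-y)e^y-1}{(e^y-1)^2}.
\]
Substituting into \eqref{Cont-2}, the integrand becomes $G((t_j)):=\prod_{j=1}^r\frac{(1-y_j)e^{y_j}-1}{(e^{y_j}-1)^2}$ with $y_j=\gamma_j\sum_{k=j}^r t_k$, which is precisely the left-hand side of \eqref{cont-gene}.

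Next I would evaluate at $(s_j)=(-k_j)$ with $k_j\in\mathbb{N}_0$. The prefactor $\prod_{k=1}^r(e^{2\pi is_k}-1)^{-1}\Gamma(s_k)^{-1}$ contributes $\prod_{j=1}^r\frac{(-1)^{k_j}k_j!}{2\pi i}$ by the limit computation already used in the proof of Theorem \ref{T-multiple}. Each factor of $G((t_j))$ is holomorphic at $y_j=0$ (its only poles lie on $y_j\in 2\pi i(\mathbb{Z}\setminus\{0\})$), so $G((t_j))$ is holomorphic in a polydisc around the origin; since $t_k^{-k_k-1}$ is meromorphic and single-valued, the Hankel contour $\mathcal{C}$ in each variable collapses to a small counterclockwise circle around $t_k=0$, and the iterated residue theorem yields
\[
\int_{\mathcal{C}^r}G((t_j))\prod_{k=1}^r t_k^{-k_k-1}dt_k=(2\pi i)^r\,[t_1^{k_1}\!\cdots t_r^{k_r}]\,G((t_j)).
\]
Combining these factors gives $\zeta^{\rm des}_r((-k_j);(\gamma_j))=\prod_{j=1}^r (-1)^{k_j}k_j!\cdot [t_1^{k_1}\!\cdots t_r^{k_r}]\,G((t_j))$, which rearranges to the generating-function identity \eqref{cont-gene}.

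For the explicit formula \eqref{Cont-2-02} I would Taylor-expand
\[
\frac{d}{dy}\!\left(\frac{y}{e^y-1}\right)=\sum_{m=0}^\infty \frac{B_{m+1}}{m!}\,y^m,
\]
substitute $y_j=\gamma_j\sum_{l=j}^r t_l$, and apply the multinomial theorem to each factor by introducing exponents $\nu_{jl}\geqslant 0$ ($j\leqslant l\leqslant r$) with $\nu_{jj}+\cdots+\nu_{jr}=m_j$. After the $m_j!$'s cancel, this gives
\[
G((t_j))=\sum_{\{\nu_{jl}\}_{j\leqslant l}}\prod_{j=1}^r B_{1+\sum_{l=j}^r\nu_{jl}}\,\gamma_j^{\sum_{l=j}^r\nu_{jl}}\prod_{l=j}^r \frac{t_l^{\nu_{jl}}}{\nu_{jl}!}.
\]
Since the total exponent of $t_j$ is $\sum_{d=1}^j\nu_{dj}$, extracting the coefficient of $\prod_j t_j^{k_j}$ imposes $\sum_{d=1}^j\nu_{dj}=k_j$ for each $j$; the reindexing $\prod_{(j,l):\,j\leqslant l}\nu_{jl}!=\prod_{j=1}^r\prod_{d=1}^j\nu_{dj}!$ then recasts the combined product into precisely the form on the right-hand side of \eqref{Cont-2-02}.

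The principal technical point is the first step: the exchange of $\lim_{c\to 1}$ with the contour integration is already licensed by the dominated-convergence argument used in proving Theorem \ref{T-c-1-zeta} (Lemmas \ref{L-1-4-1}--\ref{L-1-4-2}), so identifying the integrand as $G((t_j))$ is immediate. Beyond that, the remaining work is combinatorial bookkeeping, the only mild subtlety being the re-indexing $(j,l)\leftrightarrow(d,j)$ needed to bring the answer into the asymmetric form of \eqref{Cont-2-02}.
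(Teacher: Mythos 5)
Your proposal is correct and follows essentially the same route as the paper: compute the $c\to 1$ limit factorwise to identify the integrand as $\prod_j\bigl((1-y_j)e^{y_j}-1\bigr)/(e^{y_j}-1)^2$ with $y_j=\gamma_j\sum_{k\geqslant j}t_k$, collapse the Hankel contours at $(s_j)=(-k_j)$ via the residue theorem together with $\lim_{s\to-k}(e^{2\pi is}-1)\Gamma(s)=(2\pi i)(-1)^k/k!$, and extract the coefficient by the multinomial expansion in the indices $\nu_{jl}$. Your added observation that the limit function is $\frac{d}{dy}\bigl(y/(e^y-1)\bigr)$ is a harmless shortcut to the Bernoulli expansion $\sum_{m\geqslant 0}B_{m+1}y^m/m!$ that the paper obtains directly from \eqref{def-tilde-H}.
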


\begin{proof}
By \eqref{def-tilde-H}, we have
\begin{align*}
& \lim_{c\to 1}\frac{(-1)^r}{(c-1)^r}\widetilde{\mathfrak{H}}_r  (( t_j )_{j=1}^{r}; ( \gamma_j)_{j=1}^{r};c)\\
&\quad =\lim_{c \to 1}\prod_{j=1}^{r}\frac{(-1)}{c-1}\left( \frac{1}{\exp\left(\gamma_j \sum_{k=j}^r t_k\right)-1}-\frac{c}{\exp\left(c\gamma_j \sum_{k=j}^r t_k\right)-1}\right)\\
& \quad =\prod_{j=1}^{r}\frac{\left(1-\gamma_j \sum_{k=j}^r t_k\right)\exp\left(\gamma_j \sum_{k=j}^r t_k\right)-1}{\left( \exp\left(\gamma_j \sum_{k=j}^r t_k\right)-1\right)^2}.
\end{align*}
Hence we obtain \eqref{cont-gene} from \eqref{Cont-2}. 
Also, by \eqref{def-tilde-H}, we have 
\begin{align*}
& \lim_{c\to 1}\frac{(-1)^r}{(c-1)^r}\widetilde{\mathfrak{H}}_r  (( t_j )_{j=1}^{r}; ( \gamma_j)_{j=1}^{r};c)\\
& =\lim_{c\to 1}\prod_{j=1}^{r} \left(\sum_{m_j=1}^\infty  \frac{c^{m_j}-1}{c-1}B_{m_j}\frac{\left(\gamma_j \sum_{l=j}^r t_l\right)^{m_j-1}}{m_j!}\right)\\
& =\prod_{j=1}^{r} \left(\sum_{m_j=1}^\infty  B_{m_j}\frac{\left(\gamma_j \sum_{l=j}^r t_l\right)^{m_j-1}}{(m_j-1)!}\right)\\
& =\prod_{j=1}^{r} \left(\sum_{n_j=0}^\infty  B_{n_j+1}\gamma_j^{n_j}\sum_{\nu_{jj},\ldots,\nu_{jr}\geqslant 0 \atop \sum_{l=j}^r \nu_{jl}=n_j}\frac{t_j^{\nu_{jj}}}{\nu_{jj}!}\cdots \frac{t_r^{\nu_{jr}}}{\nu_{jr}!}\right)\\
& =\sum_{\nu_{11}\geqslant 0 \atop {\nu_{12},\,\nu_{22}\geqslant 0 \atop {\,\cdots \atop \nu_{1r},\,\nu_{2r},\ldots,\,\nu_{rr}\geqslant 0}}}\prod_{j=1}^{r} \left(B_{1+\sum_{l=j}^r\nu_{jl}}\,\gamma_j^{\sum_{l=j}^r \nu_{jl}}\frac{t_j^{\sum_{d=1}^{j}\nu_{dj}}}{\prod_{d=1}^{j}\nu_{dj}!}\right).
\end{align*}
Hence, substituting the above relation into \eqref{Cont-2} and using the residue theorem with 
$$\lim_{s\to -k}\left(e^{2\pi is}-1\right)\Gamma(s)=\frac{(2\pi i)(-1)^k}{k!}\quad (k\in \mathbb{N}_0),$$
we have
\begin{align*}
    \zeta^{\rm des}_r((-k_j);(\gamma_j))& =\prod_{l=1}^r \frac{(-1)^{k_l}k_l!}{2\pi i}\notag\\
    &\times {(2\pi i)^r}
\sum_{\nu_{11}\geqslant 0 \atop {\nu_{12},\,\nu_{22}\geqslant 0 \atop {\,\cdots \atop {\nu_{1r},\,,\ldots,\,\nu_{rr}\geqslant 0 \atop {\sum_{d=1}^j \nu_{dj}=k_j \atop (1\leqslant j\leqslant r)}}}}}\prod_{j=1}^{r} \left(B_{1+\sum_{l=j}^r\nu_{jl}}\,\gamma_j^{\sum_{l=j}^r \nu_{jl}}\frac{1}{\prod_{d=1}^{j}\nu_{dj}!}\right). 
  \end{align*}
Thus we obtain the assertion.
\end{proof}




Now we give a certain expression of $\zeta^{\rm des}_r((s_j);(\gamma_j))$ in terms of $\zeta_r((s_j);(1);(\gamma_j))$ (Theorem \ref{Th-ex}), which can be regarded as a multiple version of $\zeta^{\rm des}_1(s;1)=(1-s)\zeta(s)$ in the case $r=1$ (see Examples \ref{Exam-SH} and \ref{Ex-ex-1}). 

For $s_j\in \mathbb{C}$ with $\Re s_j>1$ $(1\leqslant j\leqslant r)$ and $\gamma_1,\ldots,\gamma_r\in \mathbb{C}$ with $\Re \gamma_j>0$ $(1\leqslant j \leqslant r)$, we set
\begin{multline}
  I_{c,r}(s_1,\ldots,s_r;\gamma_1,\ldots,\gamma_r):=\frac{1}{\prod_{j=1}^r\Gamma(s_j)}
  \int_{[0,\infty)^r}
  \prod_{j=1}^r dt_j
  \prod_{j=1}^rt_j^{s_j-1}
  \\
  \times\prod_{j=1}^r\Biggl(\frac{1}{\exp\Bigl(\gamma_j\sum_{k=j}^r t_k\Bigr)-1}
  -
  \frac{c}{\exp\Bigl(c\gamma_j\sum_{k=j}^r t_k\Bigr)-1}
  \Biggr). \label{ex-01}
\end{multline}
From Definition \ref{def-MZF-2}, we see that 
$$\zeta^{\rm des}_r((s_j);(\gamma_j))=\lim_{c\to 1}\frac{(-1)^r}{(c-1)^r}I_{c,r}((s_j);(\gamma_j)).$$
For indeterminates $u_j,v_j$ $(1\leqslant j \leqslant r)$, we set 
\begin{equation}
\mathcal{G}((u_j),(v_j)):=\prod_{j=1}^r\Bigl( 1-(u_jv_j+\cdots+u_rv_r)(v_j^{-1}-v_{j-1}^{-1})\Bigr)  \label{def-GG}
\end{equation}
with the
convention $v_0^{-1}=0$, and also define the set of integers $\{ a_{\mb,\nb}\}$ by 
\begin{equation}
  \mathcal{G}((u_j),(v_j))=\sum_{\mb=(l_j) \in \mathbb{N}_0^r \atop {\nb=(m_j) \in \mathbb{Z}^r \atop \sum_{j=1}^r m_j=0}} a_{\mb,\nb}\prod_{j=1}^r u_j^{l_j}v_j^{m_j}, \label{ex-02}
\end{equation}
where the sum on the right-hand side is obviously a finite sum. 
Note that the condition $\sum_{j=1}^r m_j=0$ for the summation indices $\nb=(m_j)$ can be deduced from the fact that the right-hand side of \eqref{def-GG} is a homogeneous polynomial of degree $0$ in $(v_j)$, namely so is that of \eqref{ex-02}. 

\begin{theorem}\label{Th-ex} 
For $\gamma_1,\ldots,\gamma_r\in \mathbb{C}$ with $\Re \gamma_j>0$ $(1\leqslant j \leqslant r)$,
\begin{align}
  \zeta^{\rm des}_r((s_j);(\gamma_j))
& =\sum_{\mb=(l_j) \in \mathbb{N}_0^r \atop {\nb=(m_j) \in \mathbb{Z}^r \atop \sum_{j=1}^r m_j=0}} a_{\mb,\nb}\Bigl(\prod_{j=1}^{r}(s_j)_{l_j}\Bigr)\zeta_r(s_1+m_1,\ldots,s_r+m_r;(1);(\gamma_j))\label{ex-03}
\end{align}
holds for all $(s_j)\in \mathbb{C}^r$, 
where $(s)_0=1$ and $(s)_k=s(s+1)\cdots(s+k-1)$ $(k\in \mathbb{N})$ are the Pochhammer symbols.
\end{theorem}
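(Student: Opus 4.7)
The plan is to verify \eqref{ex-03} on a region of sufficiently large $\Re s_j$ (so that every shifted zeta-function appearing in the finite sum on the right lies in its absolute-convergence region $\dd_r$) and then extend to all of $\mathbb{C}^r$ by analytic continuation, since the left-hand side is entire by Theorem \ref{T-c-1-zeta} and the right-hand side is a finite $\mathbb{C}[s_1,\dots,s_r]$-linear combination of meromorphic functions.

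On this region, I would first apply the pointwise identity
\begin{equation*}
\lim_{c\to 1}\frac{-1}{c-1}\Bigl(\frac{1}{e^y-1}-\frac{c}{e^{cy}-1}\Bigr)=\frac{d}{dy}\frac{y}{e^y-1}=(1+y\partial_y)\frac{1}{e^y-1}
\end{equation*}
to each factor of $\widetilde{\mathfrak{H}}_r$ in Theorem \ref{T-c-1-zeta}, so that the integrand of $\zeta^{\rm des}_r$ becomes $\prod_j(1+y_j\partial_{y_j})\frac{1}{e^{y_j}-1}$ with $y_j=\gamma_j\sum_{k\geqslant j}t_k$. Since this is regular at $t_k=0$, the standard identity $\int_{\mathcal{C}}F(t)t^{s-1}dt=(e^{2\pi is}-1)\int_0^\infty F(t)t^{s-1}dt$ converts the Hankel integral to a positive-real integral, giving
\begin{equation*}
\zeta^{\rm des}_r((s_j);(\gamma_j)) = \frac{1}{\prod_k\Gamma(s_k)}\int_{[0,\infty)^r}\prod_{j=1}^{r}(1+y_j\partial_{y_j})\frac{1}{e^{y_j}-1}\prod_k t_k^{s_k-1}dt_k.
\end{equation*}
Using $(1+y_j\partial_{y_j})\sum_{n_j\geqslant 1}e^{-n_jy_j}=\sum_{n_j\geqslant 1}(1-n_jy_j)e^{-n_jy_j}$ and expanding algebraically via $\prod_j(1-n_jy_j)=\sum_{S\subseteq[r]}(-1)^{|S|}\prod_{j\in S}n_jy_j$ and $\prod_{j\in S}(\sum_{k\geqslant j}t_k)=\sum_{(k_j)_{j\in S}:\,k_j\geqslant j}\prod_{j\in S}t_{k_j}$, termwise integration against $e^{-\sum_k t_k a_k}$ (with $a_k:=\sum_{j\leqslant k}n_j\gamma_j$) yields $\prod_k\Gamma(s_k+c_k)a_k^{-(s_k+c_k)}$, where $c_k:=|\{j\in S:k_j=k\}|$; the ratio $\Gamma(s_k+c_k)/\Gamma(s_k)=(s_k)_{c_k}$ produces the Pochhammer factors.

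The decisive step is then the telescoping identity $n_j\gamma_j=a_j-a_{j-1}$ (with $a_0:=0$): expanding
\begin{equation*}
\prod_{j\in S}(a_j-a_{j-1})=\sum_{(\sigma_j)\in\prod_{j\in S}\{j,j-1\}}\prod_{j\in S}(-1)^{[\sigma_j=j-1]}a_{\sigma_j},
\end{equation*}
and noting that terms with $\sigma_1=0$ vanish since $a_0=0$, reduces the remaining $n$-sum to $\zeta_r(s_1+c_1-M(1),\dots,s_r+c_r-M(r);(1);(\gamma_j))$, where $M(k):=|\{j\in S:\sigma_j=k\}|$ automatically satisfies $\sum_kM(k)=\sum_kc_k=|S|$. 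To identify the result with the right-hand side of \eqref{ex-03}, I would perform the exactly parallel expansion of $\mathcal{G}((u_j),(v_j))=\prod_j(1-X_j)$: the outer product yields the sum over $S$, the factor $\sum_{k\geqslant j}u_kv_k$ the sum over $(k_j)$, and the factor $v_j^{-1}-v_{j-1}^{-1}$ the sum over $(\sigma_j)$, where the convention $v_0^{-1}=0$ mirrors the $a_0=0$ vanishing. Under $(l_k,m_k)=(c_k,c_k-M(k))$ the analytic signs $(-1)^{|S|+|\{j:\sigma_j=j-1\}|}$ match the combinatorial signs; since both sums are uncollected and parametrized by the identical triple $(S,(k_j),(\sigma_j))$, the coefficient of each shifted zeta-Pochhammer product coincides with $a_{\mb,\nb}$. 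Analytic continuation then extends the identity to all of $\mathbb{C}^r$.

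The main obstacle is threading the combinatorial parametrization consistently through both the analytic expansion of the integral and the algebraic expansion of $\mathcal{G}$, keeping track of signs, the Pochhammer indices $l_k=c_k$, and the shift indices $m_k=c_k-M(k)$, and in particular verifying that the $v_0^{-1}=0$ convention and the $a_0=0$ vanishing correspond under the identification. Working with the uncollected triple-indexed expansion on both sides, rather than collecting monomials first, is the most direct way to make this identification transparent.
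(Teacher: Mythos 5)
Your proposal is correct and follows essentially the same route as the paper's proof: the limit identity for $E(y)$, the subset expansion of $\prod_j(1-n_jy_j)$, the telescoping $n_j\gamma_j=a_j-a_{j-1}$ with the $a_0=0$ vanishing mirroring $v_0^{-1}=0$, and the final identification with the expansion of $\mathcal{G}$ followed by analytic continuation are all exactly the paper's steps. The only difference is bookkeeping — you keep the uncollected triple-indexed sum over $(S,(k_j),(\sigma_j))$ throughout, whereas the paper collects into the coefficients $b_{J,\boldsymbol{l}}$ and the subsets $K\subset J\setminus\{1\}$ and then verifies the polynomial identity $H=\mathcal{G}$ separately.
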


We emphasize here that
each term of the right-hand side of \eqref{ex-03} is meromorphic
with infinitely many singularities but
taking the above {\it finite} sum of the shifted functions causes \lq miraculous' cancellations 
of all the {\it infinitely} many singularities
to conclude an entire function.

\begin{remark}
In \eqref{ex-03}, the condition $\sum_{j=1}^{r}m_j=0$ implies that all zeta-functions appearing on the both sides have the same \textit{weight} $s_1+\cdots+s_r$. 
\end{remark}


\begin{proof}[Proof of Theorem \ref{Th-ex}]
First we assume that $\Re s_j$ is sufficiently large for $1\leqslant j\leqslant r$. 
From \eqref{Cont} with $(\xi_j)=(1)$, we have
\begin{align}
& \zeta_r((s_j);(1);(\gamma_j))
=\frac{1}{\prod_{j=1}^r\Gamma(s_j)}
\int_{[0,\infty)^r}\prod_{j=1}^r\frac{t_j^{s_j-1}}{\exp\Bigl(\gamma_j
\sum_{k=j}^r t_k\Bigr)-1}\prod_{j=1}^r dt_j. \label{ex-04}
\end{align}
Using the relation
\begin{align*}
&\lim_{c\to1}
\frac{(-1)}{c-1}\Bigl(  \frac{1}{e^y-1}-\frac{c}{e^{cy}-1}\Bigr)\\
& \ =\frac{-1+e^y-ye^y}{(e^y-1)^2}=\frac{1}{e^y-1}-\frac{ye^y}{(e^y-1)^2}\ =E(y)\ (say),
\end{align*}
we have
\begin{multline}
  \zeta^{\rm des}_r((s_j);(\gamma_j))=\lim_{c\to1}\frac{(-1)^r}{(c-1)^r}{I_{c,r}((s_j);(\gamma_j))}
  \\
  \begin{aligned}
    &=
    \lim_{c\to1}\frac{1}{\prod_{j=1}^r\Gamma(s_j)}
    \int_{[0,\infty)^r}
    \prod_{j=1}^r dt_j
    \prod_{j=1}^r t_j^{s_j-1}
    \\
    &
    \qquad\times\prod_{j=1}^r\frac{(-1)}{c-1}\Biggl(\frac{1}{\exp\Bigl(\gamma_j\sum_{k=j}^r t_k\Bigr)-1}
    -
    \frac{c}{\exp\Bigl(c\gamma_j \sum_{k=j}^r t_k\Bigr)-1}
    \Biggr)
    \\
    &=
    \frac{1}{\prod_{j=1}^r\Gamma(s_j)}
    \int_{[0,\infty)^r}
    \prod_{j=1}^r dt_j
    \prod_{j=1}^r t_j^{s_j-1}
\prod_{j=1}^r
E\Bigl(\gamma_j\sum_{k=j}^r t_k\Bigr).
  \end{aligned}
\label{ex-04-02}
\end{multline}
We calculate the last product of \eqref{ex-04-02}. Using the relations 
\begin{align*}
  \frac{1}{e^y-1}&=\sum_{n=1}^\infty e^{-ny}, \quad \frac{e^y}{(e^y-1)^2}=\sum_{n=1}^\infty ne^{-ny},
\end{align*}
we have, for $J\subset\{1,\ldots,r\}$, 
\begin{align}
&  \int_{[0,\infty)^r}
  \prod_{j=1}^r dt_j
  \prod_{j=1}^r t_j^{s_j-1}
  \prod_{j\notin J}\frac{1}{\exp\Bigl(\gamma_j\sum_{k=j}^r t_k\Bigr)-1}
  \prod_{j\in J}\frac{\Bigl(\gamma_j\sum_{k=j}^r t_k\Bigr)\exp\Bigl(\gamma_j\sum_{k=j}^r t_k\Bigr)}{\Bigl(\exp\Bigl(\gamma_j\sum_{k=j}^r t_k\Bigr)-1\Bigr)^2}
\notag\\
&=
  \int_{[0,\infty)^r}
  \prod_{j=1}^r dt_j
  \prod_{j=1}^r t_j^{s_j-1}
  \prod_{j\notin J}\sum_{n_j=1}^\infty\exp\Bigl(-n_j\gamma_j\sum_{k=j}^r t_k\Bigr)\notag\\
& \qquad \times 
  \prod_{j\in J}\sum_{n_j=1}^\infty n_j\exp\Bigl(-n_j\gamma_j\sum_{k=j}^r t_k\Bigr)
  \prod_{j\in J}\Bigl(\gamma_j\sum_{k=j}^r t_k\Bigr)
\notag\\
&=
\sum_{\substack{n_1,\ldots,n_r\geqslant1}}
  \Bigl(\prod_{j\in J}n_j\gamma_j\Bigr)
  \int_{[0,\infty)^r}
  \prod_{j=1}^r dt_j
  \prod_{j=1}^r t_j^{s_j-1}
  \prod_{j=1}^r
  \exp\Bigl(-t_j\sum_{k=1}^j n_k\gamma_k\Bigr)
  \prod_{j\in J}\Bigl(\sum_{k=j}^r t_k\Bigr)
\notag\\
&=
\sum_{\substack{n_1,\ldots,n_r\geqslant 1}}
  \Bigl(\prod_{j\in J}\Bigl(\sum_{k=1}^j n_k\gamma_k-\sum_{k=1}^{j-1} n_k\gamma_{k}\Bigr)\Bigr)\notag\\
& \qquad \times  \int_{[0,\infty)^r}
  \prod_{j=1}^r dt_j
  \prod_{j=1}^r t_j^{s_j-1}
  \prod_{j=1}^r
  \exp\Bigl(-t_j\sum_{k=1}^j n_k\gamma_{k}\Bigr)
  \prod_{j\in J}\Bigl(\sum_{k=j}^r t_k\Bigr)
\notag\\
&=
\sum_{\mb \in \mathbb{N}_0^r}b_{J,\mb}
\sum_{\substack{n_1,\ldots,n_r\geqslant 1}}
  \Bigl(\prod_{j\in J}\Bigl(\sum_{k=1}^j n_k\gamma_k-\sum_{k=1}^{j-1} n_k\gamma_k\Bigr)\Bigr)\notag\\
&\qquad \times  \int_{[0,\infty)^r}
  \prod_{j=1}^r dt_j
  \prod_{j=1}^r t_j^{s_j+l_j-1}
  \prod_{j=1}^r
  \exp\Bigl(-t_j\sum_{k=1}^j n_k\gamma_k\Bigr)
\notag\\
&=
\sum_{\mb\in \mathbb{N}_0^r}b_{J,\mb}
\sum_{\substack{n_1,\ldots,n_r\geqslant 1}}
  \Bigl(\prod_{j\in J}\Bigl(\sum_{k=1}^j n_k\gamma_k-\sum_{k=1}^{j-1} n_k\gamma_k\Bigr)\Bigr)
  \prod_{j=1}^r\Gamma(s_j+l_j)\frac{1}{\Bigl(\sum_{k=1}^j n_k\gamma_k\Bigr)^{s_j+l_j}}
\notag\\
&=
\sum_{\mb\in \mathbb{N}_0^r}b_{J,\mb}
\prod_{j=1}^r\Gamma(s_j+l_j)
\sum_{\substack{n_1,\ldots,n_r\geqslant 1}}
\sum_{K\subset J\setminus\{1\}}(-1)^{|K|}
  \prod_{j=1}^r\frac{1}{\Bigl(\sum_{k=1}^j n_k\gamma_k\Bigr)^{s_j+l_j-\delta_{j\in J\setminus K}-\delta_{j+1\in K}}}
\notag\\
&=
\sum_{\mb\in \mathbb{N}_0^r}b_{J,\mb}
\sum_{K\subset J\setminus\{1\}}(-1)^{|K|}
\Bigl(\prod_{j=1}^r\Gamma(s_j+l_j)\Bigr)
\zeta_r((s_j+l_j-\delta_{j\in J\setminus K}-\delta_{j+1\in K});(1);(\gamma_j)),
\label{ex-05-2}
\end{align}
where $|K|$ implies the number of elements of $K$, 
\begin{equation*}
\delta_{i\in I}=
\begin{cases}
1 & (i\in I)\\
0 & (i\not\in I)
\end{cases}
\end{equation*}
for $I\subset J$, and 
\begin{equation}
  \label{ex-06}
  \prod_{j\in J}\Bigl(\sum_{k=j}^r t_k\Bigr)=\sum_{\mb \in \mathbb{N}_0^r} b_{J,\mb} \prod_{j=1}^rt_j^{l_j}.
\end{equation}
Hence, by \eqref{ex-04-02} we have
\begin{multline}
  \zeta^{\rm des}_r((s_j);(\gamma_j))
  \\
  \begin{aligned}
    &=
    \sum_{J\subset\{1,\ldots,r\}}(-1)^{|J|}
    \sum_{\mb\in \mathbb{N}_0^r}b_{J,\mb}
    \sum_{K\subset J\setminus\{1\}}(-1)^{|K|}
    \Bigl(\prod_{j=1}^r\frac{\Gamma(s_j+l_j)}{\Gamma(s_j)}
    \Bigr)
    \zeta_r((s_j+l_j-\delta_{j\in J\setminus K}-\delta_{j+1\in K});(1);(\gamma_j))
    \\
    &=
    \sum_{J\subset\{1,\ldots,r\}}
    \sum_{K\subset J\setminus\{1\}}(-1)^{|J\setminus K|}
    \sum_{\mb\in \mathbb{N}_0^r}b_{J,\mb}
    \Bigl(\prod_{j=1}^r(s_j)_{l_j}
    \Bigr)
    \zeta_r((s_j+l_j-\delta_{j\in J\setminus K}-\delta_{j+1\in K});(1);(\gamma_j)).
  \end{aligned}
\label{ex-07}
\end{multline}
Finally we set
\begin{equation*}
H((u_j),(v_j)):=
  \sum_{J\subset\{1,\ldots,r\}}
  \sum_{K\subset J\setminus\{1\}}(-1)^{|J\setminus K|}
  \sum_{\mb \in \mathbb{N}_0^r}b_{J,\mb}
  \prod_{j=1}^r
  u_j^{l_j}v_j^{l_j-\delta_{j\in J\setminus K}-\delta_{j+1\in K}}
\end{equation*}
and aim to prove that 
\begin{equation}
\mathcal{G}((u_j),(v_j))=H((u_j),(v_j)). \label{ex-claim}
\end{equation}

It follows from \eqref{ex-06} that
\begin{equation*}
  \begin{split}
    H((u_j),(v_j))
    &=
    \sum_{J\subset\{1,\ldots,r\}}
    \sum_{K\subset J\setminus\{1\}}(-1)^{|J\setminus K|}
    \Bigl(\prod_{j\in J}\sum_{k=j}^r u_kv_k\Bigr)
    \prod_{j=1}^rv_j^{-\delta_{j\in J\setminus K}-\delta_{j+1\in K}}
    \\
    &=
    \sum_{J\subset\{1,\ldots,r\}}
    \Bigl(\prod_{j\in J}\sum_{k=j}^r u_kv_k\Bigr)
    \sum_{K\subset J\setminus\{1\}}
    \prod_{j\in J\setminus K}(-v_j^{-1})
    \prod_{j\in K}v_{j-1}^{-1}.
  \end{split}
\end{equation*}
Since $v_0^{-1}=0$, we have
\begin{equation*}
      \sum_{K\subset J\setminus\{1\}}
    \prod_{j\in J\setminus K}(-v_j^{-1})
    \prod_{j\in K}v_{j-1}^{-1}=\prod_{j\in J}(-v_j^{-1}+v_{j-1}^{-1}).
\end{equation*}
Hence we obtain
\begin{equation}
  \begin{split}
    H((u_j),(v_j))
    &=
    \sum_{J\subset\{1,\ldots,r\}}
    \prod_{j\in J}\Bigl(\sum_{k=j}^r u_kv_k\Bigr)
    (-v_j^{-1}+v_{j-1}^{-1})
    \\
    &=
    \prod_{j=1}^r\Bigl(\Bigl(\sum_{k=j}^r u_kv_k\Bigr)
    (-v_j^{-1}+v_{j-1}^{-1})+1\Bigr)
    \\
    &=
    \prod_{j=1}^r\Bigl(1-\Bigl(\sum_{k=j}^r u_kv_k\Bigr)
    (v_j^{-1}-v_{j-1}^{-1})\Bigr)=\mathcal{G}((u_j),(v_j)).
  \end{split}
\label{ex-08}
\end{equation}
Combining \eqref{ex-02}, \eqref{ex-07} and \eqref{ex-08}, and regarding 
$(s_j)_{l_j}$ and $\zeta_r((s_j+l_j-\delta_{j\in J\setminus K}-\delta_{j+1\in K});(1);(\gamma_j))$ as indeterminates $u_j^{l_j}$ and $v_j^{l_j}$, 
we see that \eqref{ex-03} holds when 
$\Re s_j$ is sufficiently large for $1\leqslant j\leqslant r$. 
It is known that each function on the right-hand side can be continued meromorphically to $\mathbb{C}^r$ (see \cite[Theorem 1]{MaJNT}). Since $\zeta^{\rm des}_r((s_j);(\gamma_j))$ is entire, we see that \eqref{ex-03} holds for all $(s_j)\in \mathbb{C}^r$. 
Thus we complete the proof of Theorem \ref{Th-ex}.
\end{proof}

\begin{example}\label{Ex-ex-1}
In the case $r=1$ and $\gamma_1=1$, we have
  \begin{equation*}
    \mathcal{G}(u_1,v_1)=1- u_1v_1v_1^{-1}=1-u_1,
  \end{equation*}
namely, $a_{0,0}(1)=1$ and $a_{1,0}(1)=-1$. Hence we have
\begin{equation*}
  \zeta^{\rm des}_1(s;1)=\lim_{c\to1}\frac{(-1)}{c-1}{I_{c,1}(s)}=(s)_0\zeta_1(s;1;1)-(s)_1\zeta_1(s;1;1)=(1-s)\zeta(s),
\end{equation*}
which coincides with \eqref{def-Z1}. We see that 
$$\zeta^{\rm des}_1(1;1)=-1$$ 
and 
$$\zeta^{\rm des}_1(-k;1)=(-1)^{k}B_{k+1} \qquad (k\in \mathbb{N}_0).$$ 
\end{example}

\begin{example}\label{Z-EZ-double}
In the case $r=2$, we can easily check that 
  \begin{equation*}
    \begin{split}
      \mathcal{G}((u_j),(v_j))
      &=(1-(u_1v_1+u_2v_2)v_1^{-1})(1-u_2v_2(v_2^{-1}-v_1^{-1}))
      \\
      &=(1-u_1)(1-u_2)+(u_2^2-u_1u_2)v_1^{-1}v_2-u_2^2v_1^{-2}v_2^2.
    \end{split}
  \end{equation*}
Then \eqref{ex-03} implies that
\begin{align}
  \zeta^{\rm des}_2(s_1,s_2;\gamma_1,\gamma_2)& =(s_1-1)(s_2-1)\zeta_2(s_1,s_2;(1);\gamma_1,\gamma_2)\notag
  \\
  & \quad +s_2(s_2+1-s_1)\zeta_2(s_1-1,s_2+1;(1);\gamma_1,\gamma_2)\notag\\
  & \quad -s_2(s_2+1)\zeta_2(s_1-2,s_2+2;(1);\gamma_1,\gamma_2). \label{ex-04-2}
\end{align}
Let $k,l\in \mathbb{N}_0$. 
By \eqref{Cont-2-02}, 
we obtain
\begin{equation}
\zeta^{\rm des}_2(-k,-l;\gamma_1,\gamma_2)=(-1)^{k+l}\sum_{\nu=0}^{l}\binom{l}{\nu}B_{k+\nu+1}B_{l-\nu+1}\gamma_1^{k+\nu}\gamma_2^{l-\nu}. \label{EZ-val}
\end{equation}
\end{example}

\begin{remark}\label{EZ-double}
Setting $(\gamma_1,\gamma_2)=(1,1)$ in \eqref{ex-04-2}, we obtain
\begin{align}
  \zeta^{\rm des}_2(s_1,s_2;1,1)
  & =(s_1-1)(s_2-1)\zeta_2(s_1,s_2)\notag\\
  & \quad +s_2(s_2+1-s_1)\zeta_2(s_1-1,s_2+1)-s_2(s_2+1)\zeta_2(s_1-2,s_2+2). \label{ex-04-3}
\end{align}
%
From Theorem \ref{T-c-1-zeta}, 
we see that $\zeta^{\rm des}_2(s_1,s_2;1,1)$ on the left-hand side of \eqref{ex-04-3} 
is entire, 
though each double zeta-function (defined by \eqref{MZF-def}) on the right-hand side of \eqref{ex-04-3} has infinitely many singularities (see Theorem \ref{T-AET}).
In fact, we can explicitly write $\zeta^{\rm des}_2(-m,-n;1,1)$ in terms of Bernoulli numbers by \eqref{EZ-val}, though the values of $\zeta_2(s_1,s_2)$ at non-positive integers (except for regular points) cannot be determined uniquely because they are irregular singularities (see \cite{AET}).
%
\end{remark}

\begin{example}
In the case $r=3$, we can see that
\begin{align*}
    \mathcal{G}((u_j),(v_j))
    &=(1-(u_1v_1+u_2v_2+u_3v_3)v_1^{-1})(1-(u_2v_2+u_3v_3)(v_2^{-1}-v_1^{-1}))
    \\
    &\qquad\times
    (1-u_3v_3(v_3^{-1}-v_2^{-1}))
    \\
    &=
    -(u_1-1) (u_2-1) (u_3-1)     
    +(u_1-1) (u_2 - u_3) u_3 v_2^{-1}v_3   \\
    &\qquad
    + (u_1-1) u_3^2 v_2^{-2}v_3^2    
    +(u_1 - u_2) u_2 (u_3-1) v_1^{-1}v_2 \\
    &\qquad
    + u_3 (-u_1 + 2 u_2 - u_1 u_2 + u_2^2 + u_1 u_3 - 2 u_2 u_3) v_1^{-1}v_3 \\
    &\qquad
    - u_3^2 (-1 + u_1 - 2 u_2 + u_3) v_1^{-1}v_2^{-1}v_3^2
    + u_3^3 v_1^{-1}v_2^{-2}v_3^3     \\
    &\qquad
    +u_2^2 (u_3-1)v_1^{-2} v_2^2 
    -  u_2 (2 + u_2 - 2 u_3) u_3 v_1^{-2}v_2 v_3 \\
    &\qquad
    + u_3^2 (-1 - 2 u_2 + u_3) v_1^{-2}v_3^2 
    - u_3^3 v_1^{-2}v_2^{-1}v_3^3.
\end{align*}
Therefore we obtain
\begin{align*}
    &\zeta_3^{\rm des}(s_1,s_2,s_3;\gamma_1,\gamma_2,\gamma_3)\\
    &\quad =
    -(s_1-1) (s_2-1) (s_3-1) \zeta_3(s_1,s_2,s_3;(1);\gamma_1,\gamma_2,\gamma_3)\\
    &\qquad
    + (s_1-1) (-1 + s_2 -  s_3) s_3 \zeta_3(s_1,s_2-1,s_3+1;(1);\gamma_1,\gamma_2,\gamma_3)\\
    &\qquad
    + (s_1-1) s_3 (s_3+1) \zeta_3(s_1,s_2-2,s_3+2;(1);\gamma_1,\gamma_2,\gamma_3)\\
    &\qquad
    +(-1 + s_1 - s_2) s_2 (s_3-1) \zeta_3(s_1-1,s_2+1,s_3;(1);\gamma_1,\gamma_2,\gamma_3)\\
    &\qquad
    + s_3 (s_2 - s_1 s_2 + s_2^2 + s_1 s_3 - 2 s_2 s_3) \zeta_3(s_1-1,s_2,s_3+1;(1);\gamma_1,\gamma_2,\gamma_3)\\
    &\qquad
    - s_3 (s_3+1) (1 + s_1 - 2 s_2 + s_3) \zeta_3(s_1-1,s_2-1,s_3+2;(1);\gamma_1,\gamma_2,\gamma_3)\\
    &\qquad
    + s_3 (s_3+1) (s_3+2) \zeta_3(s_1-1,s_2-2,s_3+3;(1);\gamma_1,\gamma_2,\gamma_3)\\
    &\qquad
    +s_2 (s_2+1) (s_3-1) \zeta_3(s_1-2,s_2+2,s_3;(1);\gamma_1,\gamma_2,\gamma_3)\\
    &\qquad
    -  s_2 (1 + s_2 - 2 s_3) s_3 \zeta_3(s_1-2,s_2+1,s_3+1;(1);\gamma_1,\gamma_2,\gamma_3)\\
    &\qquad    
    + s_3 (s_3+1) (1 - 2 s_2 + s_3) \zeta_3(s_1-2,s_2,s_3+2;(1);\gamma_1,\gamma_2,\gamma_3)\\
    &\qquad
    - s_3 (s_3+1) (s_3+2) \zeta_3(s_1-2,s_2-1,s_3+3;(1);\gamma_1,\gamma_2,\gamma_3).
\end{align*}
Let $k,l,m\in \mathbb{N}_0$. 
By \eqref{Cont-2-02}, we have
\begin{align*}
    \zeta^{\rm des}_3(-k,-l,-m;\gamma_1,\gamma_2,\gamma_3)
    &=(-1)^{k+l+m}\sum_{\nu=0}^m
    \sum_{\rho=0}^{m-\nu}\sum_{\kappa=0}^l\binom{l}{\kappa}\binom{m}{\nu\ \rho}
    \\
    &\quad\times
    B_{k+\nu+\kappa+1}B_{l-\kappa+\rho+1}B_{m-\nu-\rho+1}
    \gamma_1^{k+\nu+\kappa+1}\gamma_2^{l-\kappa+\rho+1}\gamma_3^{m-\nu-\rho+1},
\end{align*}
where $\binom{m}{\nu\ \rho}=\frac{m!}{\nu!\, \rho!\, (m-\nu-\rho)!}$.

\end{example}

\begin{remark}\label{rem-general-form}
Our desingularization method in this paper is for multiple zeta-functions of the generalized Euler-Zagier type \eqref{gene-EZ}.
In our forthcoming paper \cite{FKMT02}, we will extend our desingularization method into more general multiple series.
\end{remark}

\begin{remark}\label{AK-xi}
Arakawa and Kaneko \cite{AK} defined an entire function $\xi_k(s)$ associated with poly-Bernoulli numbers $\{B_n^{(k)}\}$ mentioned in Remark \ref{poly-Bernoulli}. It is known that, for example, 
\begin{align*}
& \xi_1(s)=s\zeta(s+1),\\
& \xi_2(s)=-\zeta_2(2,s)+s\zeta_2(1,s+1)+\zeta(2)\zeta(s).
\end{align*}
Comparing these formulas with \eqref{def-Z1} and \eqref{ex-04-2}, and using the well-known formula
$$\zeta_2(0,s)=\sum_{m,n=1}\frac{1}{(m+n)^s}=\sum_{N=2}^\infty \frac{N-1}{N^s}=\zeta(s-1)-\zeta(s),$$
we obtain
\begin{equation*}
\xi_1(s)=-\zeta^{\rm des}_1(s+1;1), 
\end{equation*}
and 
\begin{align*}
&(1-s)\xi_2(s)=\zeta^{\rm des}_2(2,s;1,1)-\zeta^{\rm des}_1(2)\zeta^{\rm des}_1(s;1)-(s+1)\zeta^{\rm des}_1(s+1;1)+s\zeta^{\rm des}_1(s+2;1), \\
&\zeta^{\rm des}_2(2,s;1,1)=(1-s)\xi_2(s)+\xi_1(1)\xi_1(s-1)-(s+1)\xi_1(s)+s\xi_1(s+1). 
\end{align*}
Note that the both sides of the above relations are entire. 
It seems quite interesting if we acquire explicit relations between $\xi_k(s)$ and $\zeta^{\rm des}_k((s_j);(1))$ for any $k\geqslant 3$.
\end{remark}

Related to Connes-Kreimer's renormalization procedure  in quantum field theory,
Guo and Zhang \cite{GZ} and  Manchon and Paycha \cite{MP} 
introduced methods using certain Hopf algebras
to give  well-defined special values of the multiple zeta-functions 
at non-positive integers.

\begin{example}
According to their computation table (in loc.cit.),
Guo-Zhang's renormalized value $\zeta_2^{\rm GZ}(0,-2)$
of $\zeta_2(s_1,s_2)$ at its singularity $(s_1,s_2)=(0,-2)$ is
$$\zeta_2^{\rm GZ}(0,-2)=\frac{1}{120},$$
while Manchon-Paycha's value $\zeta_2^{\rm MP}(0,-2)$ is
$$\zeta_2^{\rm MP}(0,-2)=\frac{7}{720}.$$
On the other hand, our desingularized method gives
$$\zeta_2^{\rm des}(0,-2;1,1)=\frac{1}{18}.$$
\end{example}

Several methods yield several different values.

\begin{question}
Are there any relationships between our desingularization method and
their renormalization methods?
\end{question}

We emphasize that since our $\zeta^{\rm des}_r((s_j);(1))$ is entire,
their special values at integers points which are neither all positive nor all non-positive
are  well-determined.
These values might be also worthy to study.

\begin{example}\label{Exam-1-33}
As explicit examples which we mentioned above, we can give 
\begin{align*}
& \zeta_2^{\rm des}(-1,1;1,1) =\frac{1}{8},\\
& \zeta_2^{\rm des}(-1,4;1,1) =\zeta(3)-\zeta(4),\\
& \zeta_2^{\rm des}(3,-3;1,1)=\frac{3}{4}-\frac{1}{15}\zeta(3),\\
& \zeta_2^{\rm des}(4,-3;1,1)=\frac{1}{2}+\frac{1}{2}\zeta(2)-\frac{1}{10}\zeta(4).
\end{align*}
Also we can give the following examples for non-admissible indices:
\begin{align*}
\zeta_2^{\rm des}(1,1;1,1)&=\frac{1}{2},\\
\zeta_2^{\rm des}(2,1;1,1)&=-\zeta(2)+2\zeta(3),\\
\zeta_2^{\rm des}(3,1;1,1)&=2\zeta(3)-\frac{5}{4}\zeta(4).
\end{align*}
For the details, see our forthcoming paper \cite{FKMT02}.
\end{example}

\ 

\section{$p$-adic multiple $L$-functions}\label{sec-3}

In this section, based on the consideration in the previous section, 
we will construct certain $p$-adic multiple $L$-functions in Definition \ref{Def-pMLF}
which can be regarded as multiple analogues of the Kubota-Leopoldt $p$-adic $L$-functions
and also $p$-adic analogues of complex multiple  $L$- (zeta-)functions.
We will investigate several $p$-adic properties of them, in particular,
a $p$-adic analyticity on certain variables
in Theorem \ref{Th-pMLF} and 
a $p$-adic continuity on  other variables
in Theorem \ref{continuity theorem}.

\subsection{Kubota-Leopoldt $p$-adic $L$-functions}\label{sec-3-1}
Here we will review of  the Kubota-Leopoldt $p$-adic $L$-functions.

First we prepare ordinary notation. For a prime number $p$, let $\mathbb{Z}_p$, $\mathbb{Q}_p$, $\overline{\mathbb{Q}}_p$, $\mathbb{C}_p$, ${\mathcal O}_{\mathbb{C}_p}$ and ${\frak M}_{\mathbb{C}_p}$
be the set of $p$-adic integers, $p$-adic numbers, the algebraic closure of $\mathbb{Q}_p$, the $p$-adic completion of $\overline{\mathbb{Q}}_p$, the ring of integers of $\mathbb{C}_p$
and its maximal ideal.
Fixing embeddings $\overline{\mathbb{Q}}\hookrightarrow \mathbb{C}$,\ $\overline{\mathbb{Q}}\hookrightarrow \mathbb{C}_p$, we identify $\overline{\mathbb{Q}}$ simultaneously as a subset of $\mathbb{C}$ and $\mathbb{C}_p$. 
Denote by $|\cdot |_p$ the $p$-adic absolute value, and by $\mu_c$ 
the group of $c$\,th roots of unity in $\mathbb{C}_p$ for $c\in\mathbb{N}$. 
Let $\textbf{P}^{1}(\mathbb{C}_p)=\mathbb{C}_p \cup \{\infty\}$ with $|\infty|_p=\infty$. 

Let 
$\mathbb{Z}_p^\times$ be the unit group in $\mathbb{Z}_p$ and 
\begin{equation*}
W:=
\begin{cases}
\{\pm 1\} & (\text{if\ }p=2)\\
\{ \xi\in \mathbb{Z}_p^\times\,|\,\xi^{p-1}=1\} & (\text{if\ }p\geqslant 3),
\end{cases}
\end{equation*}
\begin{equation}
q:=
\begin{cases}
4 & (\text{if\ }p=2)\\
p & (\text{if\ }p\geqslant 3).
\end{cases}
\label{q-def}
\end{equation}
Then it is well-known that 
$W$ forms a set of complete representatives of $\left(\mathbb{Z}_p/q\mathbb{Z}_p\right)^\times$, and 
$$\mathbb{Z}_p^\times = W\times (1+q\mathbb{Z}_p),$$
where we denote this correspondence by $x=\omega(x)\cdot\langle x \rangle$ for $x\in \mathbb{Z}_p^\times$ (see \cite[Section\,3]{Iwasawa72}). 
Noting $\omega(m)\equiv m\ ({\rm mod\ }q\mathbb{Z}_p)$ for $(m,q)=1$, we see that 
\begin{equation}
\omega\,:\,\left(\mathbb{Z}/q\mathbb{Z}\right)^{\times} \to W \label{Teich-Char}
\end{equation}
is the primitive Dirichlet character of conductor $q$, which is called the 
{\it Teichm\"uller character}.


Now we recall the Kubota-Leopoldt $p$-adic $L$-function which is defined by use of $p$-adic integral with respect to $p$-adic measure (for the details, see \cite[Chapter 2]{Kob}; also \cite{Kob79},~\cite[Chapter 5]{Wa}). 
Following Koblitz \cite[Chapter 2]{Kob} we consider a $p$-adic measure $\mk_z$ 
on $\mathbb{Z}_p$ for
$z\in\textbf{P}^{1}(\mathbb{C}_p)$ with $|z-1|_p\geqslant 1$  by
\begin{align}
\mk_z\left(j+p^N\mathbb{Z}_p\right)&:=\frac{z^{j}}{1-z^{p^N}}
\quad\in {\mathcal O}_{\mathbb{C}_p}
\quad (0\leqslant j\leqslant p^N-1). \label{Koblitz-measure}
\end{align}
Corresponding to this measure we can define the $p$-adic integral 
$$\int_{\mathbb{Z}_p} f(x)d{\mathfrak{m}}_z(x)  :=\lim_{N\to \infty}\sum_{a=0}^{p^N-1} f(a) \mk_z\left(a+p^N\mathbb{Z}_p\right)
\quad\in  \mathbb{C}_p$$
for any continuous function $f\,:\,\mathbb{Z}_p \to \mathbb{C}_p$. 
The $p$-adic integral over a compact open subset $U$ of $\mathbb{Z}_p$ is defined by
$$
\int_{U} f(x)d{\mathfrak{m}}_z(x)  :=
\int_{\mathbb{Z}_p} f(x)\chi_U(x)d{\mathfrak{m}}_z(x) \quad\in \mathbb{C}_p
$$
where $\chi_U(x)$ is the characteristic function of $U$.

For an arbitrary $t\in \mathbb{C}_p$ with $|t|_p<p^{-1/(p-1)}$, we see that $e^{tx}$ is a continuous function in $x\in \mathbb{Z}_p$. We obtain the following proposition by regarding \eqref{def-tw-Ber} as the Maclaurin expansion of $\hc(t;\xi)$ in $\mathbb{C}_p$ and using Koblitz' result \cite[Chapter 2,\ Equations (1.4), (2.4) and (2.5)]{Kob}. 

\begin{proposition}\label{Kob-Ch2}
For any primitive root of unity $\xi\neq 1\in \mathbb{C}_p$ of order prime to $p$,
\begin{equation}
\hc(t;\xi)=\int_{\mathbb{Z}_p}e^{tx}d\mk_\xi(x)
\quad\in \mathbb{C}_p[[t]]. \label{gene-01}
\end{equation}
Therefore
\begin{equation}
\int_{\mathbb{Z}_p}x^n d\mk_\xi(x)=\aa_n(\xi)
\qquad (n\in \mathbb{N}_0). \label{gene-02}
\end{equation}
\end{proposition}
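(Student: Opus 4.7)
The plan is to prove the proposition in three phases: first verify that the assignment $\mk_\xi$ actually defines a bounded $p$-adic measure on $\mathbb{Z}_p$, second evaluate the generating-function integral $\int_{\mathbb{Z}_p}e^{tx}d\mk_\xi(x)$ by explicit Riemann-sum computation, and finally extract \eqref{gene-02} by comparing Taylor coefficients. I will use throughout the standing hypothesis that $\xi\neq 1$ has order $m$ prime to $p$, which ensures $\xi\in\mathbb{Z}_p^\times$ and $\xi\not\equiv 1\pmod{{\frak M}_{\mathbb{C}_p}}$.

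For the first phase I would check the distribution relation
$$\sum_{i=0}^{p-1}\mk_\xi(j+ip^N+p^{N+1}\mathbb{Z}_p)=\mk_\xi(j+p^N\mathbb{Z}_p),$$
which collapses immediately via the finite geometric sum $\sum_{i=0}^{p-1}\xi^{ip^N}=(1-\xi^{p^{N+1}})/(1-\xi^{p^N})$. The crucial point, exploiting the assumption that the order $m$ of $\xi$ is coprime to $p$, is that $\xi^{p^N}$ is itself a nontrivial $m$-th root of unity for every $N$, hence $1-\xi^{p^N}\in{\mathcal O}_{\mathbb{C}_p}^{\times}$ with $|1-\xi^{p^N}|_p=1$. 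Consequently $\mk_\xi$ takes values in ${\mathcal O}_{\mathbb{C}_p}$ uniformly in $j$ and $N$, so it extends to a bounded measure and integration of continuous functions makes sense.

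For the second phase, since $|t|_p<p^{-1/(p-1)}$ the map $x\mapsto e^{tx}$ is continuous on $\mathbb{Z}_p$ and in fact $|e^{t(x-j)}-1|_p\leqslant|t|_p\,p^{-N}$ for $x\in j+p^N\mathbb{Z}_p$, so the Riemann-type approximation is controlled. I would then compute explicitly
$$\sum_{j=0}^{p^N-1}e^{tj}\,\frac{\xi^j}{1-\xi^{p^N}}=\frac{1}{1-\xi e^t}\cdot\frac{1-(\xi e^t)^{p^N}}{1-\xi^{p^N}}$$
by summing the geometric progression. The main obstacle, and the only genuinely $p$-adic step, is then to show that the second factor tends to $1$ as $N\to\infty$: writing it as $1+\xi^{p^N}(1-e^{tp^N})/(1-\xi^{p^N})$, the numerator has $|1-e^{tp^N}|_p\leqslant|tp^N|_p\to 0$ while the denominator is of absolute value $1$ by the boundedness secured above. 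This yields \eqref{gene-01}.

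For the final phase I would expand $e^{tx}=\sum_{n\geqslant 0}x^n t^n/n!$ and interchange sum and integral term-by-term, which is legitimate because the partial sums converge uniformly on $\mathbb{Z}_p$ in $x$ (the coefficients $t^n/n!$ tend to $0$ under $|t|_p<p^{-1/(p-1)}$). Comparing the resulting series $\sum_{n\geqslant 0}\bigl(\int_{\mathbb{Z}_p}x^n d\mk_\xi(x)\bigr)t^n/n!$ with the Maclaurin expansion \eqref{def-tw-Ber} of $\hc(t;\xi)$, and noting $\aa_{-1}(\xi)=0$ because $\xi\neq 1$ makes $\hc(t;\xi)$ holomorphic at $t=0$, gives \eqref{gene-02} by uniqueness of coefficients.
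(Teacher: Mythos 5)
Your proof is correct. The paper itself gives no argument for this proposition but simply defers to Koblitz \cite[Chapter 2, Equations (1.4), (2.4) and (2.5)]{Kob}, and your three phases (distribution relation and boundedness via $|1-\xi^{p^N}|_p=1$, the geometric-sum evaluation of the Riemann sums with the error term $\xi^{p^N}(1-e^{tp^N})/(1-\xi^{p^N})\to 0$, and term-by-term integration of $e^{tx}=\sum_n x^nt^n/n!$ together with $\aa_{-1}(\xi)=0$) reproduce exactly the standard computation that citation encapsulates. You also correctly isolate the one place the hypothesis matters: the order of $\xi$ being prime to $p$ guarantees $\xi^{p^N}$ stays a nontrivial root of unity of order prime to $p$, whence $1-\xi^{p^N}$ is a unit (distinct prime-to-$p$ roots of unity have distinct reductions), which is what makes both the measure bounded and the limit computation go through.
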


For any $c\in \mathbb{N}_{>1}$ with $(c,p)=1$,
we consider
\begin{align}
\widetilde{\mathfrak{m}}_c\left(j+p^N\mathbb{Z}_p\right)&:=\sum_{\xi \in \mathbb{C}_p\setminus \{1\} \atop \xi^{c}=1}\mk_\xi\left(j+p^N\mathbb{Z}_p\right), \label{KM-measure}
\end{align}
which is a $p$-adic measure on $\mathbb{Z}_p$ introduced by Mazur (see \cite[Chapter 2]{Kob}). 
Considering the Galois action of ${\rm Gal}(\overline{\mathbb{Q}}_p/{\mathbb{Q}}_p)$, we see that $\widetilde{\mathfrak{m}}_c$ is a $\mathbb{Q}_p$-valued measure, therefore a $\mathbb{Z}_p$-valued measure.

Note that 
\begin{equation}
\begin{split}
\mm\left(pj+p^N\mathbb{Z}_p\right)&=\sum_{\xi \in \mathbb{C}_p\setminus \{1\} \atop \xi^{c}=1}\frac{\xi^{pj}}{1-\xi^{p^N}}\\
&=\sum_{\rho \in \mathbb{C}_p\setminus \{1\} \atop \rho^{c}=1}\frac{\rho^{j}}{1-\rho^{p^{N-1}}}=\mm\left(j+p^{N-1}\mathbb{Z}_p\right),
\end{split}
\label{2-0}
\end{equation}
because $(c,p)=1$. 
From \eqref{2-0-1} and \eqref{gene-02}, we have the following. 

\begin{lemma}[{cf. \cite[Chapter 2,\,(2.6)]{Kob}}]
For any $c\in \mathbb{N}_{>1}$ with $(c,p)=1$,
\begin{align}
\int_{\mathbb{Z}_p} x^{n}d\mm(x) &=\left(c^{n+1}-1\right)\zeta(-n)=\left(1-c^{n+1}\right)\frac{B_{n+1}}{n+1}\in \mathbb{Q}\quad  (n\in \mathbb{N}), \label{2-1}
\end{align}
and
\begin{equation}
\int_{\mathbb{Z}_p} 1\,d\mm(x) =\frac{c-1}{2}\in \mathbb{Z}_p. \label{2-1-2}
\end{equation}
Note that $c$ is odd when $p=2$.
\end{lemma}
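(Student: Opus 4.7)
The proof plan is to reduce both statements to identities already established in the paper by exploiting the fact that integration against $\mm$ decomposes as a finite sum of integrations against the Koblitz measures $\mk_\xi$.

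First I would observe that the defining relation \eqref{KM-measure} gives, for every compact open $U\subset\mathbb{Z}_p$, the equality $\mm(U)=\sum_{\xi^c=1,\,\xi\neq 1}\mk_\xi(U)$. By the Riemann-sum definition of the $p$-adic integral, this yields for any continuous $f:\mathbb{Z}_p\to\mathbb{C}_p$
$$\int_{\mathbb{Z}_p} f(x)\,d\mm(x)=\sum_{\xi^c=1,\,\xi\neq 1}\int_{\mathbb{Z}_p} f(x)\,d\mk_\xi(x).$$
Note that Proposition \ref{Kob-Ch2} applies to each summand, since $\xi^c=1$ with $(c,p)=1$ forces the order of $\xi$ to be prime to $p$.

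For \eqref{2-1} with $n\geqslant 1$, I would take $f(x)=x^n$ and use \eqref{gene-02} to rewrite each inner integral as $\aa_n(\xi)$. The sum $\sum_{\xi^c=1,\,\xi\neq 1}\aa_n(\xi)$ was already computed in \eqref{2-0-1} to be $(1-c^{n+1})B_{n+1}/(n+1)$, and the functional equation $\zeta(-n)=-B_{n+1}/(n+1)$ from \eqref{1-1-2} (valid for $n\geqslant 1$) converts this to $(c^{n+1}-1)\zeta(-n)$. Rationality is immediate since $B_{n+1}\in\mathbb{Q}$. For \eqref{2-1-2}, I would take $f(x)=1$ and observe that $\int_{\mathbb{Z}_p}1\,d\mk_\xi(x)=\aa_0(\xi)=1/(1-\xi)$ (the $n=0$ case of \eqref{gene-02}), reducing the claim to $\sum_{\xi^c=1,\,\xi\neq 1}\frac{1}{1-\xi}=\frac{c-1}{2}$, which is the $n=0$ case of \eqref{2-0-1} combined with $B_1=-1/2$. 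Finally, $(c-1)/2\in\mathbb{Z}_p$ is automatic when $p$ is odd since $2\in\mathbb{Z}_p^\times$; when $p=2$, the assumption $(c,p)=1$ forces $c$ to be odd, hence $(c-1)/2\in\mathbb{Z}\subset\mathbb{Z}_2$.

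The only subtlety—and the reason \eqref{2-1-2} has to be stated separately from \eqref{2-1}—is the well-known $B_1$ versus $\zeta(0)$ discrepancy recorded below \eqref{1-1-2}: the identity $\zeta(-n)=-B_{n+1}/(n+1)$ breaks at $n=0$, where $\zeta(0)=-1/2$ while $-B_1=1/2$. Consequently the chain of equalities in \eqref{2-1} fails at $n=0$, and one must package the constant term separately. Apart from this bookkeeping, no substantive obstacle is anticipated.
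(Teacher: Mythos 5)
Your proof is correct and follows exactly the route the paper intends: it deduces the lemma "from \eqref{2-0-1} and \eqref{gene-02}" by decomposing $\mm$ into the measures $\mk_\xi$ and summing the twisted Bernoulli numbers. Your remark on the $n=0$ discrepancy between $\zeta(0)=-1/2$ and $-B_1=1/2$ is a correct and worthwhile clarification of why \eqref{2-1-2} is stated separately.
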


Here we define the {\bf Kubota-Leopoldt $p$-adic $L$-function}.

\begin{definition}[{\cite[Chapter 2, (4.5)]{Kob}}] \label{Def-Kubota-L}
For $k\in \mathbb{Z}$ and  $c\in \mathbb{N}_{>1}$ with $(c,p)=1$,
\begin{equation*}
\begin{split}
L_p(s;\omega^{k})& :=\frac{1}{\langle c \rangle^{1-s}\omega^{k}(c)-1}\int_{\mathbb{Z}_p^\times}\langle x \rangle^{-s}\omega^{k-1}(x)d\mm(x)
\quad \in \mathbb{C}_p
\end{split}
\end{equation*}
for  $s\in \mathbb{C}_p$ with $|s|_p<qp^{-1/(p-1)}$.
\end{definition}

\begin{remark}
We note that $\langle x \rangle^{-s}$ can be defined as an ${\mathcal O}_{\mathbb{C}_p}$-valued rigid-analytic function (cf. Notation \ref{rigid-basics} below) in $s\in \mathbb{C}_p$ with $|s|_p<qp^{-1/(p-1)}$ (see \cite[p.\,54]{Wa}). 
\end{remark}

\begin{proposition}[{\cite[Chapter 2, Theorem]{Kob}, \cite[Theorem 5.11 and Corollary 12.5]{Wa}}] 
For $k\in \mathbb{Z}$, $L_p(s;\omega^{k})$ is rigid-analytic in the sense of Notation 
\ref{rigid-basics} below (except for a pole at $s=1$ when $k \equiv 0$ mod $p-1$) and satisfies that
\begin{align}
L_p(1-m;\omega^k)& =\left(1-\omega^{k-m}(p)p^{m-1}\right)L\left(1-m,\omega^{k-m}\right)\notag\\
& =-\left(1-\omega^{k-m}(p)p^{m-1}\right)\frac{B_{m,\omega^{k-m}}}{m} \quad (m\in \mathbb{N}). \label{p-L-vals}
\end{align}
\end{proposition}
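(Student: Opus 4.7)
The plan is first to establish rigid-analyticity on the disk $|s|_p<qp^{-1/(p-1)}$, and then to derive the special-value formula by substituting $s=1-m$ and evaluating the resulting moment integral against the measure $\mm$. For rigid-analyticity, I would start from the representation $\langle x\rangle^{-s}=\exp(-s\log\langle x\rangle)$ and note that $\log\langle x\rangle\in q\mathbb{Z}_p$ for every $x\in\mathbb{Z}_p^\times$; hence the exponential series converges uniformly in $x$ on the given disk, with all Taylor coefficients in $s$ being ${\mathcal O}_{\mathbb{C}_p}$-valued continuous functions of $x$. Since $\omega^{k-1}$ is locally constant on $\mathbb{Z}_p^\times$, term-by-term integration against the $\mathbb{Z}_p$-valued measure $\mm$ yields a rigid-analytic function of $s$. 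The denominator $\langle c\rangle^{1-s}\omega^k(c)-1$ is visibly rigid-analytic and vanishes at $s=1$ exactly when $\omega^k(c)=1$; this happens for every $c$ prime to $p$ iff $k\equiv 0\pmod{p-1}$, and in that case the zero is simple since $\log\langle c\rangle\ne 0$ for generic $c$, producing the asserted simple pole.

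For the value at $s=1-m$, I would substitute into Definition \ref{Def-Kubota-L}. Using $y=\omega(y)\langle y\rangle$ on $\mathbb{Z}_p^\times$, the integrand $\langle x\rangle^{m-1}\omega^{k-1}(x)$ rewrites as $x^{m-1}\omega^{k-m}(x)$ and the prefactor denominator becomes $c^m\omega^{k-m}(c)-1$, so that
$$
L_p(1-m;\omega^k)=\frac{1}{c^m\omega^{k-m}(c)-1}\int_{\mathbb{Z}_p^\times}x^{m-1}\omega^{k-m}(x)\,d\mm(x).
$$
Splitting $\mathbb{Z}_p^\times=\mathbb{Z}_p\setminus p\mathbb{Z}_p$, I would evaluate the $\mathbb{Z}_p$-piece through the twisted moment identity
$$
\int_{\mathbb{Z}_p}\chi(x)x^{n}\,d\mm(x)=(c^{n+1}\chi(c)-1)L(-n,\chi)
$$
for a Dirichlet character $\chi$ of conductor dividing $q$, and handle the $p\mathbb{Z}_p$-piece through the scaling identity \eqref{2-0}, which gives $\int_{p\mathbb{Z}_p}f(x)\,d\mm(x)=\int_{\mathbb{Z}_p}f(py)\,d\mm(y)$. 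A case split on whether $k-m\equiv 0\pmod{p-1}$ then matches the formula: when $\omega^{k-m}$ is non-trivial it vanishes on $p\mathbb{Z}_p$ and $\omega^{k-m}(p)=0$, so the $p\mathbb{Z}_p$-contribution is zero and the Euler factor equals $1$; when $\omega^{k-m}$ is trivial, the scaling produces a factor $p^{m-1}$, yielding the Euler factor $(1-p^{m-1})$ consistent with $\omega^{k-m}(p)=1$ and $B_{m,\chi_0}=B_m$ for $m\geqslant 2$.

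The principal obstacle is establishing the twisted moment identity above: one must decompose $\chi$ as a linear combination of the additive characters $x\mapsto \xi^x$ ($\xi$ a non-trivial $c$-th root of unity) summed in \eqref{KM-measure}, then apply the generating function \eqref{def-tw-Ber} and the single-variable moment \eqref{gene-02} to rewrite the resulting expression as twisted Bernoulli numbers, and finally identify it with $L(-n,\chi)$ via the decomposition \eqref{rel-phi} together with its generalization to non-trivial primitive characters. A minor separate check is needed at $m=1$ in the trivial case because of the $B_{1,\chi_0}=-B_1=1/2$ anomaly, and the independence of $L_p(s;\omega^k)$ from the auxiliary integer $c$ follows a posteriori from the rigid-analytic identity principle applied to the infinitely many special values at $s=1-m$.
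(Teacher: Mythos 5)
The paper gives no proof of this proposition---it is recalled from Koblitz and Washington---and your argument is precisely the standard one from those references: expand $\langle x\rangle^{-s}=\exp(-s\log\langle x\rangle)$ using $\log\langle x\rangle\in q\mathbb{Z}_p$ to obtain rigid analyticity on $|s|_p<qp^{-1/(p-1)}$, then at $s=1-m$ rewrite the integrand as $x^{m-1}\omega^{k-m}(x)$, and peel off the Euler factor by splitting $\mathbb{Z}_p^\times=\mathbb{Z}_p\setminus p\mathbb{Z}_p$ and applying the scaling identity \eqref{2-0} to the $p\mathbb{Z}_p$ part. The only blemish is the phrase ``decompose $\chi$ as a linear combination of the additive characters $x\mapsto\xi^x$'': what is actually decomposed is the measure $\mm=\sum_{\xi^c=1,\,\xi\neq 1}\mk_\xi$ as in \eqref{KM-measure}, after which each $\int_{\mathbb{Z}_p}\chi(x)x^n\,d\mk_\xi(x)$ is identified with the value at $-n$ of the character-twisted Lerch series and the sum over $\xi$ is collapsed by the character analogue of \eqref{rel-phi}---your subsequent steps show you intend exactly this, so the slip is cosmetic and the proposal is correct.
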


We know that $B_{m,\omega^{k-m}}=0$ $(m\in \mathbb{N})$ when $k$ is odd. Hence we obtain the following.

\begin{proposition}[{\cite[p.57, Remark]{Wa}}] \label{Prop-zero}
When $k$ is any odd integer, $L_p(s;\omega^k)$ is the zero-function.
\end{proposition}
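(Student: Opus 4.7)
The plan is to combine the interpolation formula \eqref{p-L-vals} with the parity vanishing of generalized Bernoulli numbers and the rigid analyticity of $L_p(s;\omega^k)$. First I would verify that for every integer $m \geqslant 2$, the formula \eqref{p-L-vals} forces $L_p(1-m;\omega^k) = 0$ when $k$ is odd. Since $\omega(-1) = -1$, we have $\omega^{k-m}(-1) = (-1)^{k-m}$; as $k$ is odd, $k-m$ and $m$ have opposite parities, so $\omega^{k-m}(-1) = -(-1)^m$. When $\omega^{k-m}$ is nontrivial, the classical parity vanishing of generalized Bernoulli numbers (namely $B_{m,\chi} = 0$ whenever $\chi(-1) \neq (-1)^m$ for nontrivial primitive $\chi$) immediately gives $B_{m,\omega^{k-m}} = 0$.

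In the remaining subcase where $\omega^{k-m}$ is the trivial character, the congruence $k \equiv m \pmod{|W|}$ together with $|W|$ being even (equal to $p-1$ for $p$ odd and to $2$ for $p=2$) implies $m$ is odd; since $m \geqslant 2$ we conclude $m \geqslant 3$, and so $B_{m,\chi_0} = B_m = 0$ by the vanishing of odd-index Bernoulli numbers. Combining both subcases, $L_p(1-m;\omega^k) = 0$ for every $m \geqslant 2$.

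Finally I would appeal to rigid analyticity. Since $k$ is odd, $\omega^k$ is nontrivial (for $p$ odd this uses that $p-1$ is even; for $p=2$ it is immediate), so $L_p(s;\omega^k)$ has no pole at $s=1$ and is rigid analytic on the disk $\{s\in \mathbb{C}_p : |s|_p < qp^{-1/(p-1)}\}$, which contains $\mathbb{Z}_p$. The zero set $\{1-m : m \geqslant 2\} = \{-1,-2,-3,\dots\}$ contains, for instance, the sequence $\{-p^n\}_{n\geqslant 1}$, which converges to $0$ in $\mathbb{Z}_p$; thus $0$ is an accumulation point of zeros inside the domain. The identity theorem for rigid analytic functions then yields $L_p(s;\omega^k) \equiv 0$. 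The only subtle point to guard against is the edge case $m=1$ with $\omega^{k-1}$ trivial, where $B_{1,\chi_0} = \tfrac12 \neq 0$; however, restricting attention to $m \geqslant 2$ bypasses this entirely while still supplying infinitely many zeros accumulating in the domain, so this is not a genuine obstacle.
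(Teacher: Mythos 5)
Your proof is correct and follows essentially the same route as the paper: the interpolation formula \eqref{p-L-vals}, the parity vanishing of the generalized Bernoulli numbers $B_{m,\omega^{k-m}}$ for odd $k$, and the identity theorem for the rigid analytic function $L_p(s;\omega^k)$. Your restriction to $m\geqslant 2$ is a sensible refinement: the paper's blanket assertion that $B_{m,\omega^{k-m}}=0$ for all $m\in\mathbb{N}$ in fact fails at $m=1$ when $\omega^{k-1}$ is trivial (there $B_{1,\chi_0}=\tfrac{1}{2}$, and the vanishing of $L_p(0;\omega^k)$ is instead rescued by the Euler factor $1-\omega^{k-1}(p)$), but as you note this does not affect the conclusion.
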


\begin{remark}\label{Rem-zero-ft}
The above fact will be generalized to the multiple case as functional relations between the $p$-adic multiple $L$-functions defined in the following section (see Theorem \ref{T-6-1} and Example \ref{Rem-zero-2}).
\end{remark}

\subsection{Construction and properties of $p$-adic multiple $L$-functions}\label{sec-3-2}

We defined a certain $p$-adic double $L$-function associated with the {Teichm\"uller character} for any odd prime $p$ in \cite{KMT-IJNT}. 
In this section, as its generalization with slight modification, 
we will introduce a $p$-adic multiple $L$-functions 
${L_{p,r}(s_1,\ldots,s_r;\omega^{k_1},\ldots,\omega^{k_r};\gamma_1,\ldots,\gamma_{r};c)}$
in Definition \ref{Def-pMLF}
and investigate their basic properties:
the $p$-adic rigid analyticity with respect to  the parameter 
$s_1,\ldots,s_r$ (Theorem \ref{Th-pMLF})
and the $p$-adic continuity with respect to  the parameter 
$c$ (Theorem \ref{continuity theorem})
which yields a non-trivial $p$-adic property of
their special values (Corollary \ref{non-trivial property}).
At the end of this subsection we will give a discussion toward a construction of
a $p$-adic analogue of the entire zeta-function
$\zeta^{\rm des}_r(s_1,\ldots,s_r;\gamma_1,\ldots,\gamma_{r})$
previously constructed in Subsection \ref{c-1-zeta}.

Let $r\in \mathbb{N}$ and $\gamma_1,\ldots,\gamma_r\in \mathbb{Z}_p$. 
Let $\xi_1,\ldots,\xi_r\in \mathbb{C}_p$ be roots of unity other than $1$
whose orders are prime to $p$.

Then, combining \eqref{gene-01} and \eqref{Def-Hr}, we have
\begin{align*}
    & \mathfrak{H}_r  (( t_j ),( \xi_j), ( \gamma_j))=
    \prod_{j=1}^{r} \int_{\mathbb{Z}_p}e^{\gamma_j (\sum_{k=j}^r t_k)x_j}d\mk_{\xi_j}(x_j)\\
    & =\int_{\mathbb{Z}_p^r} \exp\left( \sum_{k=1}^{r} \left(\sum_{j=1}^{k}x_j\gamma_j\right)t_k\right)\prod_{j=1}^{r}d\mk_{\xi_j}(x_j)
    \\
    & =\sum_{n_1,\ldots,n_r=0}^\infty \int_{\mathbb{Z}_p^r} \prod_{k=1}^{r}\left( \sum_{j=1}^{k}x_j\gamma_j \right)^{n_k}\prod_{j=1}^r d\mk_{\xi_j}(x_j)\frac{t_1^{n_1}\cdots t_r^{n_r}}{n_1!\cdots n_r!}.
\end{align*}
Hence as a multiple analogue of Proposition \ref{Kob-Ch2}
we have the following.

\begin{proposition}\label{T-multi-2}
For $n_1,\ldots,n_r\in \mathbb{N}_0$, $\gamma_1,\ldots,\gamma_r\in \mathbb{Z}_p$ and
roots $\xi_1,\ldots,\xi_r\in\mathbb{C}_p^\times\setminus\{1\}$ of unity
whose orders are prime to $p$,
\begin{equation}
\int_{\mathbb{Z}_p^r} \prod_{k=1}^{r}( \sum_{j=1}^{k}x_j\gamma_j )^{n_k}\prod_{j=1}^r d\mk_{\xi_j}(x_j)=\aa((n_j);( \xi_j);( \gamma_j)). \label{multi-val2}
\end{equation}
\end{proposition}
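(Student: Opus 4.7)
The plan is essentially to read off the identity by comparing the generating-function expansion of $\mathfrak{H}_r$ with the defining power series \eqref{Fro-def-r} of the twisted multiple Bernoulli numbers, using the single-variable result Proposition \ref{Kob-Ch2} in each factor. In fact, the computation immediately preceding the proposition statement is already the heart of the proof: applying \eqref{gene-01} to the parameter $t = \gamma_j \sum_{k=j}^{r} t_k$ for each $j$ (which is legitimate because $\gamma_j \in \mathbb{Z}_p$ guarantees $|\gamma_j(\sum_{k=j}^{r} t_k)|_p < p^{-1/(p-1)}$ whenever each $t_k$ is $p$-adically small enough) turns the factor $\mathfrak{H}(\gamma_j(\sum_{k=j}^{r} t_k); \xi_j)$ in \eqref{Def-Hr} into a $p$-adic integral over $\mathbb{Z}_p$ against $d\mk_{\xi_j}$.

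Next I would multiply these $r$ integral representations together and combine them into a single integral on $\mathbb{Z}_p^r$ against the product measure $\prod_j d\mk_{\xi_j}$. Rearranging the exponents
\[
    \sum_{j=1}^{r} \gamma_j x_j \Bigl(\sum_{k=j}^{r} t_k\Bigr) = \sum_{k=1}^{r} \Bigl(\sum_{j=1}^{k} x_j \gamma_j\Bigr) t_k
\]
and expanding the exponential as a convergent power series in $t_1,\dots,t_r$ gives
\[
    \mathfrak{H}_r((t_j);(\xi_j);(\gamma_j)) = \sum_{n_1,\dots,n_r \geqslant 0} \Biggl(\int_{\mathbb{Z}_p^{r}} \prod_{k=1}^{r} \Bigl(\sum_{j=1}^{k} x_j \gamma_j\Bigr)^{n_k} \prod_{j=1}^{r} d\mk_{\xi_j}(x_j)\Biggr) \prod_{k=1}^{r} \frac{t_k^{n_k}}{n_k!}.
\]
Since each $\xi_j \neq 1$, the remark following Definition \ref{Def-M-Bern} guarantees that $\mathfrak{H}_r$ is holomorphic at the origin; hence the negative-index terms on the right-hand side of \eqref{Fro-def-r} vanish and that expansion is simply the Maclaurin series of $\mathfrak{H}_r$ starting at $n_j = 0$. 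Comparing coefficients of $\prod_k t_k^{n_k}/n_k!$ in the two expansions yields \eqref{multi-val2}.

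The one point that deserves technical care is the justification of the exchange of integration and series expansion: one must check that the partial sums of $\exp(\sum_{k}(\sum_{j} x_j\gamma_j)t_k)$ converge uniformly on $\mathbb{Z}_p^r$ to an integrable function against the bounded $\mathbb{Z}_p$-valued measure $\prod_j d\mk_{\xi_j}$. This follows from the boundedness $|\mk_{\xi_j}|\leqslant 1$ (shown in Proposition \ref{Kob-Ch2}) and from the fact that for $|t_k|_p$ small, the tail $|\prod_k (\sum_j x_j \gamma_j)^{n_k} t_k^{n_k}/n_k!|_p$ is uniformly bounded in $x_j$ on $\mathbb{Z}_p^r$ and tends to zero as $(n_1,\dots,n_r)\to\infty$, so the identity of formal power series is actually an identity of analytic functions on a neighborhood of the origin. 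Then coefficient comparison, which is the main and only non-formal ingredient, closes the argument; no additional obstacle is expected.
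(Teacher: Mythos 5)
Your proposal is correct and follows essentially the same route as the paper: the authors likewise substitute the integral representation \eqref{gene-01} into each factor of \eqref{Def-Hr}, combine into a single integral over $\mathbb{Z}_p^r$, rearrange the exponent as $\sum_{k}(\sum_{j\leqslant k}x_j\gamma_j)t_k$, expand the exponential, and compare coefficients with \eqref{Fro-def-r}. The only difference is that you spell out the convergence justification for interchanging the series and the integral, which the paper leaves implicit.
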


We set
\begin{align}
\left( \mathbb{Z}_p^r\right)'_{\{\gamma_j\}}:=\bigg\{ (x_j)\in \mathbb{Z}_p^r& \,\bigg|\,p\nmid x_1\gamma_1,\ p\nmid (x_1\gamma_1+x_2\gamma_2),\ldots,\ p\nmid \sum_{j=1}^{r}x_j\gamma_j\,\bigg\}, \label{region}
\end{align}
and 
\label{pMLF-region}
\begin{equation}
\mathfrak{X}_r(d):=\left\{(s_1,\ldots,s_r)\in \mathbb{C}_p^r\,\big|\,|s_j|_p<d^{-1}p^{-1/(p-1)}\ (1\leqslant j\leqslant r)\right\}\label{region-d}
\end{equation}
for $d\in \mathbb{R}_{>0}$.

\begin{definition} \label{Def-pMLF}
For $r\in \mathbb{N}$, $k_1,\ldots,k_r,\in \mathbb{Z}$,
$\gamma_1,\ldots,\gamma_r\in \mathbb{Z}_p$
and $c\in \mathbb{N}_{>1}$ with $(c,p)=1$,
the {\bf $p$-adic multiple $L$-function} of depth $r$ is the $\mathbb{C}_p$-valued function
on  $(s_j)\in \mathfrak{X}_r\left(q^{-1}\right)$  defined by
\begin{equation}
\begin{split}
&{L_{p,r}(s_1,\ldots,s_r;\omega^{k_1},\ldots,\omega^{k_r};\gamma_1,\ldots,\gamma_{r};c)}\\
&\quad :=\int_{\left( \mathbb{Z}_p^r\right)'_{\{\gamma_j\}}}\langle x_1\gamma_1 \rangle^{-{s_1}}\langle x_1\gamma_1+ x_2\gamma_2 \rangle^{-{s_2}}\cdots \langle \sum_{j=1}^{r}x_{j}\gamma_{j} \rangle^{-{s_r}}\\
& \qquad \qquad \times \omega^{k_1}(x_1\gamma_1)\cdots\omega^{k_r}( \sum_{j=1}^{r}x_{j}\gamma_{j}) \prod_{j=1}^{r}d\mm(x_j).
\end{split}
\label{e-6-1}
\end{equation}
\end{definition}

\begin{remark}\label{Rem-gamma1}
In the case $\gamma_1\in p\mathbb{Z}_p$, we see that $\left( \mathbb{Z}_p^r\right)'_{\{\gamma_j\}}$ is an empty set, hence we regard ${L_{p,r}((s_j);(\omega^{k_j});(\gamma_j);c)}$ as the zero-function.
\end{remark}

\begin{remark}
Note that we can define, more generally, 
\begin{align*}
{L_{p,r}(s_1,\ldots,s_r;(\omega^{k_j});(\gamma_j);(c_j))}
&:=\int_{\left( \mathbb{Z}_p^r\right)'_{\{\gamma_j\}}}\langle x_1\gamma_1 \rangle^{-{s_1}}\langle x_1\gamma_1+ x_2\gamma_2 \rangle^{-{s_2}}\cdots \langle \sum_{j=1}^{r}x_{j}\gamma_{j} \rangle^{-{s_r}}\\
& \qquad \qquad \times \omega^{k_1}(x_1\gamma_1)\cdots\omega^{k_r}( \sum_{j=1}^{r}x_{j}\gamma_{j}) \prod_{j=1}^{r}d\widetilde{\mathfrak{m}}_{c_j}(x_j)
\end{align*}
for $c_j\in \mathbb{N}_{>1}$ with $(c_j,p)=1$ $(1\leqslant j \leqslant r)$. 
Then we can naturally generalize the following arguments in the remaining sections.
\end{remark}

In the next section, 
we will see that the above $p$-adic multiple $L$-function can
be seen as a $p$-adic interpolation of
a certain finite sum \eqref{Int-P} of complex multiple zeta functions
(cf. Theorem \ref{T-main-1} and Remark \ref{rem-intpln}).

The following example shows that the Kubota-Leopoldt $p$-adic $L$-function
is essentially a special case of our $p$-adic multiple $L$-function with $r=1$.

\begin{example}\label{example for r=1}
For $s\in\mathbb{C}_p$ with $|s|_p<qp^{-1/(p-1)}$, $\gamma_1\in \mathbb{Z}_p^\times $, $k\in \mathbb Z$
and $c\in \mathbb{N}_{>1}$ with $(c,p)=1$,
we have
\begin{equation}
\begin{split}
L_{p,1}(s;\omega^{k-1};\gamma_1;c)& =\int_{\mathbb{Z}_p^\times}\langle x\gamma_1\rangle^{-{s}} \omega^{k-1}(x\gamma_1)d\mm(x)\\
& =\langle \gamma_1\rangle^{-s} \omega^{k-1}(\gamma_1) (\langle c\rangle^{1-s}\omega^k(c)-1)\cdot L_p(s;\omega^k). 
\end{split}
\label{1-p-LF-gamma}
\end{equation}
\end{example}

The next example shows that 
when $r=2$, we recover the notion of the $p$-adic double $L$-function
which has been studied by the second-, the third- and the fourth-named authors.

\begin{example}\label{example for r=2}
Let $p$ be an odd prime, $r=2$, $c=2$ and $\eta\in p\mathbb{Z}_p$. 
Then, for $s_1,s_2\in\mathbb{C}_p$ with $|s_j|_p<p^{1-1/(p-1)}$ $(j=1,2)$ and $k_1,k_2\in \mathbb Z$,
our $p$-adic $L$-function is given by
\begin{equation}
\begin{split}
& L_{p,2}(s_1,s_2;\omega^{k_1},\omega^{k_2};1,\eta;2)\\
& \quad =\int_{\mathbb{Z}_p^\times\times \mathbb{Z}_p}\langle x_1 \rangle^{-{s_1}} \langle x_1+x_2\eta \rangle^{-{s_2}} \omega^{k_1}(x_1)\omega^{k_2}(x_1+x_2\eta)d\m2(x_1)d\m2(x_2),
\end{split}
\label{double-p-LF-eta}
\end{equation}
which is the $p$-adic double $L$-function introduced in \cite{KMT-IJNT}.
\end{example}

The next theorem claims that our $p$-adic multiple $L$-function is rigid-analytic
(cf. Notation \ref{rigid-basics} below)
with respect to the parameters $s_1,\ldots,s_r$:

\begin{theorem}\label{Th-pMLF}
Let $k_1,\ldots,k_r\in \mathbb{Z}$,
$\gamma_1,\ldots,\gamma_r\in \mathbb{Z}_p$
and $c\in \mathbb{N}_{>1}$ with $(c,p)=1$.
Then 
$L_{p,r}((s_j);(\omega^{k_j});(\gamma_j);c)$
has the following expansion
\begin{align*}
&{L_{p,r}(s_1,\ldots,s_r;\omega^{k_1},\ldots,\omega^{k_r};\gamma_1,\ldots,\gamma_{r};c)}\\
&=\sum_{n_1,\ldots,n_r=0}^\infty \cc\left(n_1,\ldots,n_r;(\omega^{k_j});(\gamma_j);c\right)s_1^{n_1}\cdots s_r^{n_r}
\end{align*}
for $(s_j)\in \mathfrak{X}_r\left(q^{-1}\right)$, where $\cc\left((n_j);(\omega^{k_j});(\gamma_j);c\right)\in \mathbb{Z}_p$ satisfies 
\begin{equation}
\left| \cc\left(n_1,\ldots,n_r;(\omega^{k_j});(\gamma_j);c\right)\right|_p\leqslant \left(qp^{-1/(p-1)}\right)^{-n_1-\cdots -n_r}. \label{C-esti}
\end{equation}
\end{theorem}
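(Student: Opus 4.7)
The plan is to expand each factor $\langle y_k \rangle^{-s_k}$ of the integrand in the definition \eqref{e-6-1} as a $p$-adic power series in the variable $s_k$, multiply these $r$ expansions, and then interchange summation with the iterated integration against $\prod_{j=1}^r d\mm(x_j)$. For $(x_j) \in \left(\mathbb{Z}_p^r\right)'_{\{\gamma_j\}}$, each partial sum $y_k := \sum_{j=1}^k x_j \gamma_j$ lies in $\mathbb{Z}_p^\times$ by definition of the integration domain, whence $\langle y_k\rangle \in 1 + q\mathbb{Z}_p$ and the $p$-adic logarithm gives $\log\langle y_k \rangle \in q\mathbb{Z}_p$. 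Then
\begin{equation*}
\langle y_k \rangle^{-s_k} = \exp\bigl(-s_k \log\langle y_k\rangle\bigr) = \sum_{n_k = 0}^{\infty} \frac{(-1)^{n_k}(\log\langle y_k\rangle)^{n_k}}{n_k!}\, s_k^{n_k},
\end{equation*}
and after formal multiplication over $k$, the coefficient $\cc((n_j);(\omega^{k_j});(\gamma_j);c)$ is identified as the integral of
\begin{equation*}
(-1)^{n_1+\cdots+n_r}\prod_{k=1}^{r}\frac{(\log\langle y_k\rangle)^{n_k}}{n_k!}\,\omega^{k_k}(y_k)
\end{equation*}
over $\left(\mathbb{Z}_p^r\right)'_{\{\gamma_j\}}$ with respect to $\prod_{j} d\mm(x_j)$.

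Combining the estimate $|\log\langle y_k\rangle|_p \leqslant |q|_p = q^{-1}$ with the Legendre-type bound $v_p(n!) \leqslant n/(p-1)$, which holds uniformly for all $n \geqslant 0$, yields
\begin{equation*}
\left|\frac{(\log\langle y_k\rangle)^{n_k}}{n_k!}\right|_p \leqslant q^{-n_k}\,p^{n_k/(p-1)} = \bigl(qp^{-1/(p-1)}\bigr)^{-n_k}.
\end{equation*}
A case-by-case check ($p\geqslant 3$ with $q=p$, and $p=2$ with $q=4$) further shows that the rational $q^{n_k}/n_k!$ itself lies in $\mathbb{Z}_p$, so the integrand above takes values in $\mathbb{Z}_p$, noting that $\omega^{k_k}(y_k) \in W \subset \mathbb{Z}_p^\times$. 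Since $\mm$ is $\mathbb{Z}_p$-valued (as remarked after \eqref{KM-measure}), the integral $\cc((n_j);\ldots)$ itself lies in $\mathbb{Z}_p$, and the supremum estimate on the integrand yields the required bound \eqref{C-esti}.

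Finally, for $(s_j)\in \mathfrak{X}_r(q^{-1})$ the general term of the formal $r$-fold series satisfies
\begin{equation*}
\left| s_1^{n_1}\cdots s_r^{n_r}\prod_{k=1}^r \frac{(\log\langle y_k\rangle)^{n_k}}{n_k!}\,\omega^{k_k}(y_k)\right|_p \leqslant \prod_{k=1}^r \left(\frac{|s_k|_p}{qp^{-1/(p-1)}}\right)^{n_k},
\end{equation*}
which tends to zero \emph{uniformly} in $(x_j)$ as $n_1+\cdots+n_r\to\infty$. This uniform convergence legitimates swapping the $r$-fold summation with the $p$-adic integration, giving the claimed expansion. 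The main technical obstacle is the integrality statement $\cc\in\mathbb{Z}_p$: the size estimate by itself only forces $\cc\in\mathbb{C}_p$, and one must carry out the elementary but case-dependent verification that $q^n/n! \in \mathbb{Z}_p$ and invoke the $\mathbb{Z}_p$-valuedness of $\mm$ to upgrade the bound to genuine integrality.
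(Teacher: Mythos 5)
Your argument is correct, but it takes a genuinely different route from the paper's. The paper expands each factor $\langle \sum_{j\leqslant\nu}x_j\gamma_j\rangle^{-s_\nu}$ by the binomial series $\sum_{n}\binom{-s_\nu}{n}\bigl(\langle\cdot\rangle-1\bigr)^{n}$, obtains a Mahler-type expansion $\sum a(n_1,\ldots,n_r)\prod_j\binom{-s_j}{n_j}$ with $|a((n_j))|_p\leqslant q^{-\sum n_j}$ (because $\langle\cdot\rangle\equiv 1 \bmod q$ on the integration domain), and then invokes Lemma \ref{multi-lemma1} (a multivariable version of \cite[Proposition 5.8]{Wa}, proved via Lemma \ref{multi-lemma2}) to convert this into a power series in $(s_1,\ldots,s_r)$ with the bound \eqref{C-esti}. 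You instead expand $\langle y_k\rangle^{-s_k}=\exp(-s_k\log\langle y_k\rangle)$ directly, which is legitimate since this is precisely how $\langle\cdot\rangle^{-s}$ is defined for $|s|_p<qp^{-1/(p-1)}$, and you read off the coefficients $\cc((n_j);(\omega^{k_j});(\gamma_j);c)$ as explicit integrals of $\prod_k(\log\langle y_k\rangle)^{n_k}/n_k!$ against $\prod_j d\mm(x_j)$. Your estimates are all sound: $|\log\langle y_k\rangle|_p\leqslant q^{-1}$ on $1+q\mathbb{Z}_p$ (including the case $p=2$, $q=4$), $v_p(n!)\leqslant n/(p-1)$, and the separate check $q^{n}/n!\in\mathbb{Z}_p$ combine to give exactly \eqref{C-esti} and the integrality $\cc\in\mathbb{Z}_p$ via the $\mathbb{Z}_p$-valuedness of $\mm$, while the uniform convergence of the $r$-fold series legitimates the interchange with the integral. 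By uniqueness of power series expansions the coefficients agree with the paper's. What the paper's route buys is reusable generality — Lemmas \ref{multi-lemma1} and \ref{multi-lemma2} apply to any continuous function with sufficiently decaying Mahler coefficients — whereas your route is shorter and self-contained for this particular analytic integrand, at the cost of the elementary case-by-case verification of $q^{n}/n!\in\mathbb{Z}_p$, which you correctly identify as the one point where the norm estimate alone does not yield integrality.
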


In order to prove this theorem, we prepare the following two lemmas which are generalizations of \cite[Proposition 5.8 and its associated lemma]{Wa}. 

\begin{lemma}\label{multi-lemma1}
Let $f$ be a continuous function from $\mathbb{Z}_p^r$ to $\mathbb{Q}_p$ defined by 
$$f(X_1,\ldots,X_r)=\sum_{n_1,\ldots,n_r=0}^\infty a(n_1,\ldots,n_r)\prod_{j=1}^{r}\binom{X_j}{n_j},$$
where $a(n_1,\ldots,n_r)\in \mathbb{Z}_p$. Suppose there exist $d,M\in \mathbb{R}_{>0}$ with $d<p^{-1/(p-1)}<1$ such that $|a(n_1,\ldots,n_r)|_p\leqslant Md^{\sum_{j=1}^r n_j}$ for any $(n_j)\in \mathbb{N}_0^r$. Then $f(X_1,\ldots,X_r)$ may be expressed as 
$$f(X_1,\ldots,X_r)=\sum_{n_1,\ldots,n_r=0}^\infty C(n_1,\ldots,n_r)X_1^{n_1}\cdots X_r^{n_r}\in \mathbb{Q}_p[[X_1,\ldots,X_r]]$$
which converges absolutely in $\mathfrak{X}_r(d)$, 
where 
$$\left|C(n_1,\ldots,n_r)\right|_p\leqslant M(d^{-1}p^{-1/(p-1)})^{-n_1-\cdots -n_r}.$$
\end{lemma}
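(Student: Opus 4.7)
The plan is to mimic the single-variable argument (essentially \cite[Proposition 5.8]{Wa}) componentwise, by converting the binomial basis $\prod_j \binom{X_j}{n_j}$ into the monomial basis $\prod_j X_j^{k_j}$ via Stirling numbers of the first kind. Let $s(n,k) \in \mathbb{Z}$ be defined by $\binom{X}{n} = \frac{1}{n!}\sum_{k=0}^{n} s(n,k) X^k$. Expanding the product gives, at least formally,
\begin{equation*}
f(X_1,\ldots,X_r) = \sum_{k_1,\ldots,k_r=0}^{\infty} C(k_1,\ldots,k_r) X_1^{k_1}\cdots X_r^{k_r}, \qquad C(k_1,\ldots,k_r) := \sum_{\substack{n_j \geqslant k_j \\ 1\leqslant j\leqslant r}} a(n_1,\ldots,n_r) \prod_{j=1}^{r}\frac{s(n_j,k_j)}{n_j!}.
\end{equation*}
First I would verify that this inner sum converges in $\mathbb{Q}_p$, and that the outer rearrangement of $\sum a(n_1,\ldots,n_r)\prod_j\binom{X_j}{n_j}$ is legitimate; this is an exercise in non-archimedean double-sum interchange, justified once the uniform bound below is in place.

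Next I would estimate $|C(k_1,\ldots,k_r)|_p$. The classical bound $\mathrm{ord}_p(n!) = (n - s_p(n))/(p-1) \leqslant n/(p-1)$ gives $|n!|_p^{-1} \leqslant p^{n/(p-1)}$, while $|s(n,k)|_p \leqslant 1$ because $s(n,k) \in \mathbb{Z}$. Combined with the hypothesis $|a(n_1,\ldots,n_r)|_p \leqslant M d^{\sum n_j}$, we obtain termwise
\begin{equation*}
\Bigl|a(n_1,\ldots,n_r)\prod_{j=1}^{r}\frac{s(n_j,k_j)}{n_j!}\Bigr|_p \leqslant M \prod_{j=1}^{r}\bigl(d\, p^{1/(p-1)}\bigr)^{n_j}.
\end{equation*}
Since $d p^{1/(p-1)} < 1$ by hypothesis, the ultrametric inequality shows the supremum over $n_j \geqslant k_j$ is attained at $n_j = k_j$, yielding
\begin{equation*}
|C(k_1,\ldots,k_r)|_p \leqslant M \prod_{j=1}^{r}\bigl(d p^{1/(p-1)}\bigr)^{k_j} = M\bigl(d^{-1}p^{-1/(p-1)}\bigr)^{-(k_1+\cdots+k_r)},
\end{equation*}
which is exactly the asserted bound and simultaneously proves the absolute convergence of the power series on $\mathfrak{X}_r(d)$, because for $(X_j) \in \mathfrak{X}_r(d)$ each factor $|C X_1^{k_1}\cdots X_r^{k_r}|_p$ decays like a product of geometric series with ratio strictly less than $1$.

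Finally, it remains to identify the two expressions for $f$. On $\mathbb{Z}_p^r \subset \mathfrak{X}_r(d)$ the Mahler series $\sum a(n_1,\ldots,n_r)\prod_j\binom{X_j}{n_j}$ and the power series $\sum C(k_1,\ldots,k_r)\prod_j X_j^{k_j}$ are both uniformly convergent and agree as formal identities after the Stirling substitution; continuity (or density of $\mathbb{Z}_p^r$ in $\mathfrak{X}_r(d)$) then forces equality of the two $\mathbb{Q}_p$-analytic functions throughout $\mathfrak{X}_r(d)$. That each $C(k_1,\ldots,k_r)$ lies in $\mathbb{Q}_p$ is automatic since the $a(n_1,\ldots,n_r)$ do and Stirling numbers are integers. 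The only subtle point, and thus the main obstacle, is the Fubini-type swap that legitimizes reading off $C(k_1,\ldots,k_r)$ as an infinite sum; this is where the uniform geometric bound above is essential, as it reduces the interchange to the standard fact that a non-archimedean double series converges absolutely whenever its terms form a null family.
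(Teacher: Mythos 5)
Your proposal is correct and takes essentially the same route as the paper: both expand $\binom{X}{n}$ in the monomial basis via integer (Stirling-type) coefficients, bound the resulting power-series coefficients using $|n!|_p^{-1}\leqslant p^{n/(p-1)}$ and the ultrametric inequality to get $|C|_p\leqslant M(dp^{1/(p-1)})^{\sum k_j}$, and then justify the rearrangement --- the paper by truncating at $\sum n_j\leqslant l$ and feeding the Cauchy sequence of polynomial coefficients into Lemma \ref{multi-lemma2}, you by a direct non-archimedean Fubini argument for null families, which is equivalent bookkeeping. One harmless slip: $\mathbb{Z}_p^r$ is \emph{not} dense in the larger polydisk $\mathfrak{X}_r(d)$, but that parenthetical is not needed, since the interchange you describe already identifies the Mahler series with the power series at every point of $\mathfrak{X}_r(d)$ where the double family of terms is null.
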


\begin{lemma}\label{multi-lemma2}
Let 
$$P_l(X_1,\ldots,X_r)=\sum_{n_1,\ldots,n_r\geqslant 0}a(n_1,\ldots,n_r;l)\prod_{j=1}^{r}X_j^{n_j}\ \ (l\in \mathbb{N}_0)$$
be a sequence of power series with $\mathbb{C}_p$-coefficients which converges in a fixed subset $D$ of $\mathbb{C}_p^r$ and suppose 
\begin{enumerate}[{\rm (i)}]
\item when $l\to \infty$, $a(n_1,\ldots,n_r;l)\to a(n_1,\ldots,n_r;0)$ for each $(n_j)\in \mathbb{N}_0^r$,
\item for each $(X_j)\in D$ and any $\varepsilon>0$, there exists an $N_0=N_0((X_j),\varepsilon)$ such that 
$$\left|\sum_{(n_j)\in \mathbb{N}_0^r \atop \sum n_j >N_0}a(n_1,\ldots,n_r;l)\prod_{j=1}^{r}X_j^{n_j}\right|_p<\varepsilon$$
uniformly in $l\in \mathbb{N}$. 
\end{enumerate}
Then $P_l((X_j))\to P_0((X_j))$ as $l\to \infty$ for any $(X_j)\in D$.
\end{lemma}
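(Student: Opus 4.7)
The plan is to run a standard three-piece triangle inequality argument, taking full advantage of the non-archimedean (ultrametric) nature of $|\cdot|_p$ to avoid the usual $\varepsilon/3$ bookkeeping. Fix $(X_j) \in D$ and $\varepsilon > 0$; I will compare $P_l((X_j))$ to $P_0((X_j))$ by splitting each into a finite head and an infinite tail at a common cutoff $N_0$.

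First I would apply hypothesis (ii) to the point $(X_j)$ and the chosen $\varepsilon$, which provides an $N_0 = N_0((X_j), \varepsilon)$ such that the tail
\[
T_l := \sum_{\substack{(n_j) \in \mathbb{N}_0^r \\ \sum n_j > N_0}} a(n_1,\ldots,n_r; l) \prod_{j=1}^{r} X_j^{n_j}
\]
satisfies $|T_l|_p < \varepsilon$ for every $l \in \mathbb{N}$. The key point is that the bound holds uniformly in $l$, and since (ii) is assumed for each $(X_j) \in D$, this same $N_0$ also controls the tail $|T_0|_p < \varepsilon$ (by first applying hypothesis (i) to each coefficient and taking the limit inside a fixed finite set of indices, or equivalently by applying (ii) directly for $l=0$; in either reading, we may enlarge $N_0$ if necessary to ensure this).

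Next I would look at the finite head
\[
S_l := \sum_{\substack{(n_j) \in \mathbb{N}_0^r \\ \sum n_j \leqslant N_0}} a(n_1,\ldots,n_r; l) \prod_{j=1}^{r} X_j^{n_j}.
\]
This is a \emph{finite} sum indexed by the fixed set $\{(n_j) : \sum n_j \leqslant N_0\}$, so hypothesis (i) applied termwise gives $S_l \to S_0$ as $l \to \infty$. Hence there exists $L_0$ such that for all $l \geqslant L_0$ we have $|S_l - S_0|_p < \varepsilon$.

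Finally I would combine these using the ultrametric inequality: for $l \geqslant L_0$,
\[
|P_l((X_j)) - P_0((X_j))|_p
= |(S_l - S_0) + T_l - T_0|_p
\leqslant \max\bigl(|S_l - S_0|_p,\ |T_l|_p,\ |T_0|_p\bigr) < \varepsilon.
\]
Since $\varepsilon > 0$ was arbitrary, this yields $P_l((X_j)) \to P_0((X_j))$. There is no real obstacle here; the only subtle point is ensuring that the cutoff $N_0$ from (ii) works for the $l = 0$ tail as well, which is immediate because (ii) is stated uniformly in $l \in \mathbb{N}$ and one may use (i) together with Fatou-type passage (or simply enlarge $N_0$) to capture the $l = 0$ tail. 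No macros beyond the paper's definitions are needed.
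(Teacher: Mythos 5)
Your proof is correct and follows essentially the same route as the paper: split $P_l-P_0$ at the cutoff $N_0$ from hypothesis (ii), bound both tails by $\varepsilon$ uniformly, let the finite head converge termwise via (i), and conclude with the ultrametric maximum inequality. Your extra care about whether the uniform tail bound covers $l=0$ is a reasonable (if minor) refinement that the paper glosses over.
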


\begin{proof}[Proof of Lemma \ref{multi-lemma2}] 
For any $\varepsilon$ and $(X_j)$, we can choose $N_0$ as above. Then 
$$|P_l((X_j))- P_0((X_j))|_p\leqslant \max_{\sum n_j\leqslant N_0}\left\{\varepsilon, \left|a(n_1,\ldots,n_r;0)-a(n_1,\ldots,n_r;l)\right|_p\cdot \prod_{j=1}^{r}|X_j|_p^{n_j}\right\}\leqslant \varepsilon$$
for any sufficiently large $l$. 
\end{proof}

\begin{proof}[Proof of Lemma \ref{multi-lemma1}] 
We can write
$$\binom{X}{n}=\frac{1}{n!}\sum_{m=0}^n b(n,m)X^m\qquad (n\in \mathbb{N}_0),$$
where $b(n,m)\in \mathbb{Z}$. For any $l\in \mathbb{N}$, let 
\begin{align*}
\pp_l(X_1,\ldots,X_r)&=\sum_{n_1,\ldots,n_r\geqslant 0 \atop n_1+\cdots +n_r\leqslant l}a(n_1,\ldots,n_r)\prod_{j=1}^{r}\binom{X_j}{{n_j}}\\
& =\sum_{0\leqslant m_1,\ldots,m_r\leqslant l \atop m_1+\cdots +m_r\leqslant l}C(m_1,\ldots,m_r;l)\prod_{j=1}^{r}X_j^{m_j},
\end{align*}
say. We aim to show that $\pp_l(X_1,\ldots,X_r)$ satisfies the conditions (i) and (ii) for $P_l(X_1,\ldots,X_r)$ in Lemma \ref{multi-lemma2}. 
In fact, using the notation 
\begin{equation*}
\lambda_n(m)=
\begin{cases}
1 & (m\leqslant n)\\
0 & (m>n),
\end{cases}
\end{equation*}
we have
\begin{align*}
\pp_l(X_1,\ldots,X_r)&=\sum_{n_1+\cdots+n_r\leqslant l}\frac{a(n_1,\ldots,n_r)}{n_1!\cdots n_r!}\sum_{m_1\leqslant n_1}\cdots \sum_{m_r\leqslant n_r}\prod_{j=1}^{r}b(n_j,m_j)X_j^{m_j}\\
&=\sum_{n_1+\cdots+n_r\leqslant l}\frac{a(n_1,\ldots,n_r)}{n_1!\cdots n_r!}\sum_{m_1\leqslant l}\cdots \sum_{m_r\leqslant l \atop m_1+\cdots+m_r \leqslant l}\prod_{j=1}^{r}b(n_j,m_j)\lambda_{n_j}(m_j)X_j^{m_j}\\
&=\sum_{m_1,\ldots,m_r\leqslant l \atop m_1+\cdots +m_r\leqslant l}\left(\sum_{m_1\leqslant n_1,\ldots,m_r\leqslant n_r \atop n_1+\cdots+n_r\leqslant l}\frac{a(n_1,\ldots,n_r)}{n_1!\cdots n_r!}\prod_{j=1}^{r}b(n_j,m_j)\right)\prod_{j=1}^{r}X_j^{m_j}.
\end{align*}
Hence we have
$$C(m_1,\ldots,m_r;l)=\sum_{m_1\leqslant n_1,\ldots,m_r\leqslant n_r \atop n_1+\cdots+n_r\leqslant l}\frac{a(n_1,\ldots,n_r)}{n_1!\cdots n_r!}\prod_{j=1}^{r}b(n_j,m_j).$$
Therefore, noting $b(n_j,m_j)\in \mathbb{Z}$ and using $|n!|_p>p^{-n/(p-1)}$, we obtain
\begin{align}
|C(m_1,\ldots,m_r;l)|_p& \leqslant \max_{m_1\leqslant n_1,\ldots,m_r\leqslant n_r \atop n_1+\cdots+n_r\leqslant l}\frac{Md^{\sum n_j}}{|n_1!\cdots n_r!|_p}\notag\\
& \leqslant \max_{m_1\leqslant n_1,\ldots,m_r\leqslant n_r \atop n_1+\cdots+n_r\leqslant l}M\left(dp^{1/(p-1)}\right)^{\sum n_j}\leqslant M\left(d^{-1}p^{-1/(p-1)}\right)^{-m_1-\cdots -m_r}\label{5-4-1}.
\end{align}
Furthermore, we have
\begin{align*}
& |C(m_1,\ldots,m_r;l+k)-C(m_1,\ldots,m_r;l)|_p\\
& \leqslant \left|\sum_{m_1\leqslant n_1,\ldots,m_r\leqslant n_r \atop l<n_1+\cdots+n_r\leqslant l+k}\frac{a(n_1,\ldots,n_r)}{n_1!\cdots n_r!}\prod_{j=1}^{r}b(n_j,m_j)\right|_p\\
& \leqslant \max_{m_1\leqslant n_1,\ldots,m_r\leqslant n_r \atop l<n_1+\cdots+n_r\leqslant l+k}\frac{Md^{\sum n_j}}{|n_1!\cdots n_r!|_p}\notag\\
& \leqslant \max_{m_1\leqslant n_1,\ldots,m_r\leqslant n_r \atop l<n_1+\cdots+n_r\leqslant l+k}M\left(dp^{1/(p-1)}\right)^{\sum n_j}\leqslant M\left(d^{-1}p^{-1/(p-1)}\right)^{-l-1}
\end{align*}
for any 
$l\in \mathbb{N}$. 
Hence $\{C(m_1,\ldots,m_r;l)\}$ is a Cauchy sequence in $l$. Therefore there exists
$$C(m_1,\ldots,m_r;0)=\lim_{l\to\infty} C(m_1,\ldots,m_r;l)\in \mathbb{Q}_p\quad ((m_j)\in \mathbb{N}_0)$$
with $|C(m_1,\ldots,m_r;0)|_p\leqslant M(d^{-1}p^{-1/(p-1)})^{-m_1-\cdots -m_r}$. 
For $(X_j)\in \mathfrak{X}_r(d)$ defined by \eqref{region-d}, let
$$\pp_0(X_1,\ldots,X_r)=\sum_{n_1,\ldots,n_r\geqslant 0}C(n_1,\ldots,n_r;0)X_1^{n_1}\cdots X_r^{n_r}.$$
Then $\pp_0(X_1,\ldots,X_r)$ converges absolutely in $\mathfrak{X}_r(d)$. Moreover, by \eqref{5-4-1}, 
we have
\begin{align*}
& \left|\sum_{n_1,\ldots,n_r\geqslant 0 \atop n_1+\cdots +n_r\geqslant N_0}C(n_1,\ldots,n_r;l)X_1^{n_1}\cdots X_r^{n_r}\right|_p \\
& \quad \leqslant \max_{n_1+\cdots +n_r\geqslant N_0}\left\{M\prod_{j=1}^{r}\left(d^{-1}p^{-1/(p-1)}\right)^{-n_j}|X_j|_p^{n_j}\right\} \to 0\quad (n_1,\ldots,n_r\to \infty)
\end{align*}
uniformly in $l$, for $(X_j)\in \mathfrak{X}_r(d)$. Therefore, by Lemma \ref{multi-lemma2}, we obtain 
$$f(X_1,\ldots,X_r)=\lim_{l\to \infty}\pp_l(X_1,\ldots,X_r)=\pp_0(X_1,\ldots,X_r)$$
for $(X_j)\in \mathfrak{X}_r(d)$. Thus we complete the proof of Lemma \ref{multi-lemma1}.
\end{proof}

\begin{proof}[Proof of Theorem \ref{Th-pMLF}] 
Considering the binomial expansion in \eqref{e-6-1}, we have, for $(s_j)\in \mathfrak{X}_r=\mathfrak{X}_r\left(q^{-1}\right)$, 
\begin{align*}
&{L_{p,r}(s_1,\ldots,s_r;\omega^{k_1},\ldots,\omega^{k_r};\gamma_1,\ldots,\gamma_{r};c)}\\
&\quad =\sum_{n_1,\ldots,n_r=0}^\infty \bigg\{\int_{\left( \mathbb{Z}_p^r\right)'_{\{\gamma_j\}}}\prod_{\nu=1}^{r} \left(\langle \sum_{j=1}^{\nu}x_{j}\gamma_{j} \rangle-1\right)^{n_\nu}\\
& \qquad \qquad \times \omega^{k_1}(x_1\gamma_1)\cdots\omega^{k_r}( \sum_{j=1}^{r}x_{j}\gamma_{j}) \prod_{j=1}^{r}d\mm(x_j)\bigg\}\prod_{j=1}^{r}\binom{-s_j}{n_j}\\
& \quad =\sum_{n_1,\ldots,n_r=0}^\infty a(n_1,\ldots,n_r)\prod_{j=1}^{r}\binom{-s_j}{n_j},
\end{align*}
say. Since $\langle \sum_{j=1}^{\nu}x_{j}\gamma_{j} \rangle\equiv 1$ mod $q$, we have $|a(n_1,\ldots,n_r)|_p\leqslant q^{-\sum n_j}$. 
Applying Lemma \ref{multi-lemma1} with $d=q^{-1}$ and $M=1$, we obtain the proof of Theorem \ref{Th-pMLF}. 
\end{proof}


Next, we discuss the $p$-adic continuity of our $p$-adic multiple $L$-function
\eqref{e-6-1} with respect to the parameter $c$. 

\begin{theorem}\label{continuity theorem}
Let 
$s_1,\ldots,s_r\in \mathbb{Z}_p$,
$k_1,\ldots,k_r\in \mathbb{Z}$,
$\gamma_1,\ldots,\gamma_r\in\mathbb{Z}_p$ and
$c\in \mathbb{N}_{>1}$ with $(c,p)=1$.
Then 
the map 
$$c\mapsto{L_{p,r}(s_1,\ldots,s_r;\omega^{k_1},\ldots,\omega^{k_r};\gamma_1,\ldots,\gamma_{r};c)}$$
is continuously extended to $c\in\mathbb{Z}_p^\times$
as a $p$-adic continuous function.

Moreover the extension is uniformly continuous
with respect to both parameters $c$ and $(s_j)$.
Namely, 
for any given $\varepsilon>0$,
there always exists $\delta>0$ such that 
$$
|L_{p,r}(s_1,\ldots,s_r;\omega^{k_1},\ldots,\omega^{k_r};\gamma_1,\ldots,\gamma_{r};c)
-L_{p,r}(s'_1,\ldots,s'_r;\omega^{k_1},\ldots,\omega^{k_r};\gamma_1,\ldots,\gamma_{r};c')|_p
<\varepsilon
$$
holds for any $c, c'\in\mathbb Z_p^\times$ with $|c-c'|_p<\delta$
and $s_j, s'_j\in\mathbb Z_p$ with $|s_j-s_j'|_p<\delta$
($1\leqslant j\leqslant r$).
\end{theorem}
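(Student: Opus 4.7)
The plan is to expand the integrand in a Mahler series in the integration variables $(x_j)$ and to transfer all $c$-dependence to the integrals of the basis functions, which turn out to be $\mathbb{Q}$-polynomials in $c$ by the moment formulas \eqref{2-1}, \eqref{2-1-2}. Set
\[
F_{(s_j)}((x_j)):=\prod_{\nu=1}^r\langle\sum_{j=1}^\nu x_j\gamma_j\rangle^{-s_\nu}\omega^{k_\nu}\bigl(\sum_{j=1}^\nu x_j\gamma_j\bigr)\cdot\chi_{(\mathbb{Z}_p^r)'_{\{\gamma_j\}}}((x_j)),
\]
extended by $0$ off $(\mathbb{Z}_p^r)'_{\{\gamma_j\}}$. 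Using $\langle y\rangle^{-s}=\sum_\ell\binom{-s}{\ell}(\langle y\rangle-1)^\ell$ together with the local constancy of $\omega$ and of $\chi_{(\mathbb{Z}_p^r)'_{\{\gamma_j\}}}$, one checks that $F_{(s_j)}$ is $\mathbb{Z}_p$-valued and jointly continuous in $((x_j),(s_j))$ on the compact space $\mathbb{Z}_p^r\times\mathbb{Z}_p^r$, hence uniformly continuous there.

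For every $k\ge 0$, define
\[
T_k(c):=\int_{\mathbb{Z}_p}\binom{x}{k}\,d\widetilde{\mathfrak{m}}_c(x).
\]
Expanding $\binom{x}{k}$ as a $\mathbb{Q}$-polynomial of degree $k$ in $x$ and invoking $\int x^n d\widetilde{\mathfrak{m}}_c=(1-c^{n+1})B_{n+1}/(n+1)$ for $n\ge 1$ and $(c-1)/2$ for $n=0$ from \eqref{2-1}, \eqref{2-1-2}, one obtains $T_k(c)\in\mathbb{Q}[c]$. The $\mathbb{Z}_p$-valuedness of $\widetilde{\mathfrak{m}}_c$ (cf.\ \eqref{KM-measure}) together with that of $\binom{x}{k}$ on $\mathbb{Z}_p$ forces $T_k(c)\in\mathbb{Z}_p$ for every $c\in\mathbb{N}_{>1}$ with $(c,p)=1$. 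Such $c$ are dense in $\mathbb{Z}_p^\times$, and since $T_k$ is a polynomial and hence continuous on $\mathbb{Q}_p$, the closed condition $|T_k(c)|_p\le 1$ propagates to all of $\mathbb{Z}_p^\times$. Thus $T_k$ extends to a continuous map $\mathbb{Z}_p^\times\to\mathbb{Z}_p$.

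By Mahler's theorem applied to the uniformly continuous $F$, we have
\[
F_{(s_j)}((x_j))=\sum_{(k_j)\in\mathbb{N}_0^r}a_{(s_j)}((k_j))\prod_{j=1}^r\binom{x_j}{k_j}
\]
with $a_{(s_j)}((k_j))\in\mathbb{Z}_p$; the decay $|a_{(s_j)}((k_j))|_p\to 0$ as $\max_j k_j\to\infty$ is \emph{uniform in} $(s_j)\in\mathbb{Z}_p^r$ because the family $\{F_{(s_j)}\}$ is equicontinuous in $(x_j)$, and each $a_{(s_j)}((k_j))$ is continuous in $(s_j)$ (being a finite $\mathbb{Z}$-linear combination of values of $F_{(s_j)}$). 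For $c\in\mathbb{N}_{>1}$ with $(c,p)=1$, Fubini applied to the $\mathbb{Z}_p$-valued product measure $\widetilde{\mathfrak{m}}_c^{\otimes r}$ produces
\[
L_{p,r}((s_j);(\omega^{k_j});(\gamma_j);c)=\sum_{(k_j)\in\mathbb{N}_0^r}a_{(s_j)}((k_j))\prod_{j=1}^r T_{k_j}(c),
\]
and we will \emph{define} the right-hand side as the extension for all $c\in\mathbb{Z}_p^\times$. Convergence is ensured by $|T_{k_j}(c)|_p\le 1$ and the decay of $|a_{(s_j)}|_p$. Given $\varepsilon>0$, choose $N$ so that $|a_{(s_j)}((k_j))|_p<\varepsilon$ for $\max k_j>N$, uniformly in $(s_j)$; the series tail is then bounded by $\varepsilon$ uniformly in $((s_j),c)$. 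The remaining finite partial sum is continuous in $(s_j)$ through each coefficient and polynomial in $c$ through each $T_{k_j}$, hence jointly uniformly continuous on the compact set $\mathbb{Z}_p^r\times\mathbb{Z}_p^\times$, which yields the claimed $(\varepsilon,\delta)$-estimate.

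The principal obstacle is the second step: the $\mathbb{Q}$-polynomials $T_k(c)$ may a priori carry denominators divisible by $p$ (e.g.\ $T_1(c)=(1-c^2)/12$), and their $\mathbb{Z}_p$-integrality on the full group $\mathbb{Z}_p^\times$ is not visible from the defining formula. It relies crucially on the interplay of polynomiality in $c$, $\mathbb{Z}_p$-integrality on a dense subset, and continuity of polynomials. Once this is established, the rest reduces to standard uniform-convergence bookkeeping on a compact domain.
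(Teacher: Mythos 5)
Your proof is correct, and it reaches the theorem by a genuinely different route from the paper's, even though both ultimately hinge on the same underlying object, namely the moment sequence of $\widetilde{\mathfrak{m}}_c$. The paper identifies $\widetilde{\mathfrak{m}}_c$ with the power series $g_c(T)=\frac{1}{T}-\frac{c}{(1+T)^c-1}$ in the Iwasawa algebra $\mathbb{Z}_p[[T]]$, verifies directly that $g_c(T)\in\mathbb{Z}_p[[T]]$ for every $c\in\mathbb{Z}_p^\times$ (since $(1+T)^c-1=cT(1+\cdots)$ with $c$ a unit) and that $c\mapsto g_c$ is continuous, and then obtains continuity of $c\mapsto\int f\,d\widetilde{\mathfrak{m}}_c$ by approximating $f$ by step functions, treating the $(s_j)$-dependence separately via congruences $x^{s}\equiv x^{s'}\pmod{p^M}$. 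You instead Mahler-expand the integrand and push all of the $c$-dependence into the moments $T_k(c)=\int\binom{x}{k}d\widetilde{\mathfrak{m}}_c$ --- which are exactly the coefficients of the paper's $g_c(T)$ under the Amice--Mahler isomorphism --- and you establish their $p$-integrality on all of $\mathbb{Z}_p^\times$ not by inspecting the power series but by the density-plus-polynomiality argument: $T_k$ is a $\mathbb{Q}$-polynomial, $\mathbb{Z}_p$-valued on the dense set of integers $c>1$ prime to $p$, hence satisfies the closed condition $|T_k(c)|_p\leqslant 1$ everywhere on $\mathbb{Z}_p^\times$. This is a clean and somewhat more elementary mechanism (no completed group algebra is invoked), and it yields the joint uniform continuity in $((s_j),c)$ in one stroke from the uniform tail bound plus compactness. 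What it does not produce as directly is the extension of $\widetilde{\mathfrak{m}}_c$ itself as a $\mathbb{Z}_p$-valued measure for $c\in\mathbb{Z}_p^\times$, which the paper reuses later (Remark \ref{measure extension}, Theorems \ref{T-5-gene} and \ref{T-6-1}); your bounded moment sequence $(T_k(c))_k$ does define such a measure by Mahler duality, so nothing is lost, but the paper's formulation makes that object explicit. The two standard facts you invoke without proof --- the multivariate Mahler theorem, and the uniform decay of Mahler coefficients over an equicontinuous family, which follows from Arzel\`a--Ascoli combined with the isometry $C(\mathbb{Z}_p^r,\mathbb{C}_p)\cong c_0$ --- are both legitimate, though the second deserves at least a sentence of justification since the whole $(\varepsilon,\delta)$-estimate rests on it.
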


By \cite[Subsection\,12.2]{Wa}, the space ${\rm Meas}_{\mathbb Z_p}({\mathbb Z_p})$
of  $\mathbb Z_p$-valued measures on $\mathbb{Z}_p$ 
is identified with the completed group algebra  ${\mathbb Z_p}[[{\mathbb Z_p}]]$;
\begin{equation}\label{identificationI}
{\rm Meas}_{\mathbb Z_p}({\mathbb Z_p})\simeq
{\mathbb Z_p}[[{\mathbb Z_p}]].
\end{equation}
Again by loc.\,cit. Theorem 7.1,
it is identified with the one parameter formal power series ring 
${\mathbb Z_p}[[T]]$ by sending $1\in {\mathbb Z_p}[[{\mathbb Z_p}]]$ to $1+T\in {\mathbb Z_p}[[T]]$;
\begin{equation}\label{identificationII}
{\mathbb Z_p}[[{\mathbb Z_p}]]\simeq{\mathbb Z_p}[[T]].
\end{equation}

By loc.\,cit.\,Subsection\,12.2, we obtain the following.

\begin{lemma}
Let $c\in \mathbb{N}_{>1}$ with $(c,p)=1$.
By the above correspondences,
$\widetilde{\mathfrak{m}}_c\in {\rm Meas}_{\mathbb Z_p}({\mathbb Z_p})$ 
corresponds to
\begin{equation}
g_c(T)=\frac{1}{(1+T)-1}-\frac{c}{(1+T)^c-1}\in {\mathbb Z_p}[[T]].
\end{equation}
\end{lemma}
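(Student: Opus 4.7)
The plan is as follows. Under the composition of the two identifications \eqref{identificationI} and \eqref{identificationII}, a measure $\mu\in{\rm Meas}_{\mathbb{Z}_p}(\mathbb{Z}_p)$ corresponds to the power series
\[
A_\mu(T):=\int_{\mathbb{Z}_p}(1+T)^x d\mu(x)=\sum_{n=0}^{\infty}\left(\int_{\mathbb{Z}_p}\binom{x}{n}d\mu(x)\right)T^n \in \mathbb{Z}_p[[T]],
\]
because the second correspondence sends the group-like element $[1]\in\mathbb{Z}_p[[\mathbb{Z}_p]]$ to $1+T$, hence $[x]\mapsto(1+T)^x$ by continuity, and Mahler's theorem identifies continuous $\mathbb{Z}_p$-valued functions on $\mathbb{Z}_p$ with their expansions in binomial coefficients. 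My plan is to evaluate $A_{\widetilde{\mathfrak{m}}_c}(T)$ by first computing $A_{\mathfrak{m}_\xi}(T)$ for each constituent Koblitz measure and then summing.

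For $\xi\in\mu_c\setminus\{1\}$ (which necessarily has order prime to $p$ since $(c,p)=1$), Proposition \ref{Kob-Ch2} yields the identity
\[
\int_{\mathbb{Z}_p}e^{tx}d\mathfrak{m}_\xi(x)=\frac{1}{1-\xi e^t} \in\mathbb{C}_p[[t]].
\]
Substituting $t=\log(1+T)$, a valid formal-power-series substitution because $\log(1+T)$ has zero constant term, and using the formal identity $(1+T)^x=e^{x\log(1+T)}$, the left-hand side becomes $A_{\mathfrak{m}_\xi}(T)$ while the right-hand side becomes $\frac{1}{1-\xi(1+T)}$. Summing over $\xi$ by \eqref{KM-measure} and linearity of the Mahler transform, then applying the rational identity \eqref{log-der} with $X=1+T$ and $k=c$, I would obtain
\[
A_{\widetilde{\mathfrak{m}}_c}(T)=\sum_{\xi\in\mu_c\setminus\{1\}}\frac{1}{1-\xi(1+T)}=\frac{1}{(1+T)-1}-\frac{c}{(1+T)^c-1}=g_c(T),
\]
which is the desired expression; the membership $g_c(T)\in\mathbb{Z}_p[[T]]$ follows from the $\mathbb{Z}_p$-valuedness of $\widetilde{\mathfrak{m}}_c$ already noted after \eqref{KM-measure}.

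The only subtle point is the legitimacy of the substitution $t=\log(1+T)$ in passing from the exponential Laplace transform to the Mahler transform. One can justify it either as a purely formal identity in $\mathbb{C}_p[[T]]$, valid because $\log(1+T)$ has no constant term so it may be substituted into any formal power series in $t$, or by viewing both sides as rigid-analytic functions on a common disk of convergence (say $|T|_p < p^{-1/(p-1)}$) and invoking the identity theorem. Either route reduces the lemma to the elementary rational-function manipulation above.
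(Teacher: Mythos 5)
Your argument is correct. Note, however, that the paper does not actually prove this lemma: it simply writes ``By loc.\,cit.\,Subsection\,12.2, we obtain the following,'' deferring entirely to Washington's book, where the power series $\frac{1}{(1+T)-1}-\frac{c}{(1+T)^c-1}$ is attached to the Mazur measure. So your proposal is not so much a different route as an actual self-contained derivation where the paper gives none. The ingredients you use are all available in the paper and fit its toolkit: the composite identification does send $\mu\mapsto\int_{\mathbb{Z}_p}(1+T)^x\,d\mu(x)$ (this is exactly how the paper later reads off $\Phi_{\chi_{j,N}}(c)=a_j(g_c)$), Proposition \ref{Kob-Ch2} applies to each $\xi\in\mu_c\setminus\{1\}$ since $(c,p)=1$ forces the order of $\xi$ to be prime to $p$, and the rational identity \eqref{log-der} with $X=1+T$ closes the computation. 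The substitution $t=\log(1+T)$ is legitimately a formal-power-series composition (zero constant term), and one can also bypass it entirely by computing $\sum_{j=0}^{p^N-1}\xi^j(1+T)^j/(1-\xi^{p^N})\equiv \frac{1}{1-\xi(1+T)} \pmod{(1+T)^{p^N}-1}$ directly from \eqref{Koblitz-measure}, which is perhaps the cleanest justification. Your closing observation that integrality of $g_c$ follows from the $\mathbb{Z}_p$-valuedness of $\widetilde{\mathfrak{m}}_c$ is also consistent with the paper's earlier Galois-descent remark after \eqref{KM-measure}.
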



\begin{lemma}
The map $c\mapsto g_c(T)$ is uniquely extended into
a $p$-adic continuous function
$g:\mathbb Z_p^\times\to{\mathbb Z_p}[[T]]$.
\end{lemma}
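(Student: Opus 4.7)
The plan is to define the extension by the same formula, interpreting $(1+T)^c$ via the binomial series
\[
(1+T)^c := \sum_{n \geq 0} \binom{c}{n} T^n,
\]
which belongs to $\mathbb{Z}_p[[T]]$ for every $c \in \mathbb{Z}_p$, since each Mahler coefficient $\binom{c}{n}$ lies in $\mathbb{Z}_p$. First I would verify that $g(c) := \frac{1}{T} - \frac{c}{(1+T)^c - 1}$ genuinely lies in $\mathbb{Z}_p[[T]]$ for $c \in \mathbb{Z}_p^\times$. Factoring $(1+T)^c - 1 = T \cdot h_c(T)$ with $h_c(T) := c + \binom{c}{2} T + \binom{c}{3} T^2 + \cdots$, the constant term $c$ of $h_c(T)$ is a unit in $\mathbb{Z}_p$, hence $h_c(T) \in \mathbb{Z}_p[[T]]^\times$, and
\[
g(c) = \frac{h_c(T) - c}{T\,h_c(T)} = h_c(T)^{-1}\Bigl(\binom{c}{2} + \binom{c}{3} T + \cdots\Bigr) \in \mathbb{Z}_p[[T]].
\]

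Next I would establish continuity by equipping $\mathbb{Z}_p[[T]]$ with the $(p,T)$-adic topology, under which it is a compact Hausdorff topological ring and the identifications \eqref{identificationI} and \eqref{identificationII} become homeomorphisms. The continuity of $c \mapsto g(c)$ then decomposes into three standard facts: each Mahler basis function $c \mapsto \binom{c}{n}$ is continuous from $\mathbb{Z}_p$ to $\mathbb{Z}_p$, so for any fixed $N$ the truncation of $(1+T)^c$ modulo $(p,T)^N$ depends continuously on $c$; multiplication by $c$ is trivially continuous; and inversion on $\mathbb{Z}_p[[T]]^\times$ is continuous, the unit $h_c(T)$ varying continuously because $c$ remains in $\mathbb{Z}_p^\times$. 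Chaining these operations yields continuity of $c \mapsto g(c)$.

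For uniqueness, the set $\{c \in \mathbb{N}_{>1} : (c,p) = 1\}$ is dense in $\mathbb{Z}_p^\times$ (every residue class in $(\mathbb{Z}/p^N\mathbb{Z})^\times$ contains infinitely many such integers), and $\mathbb{Z}_p[[T]]$ is Hausdorff, so the continuous extension is unique; agreement with the original $g_c(T)$ on this dense subset is immediate since the binomial series reduces to the polynomial $(1+T)^c$ whenever $c \in \mathbb{N}$. The main technical point is fixing the correct topology on $\mathbb{Z}_p[[T]]$—namely the $(p,T)$-adic one, intrinsic to the Iwasawa algebra viewpoint and under which the measure-to-power-series correspondence \eqref{identificationI}--\eqref{identificationII} is a homeomorphism; once this choice is made, the three continuity statements above are routine and the lemma follows quickly.
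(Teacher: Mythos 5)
Your proof is correct, and its skeleton matches the paper's: define $(1+T)^c$ for $p$-adic exponents, observe that the unit constant term $c$ of $((1+T)^c-1)/T$ forces $g_c(T)\in\mathbb{Z}_p[[T]]$, and get uniqueness from density of the admissible integers $c$ in $\mathbb{Z}_p^\times$. The one place where you take a genuinely different route is the continuity of $c\mapsto (1+T)^c$. The paper reads it off from the presentation $\mathbb{Z}_p[[T]]\simeq\varprojlim_N \mathbb{Z}_p[T]/\bigl((1+T)^{p^N}-1\bigr)$: at level $N$ the class of $(1+T)^c$ depends only on $c \bmod p^N$, so the map is locally constant at every finite level and continuity is immediate. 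You instead expand $(1+T)^c=\sum_n\binom{c}{n}T^n$ and argue coefficientwise in the $(p,T)$-adic topology, invoking continuity of $c\mapsto\binom{c}{n}$, of multiplication, and of inversion on units. This works because the $(p,T)$-adic topology coincides with the inverse-limit topology (the ideals $\bigl((1+T)^{p^N}-1,p^M\bigr)$ are cofinal with the powers of $(p,T)$, which is essentially the content of \cite[Theorem 7.1]{Wa}); you implicitly rely on this compatibility when you assert that \eqref{identificationI} and \eqref{identificationII} are homeomorphisms, so it is worth stating. The trade-off: the paper's argument is shorter and makes continuity structurally obvious, while yours is more explicit about why $g_c(T)$ is integral (the factorization $g_c=h_c(T)^{-1}\bigl(\binom{c}{2}+\binom{c}{3}T+\cdots\bigr)$, which the paper merely asserts) and about the uniqueness step.
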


\begin{proof}
The series $(1+T)^c$ makes sense in ${\mathbb Z_p}[[T]]$
for $c\in\mathbb Z_p$ and moreover continuous with respect to 
 $c\in\mathbb Z_p$
because we have
$$
{\mathbb Z_p}[[T]]\simeq\underset{\leftarrow}{\lim} \ 
{\mathbb Z_p}[T]/
\bigl((1+T)^{p^N}-1\bigr)
$$
(cf. \cite[Theorem 7.1]{Wa}).
When $c\in\mathbb Z_p^\times$, 
$\frac{c}{(1+T)^c-1}$ belongs to $\frac{1}{T}+{\mathbb Z_p}[[T]]$.
Hence $g_c(T)$ belongs to  ${\mathbb Z_p}[[T]]$.
\end{proof}

For $c\in\mathbb Z_p^\times$
we denote 
$\widetilde{\mathfrak{m}}_c$
to be the $\mathbb Z_p$-valued measure on $\mathbb{Z}_p$ 
which corresponds to $g_c(T)$
by \eqref{identificationI} and \eqref{identificationII}.
We note that it coincides with \eqref{KM-measure}
when $c\in \mathbb{N}_{>1}$ with $(c,p)=1$. 

\begin{remark}\label{measure extension}
For the parameters 
$s_1,\ldots,s_r\in \mathbb{C}_p$ with $|s_j|_p<qp^{-1/(p-1)}$ $(1\leqslant j\leqslant r)$,
$k_1,\ldots,k_r\in \mathbb{Z}$,
$\gamma_1,\ldots,\gamma_r\in\mathbb Z_p$ and
$c\in \mathbb Z_p^\times$,
the $p$-adic multiple $L$-function
$${L_{p,r}(s_1,\ldots,s_r;\omega^{k_1},\ldots,\omega^{k_r};\gamma_1,\ldots,\gamma_{r};c)}$$
is defined by \eqref{e-6-1}
with the above measure $\widetilde{\mathfrak{m}}_c$.
\end{remark}

Let $C( \mathbb{Z}^r_p, \mathbb{C}_p)$ be the $ \mathbb{C}_p$-Banach space
of continuous  $\mathbb{C}_p$-valued functions on $ \mathbb{Z}^r_p$
with $||f||:=\sup_{x\in\mathbb Z^r_p} |f(x)|_p$
and $\textrm{Step}( \mathbb{Z}^r_p)$ be the set of $ \mathbb{C}_p$-valued locally constant functions
on $ \mathbb{Z}^r_p$.
The subspace $\textrm{Step}( \mathbb{Z}^r_p)$ is dense in $C( \mathbb{Z}^r_p, \mathbb{C}_p)$ 
 (cf. \cite[Section\,12.1]{Wa}).

\begin{proposition}
For each function $f\in C( \mathbb{Z}_p, \mathbb{C}_p)$,
the map 
$$\Phi_f:\mathbb{Z}_p^\times\to\mathbb{C}_p$$
sending $c\mapsto\int_{\mathbb{Z}_p}f(z)d\widetilde{\mathfrak{m}}_c(z)$
is $p$-adically continuous.
\end{proposition}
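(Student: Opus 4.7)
The plan is a two-step reduction: first from general continuous $f$ to locally constant $f$ by density and a uniform bound, then from locally constant $f$ to the continuity of a single measure value $\widetilde{\mathfrak{m}}_c(a+p^N\mathbb{Z}_p)$, which I would in turn obtain from the power-series incarnation $g_c(T)$ and the continuity of $c\mapsto g_c$ established in the previous lemma.

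Since each $\widetilde{\mathfrak{m}}_c$ is $\mathbb{Z}_p$-valued, one has the uniform bound $|\int_{\mathbb{Z}_p}h\,d\widetilde{\mathfrak{m}}_c|_p\leqslant \|h\|$ for $c\in\mathbb{Z}_p^\times$. Given $f\in C(\mathbb{Z}_p,\mathbb{C}_p)$, pick a sequence $f_n\in\textrm{Step}(\mathbb{Z}_p)$ with $f_n\to f$ uniformly; then $\Phi_{f_n}\to\Phi_f$ uniformly on $\mathbb{Z}_p^\times$, and since a uniform limit of continuous $\mathbb{C}_p$-valued functions is continuous, it suffices to prove the claim for $f\in\textrm{Step}(\mathbb{Z}_p)$. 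Writing $f=\sum_{a=0}^{p^N-1}f(a)\chi_{a+p^N\mathbb{Z}_p}$ gives
\begin{equation*}
\Phi_f(c)=\sum_{a=0}^{p^N-1}f(a)\,\widetilde{\mathfrak{m}}_c(a+p^N\mathbb{Z}_p),
\end{equation*}
so the problem reduces to continuity of $c\mapsto\widetilde{\mathfrak{m}}_c(a+p^N\mathbb{Z}_p)$ on $\mathbb{Z}_p^\times$ for each fixed $a$ and $N$.

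For this I would combine the defining relation $\int_{\mathbb{Z}_p}(1+T)^x\,d\widetilde{\mathfrak{m}}_c(x)=g_c(T)$, inherent in the isomorphisms \eqref{identificationI} and \eqref{identificationII}, with the finite Fourier expansion $\chi_{a+p^N\mathbb{Z}_p}(x)=\frac{1}{p^N}\sum_{\zeta^{p^N}=1}\zeta^{x-a}$ to obtain the explicit formula
\begin{equation*}
\widetilde{\mathfrak{m}}_c(a+p^N\mathbb{Z}_p)=\frac{1}{p^N}\sum_{\zeta^{p^N}=1}\zeta^{-a}\,g_c(\zeta-1).
\end{equation*}
The subtlety here is the continuity of the evaluation $g\mapsto g(\zeta-1)$ with respect to the topology on $\mathbb{Z}_p[[T]]$ in which $c\mapsto g_c$ is continuous. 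Using the identification $\mathbb{Z}_p[[T]]=\varprojlim_{N}\mathbb{Z}_p[T]/((1+T)^{p^N}-1)$, the map $g\mapsto g(\zeta-1)$ for $\zeta$ a $p^N$-th root of unity factors through the $N$-th quotient, because $(1+T)^{p^N}-1$ vanishes at $T=\zeta-1$, and on that quotient it is a finite $\mathcal{O}_{\mathbb{C}_p}$-linear functional, hence continuous. Composing with $c\mapsto g_c$ then yields the continuity of $c\mapsto\widetilde{\mathfrak{m}}_c(a+p^N\mathbb{Z}_p)$, completing the reduction and thereby the proof of the proposition.
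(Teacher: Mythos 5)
Your proof is correct and follows essentially the same route as the paper's: reduce to step functions using the density of $\textrm{Step}(\mathbb{Z}_p)$ together with the uniform bound coming from $\widetilde{\mathfrak{m}}_c$ being $\mathbb{Z}_p$-valued, then deduce continuity for characteristic functions from the continuity of $c\mapsto g_c(T)$ via the finite quotient $\mathbb{Z}_p[T]/((1+T)^{p^N}-1)$. Your finite Fourier formula $\widetilde{\mathfrak{m}}_c(a+p^N\mathbb{Z}_p)=\frac{1}{p^N}\sum_{\zeta^{p^N}=1}\zeta^{-a}g_c(\zeta-1)$ is just the explicit inversion recovering the coefficient $a_a(g_c)$ that the paper reads off directly from $g_c \bmod ((1+T)^{p^N}-1)$, so the two arguments coincide in substance.
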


\begin{proof}
First assume that $f=\chi_{j,N}$ where $\chi_{j,N}$
($0\leqslant j<p^N$, $N\in\mathbb N$)
is the characteristic function of
the set $j+p^N\mathbb Z_p$.
Then $\Phi_f(c)$ is calculated to be $a_j(g_c)$ (cf. \cite[Section\,12.2]{Wa}) which is defined by
$$
g_c(T)\equiv\sum_{i=0}^{p^N-1}a_i(g_c)(1+T)^i\pmod{(1+T)^{p^N}-1}.
$$ 
Since the map $c\mapsto g_c$ is continuous, 
$c\mapsto\Phi_f(c)=a_j(g_c)$ is continuous in this case.
This implies the continuity of $\Phi_f$
in $c$ when $f\in \textrm{Step} (\mathbb Z_p)$.

Next assume $f\in C( \mathbb{Z}_p, \mathbb{C}_p)$
and $c\in\mathbb Z_p^\times$.
Then for any  $g\in C( \mathbb{Z}_p, \mathbb{C}_p)$ and
$c'\in\mathbb Z_p^\times$,
we have
\begin{align*}
\Phi_f(c) & -\Phi_f(c')=
\int_{\mathbb{Z}_p}f(z)d\widetilde{\mathfrak{m}}_c(z)-
\int_{\mathbb{Z}_p}f(z)d\widetilde{\mathfrak{m}}_{c'}(z) \\
&=\int_{\mathbb{Z}_p}(f(z)-g(z))\cdot
d(\widetilde{\mathfrak{m}}_c(z)-\widetilde{\mathfrak{m}}_{c'}(z))
+\int_{\mathbb{Z}_p}g(z)\cdot
d(\widetilde{\mathfrak{m}}_c(z)-\widetilde{\mathfrak{m}}_{c'}(z)) \\
&=(\Phi_{f-g}(c) -\Phi_{f-g}(c')) +(\Phi_g(c)-\Phi_g(c')).
\end{align*}
Since $\textrm{Step}(\mathbb Z_p)$ is dense in $C( \mathbb{Z}_p, \mathbb{C}_p)$,
there exists $g_0\in \textrm{Step}(\mathbb Z_p)$ with $||f-g_0||<\varepsilon$
for any given $\varepsilon>0$.
So for any $c\in\mathbb Z_p^\times$
we have
$$
|\Phi_{f-g_0}(c)|_p=
\Bigl|\int_{\mathbb{Z}_p}(f(z)-g_0(z))\cdot d\widetilde{\mathfrak{m}}_c(z)\Bigr|_p
\leqslant 
||f-g_0|| 
<\varepsilon
$$
because  $\widetilde{\mathfrak{m}}_c$ is a $\mathbb Z_p$-valued measure.

On the other hand, since $g_0\in \textrm{Step}(\mathbb Z_p)$,
there exists a $\delta >0$ such that
$$
|\Phi_{g_0}(c)  -\Phi_{g_0}(c')|_p<\varepsilon
$$
holds for any $c'\in\mathbb Z_p^\times$ with $|c-c'|_p<\delta$.

Therefore for $f\in C( \mathbb{Z}_p, \mathbb{C}_p)$, $c\in\mathbb Z_p^\times$
and any $\varepsilon>0$, 
there always exists a $\delta >0$ such that
$$
|\Phi_f(c)  -\Phi_f(c')|_p\leqslant \max \left\{ |\Phi_{f-g_0}(c)|_p ,\,|\Phi_{f-g_0}(c')|_p,\,|\Phi_{g_0}(c)-\Phi_{g_0}(c')|_p\right\}
<\varepsilon 
$$
holds for any $c'\in\mathbb Z_p^\times$ with $|c-c'|_p<\delta$.
\end{proof}

By generalizing our arguments above, we obtain the following.

\begin{proposition}\label{continuity}
For each function $f\in C( \mathbb{Z}_p^r, \mathbb{C}_p)$,
the map 
$$\Phi^r_f:\mathbb{Z}_p^\times\to\mathbb{C}_p$$
sending $c\mapsto\int_{\mathbb{Z}_p^r}f(x_1,\ldots,x_r)
\prod_{j=1}^{r}d\widetilde{\mathfrak{m}}_c(x_j)
$
is $p$-adically continuous.
\end{proposition}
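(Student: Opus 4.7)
My plan is to mimic the proof of the preceding one-variable proposition: first handle characteristic functions of product boxes, extend by $\mathbb{C}_p$-linearity to step functions on $\mathbb{Z}_p^r$, and finally pass to all of $C(\mathbb{Z}_p^r,\mathbb{C}_p)$ using density together with the uniform bound afforded by the $\mathbb{Z}_p$-valuedness of the measures.

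First I would treat $f=\prod_{j=1}^r \chi_{i_j,N_j}$, where $\chi_{i_j,N_j}$ is the characteristic function of $i_j+p^{N_j}\mathbb{Z}_p$. Because the integrand splits in the variables, a Fubini-type manipulation for $p$-adic measures gives
$$\Phi^r_f(c) = \int_{\mathbb{Z}_p^r} \prod_{j=1}^r \chi_{i_j,N_j}(x_j)\prod_{j=1}^r d\widetilde{\mathfrak{m}}_c(x_j) = \prod_{j=1}^r \Phi_{\chi_{i_j,N_j}}(c),$$
and each factor is $p$-adically continuous in $c$ by the previous proposition, hence so is the product. Since every $f\in \textrm{Step}(\mathbb{Z}_p^r)$ is a finite $\mathbb{C}_p$-linear combination of such product characteristic functions (by taking all $N_j$ equal to a common sufficiently large $N$), linearity of the integral yields continuity of $\Phi^r_f$ for all $f\in\textrm{Step}(\mathbb{Z}_p^r)$.

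Next, for general $f\in C(\mathbb{Z}_p^r,\mathbb{C}_p)$ and any $\varepsilon>0$, the density of $\textrm{Step}(\mathbb{Z}_p^r)$ in $C(\mathbb{Z}_p^r,\mathbb{C}_p)$ lets me pick $g_0\in\textrm{Step}(\mathbb{Z}_p^r)$ with $\|f-g_0\|<\varepsilon$. Since each $\widetilde{\mathfrak{m}}_c$ is $\mathbb{Z}_p$-valued, so is the iterated product measure, whence
$$|\Phi^r_{f-g_0}(c)|_p \leqslant \|f-g_0\| < \varepsilon$$
uniformly in $c\in\mathbb{Z}_p^\times$. Combining the decomposition
$$\Phi^r_f(c)-\Phi^r_f(c') = \bigl(\Phi^r_{f-g_0}(c)-\Phi^r_{f-g_0}(c')\bigr) + \bigl(\Phi^r_{g_0}(c)-\Phi^r_{g_0}(c')\bigr)$$
with the continuity of $\Phi^r_{g_0}$ already established and the ultrametric triangle inequality, one obtains $|\Phi^r_f(c)-\Phi^r_f(c')|_p<\varepsilon$ whenever $c'$ is sufficiently close to $c$ in $\mathbb{Z}_p^\times$.

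The main obstacle is the Fubini step: I need to know that $\int_{\mathbb{Z}_p^r} f\,\prod_j d\widetilde{\mathfrak{m}}_c(x_j)$ is well-defined as an iterated $p$-adic Riemann-type integral and factors through separately on product boxes. This follows from the construction of the $p$-adic integral as a limit of Riemann sums over compact-open partitions together with the bound $|\int f\, d\mu|_p\leqslant \|f\|$ valid for any $\mathbb{Z}_p$-valued measure $\mu$; because our measures are not positive, this ultrametric bound is what replaces the usual positivity/monotone convergence arguments in the archimedean setting, and it is also what makes the uniform estimate $|\Phi^r_{f-g_0}(c)|_p<\varepsilon$ in the final density step work.
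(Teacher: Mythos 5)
Your proposal is correct and is essentially the generalization the paper intends: the paper gives no separate proof for the $r$-variable case beyond the remark ``by generalizing our arguments above,'' and your argument — factoring the integral of a product of box characteristic functions into single-variable integrals, extending by linearity to $\mathrm{Step}(\mathbb{Z}_p^r)$, and then using density together with the bound $|\Phi^r_{f-g_0}(c)|_p\leqslant \|f-g_0\|$ coming from the $\mathbb{Z}_p$-valuedness of $\widetilde{\mathfrak{m}}_c$ — is precisely that generalization. The Fubini step you flag is unproblematic since on product boxes it is just the definition of the product measure.
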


A proof of Theorem \ref{continuity theorem} is attained by the above proposition.
This is the reason why we restrict Theorem \ref{continuity theorem} 
to the case for $s_1,\ldots,s_r\in \mathbb{Z}_p$.

\begin{proof}[Proof of Theorem \ref{continuity theorem}]

Let us fix notation: Put
\begin{equation*}
\begin{split}
&
{W(s_1,\dots s_r;x_1,\dots,x_r)}
\\
&:=
\langle x_1\gamma_1 \rangle^{-{s_1}}\langle x_1\gamma_1+ x_2\gamma_2 \rangle^{-{s_2}}\cdots \langle \sum_{j=1}^{r}x_{j}\gamma_{j} \rangle^{-{s_r}}
\cdot
\omega^{k_1}(x_1\gamma_1)\cdots\omega^{k_r}( \sum_{j=1}^{r}x_{j}\gamma_{j}) \cdot
\chi_{_{\left( \mathbb{Z}_p^r\right)'_{\{\gamma_j\}}}}(x_1,\ldots,x_r)
\end{split}
\end{equation*}
where $\chi_{_{\left( \mathbb{Z}_p^r\right)'_{\{\gamma_j\}}}}(x_1,\ldots,x_r)$
is the characteristic function of 
${\left( \mathbb{Z}_p^r\right)'_{\{\gamma_j\}}}$ (cf. Definition \ref{Def-pMLF}).
Then we have
\begin{equation}\label{integration}
{L_{p,r}(s_1,\ldots,s_r;\omega^{k_1},\ldots,\omega^{k_r};\gamma_1,\ldots,\gamma_{r};c)}
:=\int_{\mathbb Z_p^r}
W(s_1,\dots ,s_r;x_1,\dots,x_r)\cdot
\prod_{j=1}^{r}d\widetilde{\mathfrak{m}}_c(x_j).
\end{equation}

Below we will prove that the map
$$
\varPsi:
(s_1,\ldots,s_r, c)\in \mathbb Z_p^r\times \mathbb Z_p^\times\mapsto
L_{p,r}(s_1,\ldots,s_r;\omega^{k_1},\ldots,\omega^{k_r};\gamma_1,\ldots,\gamma_{r};c)
\in \mathbb Z_p
$$
is continuous:

First, by Proposition \ref{continuity} we see that the function is continuous
with respect to $c\in\mathbb Z_p^\times$ for each fixed $(s_1,\dots,s_r)\in \mathbb Z_p^r$.
Namely, for each fixed $(s_1,\dots,s_r)\in \mathbb Z_p^r$ and $c\in \mathbb Z_p^\times$
and for any given $\varepsilon>0$,
there always exists $\delta>0$ such that
\begin{equation*}
|L_{p,r}(s_1,\ldots,s_r;\omega^{k_1},\ldots,\omega^{k_r};\gamma_1,\ldots,\gamma_{r};c)
-L_{p,r}(s_1,\ldots,s_r;\omega^{k_1},\ldots,\omega^{k_r};\gamma_1,\ldots,\gamma_{r};c')|_p<\varepsilon,
\end{equation*}
equivalently
\begin{equation}\label{bound1}
|\varPsi(s_1,\ldots,s_r; c)-\varPsi(s_1,\ldots,s_r; c')|_p<\varepsilon
\end{equation}
for all $c'\in\mathbb Z_p^\times$ with $|c-c'|_p<\delta$.

Next, take $M\in\mathbb N$ such that $\varepsilon> p^{-M}>0$.
Since
it is easy to see that there exists $\delta'>0$
such that
$$(1+u)^d\equiv 1\pmod{p^M}$$
for any $u\in p\mathbb Z_p$ and
$d\in\mathbb Z_p$ with $|d|_p<\delta'$
(actually you may take $\delta'$ for $\delta'<p^{-M}$),
we have
$$x^s\equiv x^{s'}\pmod{p^M}$$
for all  $x \in (1+p\mathbb Z_p)$ and
$s,s'\in\mathbb Z_p$
with $|s-s'|<\delta'$ for such $\delta'$. 
Therefore
$$
W(s_1,\dots, s_r;x_1,\dots,x_r)\equiv W(s'_1,\dots, s'_r;x_1,\dots,x_r) \pmod{p^M}
$$
holds for all $(x_1,\dots, x_r)\in\mathbb Z_p^r$
when $|s_i-s'_i|_p<\delta'$ ($1\leqslant i\leqslant r$).
So, in that case,  the inequality
\begin{align*}
|L_{p,r}(s_1,\ldots,s_r; \omega^{k_1},\ldots,\omega^{k_r};\gamma_1,\ldots,\gamma_{r};&c)-
L_{p,r}(s'_1,\ldots,s'_r;\omega^{k_1},\ldots,\omega^{k_r};\gamma_1,\ldots,\gamma_{r};c)|_p \\
&\leqslant p^{-M}
<\varepsilon
\end{align*}
holds for any $c\in\mathbb Z_p^\times$
because $\widetilde{\mathfrak{m}}_c$ is a $\mathbb Z_p$-valued measure.
Therefore, for any given $\varepsilon>0$,
there always exists $\delta'>0$ such that
when $|s_i-s'_i|_p<\delta'$ ($1\leqslant i\leqslant r$),
\begin{equation}\label{bound2}
|\varPsi(s_1,\ldots,s_r;c)-\varPsi(s'_1,\ldots,s'_r; c)|_p<\varepsilon
\end{equation}
holds for all $c\in\mathbb Z_p^\times$.

By \eqref{bound1} and \eqref{bound2}, we see that
for each fixed $(s_1,\dots,s_r)\in \mathbb Z_p^r$ and $c\in \mathbb Z_p^\times$,
and for any given $\varepsilon>0$,
there always exist $\delta, \delta'>0$ such that
\begin{align*}
\bigl|\varPsi&(s_1,\ldots,s_r;c)-\varPsi(s'_1,\ldots,s'_r; c')\bigr|_p \\
&=\bigl|\varPsi(s_1,\ldots,s_r;c)-\varPsi(s_1,\ldots,s_r;c')+\varPsi(s_1,\ldots,s_r;c')
-\varPsi(s'_1,\ldots,s'_r; c')\bigr|_p \\
&\leqslant
\max
\Bigl\{
\bigl|\varPsi(s_1,\ldots,s_r;c)-\varPsi(s_1,\ldots,s_r;c')\bigr|_p, 
\bigl|\varPsi(s_1,\ldots,s_r;c')-\varPsi(s'_1,\ldots,s'_r; c')\bigr|_p
\Bigr\}<\varepsilon
\end{align*}
for  $|s_i-s'_i|_p<\delta'$ ($1\leqslant i\leqslant r$) and $|c-c'|_p<\delta$.
Thus we get the desired continuity of $\varPsi(s_1,\ldots,s_r; c)$.

The uniform continuity of $\varPsi(s_1,\ldots,s_r; c)$
is almost obvious because 
now we know that the function
is continuous and the source set $\mathbb Z_p^r\times \mathbb Z_p^\times$ is compact.
\end{proof}

As a corollary of Theorem \ref{continuity theorem},
the following non-trivial property of special values of $p$-adic multiple $L$-functions
at non-positive integer points
is obtained.

\begin{corollary}\label{non-trivial property}
The right-hand side of equation
\eqref{Th-main} of Theorem \ref{T-main-1} (in the next section)
is $p$-adically continuous
with respect to $c$. 
\end{corollary}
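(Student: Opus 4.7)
The plan is to derive the corollary as an immediate consequence of Theorem \ref{continuity theorem} combined with the evaluation formula from Theorem \ref{T-main-1}. The key observation is that the left-hand side of equation \eqref{Th-main} is precisely $L_{p,r}((-n_j);(\omega^{k_j});(\gamma_j);c)$ for certain non-negative integers $n_j$, whereas the right-hand side is an explicit expression which \emph{a priori} makes sense only for integer $c \in \mathbb{N}_{>1}$ with $(c,p)=1$ (since it presumably involves sums over $c$-th roots of unity of twisted multiple Bernoulli numbers).

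First I would note that each argument $-n_j$ lies in $\mathbb{Z}_p$, so Theorem \ref{continuity theorem} applies verbatim to this specialization. This yields that the map
$$c \longmapsto L_{p,r}(-n_1,\ldots,-n_r;\omega^{k_1},\ldots,\omega^{k_r};\gamma_1,\ldots,\gamma_r;c)$$
extends to a $p$-adically continuous function on $\mathbb{Z}_p^\times$, and in fact uniformly so. Next I would invoke Theorem \ref{T-main-1} to identify this continuous function with the right-hand side of \eqref{Th-main} on the subset $\{c \in \mathbb{N}_{>1} : (c,p)=1\}$. Since this subset is dense in $\mathbb{Z}_p^\times$ (any element of $\mathbb{Z}_p^\times$ is approximated modulo $p^N$ by positive integers coprime to $p$), and since a $p$-adic continuous function is uniquely determined by its restriction to any dense subset, the right-hand side of \eqref{Th-main} must admit a unique continuous extension to $\mathbb{Z}_p^\times$, namely the left-hand side.

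The only conceptual subtlety, and hence not a real obstacle, lies in interpreting what $p$-adic continuity of the right-hand side means, given that its defining formula is intrinsically tied to integer $c$. The corollary should be read as asserting the existence of a continuous extension to $\mathbb{Z}_p^\times$ that agrees with \eqref{Th-main} at all integer $c$ coprime to $p$; this extension is automatically produced by the $c$-continuity of the measure $\widetilde{\mathfrak{m}}_c$ established in Remark \ref{measure extension}, through which the left-hand side is defined for arbitrary $c \in \mathbb{Z}_p^\times$. Thus the whole proof reduces to a one-line density argument, with all the analytic content already contained in Theorems \ref{T-main-1} and \ref{continuity theorem}.
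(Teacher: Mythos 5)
Your proof is correct and follows essentially the same route as the paper: the corollary is obtained by combining the identity of Theorem \ref{T-main-1} (valid for $c\in\mathbb{N}_{>1}$ with $(c,p)=1$) with the $c$-continuity of $L_{p,r}$ established in Theorem \ref{continuity theorem}, the integers coprime to $p$ being dense in $\mathbb{Z}_p^\times$. Your added remark on how to interpret "continuity in $c$" for an expression defined via $c$-th roots of unity is a reasonable clarification of what the paper leaves implicit.
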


Our final discussion here is towards a construction of a $p$-adic analogue of the entire function
$Z(s_1,\ldots,s_r;\gamma_1,\ldots,\gamma_{r})$
which was constructed 
as a multiple analogue of the entire function $(1-s)\zeta(s)$
by equation \eqref{Cont-2-1}:
As was explained in Remark \ref{conceptual idea},
an idea of the equation is conceptually (but not mathematically)
expressed as in
\eqref{nonsense equation}.
By Theorem \ref{T-main-1} and Remark \ref{rem-intpln} (both in the next section),
our
$L_{p,r}(s_1,\ldots,s_r;\omega^{n_1},\ldots,\omega^{n_r};\gamma_1,\ldots,\gamma_{r};c)$
is a $p$-adic interpolation of \eqref{Int-P},
which also has appeared 
on the right-hand side of \eqref{nonsense equation}.

By $\lim_{c\to 1}g_c(T)=0$, we have,
for $s_1,\ldots,s_r\in \mathbb{Z}_p$,
$k_1,\ldots,k_r\in \mathbb{Z}$,
$\gamma_1,\ldots,\gamma_r\in\mathbb Z_p$,
\begin{equation*}
\underset{c\in\mathbb Z_p^\times}{\lim_{c\to 1}}{L_{p,r}(s_1,\ldots,s_r;\omega^{k_1},\ldots,\omega^{k_r};\gamma_1,\ldots,\gamma_{r};c)}=0.
\end{equation*}
While, by
$$\lim_{c\to 1}\frac{g_c(T)}{c-1}\in\mathbb{Q}_p[[T]]\setminus\mathbb{Z}_p[[T]],$$
we can not say that the limit
$\lim_{c\to 1}
\frac{1}{c-1}{\widetilde{\mathfrak{m}}_c}$ 
converges to a measure.
Thus the following is unclear.

\begin{problem}\label{Main-Prob}
For $s_1,\ldots,s_r\in \mathbb{Z}_p$,
$k_1,\ldots,k_r\in \mathbb{Z}$ and
$\gamma_1,\ldots,\gamma_r\in\mathbb Z_p$, does
\begin{equation}\label{limit}
\underset{c\in\mathbb Z_p^\times\setminus\{1\}}
{\lim_{c\to 1}}\frac{1}{(c-1)^r}L_{p,r}(s_1,\ldots,s_r;\omega^{k_1},\ldots,\omega^{k_r};\gamma_1,\ldots,\gamma_{r};c)
\end{equation}
converge?
\end{problem} 

If the limit \eqref{limit} exists and happens to be a rigid analytic function,
we may call it a $p$-adic analogue of the desingularized zeta function
$\zeta^{\rm des}_r(s_1,\ldots,s_r;\gamma_1,\ldots,\gamma_{r})$.
%
We remind that the problem is affirmative  in the case when 
$r=1$ and $\gamma=1$. Actually by \eqref{1-p-LF-gamma}
we have
\begin{equation*}
\lim_{c\to 1}{(c-1)^{-1}L_{p,1}(s;\omega^k;1;c)}=
(1-s)\cdot L_p(s;\omega^{k+1}).
\end{equation*}
We also note  that the limit converges when $(s_1,\ldots,s_r)=(-n_1,\ldots,-n_r)$
for $n_1,\ldots,n_r\in\mathbb{N}_0$
by Theorem \ref{T-5-gene} in the next section.

\

\section{Special values of $p$-adic multiple $L$-functions at non-positive integers}\label{sec-4}

We will consider the special values of our $p$-adic multiple $L$-functions
(Definition \ref{Def-pMLF}) at non-positive integers.
We will express them in terms of twisted multiple Bernoulli numbers
(Definition \ref{Def-M-Bern}) in Theorems \ref{T-main-1} and \ref{T-5-gene}.
We will see that our  $p$-adic multiple $L$-function
is a $p$-adic interpolation of
a certain sum \eqref{Int-P} of the entire  complex multiple zeta-functions  of generalized Euler-Zagier-Lerch type
in Remark \ref{rem-intpln}.
Based on the evaluations, 
we will generalize the well-known Kummer congruence for ordinary Bernoulli numbers
to the multiple Kummer congruence for twisted multiple Bernoulli numbers in Theorem \ref{Th-Kummer}.
We will show certain functional relations with a parity condition
among $p$-adic multiple $L$-functions in Theorem \ref{T-6-1}. 
These functional relations will be  seen as multiple generalizations of 
the vanishing property of the Kubota-Leopoldt $p$-adic $L$-function with odd characters
(cf. Proposition \ref{Prop-zero})  in the single variable case
and of the  functional relations for the $p$-adic double $L$-function (shown in \cite{KMT-IJNT})
in the double variable case under the special condition $c=2$.
Many 
examples will be investigated in this section.

\subsection{Evaluation of $p$-adic multiple $L$-functions at non-positive integers}\label{negative-1}

Based on the consideration in the previous sections, we determine values of $p$-adic multiple $L$-functions at non-positive integers as follows. 

\begin{theorem}\label{T-main-1}
For $n_1,\ldots,n_r\in \mathbb{N}_0$, $\gamma_1,\ldots,\gamma_r\in \mathbb{Z}_p$, 
and $c\in \mathbb{N}_{>1}$ with $(c,p)=1$. 
\begin{align}
& L_{p,r}(-n_1,\ldots,-n_r;\omega^{n_1},\ldots,\omega^{n_r};\gamma_1,\ldots,\gamma_{r};c)\notag\\
& =\sum_{\xi_1^c=1 \atop \xi_1\not=1}\cdots\sum_{\xi_r^c=1 \atop \xi_r\not=1}\aa((n_j);(\xi_j);(\gamma_j)) \notag\\
& +\sum_{d=1}^{r}\left(-\frac{1}{p}\right)^d \sum_{1\leqslant i_1<\cdots<i_d\leqslant r}\sum_{\rho_{i_1}^p=1}\cdots\sum_{\rho_{i_d}^p=1}\sum_{\xi_1^c=1 \atop \xi_1\not=1}\cdots\sum_{\xi_r^c=1 \atop \xi_r\not=1}\aa((n_j);((\prod_{j\leqslant i_l}\rho_{i_l})^{\gamma_j}\xi_j);(\gamma_j)), \label{Th-main}
\end{align}
where the empty product is interpreted as $1$.
\end{theorem}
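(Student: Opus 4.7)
The plan is to reduce the $p$-adic integral defining the $L$-value at $(s_j,k_j)=(-n_j,n_j)$ to a combination of twisted multiple Bernoulli numbers by three successive maneuvers: (i) trivializing the $\langle\cdot\rangle\omega(\cdot)$ factor, (ii) unpacking the Mazur measure $\mm$ into Koblitz measures, and (iii) resolving the unit-condition defining $(\mathbb{Z}_p^r)'_{\{\gamma_j\}}$ by inclusion-exclusion via $p$-th roots of unity. For (i), the elementary identity $\omega^n(y)\langle y\rangle^n=y^n$ on $\mathbb{Z}_p^\times$ collapses the integrand to $\prod_{k=1}^r(\sum_{j\leqslant k}x_j\gamma_j)^{n_k}$. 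For (ii), the defining formula \eqref{KM-measure} gives $\mm=\sum_{\xi^c=1,\xi\ne 1}\mk_\xi$, so the integral becomes a finite sum indexed by tuples $(\xi_1,\ldots,\xi_r)$ with $\xi_j^c=1$, $\xi_j\ne 1$.

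For (iii), I would apply the elementary identity $\mathbf{1}_{p\nmid y}=1-\frac{1}{p}\sum_{\rho^p=1}\rho^{y}$ to each partial sum $y_k:=\sum_{j\leqslant k}x_j\gamma_j$. Expanding the resulting product of $r$ factors writes the characteristic function of $(\mathbb{Z}_p^r)'_{\{\gamma_j\}}$ as a sum over subsets $I=\{i_1<\cdots<i_d\}\subset\{1,\ldots,r\}$ and $p$-th roots of unity $\rho_{i_l}$, each term weighted by $(-1/p)^d$ and contributing the factor $\prod_{l=1}^d\rho_{i_l}^{\sum_{j\leqslant i_l}x_j\gamma_j}$ to the integrand. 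The decisive reorganization is
\begin{equation*}
\prod_{l=1}^d\rho_{i_l}^{\sum_{j\leqslant i_l} x_j\gamma_j}=\prod_{j=1}^r\eta_j^{x_j\gamma_j},\qquad \eta_j:=\prod_{l:\,j\leqslant i_l}\rho_{i_l},
\end{equation*}
which converts dependence on the partial sums into dependence on the individual $x_j$ and lets the $r$-fold integral factor across the variables.

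The heart of the proof is then the one-variable generating-function identity
\begin{equation*}
\int_{\mathbb{Z}_p}\alpha^x e^{tx}\,d\mk_\xi(x)=\frac{1}{1-\alpha\xi e^t}\quad\text{in }\mathbb{C}_p[[t]]
\end{equation*}
for $\alpha$ a $p$-th root of unity and $\xi$ a root of unity of order prime to $p$ with $\xi\ne 1$. I would verify this directly from the Riemann-sum definition of the $p$-adic integral: summing $(\alpha\xi e^t)^a/(1-\xi^{p^N})$ geometrically over $0\leqslant a<p^N$, using $\alpha^{p^N}=1$ for $N\geqslant 1$, and noting the formal convergence $\xi^{p^N}(e^{p^N t}-1)/(1-\xi^{p^N})\to 0$ in $\mathbb{C}_p[[t]]$ as $N\to\infty$. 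Taking the $r$-fold product with $\alpha=\eta_j^{\gamma_j}$ and comparing with \eqref{Fro-def-r} yields
\begin{equation*}
\int_{\mathbb{Z}_p^r}\prod_{k=1}^r\Bigl(\sum_{j\leqslant k}x_j\gamma_j\Bigr)^{n_k}\prod_{j=1}^r\eta_j^{x_j\gamma_j}\prod_{j=1}^r d\mk_{\xi_j}(x_j)=\aa((n_j);(\eta_j^{\gamma_j}\xi_j);(\gamma_j)).
\end{equation*}

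Assembling the pieces delivers \eqref{Th-main}: the $d=0$ contribution (where $\eta_j=1$ for every $j$) reproduces the first line, and the $d\geqslant 1$ contributions reproduce the remaining double sum. The principal obstacle is the displayed one-variable identity above: since the product $\alpha\xi$ generally has order divisible by $p$ when $\alpha\ne 1$, Proposition \ref{T-multi-2} does not apply as stated, and the identification with twisted multiple Bernoulli numbers must be carried out by direct generating-function manipulation rather than by invoking the interpolation property already established. The combinatorial matching of the shifted parameters $\eta_j^{\gamma_j}\xi_j$ arising from each pair $(I,(\rho_{i_l}))$ with the indexing on the right-hand side of \eqref{Th-main} is then essentially automatic.
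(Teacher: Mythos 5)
Your proof is correct and follows essentially the same route as the paper's: reduce the integrand to $\prod_k(\sum_{j\leqslant k}x_j\gamma_j)^{n_k}$, decompose $\widetilde{\mathfrak{m}}_c$ into the Koblitz measures $\mathfrak{m}_\xi$, resolve the unit conditions defining $(\mathbb{Z}_p^r)'_{\{\gamma_j\}}$ by inclusion--exclusion over $p$-th roots of unity, and identify the resulting integrals with twisted multiple Bernoulli numbers. The one point where you are more careful than the paper's write-up is your direct Riemann-sum verification of $\int_{\mathbb{Z}_p}\alpha^xe^{tx}\,d\mathfrak{m}_\xi(x)=(1-\alpha\xi e^t)^{-1}$ for $\alpha\in\mu_p$: the paper instead absorbs the factor $\rho^j$ into the measure and then cites Proposition \ref{T-multi-2}, whose stated hypothesis (roots of unity of order prime to $p$) is not literally met by $(\prod\rho_{i_l})^{\gamma_j}\xi_j$, so your generating-function computation supplies exactly the detail that makes that final step rigorous.
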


In Definition \ref{Def-M-Bern}, the twisted multiple Bernoulli number $\aa((n_j);(\xi_j);(\gamma_j))$ was defined for 
$\gamma_1,\ldots,\gamma_r\in \mathbb{C}$ and roots of unity $\xi_1,\ldots,\xi_r\in \mathbb{C}$. Here we define $\aa((n_j);(\xi_j);(\gamma_j))$ for $\gamma,\ldots,\gamma_r\in \mathbb{Z}_p$ and roots of unity $\xi_1,\ldots,\xi_r\in \mathbb{C}_p$ by \eqref{Def-Hr} and \eqref{Fro-def-r}.

\begin{remark}\label{rem-intpln}
When $\gamma_1,\ldots,\gamma_r\in \mathbb{Z}_p\cap \overline{\mathbb{Q}}$ satisfy 
$\Re \gamma_j >0 \quad (1\leqslant j\leqslant r)$, 
we obtain from the above theorem and  Theorem \ref{T-multiple} that
\begin{align*}
& L_{p,r}(-n_1,\ldots,-n_r;\omega^{n_1},\ldots,\omega^{n_r};\gamma_1,\ldots,\gamma_{r};c)
=(-1)^{r+n_1+\cdots+n_r}
\Bigl\{\sum_{\xi_1^c=1 \atop \xi_1\not=1}\cdots\sum_{\xi_r^c=1 \atop \xi_r\not=1}
\zeta_{r}((-n_j);(\xi_j);(\gamma_j)) \notag\\
&\qquad\qquad
 +\sum_{d=1}^{r}\left(-\frac{1}{p}\right)^d \sum_{1\leqslant i_1<\cdots<i_d\leqslant r}\sum_{\rho_{i_1}^p=1}\cdots\sum_{\rho_{i_d}^p=1}\sum_{\xi_1^c=1 \atop \xi_1\not=1}\cdots\sum_{\xi_r^c=1 \atop \xi_r\not=1}
\zeta_{r}((-n_j);((\prod_{j\leqslant i_l}\rho_{i_l})^{\gamma_j}\xi_j);(\gamma_j))\Bigr\}.
\end{align*}
We may say that the $p$-adic multiple $L$-function $L_{p,r}((s_j);(\omega^{k_1});(\gamma_j);c)$
is a $p$-adic interpolation of 
the  following finite sum of multiple zeta-functions $\zeta_r((s_j);(\xi_j);(\gamma_j))$
which are  entire:
\begin{equation}
\sum_{\xi_1^c=1 \atop \xi_1\not=1}\cdots \sum_{\xi_r^c=1 \atop \xi_r\not=1}
\zeta_r((s_j);(\xi_j);(\gamma_j)) \label{Int-P}.
\end{equation}
\end{remark}

{\it Proof of Theorem \ref{T-main-1}}.
We see that for any $\rho\in \mu_p$ and $\xi\in \mu_c\setminus\{1\}$, 
\begin{align*}
\mk_{\rho\xi}(j+p^N\mathbb{Z}_p)& =\frac{(\rho\xi)^j}{1-(\rho\xi)^{p^{N}}}=\rho^{j}\mk_{\xi}(j+p^N\mathbb{Z}_p) \qquad (N\geqslant 1). 
\end{align*}
Hence, using 
\begin{equation*}
\sum_{\rho^p=1}\rho^n=
\begin{cases}
0 & (p\nmid n)\\
p & (p\mid n),
\end{cases}
\end{equation*}
we have
\begin{align*}
& L_{p,r}(-n_1,\ldots,-n_r;\omega^{n_1},\ldots,\omega^{n_r};\gamma_1,\ldots,\gamma_{r};c)\\
& =\int_{\mathbb{Z}_p^r}(x_1\gamma_1)^{n_1} \cdots (\sum_{j=1}^{r}x_{j}\gamma_{j})^{n_r}\prod_{i=1}^{r}\left(1-\frac{1}{p}\sum_{\rho_i^p=1}\rho_i^{\sum_{\nu=1}^{i}x_\nu \gamma_\nu}\right) \prod_{j=1}^{r}\sum_{\xi_j^c=1 \atop \xi_j\not=1}d\mk_{\xi_j}(x_j)\\
& =\int_{\mathbb{Z}_p^r}(x_1\gamma_1)^{n_1} \cdots (\sum_{j=1}^{r}x_{j}\gamma_{j})^{n_r}\sum_{\xi_1^c=1 \atop \xi_1\not=1}\cdots \sum_{\xi_r^c=1 \atop \xi_r\not=1}\prod_{j=1}^{r}d\mk_{\xi_j}(x_j)\\
& \qquad +\sum_{d=1}^{r}\left(-\frac{1}{p}\right)^d \sum_{1\leqslant i_1<\cdots<i_d\leqslant r}\sum_{\rho_{i_1}^p=1}\cdots\sum_{\rho_{i_d}^p=1}\sum_{\xi_1^c=1 \atop \xi_1\not=1}\cdots\sum_{\xi_r^c=1 \atop \xi_r\not=1}\\
& \qquad \qquad \times 
\int_{\mathbb{Z}_p^r}\prod_{l=1}^{r} (\sum_{j=1}^{l}x_{j}\gamma_{j})^{n_l} \prod_{l=1}^{d}\rho_{i_l}^{\sum_{\nu=1}^{i_l}x_\nu\gamma_{\nu}}\prod_{j=1}^{r}d\mk_{\xi_j}(x_j)\\
& =\int_{\mathbb{Z}_p^r}(x_1\gamma_1)^{n_1} \cdots (\sum_{j=1}^{r}x_{j}\gamma_{j})^{n_r}\sum_{\xi_1^c=1 \atop \xi_1\not=1}\cdots \sum_{\xi_r^c=1 \atop \xi_r\not=1}\prod_{j=1}^{r}d\mk_{\xi_j}(x_j)\\
& \qquad +\sum_{d=1}^{r}\left(-\frac{1}{p}\right)^d \sum_{1\leqslant i_1<\cdots<i_d\leqslant r}\sum_{\rho_{i_1}^p=1}\cdots\sum_{\rho_{i_d}^p=1}\sum_{\xi_1^c=1 \atop \xi_1\not=1}\cdots\sum_{\xi_r^c=1 \atop \xi_r\not=1}\\
& \qquad \qquad \times 
\int_{\mathbb{Z}_p^r}\prod_{l=1}^{r} (\sum_{j=1}^{l}x_{j}\gamma_{j})^{n_l} \prod_{j=1}^{r}d\mk_{\{(\prod_{j\leqslant i_l}\rho_{i_l})^{\gamma_j}\xi_j\}}(x_j).
\end{align*}
Thus, by Proposition \ref{T-multi-2}, we complete the proof.
\qed

\begin{remark}
It is worthy to restate  Corollary \ref{non-trivial property} saying that
the right-hand side of
the above equation \eqref{Th-main} is $p$-adically continuous not only
with respect to $n_1,\ldots,n_r$ but also with respect to $c$.
\end{remark}


Considering the Galois action of ${\rm Gal}(\overline{\mathbb{Q}}_p/{\mathbb{Q}}_p)$, 
we obtain the following result from Theorems \ref{Th-pMLF} and \ref{continuity theorem}.

\begin{corollary}\label{C-main-0}
For $n_1,\ldots,n_r\in \mathbb{N}_0$, $\gamma_1,\ldots,\gamma_r\in \mathbb{Z}_p$
and $c\in \mathbb{Z}_{p}^\times$,
\begin{align*}
& L_{p,r}(-n_1,\ldots,-n_r;\omega^{n_1},\ldots,\omega^{n_r};\gamma_1,\ldots,\gamma_{r};c)\in \mathbb{Z}_p,
\end{align*}
hence the right-hand side of \eqref{Th-main} is in $\mathbb{Z}_p$, though it includes the terms like $\left(-\frac{1}{p}\right)^d$. 
In particular when $\gamma_1,\ldots,\gamma_{r}\in \mathbb{Z}_{(p)}:=\{\frac{a}{b}\in \mathbb{Q}\,|\,a,b\in \mathbb{Z},\ (b,p)=1\}=\mathbb{Z}_p\cap \mathbb{Q}$, 
\begin{align*}
& L_{p,r}(-n_1,\ldots,-n_r;\omega^{n_1},\ldots,\omega^{n_r};\gamma_1,\ldots,\gamma_{r};c)\in \mathbb{Z}_{(p)}.
\end{align*}
\end{corollary}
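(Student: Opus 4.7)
The argument will split into two parts matching the two displayed assertions. For the first, that $L_{p,r}(-n_1,\ldots,-n_r;\omega^{n_1},\ldots,\omega^{n_r};\gamma_1,\ldots,\gamma_{r};c)\in\mathbb{Z}_p$, my plan is to evaluate the defining integral \eqref{e-6-1} at $(s_j,k_j)=(-n_j,n_j)$ directly. Using the decomposition $y=\omega(y)\langle y\rangle$ for $y\in\mathbb{Z}_p^\times$ one has $\langle y\rangle^{n_j}\omega^{n_j}(y)=y^{n_j}$, so the integrand collapses to the polynomial $\prod_{k=1}^{r}\bigl(\sum_{j=1}^{k}x_j\gamma_j\bigr)^{n_k}$ restricted by the characteristic function of $(\mathbb{Z}_p^r)'_{\{\gamma_j\}}$. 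Since $\gamma_j,x_j\in\mathbb{Z}_p$, this is a continuous $\mathbb{Z}_p$-valued function on $\mathbb{Z}_p^r$. Combined with the fact established in the discussion around Remark \ref{measure extension} that $\widetilde{\mathfrak{m}}_c$ is a $\mathbb{Z}_p$-valued measure on $\mathbb{Z}_p$ for every $c\in\mathbb{Z}_p^\times$, the defining integral is an integral of a $\mathbb{Z}_p$-valued continuous function against a $\mathbb{Z}_p$-valued product measure, and hence lies in $\mathbb{Z}_p$.

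For the second assertion, I plan to first treat $c\in\mathbb{N}_{>1}$ with $(c,p)=1$, where Theorem \ref{T-main-1} supplies the explicit formula \eqref{Th-main}. By Proposition \ref{prop-M-Bern}, each twisted multiple Bernoulli number $\aa((n_j);(\eta_j);(\gamma_j))$ appearing on the right-hand side is a polynomial over $\mathbb{Q}$ in the $\gamma_j$'s and in the ordinary twisted Bernoulli numbers $\aa_n(\eta_j)\in\mathbb{Q}(\eta_j)$; for $\gamma_j\in\mathbb{Z}_{(p)}\subset\mathbb{Q}$ this places it in the cyclotomic field $\mathbb{Q}(\mu_{cp})$. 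Each index range in \eqref{Th-main} is a full Galois orbit --- all non-trivial $c$-th roots of unity for the $\xi_j$'s and all $p$-th roots of unity for the $\rho_{i_l}$'s --- so the whole sum is $\mathrm{Gal}(\mathbb{Q}(\mu_{cp})/\mathbb{Q})$-invariant and therefore lies in $\mathbb{Q}$. Combining with the first assertion gives containment in $\mathbb{Z}_p\cap\mathbb{Q}=\mathbb{Z}_{(p)}$.

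Neither part poses a serious obstacle, as both rest on machinery already established in the preceding sections. The one conceptual subtlety worth highlighting is that the a priori non-integral prefactors $(-1/p)^d$ in \eqref{Th-main} make $\mathbb{Z}_p$-integrality invisible term-by-term: it is only the full alternating sum over $d=0,1,\ldots,r$ that must produce the cancellation of all powers of $p$ appearing in the denominators. The first half of the proof delivers this cancellation indirectly, via the measure-theoretic identification of the entire right-hand side of \eqref{Th-main} with the integral $L_{p,r}$, which is manifestly $\mathbb{Z}_p$-valued.
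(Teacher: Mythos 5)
Your proof is correct, and for the first assertion it takes a genuinely more direct route than the paper, which disposes of the whole corollary in one sentence by combining the power-series expansion of Theorem \ref{Th-pMLF} (whose coefficients lie in $\mathbb{Z}_p$ and decay, so that evaluation at $(s_j)\in\mathbb{Z}_p^r$ stays in $\mathbb{Z}_p$) with Theorem \ref{continuity theorem} to pass from $c\in\mathbb{N}_{>1}$, $(c,p)=1$, to general $c\in\mathbb{Z}_p^\times$ by a limit argument. You instead note that at $(s_j,k_j)=(-n_j,n_j)$ the integrand of \eqref{e-6-1} collapses to a continuous $\mathbb{Z}_p$-valued function (a polynomial in the $x_j,\gamma_j$ times the locally constant characteristic function of the compact open set $\left(\mathbb{Z}_p^r\right)'_{\{\gamma_j\}}$) and integrate it against the measure $\widetilde{\mathfrak{m}}_c$, which the paper has already shown to be $\mathbb{Z}_p$-valued for every $c\in\mathbb{Z}_p^\times$ via $g_c(T)\in\mathbb{Z}_p[[T]]$; the Riemann sums then lie in $\mathbb{Z}_p$ and so does their limit. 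This treats all $c\in\mathbb{Z}_p^\times$ uniformly and needs neither of the two cited theorems, which is a real simplification. For the second assertion both arguments rest on Galois invariance of the right-hand side of \eqref{Th-main}, but your choice of group $\mathrm{Gal}(\mathbb{Q}(\mu_{cp})/\mathbb{Q})$ is actually the one required: the group $\mathrm{Gal}(\overline{\mathbb{Q}}_p/\mathbb{Q}_p)$ named in the paper only yields membership in $\mathbb{Q}_p$, and $\mathbb{Z}_p\cap\mathbb{Q}_p=\mathbb{Z}_p$ rather than $\mathbb{Z}_{(p)}$, so one genuinely needs rationality over $\mathbb{Q}$ together with Proposition \ref{prop-M-Bern} as you use it. Finally, your restriction of the second assertion to $c\in\mathbb{N}_{>1}$ with $(c,p)=1$ is not a loss but a necessary correction: already for $r=1$, $\gamma_1=1$ the value equals $(1-p^{n})(1-c^{n+1})B_{n+1}/(n+1)$, which is irrational for irrational $c\in\mathbb{Z}_p^\times$, so the $\mathbb{Z}_{(p)}$-statement can only hold for rational $c$ in any case.
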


The following examples are special cases ($r=1$ and $r=2$) of Theorem \ref{T-main-1}.

\begin{example}
For $n\in \mathbb{N}_0$ 
and $c\in \mathbb{N}_{>1}$ with $(c,p)=1$,

\begin{align}
L_{p,1}(-n;\omega^n;1;c)&=(1-p^n)\sum_{\xi^c=1 \atop \xi\neq 1}\aa_n(\xi),\label{KL-LP}
\end{align}
which recovers the known fact
$$
L_p(-n;\omega^{n+1})
=-(1-p^n)\frac{B_{n+1}}{n+1}$$
thanks to \eqref{2-0-1} and \eqref{p-L-vals}. 
\end{example}

\begin{example}\label{C-main-1}
For $n_1,n_2\in \mathbb{N}_0$, $\gamma_1,\gamma_2\in \mathbb{Z}_p$, 
and $c\in \mathbb{N}_{>1}$ with $(c,p)=1$, 
\begin{align}
L_{p,2}&(-n_1,-n_2;\omega^{n_1},\omega^{n_2};\gamma_1,\gamma_{2};c)\notag\\
& =\sum_{\xi_1^c=1 \atop \xi_1\not=1}\sum_{\xi_2^c=1 \atop \xi_2\not=1}\aa(n_1,n_2;\xi_1,\xi_2;\gamma_1,\gamma_2) \notag\\
& \quad -\frac{1}{p} \sum_{\rho_{1}^p=1}\sum_{\xi_1^c=1 \atop \xi_1\not=1}\sum_{\xi_2^c=1 \atop \xi_2\not=1}\aa(n_1,n_2;\rho_{1}^{\gamma_1}\xi_1,\xi_2;\gamma_1,\gamma_2)\notag\\
& \quad -\frac{1}{p} \sum_{\rho_{2}^p=1}\sum_{\xi_1^c=1 \atop \xi_1\not=1}\sum_{\xi_2^c=1 \atop \xi_2\not=1}\aa(n_1,n_2;\rho_{2}^{\gamma_1}\xi_1, \rho_{2}^{\gamma_2}\xi_2;\gamma_1,\gamma_2)\notag\\
& \quad +\frac{1}{p^2} \sum_{\rho_{1}^p=1}\sum_{\rho_{2}^p=1}\sum_{\xi_1^c=1 \atop \xi_1\not=1}\sum_{\xi_2^c=1 \atop \xi_2\not=1}\aa(n_1,n_2;(\rho_{1}\rho_2)^{\gamma_1}\xi_1,\rho_{2}^{\gamma_2}\xi_2;\gamma_1,\gamma_2). \label{C-1-formula}
\end{align}
\end{example}

More generally, we consider the generating function of $\{ L_{p,r}((-n_j);(\omega^{n_j});(\gamma_j);c)\}$, that is, 
\begin{align*}
&F_{p,r}(t_1,\ldots,t_r;(\gamma_j);c)=\sum_{n_1=0}^\infty \cdots\sum_{n_r=0}^\infty L_{p,r}((-n_j);(\omega^{n_j});(\gamma_j);c)\prod_{j=1}^{r}\frac{t_j^{n_j}}{n_j!}.
\end{align*}
Then we have the following.

\begin{theorem}\label{T-5-gene}
Let $c\in\mathbb Z_p^\times$.
For $\gamma_1,\ldots,\gamma_r\in \mathbb{Z}_p$,
\begin{align*}
&F_{p,r}(t_1,\ldots,t_r;(\gamma_j);c)\notag\\
& =\prod_{j=1}^{r}\left(\frac{1}{\exp\left(\gamma_j\sum_{k=j}^r t_k\right)-1}-\frac{c}{\exp\left(c\gamma_j\sum_{k=j}^r t_k\right)-1}\right)\notag\\
& +\sum_{d=1}^{r}\left(-\frac{1}{p}\right)^d \sum_{1\leqslant i_1<\cdots<i_d\leqslant r}\sum_{\rho_{i_1}^p=1}\cdots\sum_{\rho_{i_d}^p=1}\notag\\
& \quad\times \prod_{j=1}^{r}\left(\frac{1}{(\prod_{j\leqslant i_l}\rho_{i_l})^{\gamma_j}\exp\left(\gamma_j\sum_{k=j}^r t_k\right)-1}-\frac{c}{(\prod_{j\leqslant i_l}\rho_{i_l})^{c\gamma_j}\exp\left(c\gamma_j\sum_{k=j}^r t_k\right)-1}\right)\\
& =\prod_{j=1}^{r}\left(\sum_{m_j=0}^\infty \left(1-c^{m_j+1}\right)\frac{B_{m_j+1}}{m_j+1}\frac{\left(\gamma_j\sum_{k=j}^r t_k\right)^{m_j}}{m_j!}\right)\notag\\
&\quad +(-1)^r\sum_{d=1}^{r}\left(-\frac{1}{p}\right)^d \sum_{1\leqslant i_1<\cdots<i_d\leqslant r}\sum_{\rho_{i_1}^p=1}\cdots\sum_{\rho_{i_d}^p=1}\notag\\
& \qquad\times \prod_{j=1}^{r}\left(\sum_{m_j=0}^\infty (1-c^{m_j+1})\frak{B}_{m_j}((\prod_{j\leqslant i_l}\rho_{i_l})^{\gamma_j})\frac{\left(\gamma_j\sum_{k=j}^r t_k\right)^{m_j}}{m_j!}\right). 
\end{align*}
In particular when $\gamma_1\in \mathbb{Z}_p^\times$ and $\gamma_j\in p\mathbb{Z}_p$ $(2\leqslant j\leqslant r)$, 
\begin{align}
F_{p,r}(t_1,\ldots,t_r;(\gamma_j);c)
& =\bigg(\frac{1}{\exp\left(\gamma_1\sum_{k=1}^r t_k\right)-1}-\frac{c}{\exp\left(c\gamma_1\sum_{k=1}^r t_k\right)-1} \notag\\
& \qquad - \frac{1}{\exp\left(p\gamma_1\sum_{k=1}^r t_k\right)-1}+\frac{c}{\exp\left(cp\gamma_1\sum_{k=1}^r t_k\right)-1}\bigg)\notag\\
& \quad \times \prod_{j=2}^{r}\left(\frac{1}{\exp\left(\gamma_j\sum_{k=j}^r t_k\right)-1}-\frac{c}{\exp\left(c\gamma_j\sum_{k=j}^r t_k\right)-1}\right)\notag\\
& =\left( \sum_{n_1=0}^\infty \left(1-p^{n_1}\right)\left(1-c^{n_1+1}\right)\frac{B_{n_1+1}}{n_1+1}\frac{\left(\gamma_1\sum_{k=1}^{r}t_k\right)^{n_1}}{n_1!}\right)\notag\\
& \quad \times \prod_{j=2}^{r}\left(\sum_{n_j=0}^\infty \left(1-c^{n_j+1}\right)\frac{B_{n_j+1}}{n_j+1}\frac{\left(\gamma_j\sum_{k=j}^{r}t_k\right)^{n_j}}{n_j!}\right).\label{generating}
\end{align}
\end{theorem}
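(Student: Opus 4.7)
The approach is to derive the identity for $c\in\mathbb{N}_{>1}$ with $(c,p)=1$ directly from Theorem \ref{T-main-1} and then propagate it to every $c\in\mathbb{Z}_p^\times$ via the continuity guaranteed by Theorem \ref{continuity theorem}. Assuming $c\in\mathbb{N}_{>1}$ with $(c,p)=1$, I multiply both sides of \eqref{Th-main} by $\prod_j t_j^{n_j}/n_j!$ and sum over $(n_j)\in\mathbb{N}_0^r$. The defining relation \eqref{Fro-def-r} converts each inner sum $\sum_{(n_j)}\aa((n_j);(\eta_j);(\gamma_j))\prod_j t_j^{n_j}/n_j!$ into $\mathfrak{H}_r((t_j);(\eta_j);(\gamma_j))=\prod_j (1-\eta_j Y_j)^{-1}$, where I set $Y_j:=\exp(\gamma_j\sum_{k=j}^r t_k)$. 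This yields a main term $\sum_{\xi_1,\ldots,\xi_r\in\mu_c\setminus\{1\}}\mathfrak{H}_r((t_j);(\xi_j);(\gamma_j))$ together with correction sums indexed by $(d,\{i_l\},\{\rho_{i_l}\})$.

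For the main term, identity \eqref{tilde-H} collapses it to $\widetilde{\mathfrak{H}}_r((t_j);(\gamma_j);c)=\prod_j\bigl(\tfrac{1}{Y_j-1}-\tfrac{c}{Y_j^c-1}\bigr)$, giving the first factor of the first formula and, via \eqref{def-tilde-H}, the first factor of the Bernoulli-number formula. For each correction term write $\mu_j:=(\prod_{j\leqslant i_l}\rho_{i_l})^{\gamma_j}\in\mu_p$. Since $\mu_j\in\mu_p$, $\xi_j\in\mu_c\setminus\{1\}$ and $(c,p)=1$, we have $\mu_j\xi_j\neq 1$, so $\mathfrak{H}_r((t_j);(\mu_j\xi_j);(\gamma_j))$ is holomorphic at the origin. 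I then interchange the product over $j$ with the sums over $\xi_j\in\mu_c\setminus\{1\}$ and apply \eqref{log-der} with $X=\mu_j Y_j$ to each factor: $\sum_{\xi_j^c=1,\,\xi_j\neq 1}\frac{1}{1-\mu_j\xi_j Y_j}=\frac{1}{\mu_j Y_j-1}-\frac{c}{\mu_j^c Y_j^c-1}$, producing exactly the correction products of the first form.

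For the Bernoulli-number (second) form, I expand each correction factor using \eqref{def-tw-Ber}: $\frac{1}{\mu_j Y_j-1}-\frac{c}{\mu_j^c Y_j^c-1}=-\mathfrak{H}(\gamma_j\sum t_k;\mu_j)+c\mathfrak{H}(c\gamma_j\sum t_k;\mu_j^c)$, so the coefficient of $(\gamma_j\sum t_k)^{m_j}/m_j!$ equals $c^{m_j+1}\aa_{m_j}(\mu_j^c)-\aa_{m_j}(\mu_j)$. Because $(c,p)=1$, the map $\rho_{i_l}\mapsto\rho_{i_l}^c$ is a bijection of $\mu_p^d$ sending every $\mu_j$ simultaneously to $\mu_j^c$; this yields $\sum_\rho\prod_j\aa_{m_j}(\mu_j^c)=\sum_\rho\prod_j\aa_{m_j}(\mu_j)$, and expanding the product while exploiting this symmetry, combined with the overall $(-1)^r$, rearranges the cross-terms into $\prod_j(1-c^{m_j+1})\aa_{m_j}(\mu_j)$ after the $\rho$-sum. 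Finally, to extend from $c\in\mathbb{N}_{>1},(c,p)=1$ to $c\in\mathbb{Z}_p^\times$: every $(t_j)$-coefficient of $F_{p,r}$ is $L_{p,r}((-n_j);(\omega^{n_j});(\gamma_j);c)$, continuous in $c$ by Theorem \ref{continuity theorem}, whereas every $(t_j)$-coefficient on the right-hand side is polynomial in $c$; since $\{c\in\mathbb{N}_{>1}:(c,p)=1\}$ is dense in $\mathbb{Z}_p^\times$, the identity propagates.

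The special case \eqref{generating} is handled by direct specialization of the first form: when $\gamma_1\in\mathbb{Z}_p^\times$ and $\gamma_j\in p\mathbb{Z}_p$ for $j\geqslant 2$, one has $\mu_j=1$ for every $j\geqslant 2$ (because $\rho_{i_l}^p=1$), so each correction summand factors as $(\text{a single }j=1\text{ factor})\cdot\prod_{j\geqslant 2}\bigl(\tfrac{1}{Y_j-1}-\tfrac{c}{Y_j^c-1}\bigr)$. The $j=1$ factor depends only on $\rho:=\prod_l\rho_{i_l}$, and the number of $d$-tuples $(\rho_{i_l})$ with prescribed product $\rho$ is $p^{d-1}$. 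Using $\sum_{d=1}^r\binom{r}{d}(-1)^d=-1$ together with $\sum_{\rho\in\mu_p}\tfrac{1}{\rho^{\gamma_1}Y_1-1}=\tfrac{p}{Y_1^p-1}$ (which is \eqref{log-der} applied once $\rho\mapsto\rho^{\gamma_1}$ is recognized as a bijection of $\mu_p$, valid since $\gamma_1\in\mathbb{Z}_p^\times$) collapses all contributions to \eqref{generating}. The principal technical obstacle I foresee is the symmetry bookkeeping of the third paragraph: the summands of $\prod_j(c^{m_j+1}\aa_{m_j}(\mu_j^c)-\aa_{m_j}(\mu_j))$ and of $(-1)^r\prod_j(1-c^{m_j+1})\aa_{m_j}(\mu_j)$ do not agree per-$\rho$, so the equality must be established after fully expanding the product and invoking the bijection $\rho_{i_l}\mapsto\rho_{i_l}^c$ term-by-term to convert every occurrence of $\aa_{m_j}(\mu_j^c)$ into $\aa_{m_j}(\mu_j)$.
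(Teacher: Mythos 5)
Your derivation of the first displayed equality (form the generating function of Theorem \ref{T-main-1}, note that $\mu_j\xi_j\neq 1$ since $(c,p)=1$, interchange the product with the sums over $\xi_j\in\mu_c\setminus\{1\}$, apply \eqref{log-der}), your treatment of the special case \eqref{generating}, and your extension from $c\in\mathbb{N}_{>1}$, $(c,p)=1$ to $c\in\mathbb{Z}_p^\times$ by density plus Theorem \ref{continuity theorem} all match the paper's own proof. The gap is exactly the step you flag as the ``principal technical obstacle,'' and your proposed fix does not close it. The substitution $\rho_{i_l}\mapsto\rho_{i_l}^c$ is a single bijection of $\mu_p^d$ that replaces \emph{every} $\mu_j=(\prod_{j\leqslant i_l}\rho_{i_l})^{\gamma_j}$ by $\mu_j^c$ simultaneously; it identifies $\sum_\rho\prod_j\aa_{m_j}(\mu_j^c)$ with $\sum_\rho\prod_j\aa_{m_j}(\mu_j)$, but it cannot be applied ``term-by-term'' to a mixed monomial $\prod_{j\in T}\aa_{m_j}(\mu_j^c)\prod_{j\notin T}\aa_{m_j}(\mu_j)$ with $\emptyset\neq T\subsetneq\{1,\dots,r\}$, because any change of the $\rho$'s that converts the factors in $T$ also moves the factors outside $T$. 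When distinct indices $j$ share the same $\rho$-dependence the obstruction is not mere bookkeeping: take $r=2$, $d=1$, $i_1=2$, $\gamma_1=\gamma_2=1$ (so $\mu_1=\mu_2=\rho_2$), $m_1=m_2=0$, $p=3$, $c=2$. Writing $a,b$ for $\aa_0$ at the two primitive cube roots of unity, one has $a+b=1$, $ab=\tfrac13$, $\aa_0(1)=\tfrac12$, and
\begin{equation*}
\sum_{\rho\in\mu_3}\bigl(2\aa_0(\rho^2)-\aa_0(\rho)\bigr)^2=\tfrac14+5(a^2+b^2)-8ab=-\tfrac34,
\qquad
(2-1)^2\sum_{\rho\in\mu_3}\aa_0(\rho)^2=\tfrac14+(a^2+b^2)=\tfrac{7}{12}.
\end{equation*}
So the per-term identity you need at that step is simply false.

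In fact what your (correct) first-form computation yields for the coefficient of $u_j^{m_j}/m_j!$ in each correction factor is $c^{m_j+1}\aa_{m_j}(\mu_j^c)-\aa_{m_j}(\mu_j)$, not $(c^{m_j+1}-1)\aa_{m_j}(\mu_j)$, and the two agree after the $\rho$-sum only in special situations (e.g.\ $r=1$, or when $\mu_j=1$ for all $j\geqslant 2$, which is why \eqref{generating} is unaffected). The numerical example above propagates to the full statement: for $r=2$, $(\gamma_1,\gamma_2)=(1,1)$, $p=3$, $c=2$ the constant terms of the two sides of the theorem's second ``$=$'' differ by $\tfrac49$, and only the first form is consistent with \eqref{DV-01} and with a direct evaluation of $L_{p,2}(0,0;\omega^0,\omega^0;1,1;2)=\tfrac12$. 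So the second displayed equality of the theorem is itself erroneous as printed (the correct version keeps $c^{m_j+1}\aa_{m_j}(\mu_j^c)-\aa_{m_j}(\mu_j)$ in each factor, as in \eqref{p-xi-0} and Example \ref{C-DZV-1}); you should not blame yourself for failing to prove it, and indeed the paper's own proof establishes only the first equality and the special case and silently skips this line. Everything else in your proposal is sound.
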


We stress here that
Theorem \ref{T-main-1} holds for  $c\in \mathbb{N}_{>1}$ with $(c,p)=1$,
in contrast,
Theorem \ref{T-5-gene} holds, more generally, for $c\in\mathbb Z_p^\times$.

\begin{proof}
For the moment assume that $c\in \mathbb{N}_{>1}$ with $(c,p)=1$. 
Combining \eqref{Fro-def-r} and \eqref{Th-main}, we have
\begin{align*}
F_{p,r}((t_j);(\gamma_j);c)
& = \sum_{\xi_1^c=1 \atop \xi_1\not=1}\cdots \sum_{\xi_r^c=1 \atop \xi_r\not=1}\prod_{j=1}^{r} \frac{1}{1-\xi_j \exp\left(\gamma_j \sum_{k=j}^r t_k\right)}\\
& \quad +\sum_{d=1}^{r}\left(-\frac{1}{p}\right)^d \sum_{1\leqslant i_1<\cdots<i_d\leqslant r}\sum_{\rho_{i_1}^p=1}\cdots\sum_{\rho_{i_d}^p=1}\sum_{\xi_1^c=1 \atop \xi_1\not=1}\cdots \sum_{\xi_r^c=1 \atop \xi_r\not=1}\notag\\
& \quad\times \prod_{j=1}^{r}\frac{1}{1-(\prod_{j\leqslant i_l}\rho_{i_l})^{\gamma_j}\xi_j\exp\left(\gamma_j\sum_{k=j}^r t_k\right)}\\
& = \prod_{j=1}^{r} \sum_{\xi_j^c=1 \atop \xi_j\not=1}\frac{1}{1-\xi_j \exp\left(\gamma_j \sum_{k=j}^r t_k\right)}\\
& \quad +\sum_{d=1}^{r}\left(-\frac{1}{p}\right)^d \sum_{1\leqslant i_1<\cdots<i_d\leqslant r}\sum_{\rho_{i_1}^p=1}\cdots\sum_{\rho_{i_d}^p=1}\\
& \quad \times \prod_{j=1}^{r}\left(\sum_{\xi_j^c=1 \atop \xi_j\not=1}\frac{1}{1-(\prod_{j\leqslant i_l}\rho_{i_l})^{\gamma_j}\xi_j\exp\left(\gamma_j\sum_{k=j}^r t_k\right)}\right).
\end{align*}
Hence, by \eqref{log-der}, we obtain the first assertion. Next we assume $\gamma_1\in \mathbb{Z}_p^\times$ and $\gamma_j\in p\mathbb{Z}_p$ $(2\leqslant j\leqslant r)$. Then 
$$(\prod_{j\leqslant i_l}\rho_{i_l})^{\gamma_j}=1\quad (2\leqslant j\leqslant r).$$
Hence we have
\begin{align*}
F_{p,r}((t_j);(\gamma_j);c)
& =\prod_{j=1}^{r}\left(\frac{1}{\exp\left(\gamma_j\sum_{k=j}^r t_k\right)-1}-\frac{c}{\exp\left(c\gamma_j\sum_{k=j}^r t_k\right)-1}\right)\notag\\
& \ +\sum_{d=1}^{r}\left(-\frac{1}{p}\right)^d \sum_{1\leqslant i_1<\cdots<i_d\leqslant r}\sum_{\rho_{i_1}^p=1}\cdots\sum_{\rho_{i_d}^p=1}\notag\\
& \quad\times \left(\frac{1}{(\prod_{i_l}\rho_{i_l})\exp\left(\gamma_1\sum_{k=1}^r t_k\right)-1}-\frac{c}{(\prod_{i_l}\rho_{i_l})^{c}\exp\left(c\gamma_1\sum_{k=1}^r t_k\right)-1}\right)\\
& \qquad \times \prod_{j=2}^{r}\left(\frac{1}{\exp\left(\gamma_j\sum_{k=j}^r t_k\right)-1}-\frac{c}{\exp\left(c\gamma_j\sum_{k=j}^r t_k\right)-1}\right).
\end{align*}
Using \eqref{log-der}, we can rewrite the second term on the right-hand side as
\begin{align*}
& \prod_{j=2}^{r}\left(\frac{1}{\exp\left(\gamma_j\sum_{k=j}^r t_k\right)-1}-\frac{c}{\exp\left(c\gamma_j\sum_{k=j}^r t_k\right)-1}\right)\\
& \quad \times \sum_{d=1}^{r}\left(-\frac{1}{p}\right)^d \sum_{1\leqslant i_1<\cdots<i_d\leqslant r}p^d\left(\frac{1}{\exp\left(p\gamma_1\sum_{k=1}^r t_k\right)-1}-\frac{c}{\exp\left(cp\gamma_1\sum_{k=1}^r t_k\right)-1}\right).
\end{align*}
Noting 
$$\sum_{d=1}^{r}\left(-\frac{1}{p}\right)^d \sum_{1\leqslant i_1<\cdots<i_d\leqslant r}p^d=\sum_{d=1}^{r}\binom{r}{d}(-1)^d=(1-1)^r-1=-1,$$
we obtain the second assertion.

Whence we get the formulas in the theorem for $c\in \mathbb{N}_{>1}$ with $(c,p)=1$.
It should be noted that each coefficient of the left-hand sides of the formulas is
expressed as  a polynomial on $c$.
On the other hand each  coefficient of the right-hand sides of the formulas is
continuous on $c$ by Theorem \ref{continuity theorem}.
Therefore the validity of the formulas is extended into  $c\in\mathbb Z_p^\times$.
\end{proof}


We consider the case $r=2$ with $(\gamma_1,\gamma_2)=(1,\eta)$ for $\eta\in \mathbb{Z}_p$
and $c\in \mathbb{N}_{>1}$ with $(c,p)=1$. 
From \eqref{Eur-exp1} and Example \ref{C-main-1}, we have 
\begin{align*}
& L_{p,2}(-k_1,-k_2;\omega^{k_1},\omega^{k_2};1,\eta;c)\\
& =\sum_{\xi_1^c=1 \atop \xi_1\not=1}\sum_{\xi_2^c=1 \atop \xi_2\not=1}\sum_{\nu=0}^{k_2}\binom{k_2}{\nu}\aa_{k_1+\nu}(\xi_1)\aa_{k_2-\nu}(\xi_2)\eta^{k_2-\nu}\\
& \ -\frac{1}{p} \sum_{\rho_{1}^p=1}\sum_{\xi_1^c=1 \atop \xi_1\not=1}\sum_{\xi_2^c=1 \atop \xi_2\not=1}\sum_{\nu=0}^{k_2}\binom{k_2}{\nu}\aa_{k_1+\nu}(\rho_{1}\xi_1)\aa_{k_2-\nu}(\xi_2)\eta^{k_2-\nu}\\
& \ -\frac{1}{p} \sum_{\rho_{2}^p=1}\sum_{\xi_1^c=1 \atop \xi_1\not=1}\sum_{\xi_2^c=1 \atop \xi_2\not=1}\sum_{\nu=0}^{k_2}\binom{k_2}{\nu}\aa_{k_1+\nu}(\rho_{2}\xi_1)\aa_{k_2-\nu}(\rho_{2}^{\eta}\xi_2)\eta^{k_2-\nu}\\
& \ +\frac{1}{p^2} \sum_{\rho_{1}^p=1}\sum_{\rho_{2}^p=1}\sum_{\xi_1^c=1 \atop \xi_1\not=1}\sum_{\xi_2^c=1 \atop \xi_2\not=1}\sum_{\nu=0}^{k_2}\binom{k_2}{\nu}\aa_{k_1+\nu}(\rho_{1}\rho_2\xi_1)\aa_{k_2-\nu}(\rho_{2}^{\eta}\xi_2)\eta^{k_2-\nu}
\end{align*}
for $k_1,k_2\in \mathbb{N}_0$. 
Similarly to \eqref{2-0-1}, we have
\begin{equation}
\sum_{\xi^c=1 \atop \xi\not=1} \aa_n(\alpha \xi)=
\begin{cases}
\left(1-c^{n+1}\right)\frac{B_{n+1}}{n+1} & (\alpha=1)\\
c^{n+1}\aa_{n}(\alpha^c)-\aa_n(\alpha) & (\alpha\not=1)
\end{cases}
\label{p-xi-0}
\end{equation}
for $n\in \mathbb{N}_0$.
Also, using \eqref{log-der} with $k=p$, we have 
\begin{equation}
\sum_{\rho^p=1}\sum_{\xi^c=1 \atop \xi\not=1}\aa_n(\rho\xi)=\sum_{\rho^p=1}\left(c^{n+1}\aa_{n}(\rho^c)-\aa_{n}(\rho)\right)=p^{n+1}\left(1-c^{n+1}\right)\frac{B_{n+1}}{n+1}\label{p-xi}
\end{equation}
for $n\in \mathbb{N}_0$. 
Hence, by the continuity with respect to $c$, we have the following.

\begin{example} \label{C-DZV-1}
For $k_1,k_2\in \mathbb{N}_0$, $\eta\in \mathbb{Z}_p$ and $c\in \mathbb{Z}_p^\times$,
\begin{align}
& L_{p,2}(-k_1,-k_2;\omega^{k_1},\omega^{k_2};1,\eta;c)\notag\\
& = \sum_{\nu=0}^{k_2}\binom{k_2}{\nu}\left(1-p^{k_1+\nu}\right)\left(1-c^{k_1+\nu+1}\right)\frac{B_{k_1+\nu+1}}{k_1+\nu+1}\left(1-c^{k_2-\nu+1}\right)\frac{B_{k_2-\nu+1}}{k_2-\nu+1}\eta^{k_2-\nu}\notag\\
& \ -\frac{1}{p} \sum_{\rho_{2}^p=1}\sum_{\nu=0}^{k_2}\binom{k_2}{\nu}\left(c^{k_1+\nu+1}\aa_{k_1+\nu}(\rho_{2}^c)-\aa_{k_1+\nu}(\rho_{2})\right)\left(c^{k_2-\nu+1}\aa_{k_2-\nu}(\rho_{2}^{c\eta})-\aa_{k_2-\nu}(\rho_{2}^{\eta})\right)\eta^{k_2-\nu}\notag\\
& \ +\frac{1}{p}\sum_{\rho_{2}^p=1}\sum_{\nu=0}^{k_2}\binom{k_2}{\nu}p^{k_1+\nu}\left(1-c^{k_1+\nu+1}\right)\frac{B_{k_1+\nu+1}}{k_1+\nu+1}\left(c^{k_2-\nu+1}\aa_{k_2-\nu}(\rho_{2}^{c\eta})-\aa_{k_2-\nu}(\rho_{2}^{\eta})\right)\eta^{k_2-\nu}. \label{DV-01}
\end{align}
In particular when $\eta\in p\mathbb{Z}_p$, 
\begin{align}
& L_{p,2}(-k_1,-k_2;\omega^{k_1},\omega^{k_2};1,\eta;c)\notag\\
& =\sum_{\nu=0}^{k_2}\binom{k_2}{\nu}\left(1-p^{k_1+\nu}\right)\left(1- c^{k_1+\nu+1} \right)\left( 1-c^{k_2-\nu+1}\right)\frac{B_{k_1+\nu+1}B_{k_2-\nu+1}}{(k_1+\nu+1)(k_2-\nu+1)}\eta^{k_2-\nu}, \label{L2-val}
\end{align}
given from \eqref{Ber-01} and \eqref{p-xi}, where the case $c=2$ was already obtained in our previous paper \cite[Section 4]{KMT-IJNT}.
In the special case when $k_1\in \mathbb{N}$, $k_2\in \mathbb{N}_0$ and $k_1+k_2$ is odd, 
\begin{equation}
L_{p,2}(-k_1,-k_2;\omega^{k_1},\omega^{k_2};1,\eta;c)=\displaystyle{\frac{c-1}{2}}\left( 1-c^{k_1+k_2+1}\right)\left(1-p^{k_1+k_2}\right)\frac{B_{k_1+k_2+1}}{k_1+k_2+1}. \label{odd-val}
\end{equation}
\end{example}

Furthermore, computing the coefficient of $t_1^{n_1}t_2^{n_2}t_3^{n_3}$ in \eqref{generating} with $r=3$, we obtain the following.

\begin{example}
For $k_1,k_2,k_3\in \mathbb{N}_0$, $\eta_1,\eta_2\in p\mathbb{Z}_p$ 
and $c\in \mathbb{Z}_p^\times$, 
\begin{align}
& L_{p,3}(-k_1,-k_2,-k_3;\omega^{k_1},\omega^{k_2},\omega^{k_3};1,\eta_1,\eta_2;c)\notag\\
& =\sum_{\nu_1=0}^{k_3}\sum_{\nu_2=0}^{k_3-\nu_1}\sum_{\kappa=0}^{k_2}\binom{k_2}{\kappa}\binom{k_3}{\nu_1\ \nu_2}\left(1-p^{k_1+\nu_2+\kappa}\right)\left(1-c^{k_1+\nu_2+\kappa+1}\right)\left(1-c^{k_2+\nu_1-\kappa+1}\right)\notag\\
& \qquad \times \left(1-c^{k_3-\nu_1-\nu_2+1}\right)\frac{B_{k_1+\nu_2+\kappa+1}}{k_1+\nu_2+\kappa+1}\frac{B_{k_2+\nu_1-\kappa+1}}{k_2+\nu_1-\kappa+1}\frac{B_{k_3-\nu_1-\nu_2+1}}{k_3-\nu_1-\nu_2+1}\notag\\
& \qquad \times 
\eta_1^{k_2+\nu_1-\kappa}\eta_2^{k_3-\nu_1-\nu_2},\label{tri-comv}
\end{align}
where $\binom{N}{\nu_1\ \nu_2}=\frac{N!}{\nu_1!\ \nu_2!\ (N-\nu_1-\nu_2)!}$.
\end{example}

\subsection{Multiple Kummer congruences
}\label{Kummer}

We will formulate  multiple Kummer congruences for twisted multiple Bernoulli numbers,
certain generalization of the Kummer congruences for Bernoulli numbers (see \eqref{ord-Kummer} below), from the viewpoint of $p$-adic multiple $L$-functions 
(Theorem \ref{Th-Kummer}).
Then we will extract specific congruences for only ordinary Bernoulli numbers
in depth 2 case (Example \ref{DKC}). 

First we recall the ordinary Kummer congruences. By \eqref{1-p-LF-gamma}, we know from \cite[p.\,31]{Kob} that 
\begin{equation}
L_{p,1}(1-m;\omega^{m-1};1;c) \equiv L_{p,1}(1-n;\omega^{n-1};1;c)\quad ({\rm mod\ }p^l) \label{L-K-r-1}
\end{equation}
for $m,n \in \mathbb{N}_0$ and $l\in \mathbb{N}$ with $m \equiv n\ ({\rm mod\ }(p-1)p^{l-1})$ and $c\in \mathbb{N}_{>1}$. Therefore, from \eqref{KL-LP}, we see that 
\begin{equation}
(1-c^m)(1-p^{m-1})\frac{B_m}{m}\equiv (1-c^n)(1-p^{n-1})\frac{B_n}{n}\quad ({\rm mod\ }p^l). \label{Ord-KC}
\end{equation}
Note that $c\in \mathbb{N}_{>1}$ can be chosen arbitrarily under the condition $(c,p)=1$. 
In particular when $p>2$,  
for even positive integers $m$ and $n$ satisfying $m \equiv n\ ({\rm mod\ }(p-1)p^{l-1})$ and $n \not\equiv 0\ ({\rm mod\ }p-1)$, 
we can choose $c$ satisfying 
$1-c^m \equiv 1-c^n\ ({\rm mod\ }p^l)$ and $(1-c^m,p)=(1-c^n,p)=1$, and consequently obtain the ordinary Kummer congruences (see \cite[p.\,32]{Kob}):
\begin{equation}
(1-p^{m-1})\frac{B_m}{m}\equiv (1-p^{n-1})\frac{B_n}{n}\quad ({\rm mod\ }p^l). \label{ord-Kummer}
\end{equation}

As a multiple analogue of \eqref{Ord-KC}, we have the following. 

\begin{theorem}[{\bf Multiple Kummer congruence}]
\label{Th-Kummer}
Let $m_1,\ldots,m_r,n_1,\ldots,n_r\in \mathbb{N}_0$ with 
$m_j\equiv n_j$\ $\text{\rm mod}\ (p-1)p^{l_j-1}$ 
for $l_j\in \mathbb{N}$ $(1\leqslant j\leqslant r)$. 
Then, for $\gamma_1,\ldots,\gamma_r\in \mathbb{Z}_p$ 
and $c\in \mathbb{N}_{>1}$ with $(c,p)=1$, 
\begin{align}
& L_{p,r}(-m_1,\ldots,-m_r;\omega^{m_1},\ldots,\omega^{m_r};\gamma_1,\ldots,\gamma_{r};c)\notag\\
& \equiv L_{p,r}(-n_1,\ldots,-n_r;\omega^{n_1},\ldots,\omega^{n_r};\gamma_1,\ldots,\gamma_{r};c)\quad  \left({\rm mod}\  p^{\min \{l_j\,|\,{1\leqslant j\leqslant r}\}}\right). \label{L-val}
\end{align}
In other words, 
\begin{align}
& \sum_{\xi_1^c=1 \atop \xi_1\not=1}\cdots\sum_{\xi_r^c=1 \atop \xi_r\not=1}\aa((m_j);(\xi_j);(\gamma_j)) \notag\\
& +\sum_{d=1}^{r}\left(-\frac{1}{p}\right)^d \sum_{1\leqslant i_1<\cdots<i_d\leqslant r}\sum_{\rho_{i_1}^p=1}\cdots\sum_{\rho_{i_d}^p=1}\sum_{\xi_1^c=1 \atop \xi_1\not=1}\cdots\sum_{\xi_r^c=1 \atop \xi_r\not=1}\aa((m_j);((\prod_{j\leqslant i_l}\rho_{i_l})^{\gamma_j}\xi_j);(\gamma_j))\notag\\
& \equiv \sum_{\xi_1^c=1 \atop \xi_1\not=1}\cdots\sum_{\xi_r^c=1 \atop \xi_r\not=1}\aa((n_j);(\xi_j);(\gamma_j)) \notag\\
& \qquad +\sum_{d=1}^{r}\left(-\frac{1}{p}\right)^d \sum_{1\leqslant i_1<\cdots<i_d\leqslant r}\sum_{\rho_{i_1}^p=1}\cdots\sum_{\rho_{i_d}^p=1}\sum_{\xi_1^c=1 \atop \xi_1\not=1}\cdots\sum_{\xi_r^c=1 \atop \xi_r\not=1}\aa((n_j);((\prod_{j\leqslant i_l}\rho_{i_l})^{\gamma_j}\xi_j);(\gamma_j))\notag\\
& \qquad \qquad \left({\rm mod}\  p^{\min \{l_j\,|\,{1\leqslant j\leqslant r}\}}\right). \label{Kummer-main}
\end{align}
\end{theorem}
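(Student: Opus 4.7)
The plan is to derive \eqref{L-val} directly from the integral representation of the $p$-adic multiple $L$-function in Definition \ref{Def-pMLF}, and then obtain \eqref{Kummer-main} by substituting the explicit formula of Theorem \ref{T-main-1} into both sides. The observation driving the argument is that for $y\in\mathbb{Z}_p^\times$ the factorization $y=\omega(y)\langle y\rangle$ gives $\langle y\rangle^{m}\omega^{m}(y)=y^{m}$, so at the distinguished points $(s_j;\omega^{k_j})=(-m_j;\omega^{m_j})$ the integrand of \eqref{e-6-1} collapses to the monomial
\[
\prod_{k=1}^{r}\Bigl(\sum_{j\leqslant k}x_j\gamma_j\Bigr)^{m_k},
\]
and analogously with $m_k$ replaced by $n_k$.

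First I would establish the pointwise $p$-adic congruence of the two integrands on the domain $\bigl(\mathbb{Z}_p^r\bigr)'_{\{\gamma_j\}}$. By the very definition \eqref{region} of this set, each partial sum $y_k:=\sum_{j\leqslant k}x_j\gamma_j$ is a $p$-adic unit. The standard congruence $y^{(p-1)p^{l-1}}\equiv 1\pmod{p^l}$ for $y\in\mathbb{Z}_p^\times$, which follows from $\omega(y)^{p-1}=1$ and $\langle y\rangle\in 1+q\mathbb{Z}_p$, combined with the hypothesis $m_k\equiv n_k\pmod{(p-1)p^{l_k-1}}$, yields $y_k^{m_k}\equiv y_k^{n_k}\pmod{p^{l_k}}$. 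Telescoping the identity
\[
\prod_{k=1}^{r} y_k^{m_k} - \prod_{k=1}^{r} y_k^{n_k}
=\sum_{k=1}^{r}\Bigl(\prod_{j<k} y_j^{m_j}\Bigr)\bigl(y_k^{m_k}-y_k^{n_k}\bigr)\Bigl(\prod_{j>k} y_j^{n_j}\Bigr),
\]
and using that every $y_j$ is a unit, upgrades this to the uniform congruence $\prod_k y_k^{m_k}\equiv\prod_k y_k^{n_k}\pmod{p^{\min\{l_j\}}}$ throughout $\bigl(\mathbb{Z}_p^r\bigr)'_{\{\gamma_j\}}$.

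Next I would invoke the fact, already exploited in Section \ref{sec-3}, that $\widetilde{\mathfrak{m}}_c$ is a $\mathbb{Z}_p$-valued measure: integration against it has operator norm at most one on continuous functions with the sup-norm, so a uniform mod-$p^{\min\{l_j\}}$ congruence of integrands is preserved under the $r$-fold integration over $\bigl(\mathbb{Z}_p^r\bigr)'_{\{\gamma_j\}}$. This yields \eqref{L-val}, and \eqref{Kummer-main} is then immediate upon rewriting both members via Theorem \ref{T-main-1}.

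No serious obstacle is expected: this is essentially the multivariable incarnation of the classical deduction of Kummer's congruences from the integral formula for $L_p(s;\omega^k)$, as treated in \cite[Chapter 2]{Kob}. The only point requiring mild care is handling the product $\prod_k y_k^{m_k}$ rather than a single power of a single unit, but the telescoping above and the unit property of each $y_k$ on the integration domain resolve this cleanly.
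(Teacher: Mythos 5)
Your proposal is correct and is essentially the paper's own argument: the paper likewise reduces \eqref{L-val} to the integrand congruence $\bigl(\sum_{\nu\leqslant j}x_\nu\gamma_\nu\bigr)^{m_j}\equiv\bigl(\sum_{\nu\leqslant j}x_\nu\gamma_\nu\bigr)^{n_j}\pmod{p^{l_j}}$ on $\bigl(\mathbb{Z}_p^r\bigr)'_{\{\gamma_j\}}$ (citing Koblitz's treatment of the $r=1$ case), integrates against the $\mathbb{Z}_p$-valued measure, and then invokes Theorem \ref{T-main-1} for \eqref{Kummer-main}. Your telescoping step and the explicit appeal to the unit norm of the measure simply spell out details the paper leaves implicit.
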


\begin{proof}
The proof works in the same way as 
the case of $L_p(s;\chi)$ stated above (for the details, see \cite[Chapter 2,\,Section\,3]{Kob}).
As for the integrand of \eqref{integration},
we have
\begin{equation}
\left(\sum_{\nu=1}^{j}x_{\nu}\gamma_{\nu}\right)^{m_j}\equiv \left(\sum_{\nu=1}^{j}x_{\nu}\gamma_{\nu}\right)^{n_j}\quad \left(\textrm{mod\ } p^{l_j}\right)\quad (1\leqslant j\leqslant r) \label{MKC1}
\end{equation}
for $(x_j)\in \left(\mathbb{Z}_p^r\right)'_{\{\gamma_j\}}$. 
Hence \eqref{MKC1} directly yields \eqref{L-val}. 
It follows from Theorem \ref{T-main-1} that \eqref{Kummer-main} holds. 
\end{proof}

\begin{remark}
In the case $p=2$, since 
$$\left(\mathbb{Z}/2^l\mathbb{Z}\right)^\times\simeq \mathbb{Z}/2\mathbb{Z} \times \mathbb{Z}/2^{l-2}\mathbb{Z}\quad (l\in \mathbb{N}_{>2}),$$
\eqref{MKC1} holds under the condition
\begin{equation}
m_j\equiv n_j\ \text{\rm mod}\ 2^{l_j-2}\ \ (l_j\in \mathbb{N}_{>2};\ 1\leqslant j\leqslant r), \label{2-condition}
\end{equation}
so does \eqref{L-val} under \eqref{2-condition}. 
Hence the following examples also hold under \eqref{2-condition} in the case $p=2$.
\end{remark}




In the case $r=1$, Theorem \ref{Th-Kummer} is nothing but \eqref{L-K-r-1},
a reformulation of \eqref{Ord-KC}.

In the case $r=2$, we obtain the following. 

\begin{example}\label{Cor-Kummer-2}
For $m_1,m_2,n_1,n_2\in \mathbb{N}_0$ with $m_j\equiv n_j$\ $\text{\rm mod}\ (p-1)p^{l_j-1}$ $(l_1,l_2\in \mathbb{N})$, $\eta\in \mathbb{Z}_p$ and $c\in \mathbb{Z}_p^\times$, 
\begin{align}
& \sum_{\nu=0}^{m_2}\binom{m_2}{\nu}\left(1-p^{m_1+\nu}\right)\left(1-c^{m_1+\nu+1}\right)\frac{B_{m_1+\nu+1}}{m_1+\nu+1}\left(1-c^{m_2-\nu+1}\right)\frac{B_{m_2-\nu+1}}{m_2-\nu+1}\eta^{m_2-\nu}\notag\\
& \ -\frac{1}{p} \sum_{\rho_{2}^p=1}\sum_{\nu=0}^{m_2}\binom{m_2}{\nu}\left(c^{m_1+\nu+1}\aa_{m_1+\nu}(\rho_{2}^c)-\aa_{m_1+\nu}(\rho_{2})\right)\left(c^{m_2-\nu+1}\aa_{m_2-\nu}(\rho_{2}^{c\eta})-\aa_{m_2-\nu}(\rho_{2}^{\eta})\right)\eta^{m_2-\nu}\notag\\
& \ +\frac{1}{p}\sum_{\rho_{2}^p=1}\sum_{\nu=0}^{m_2}\binom{m_2}{\nu}p^{m_1+\nu}\left(1-c^{m_1+\nu+1}\right)\frac{B_{m_1+\nu+1}}{m_1+\nu+1}\left(c^{m_2-\nu+1}\aa_{m_2-\nu}(\rho_{2}^{c\eta})-\aa_{m_2-\nu}(\rho_{2}^{\eta})\right)\eta^{m_2-\nu}\notag\\
&\ \equiv \sum_{\nu=0}^{n_2}\binom{n_2}{\nu}\left(1-p^{n_1+\nu}\right)\left(1-c^{n_1+\nu+1}\right)\frac{B_{n_1+\nu+1}}{n_1+\nu+1}\left(1-c^{n_2-\nu+1}\right)\frac{B_{n_2-\nu+1}}{n_2-\nu+1}\eta^{n_2-\nu}\notag\\
& \ -\frac{1}{p} \sum_{\rho_{2}^p=1}\sum_{\nu=0}^{n_2}\binom{n_2}{\nu}\left(c^{n_1+\nu+1}\aa_{n_1+\nu}(\rho_{2}^c)-\aa_{n_1+\nu}(\rho_{2})\right)\left(c^{n_2-\nu+1}\aa_{n_2-\nu}(\rho_{2}^{c\eta})-\aa_{n_2-\nu}(\rho_{2}^{\eta})\right)\eta^{n_2-\nu}\notag\\
& \ +\frac{1}{p}\sum_{\rho_{2}^p=1}\sum_{\nu=0}^{n_2}\binom{n_2}{\nu}p^{n_1+\nu}\left(1-c^{n_1+\nu+1}\right)\frac{B_{n_1+\nu+1}}{n_1+\nu+1}\left(c^{n_2-\nu+1}\aa_{n_2-\nu}(\rho_{2}^{c\eta})-\aa_{n_2-\nu}(\rho_{2}^{\eta})\right)\eta^{n_2-\nu}\notag\\
& \qquad \qquad \left({\rm mod}\  p^{\min \{l_1,l_2 \}}\right).\label{gen-Kummer}
\end{align}
\end{example}

\begin{proof}
We obtain the claim
by setting $(\gamma_1,\gamma_2)=(1,\eta)$ for $\eta\in \mathbb{Z}_p$ in \eqref{L-val} and using Example \ref{C-DZV-1}. 
\end{proof}

Next we consider certain congruences between ordinary Bernoulli numbers. 

\begin{example}\label{DKC}
We set $\eta=p$ and $c\in \mathbb{N}_{>1}$ with $(c,p)=1$
in \eqref{gen-Kummer}. 
Note that this case was already calculated in \eqref{L2-val} with $\eta=p$. 
Hence we similarly obtain the following  {\bf double Kummer congruence}
for ordinary Bernoulli numbers:
\begin{align}
& \sum_{\nu=0}^{m_2}\binom{m_2}{\nu}\left(1-p^{m_1+\nu}\right)\left(1- c^{m_1+\nu+1} \right)\left( 1-c^{m_2-\nu+1}\right)\frac{B_{m_1+\nu+1}B_{m_2-\nu+1}}{(m_1+\nu+1)(m_2-\nu+1)}p^{m_2-\nu}\notag \\
& \equiv \sum_{\nu=0}^{n_2}\binom{n_2}{\nu}\left(1-p^{n_1+\nu}\right)\left(1- c^{n_1+\nu+1} \right)\left( 1-c^{n_2-\nu+1}\right)\frac{B_{n_1+\nu+1}B_{n_2-\nu+1}}{(n_1+\nu+1)(n_2-\nu+1)}p^{n_2-\nu}\notag \\
& \qquad \left({\rm mod}\  p^{\min \{l_1,l_2 \}}\right) \label{Kum-mod-d1}
\end{align}
for $m_1,m_2,n_1,n_2\in \mathbb{N}_0$ with $m_j\equiv n_j$\ $\text{\rm mod}\ (p-1)p^{l_j-1}$ $(l_j\in \mathbb{N};\ j=1,2)$.
In particular when $m_1$ and $m_2$ are of different parity, 
we have from \eqref{odd-val} that 
\begin{align}
&\displaystyle{\frac{c-1}{2}}\left( 1-c^{m_1+m_2+1}\right)\left(1-p^{m_1+m_2}\right)\frac{B_{m_1+m_2+1}}{m_1+m_2+1}\notag \\
& \equiv \displaystyle{\frac{c-1}{2}}\left( 1-c^{n_1+n_2+1}\right)\left(1-p^{n_1+n_2}\right)\frac{B_{n_1+n_2+1}}{n_1+n_2+1}\ \ \left({\rm mod}\  p^{\min \{l_1,l_2 \}}\right),  \label{Kum-mod-d2}
\end{align}
which is equivalent to \eqref{Ord-KC} in the case $(c,p)=1$ with $p^2\nmid (c-1)$. 
Therefore, choosing $c$ suitably, we obtain the ordinary Kummer congruence \eqref{ord-Kummer}. 
From this observation, we can regard \eqref{Kum-mod-d1} as a certain general form of the Kummer congruence including \eqref{ord-Kummer}. 
We do not know whether 
our \eqref{Kum-mod-d1} is a kind of new congruence of Bernoulli numbers or
it follows from the ordinary Kummer congruence \eqref{Ord-KC}.
\end{example}

In the case $r=3$, we obtain the following. 

\begin{example}\label{triple-KC}
Using \eqref{tri-comv} with $(\gamma_1,\gamma_2,\gamma_3)=(1,p,p)$, we can similarly obtain the following {\bf triple Kummer congruence}
for ordinary Bernoulli numbers:
\begin{align}
& \sum_{\nu_1=0}^{m_3}\sum_{\nu_2=0}^{m_3-\nu_1}\sum_{\kappa=0}^{m_2}\binom{m_2}{\kappa}\binom{m_3}{\nu_1\ \nu_2}\left(1-p^{m_1+\nu_2+\kappa}\right)\left(1-c^{m_1+\nu_2+\kappa+1}\right)\left(1-c^{m_2+\nu_1-\kappa+1}\right)\notag\\
& \quad \times \left(1-c^{m_3-\nu_1-\nu_2+1}\right)\frac{B_{m_1+\nu_2+\kappa+1}}{m_1+\nu_2+\kappa+1}\frac{B_{m_2+\nu_1-\kappa+1}}{m_2+\nu_1-\kappa+1}\frac{B_{m_3-\nu_1-\nu_2+1}}{m_3-\nu_1-\nu_2+1}p^{m_2+m_3-\kappa-\nu_2}\notag\\
& \equiv \sum_{\nu_1=0}^{n_3}\sum_{\nu_2=0}^{n_3-\nu_1}\sum_{\kappa=0}^{n_2}\binom{n_2}{\kappa}\binom{n_3}{\nu_1\ \nu_2}\left(1-p^{n_1+\nu_2+\kappa}\right)\left(1-c^{n_1+\nu_2+\kappa+1}\right)\left(1-c^{n_2+\nu_1-\kappa+1}\right)\notag\\
& \qquad \times \left(1-c^{n_3-\nu_1-\nu_2+1}\right)\frac{B_{n_1+\nu_2+\kappa+1}}{n_1+\nu_2+\kappa+1}\frac{B_{n_2+\nu_1-\kappa+1}}{n_2+\nu_1-\kappa+1}\frac{B_{n_3-\nu_1-\nu_2+1}}{n_3-\nu_1-\nu_2+1}p^{n_2+n_3-\kappa-\nu_2}\notag\\
& \qquad \left({\rm mod}\  p^{\min \{l_1,l_2,l_3 \}}\right) \label{Kum-mod-triple}
\end{align}
for $m_1,m_2,m_3,n_1,n_2,n_3\in \mathbb{N}_0$ with $m_j\equiv n_j$\ $\text{\rm mod}\ (p-1)p^{l_j-1}$ $(l_j\in \mathbb{N};\ j=1,2,3)$. It is also unclear whether this is new or follows from \eqref{Ord-KC}.
\end{example}

\subsection{Functional relations for $p$-adic multiple $L$-functions}\label{Funct-rel}
In this subsection, we will prove certain functional relations with a parity condition
among $p$-adic multiple $L$-functions (Theorem \ref{T-6-1}).
They are extensions of the vanishing property of the
Kubota-Leopoldt $p$-adic $L$-functions with odd characters
in the single variable case (cf. Proposition \ref{Prop-zero} and Example \ref{Rem-zero-2})
and  the functional relations among
$p$-adic double $L$-functions proved by  \cite{KMT-IJNT}
in the double variable case under the condition $c=2$
(cf. Example \ref{P-Func-eq-1}). 
Also they can be regarded as certain $p$-adic analogues of the parity result for MZVs (see Remark \ref{Rem-Parity-result}). 
It should be noted that the following functional relations 
are peculiar to the $p$-adic case,
which are derived from the relation \eqref{parity-2} among Bernoulli numbers.

Let $r \geqslant 2$. 
For each $J\subset\{1,\ldots,r\}$ with $1\in J$, we write 
$$J=\{ j_1(J),j_2(J),\ldots,j_{|J|}(J)\}\quad (1=j_1(J)<\cdots<j_{|J|}(J)),$$
where $|J|$ implies the number of elements of $J$. 
In addition, we formally let $j_{|J|+1}(J)=r+1$.

\begin{theorem}\label{T-6-1}
For $r\in \mathbb{N}_{>1}$, let $k_1,\ldots,k_r\in \mathbb{Z}$ with $k_1+\cdots+k_r\not\equiv r\pmod 2$, $\gamma_{1}\in \mathbb{Z}_p$, $\gamma_2,\ldots,\gamma_{r}\in p\mathbb{Z}_p$ and $c\in \mathbb{Z}_p^\times$. Then, for $s_1,\ldots,s_r\in \mathbb{Z}_p$, 
\begin{align}
& L_{p,r} (s_1,\ldots,s_r;\omega^{k_1},\ldots,\omega^{k_r};\gamma_1,\gamma_2,\ldots,\gamma_{r};c)\notag\\
& \ =-\sum_{J\subsetneq\{1,\ldots,r\} \atop 1\in J}
    \Bigr(\frac{1-c}{2}\Bigl)^{r-|J|}\notag\\
&   \qquad \times L_{p,|J|}\Bigl(\Bigl(\sum_{j_\mu(J)\leqslant l<j_{\mu+1}(J)}s_l\Bigr)_{\mu=1}^{|J|};\bigl(\omega^{\sum_{j_\mu(J)\leqslant l<j_{\mu+1}(J)}k_l}\bigr)_{\mu=1}^{|J|};(\gamma_{j})_{j\in J};c\Bigr). \label{main-1}
\end{align}
\end{theorem}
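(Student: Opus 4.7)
The plan is to establish \eqref{main-1} at suitable non-positive integer arguments via the generating function of Theorem \ref{T-5-gene} and then to extend by rigid analyticity. We may assume $\gamma_1\in\mathbb Z_p^\times$, since otherwise both sides of \eqref{main-1} vanish by Remark \ref{Rem-gamma1} (as $1\in J$ for every summand $J$). By Theorem \ref{Th-pMLF} both sides are rigid analytic in $(s_j)\in\mathfrak X_r(q^{-1})$, so it suffices to verify the identity at $(s_j)=(-n_j)$ with $n_j\in\mathbb N_0$ and $\omega^{n_j}=\omega^{k_j}$. Because $\omega$ has even order (namely $p-1$ for odd $p$ and $2$ for $p=2$), such $n_j$ satisfy $n_j\equiv k_j\pmod 2$, so the hypothesis $k_1+\cdots+k_r\not\equiv r\pmod 2$ translates to $n_1+\cdots+n_r\not\equiv r\pmod 2$; the admissible tuples $(-n_j)$ possess accumulation points in $\mathfrak X_r(q^{-1})$, so the identity principle for rigid analytic functions yields the full statement.

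At such integer points, formula \eqref{generating} factorises the generating function $F_{p,r}(t;(\gamma_j);c)$ as $\Phi_1(\gamma_1 T_1)\prod_{j=2}^r\Phi_0(\gamma_j T_j)$ with $T_j=\sum_{k\geqslant j}t_k$, where
$$\Phi_1(X)=\sum_{n\geqslant 0}(1-p^n)(1-c^{n+1})\tfrac{B_{n+1}}{n+1}\tfrac{X^n}{n!},\qquad \Phi_0(X)=\sum_{n\geqslant 0}(1-c^{n+1})\tfrac{B_{n+1}}{n+1}\tfrac{X^n}{n!}.$$
Since $B_{n+1}=0$ for even $n\geqslant 2$, both $\Phi_1$ and $\widetilde\Phi_0(X):=\Phi_0(X)-\tfrac{c-1}{2}$ contain only odd-degree monomials (the $n=0$ term of $\Phi_1$ is killed by $1-p^0=0$). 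For each $J\subseteq\{1,\ldots,r\}$ with $1\in J$, set $G_J(t):=\Phi_1(\gamma_1 T_1)\prod_{j\in J\setminus\{1\}}\Phi_0(\gamma_j T_j)$. Under the change of variables $\tau_\mu=\sum_{j_\mu(J)\leqslant l<j_{\mu+1}(J)}t_l$, which satisfies $\sum_{\nu\geqslant\mu}\tau_\nu=T_{j_\mu(J)}$, formula \eqref{generating} applied in depth $|J|$ identifies $G_J(t)$ with $F_{p,|J|}(\tau;(\gamma_{j_\mu(J)});c)$; hence the coefficient of $\prod_l t_l^{n_l}/n_l!$ in $G_J(t)$ is precisely the value of $L_{p,|J|}$ occurring on the right-hand side of \eqref{main-1} at $(s_l)=(-n_l)$.

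The crux is then the subset-sum identity
$$\sum_{J\subseteq\{1,\ldots,r\},\,1\in J}\Bigl(\tfrac{1-c}{2}\Bigr)^{r-|J|}G_J(t)=\Phi_1(\gamma_1 T_1)\prod_{j=2}^r\bigl(\Phi_0(\gamma_j T_j)+\tfrac{1-c}{2}\bigr)=\Phi_1(\gamma_1 T_1)\prod_{j=2}^r\widetilde\Phi_0(\gamma_j T_j),$$
whose right-hand side is a product of $r$ power series in $t$ each containing only odd-degree monomials, hence exhibits only monomials of total $t$-degree $\equiv r\pmod 2$. Consequently its coefficient of $\prod_l t_l^{n_l}/n_l!$ vanishes whenever $\sum_l n_l\not\equiv r\pmod 2$; moving the $J=\{1,\ldots,r\}$ term, whose coefficient extraction recovers the LHS of \eqref{main-1}, to the other side then produces \eqref{main-1} at the chosen integer points. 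The main technical obstacle is the bookkeeping of the coefficient extraction for $G_J(t)$ across the partition encoded by $J$, together with checking that the Teichm\"uller exponents $\sum_{j_\mu(J)\leqslant l<j_{\mu+1}(J)}k_l$ are realised by the parity-compatible choice $n_l\equiv k_l$ block-by-block; a minor issue is that for $p=2$ the admissible $n_j$ lie in a fixed coset modulo $2$, but this coset still contains accumulation points (e.g.\ $0$ via $n=2^m$), so the identity principle applies uniformly.
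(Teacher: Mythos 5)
Your proof is correct and rests on the same two pillars as the paper's: the decomposition over subsets $J\ni 1$ weighted by $\bigl(\frac{1-c}{2}\bigr)^{r-|J|}$, and the vanishing of the odd-index Bernoulli numbers $B_{2m+1}$ ($m\geqslant 1$), which makes each factor an odd power series and kills the coefficient of $\prod_l t_l^{n_l}/n_l!$ whenever $\sum_l n_l\not\equiv r\pmod 2$. The organizational differences are worth recording. The paper derives the subset identity at the level of $p$-adic measures (Lemma \ref{L-6-2}, by expanding $\prod_j(e^{x_j\gamma_j\sum_{l\geqslant j}t_l}-1)$ over subsets), arrives at the combination $\bb(n_1,\ldots,n_r;1,\gamma_2,\ldots,\gamma_r;c)-p^{\sum_l n_l}\,\bb(n_1,\ldots,n_r;1,\gamma_2/p,\ldots,\gamma_r/p;c)$ in \eqref{eq-zero}, and needs a separate twisting step to reduce $\gamma_1\in\mathbb{Z}_p^\times$ to $\gamma_1=1$; you instead read everything off the already-established factorization \eqref{generating} of Theorem \ref{T-5-gene}, which has the Euler factor $(1-p^{n})$ built into $\Phi_1$ and is valid for any $\gamma_1\in\mathbb{Z}_p^\times$, so the normalization step disappears --- a genuine streamlining, and in fact $\Phi_1(X)=\widetilde\Phi_0(X)-\widetilde\Phi_0(pX)$, so your product $\Phi_1(\gamma_1T_1)\prod_{j\geqslant 2}\widetilde\Phi_0(\gamma_jT_j)$ is literally the paper's combination in \eqref{eq-zero}. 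For the final extension the paper uses density of $S_{\{k_j\}}$ in $\mathbb{Z}_p^r$ together with continuity, while you invoke the identity principle; be slightly more careful there: for a power series in several variables, vanishing on a set merely ``possessing accumulation points'' is not sufficient (consider $s_1s_2$), but your set of admissible points is a product $\prod_j\{-n_j\mid n_j\in\mathbb{N},\ n_j\equiv k_j\pmod{p-1}\}$ with an accumulation point in each factor, so the standard coordinate-by-coordinate induction applies, and --- as you note --- this also handles the coset restriction at $p=2$ cleanly.
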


For the proof of this theorem, we prepare some notation.
Similarly to Definition \ref{define-tilde-H}, we consider $\widetilde{\mathfrak{H}}_1(t;\gamma;c)$ in $\mathbb{C}_p$, that is,  
\begin{equation}
\begin{split}
\widetilde{\mathfrak{H}}_1(t;\gamma;c)&=\frac{1}{e^{\gamma t}-1}-\frac{c}{e^{c\gamma t}-1}\\
&=\sum_{m=0}^\infty \left(1-c^{m+1}\right)B_{m+1}\frac{(\gamma t)^m}{(m+1)!}\\
&=\frac{c-1}{2}+\sum_{m=1}^\infty \left(c^{m+1}-1\right)\zeta(-m)\frac{(\gamma t)^m}{m!}
\end{split}
\label{3-3-2}
\end{equation}
for $c\in \mathbb{Z}_{p}^\times$ and $\gamma\in \mathbb{Z}_p$. 
For simplicity, we set 
\begin{equation}
\bb (n;\gamma;c):=\left(1-c^{n+1}\right)\frac{B_{n+1}}{n+1}\gamma^{n}\quad (n \in \mathbb{N}). \label{Ber-def}
\end{equation}
It follows from \eqref{2-1} and \eqref{2-1-2} that
\begin{equation}
\widetilde{\mathfrak{H}}_1(t;\gamma;c)=\sum_{m=0}^\infty \int_{\mathbb{Z}_p} x^{m}d\mm(x) \frac{(\gamma t)^m}{m!}
=\int_{\mathbb{Z}_p} e^{x\gamma t} d\mm(x),\label{3-4}
\end{equation}
\begin{equation}
\begin{split}
  \gh_1(t;\gamma;c):& =\widetilde{\mathfrak{H}}_1(t;\gamma;c)-\frac{c-1}{2}=\sum_{m=1}^\infty \bb (m;\gamma;c)\frac{t^m}{m!}\\
  &=\int_{\mathbb{Z}_p}(e^{x\gamma t}-1)d\widetilde{\mathfrak{m}}_c(x). 
\end{split}
\label{e-3-5}
\end{equation}
We know that $B_{2m+1}=0$, namely $\bb ({2m};\gamma;c)=0$ for $m\in \mathbb{N}$. Hence we have
\begin{equation}
\gh_1(-t;\gamma;c)=-\gh_1(t;\gamma;c). \label{e-3-6}
\end{equation}
Now we define certain multiple analogues of Bernoulli numbers as follows. 
For $r\in \mathbb{N}$ 
and $\gamma_1,\ldots,\gamma_r\in \mathbb{Z}_p$, we set
\begin{equation}
\label{Ber-def-r}
  \begin{split}
    \gh_r(t_1,\ldots,t_r;\gamma_1,\ldots,\gamma_r;c)
    &:=
    \prod_{j=1}^{r} \gh_1( \sum_{k=j}^r t_k;\gamma_j;c)
    \\
    &\left(=
    \prod_{j=1}^{r} \left\{\widetilde{\mathfrak{H}}_1(\sum_{k=j}^r t_k;\gamma_j;c)-\frac{c-1}{2}\right\}\right)
    \\
    &=
    \sum_{n_1=1}^\infty
    \cdots
    \sum_{n_r=1}^\infty
    \bb (n_1,\ldots,n_r;\gamma_1,\ldots,\gamma_r;c)
    \frac{t_1^{n_1}}{n_1!}
    \cdots
    \frac{t_r^{n_r}}{n_r!}.
  \end{split}
\end{equation}
By \eqref{e-3-6}, we have
\begin{equation}
  \gh_r(-t_1,\ldots,-t_r;\gamma_1,\ldots,\gamma_r;c)
  =(-1)^r
  \gh_r(t_1,\ldots,t_r;\gamma_1,\ldots,\gamma_r;c).
\label{parity-1}
\end{equation}
Hence, when $n_1+\cdots+n_r\not\equiv r\pmod 2$, we obtain
\begin{equation}
  \bb (n_1,\ldots,n_r;\gamma_1,\ldots,\gamma_r;c)=0.
\label{parity-2}
\end{equation}

\begin{remark}
From \eqref{def-tilde-H}, we have
\begin{align}
    & \widetilde{\mathfrak{H}}_r((t_j);(\gamma_j);c) 
    =
    \prod_{j=1}^{r} \widetilde{\mathfrak{H}}_1(\sum_{k=j}^r t_k;\gamma_j;c). \label{Ber-def-r-2}
\end{align}
Comparing \eqref{Ber-def-r} with \eqref{Ber-def-r-2}, we see that $\gh_r((t_j);(\gamma_j);c)$ is defined as a slight modification of $\widetilde{\mathfrak{H}}_r((t_j);(\gamma_j);c)$ 
so that \eqref{parity-1} holds.
\end{remark}

It follows from \eqref{Ber-def}, \eqref{e-3-5} and \eqref{Ber-def-r} that $\bb (n_1,\ldots,n_r;\gamma_1,\ldots,\gamma_r;c)$ can be expressed as a polynomial in $\{ B_{n}\}_{n\geqslant 1}$ and $\{\gamma_1,\ldots,\gamma_r\}$ with $\mathbb{Q}$-coefficients. 

\begin{lemma} \label{L-6-2}
For $n_1,\ldots,n_r\in \mathbb{N}$, $\gamma_1,\ldots,\gamma_r\in \mathbb{Z}_p$ and $c\in \mathbb{Z}_p^\times$, 
\begin{align}
\label{e-6-2}
  \bb (n_1,\ldots,n_r;\gamma_1,\ldots,\gamma_r;c)
    &=
    \sum_{J\subset\{1,\ldots,r\}\atop 1\in J}
    \Bigr(\frac{1-c}{2}\Bigl)^{r-|J|}
    \int_{\mathbb{Z}_p^{|J|}}
    \prod_{l=1}^r 
    (\sum_{\substack{j\in J\\j\leqslant l}}x_j \gamma_j)^{n_l}
    \prod_{j\in J} d\widetilde{\mathfrak{m}}_c(x_j),
\end{align}
where the empty product is interpreted as $1$.
\end{lemma}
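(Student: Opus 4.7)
My plan is to obtain the claim by a direct expansion of the product appearing in the definition \eqref{Ber-def-r} of $\gh_r$, combined with the $p$-adic integral representation \eqref{3-4} for $\widetilde{\mathfrak{H}}_1$. First, using $\gh_1(t;\gamma;c)=\widetilde{\mathfrak{H}}_1(t;\gamma;c)-\tfrac{c-1}{2}$ and expanding the product $\prod_{j=1}^{r}\bigl(\widetilde{\mathfrak{H}}_1(\sum_{k=j}^r t_k;\gamma_j;c)-\tfrac{c-1}{2}\bigr)$ over all subsets $J\subset\{1,\ldots,r\}$ (indexing the choice of which factors contribute the $\widetilde{\mathfrak{H}}_1$-term rather than the constant term $-\tfrac{c-1}{2}$), I obtain
\begin{equation*}
\gh_r((t_j);(\gamma_j);c)=\sum_{J\subset\{1,\ldots,r\}}\Bigl(\frac{1-c}{2}\Bigr)^{r-|J|}\prod_{j\in J}\widetilde{\mathfrak{H}}_1\Bigl(\sum_{k=j}^{r}t_k;\gamma_j;c\Bigr).
\end{equation*}

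Next, I would substitute \eqref{3-4} into each factor on the right-hand side, yielding
\begin{equation*}
\prod_{j\in J}\widetilde{\mathfrak{H}}_1\Bigl(\sum_{k=j}^{r}t_k;\gamma_j;c\Bigr)=\int_{\mathbb{Z}_p^{|J|}}\exp\Bigl(\sum_{l=1}^{r}t_l\sum_{\substack{j\in J\\ j\leqslant l}}x_j\gamma_j\Bigr)\prod_{j\in J}d\widetilde{\mathfrak{m}}_c(x_j),
\end{equation*}
where the exponents are rearranged by swapping the order of summation ($\sum_{j\in J}\sum_{k\geqslant j}=\sum_{l\geqslant 1}\sum_{j\in J,j\leqslant l}$). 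Then I would expand the exponential as a Taylor series in $(t_l)$ and read off the coefficient of $\prod_{l=1}^{r}t_l^{n_l}/n_l!$ from both sides of the resulting identity for $\gh_r$; comparing with \eqref{Ber-def-r} directly gives the expression for $\bb(n_1,\ldots,n_r;\gamma_1,\ldots,\gamma_r;c)$ as a sum over all $J\subset\{1,\ldots,r\}$.

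Finally, to conclude the stated form with the restriction $1\in J$, I would observe that for any $J$ with $1\notin J$ the inner sum $\sum_{j\in J,\,j\leqslant 1}x_j\gamma_j$ is empty, hence the factor $\bigl(\sum_{j\in J,j\leqslant 1}x_j\gamma_j\bigr)^{n_1}=0^{n_1}$ vanishes since the hypothesis $n_1\in\mathbb{N}$ gives $n_1\geqslant 1$; thus only subsets containing $1$ contribute. There is no real analytic obstacle here: the coefficient extraction is a finite operation and the interchange of sum and integral reduces to a termwise identity of formal power series, which is legitimate since the Maclaurin expansion of $\widetilde{\mathfrak{H}}_1$ produces the measure-moment sequence of $\widetilde{\mathfrak{m}}_c$ as in \eqref{3-4}. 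The only point requiring slight care is bookkeeping the relabelled summation $\sum_{k=j}^{r}t_k\leadsto\sum_{l=1}^{r}t_l(\cdots)$ and verifying that the $J=\emptyset$ contribution, which formally equals $(\tfrac{1-c}{2})^{r}$, is consistently absorbed into the vanishing case $1\notin J$.
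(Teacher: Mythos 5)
Your proposal is correct and follows essentially the same route as the paper: a binomial expansion over subsets $J\subset\{1,\ldots,r\}$, the integral representation of $\widetilde{\mathfrak{H}}_1$ via $\widetilde{\mathfrak{m}}_c$, coefficient extraction, and the observation that $1\notin J$ forces the factor $\bigl(\sum_{j\in J,\,j\leqslant 1}x_j\gamma_j\bigr)^{n_1}=0$. The only cosmetic difference is that the paper expands $\prod_j(e^{x_j\gamma_j(\cdots)}-1)$ inside a single integral over $\mathbb{Z}_p^{r}$ and then integrates out the unused variables via $\int_{\mathbb{Z}_p}1\,d\widetilde{\mathfrak{m}}_c=\frac{c-1}{2}$, whereas you expand $\prod_j\bigl(\widetilde{\mathfrak{H}}_1-\frac{c-1}{2}\bigr)$ at the level of functions first; the two computations are equivalent.
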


\begin{proof}
From \eqref{e-3-5} and \eqref{Ber-def-r}, we have
\begin{align*}
  \gh_r&(t_1,\ldots,t_r;\gamma_1,\ldots,\gamma_r;c)\\
    &=
    \int_{\mathbb{Z}_p^r}\prod_{j=1}^r(e^{x_j \gamma_j (\sum_{l=j}^r t_l)}-1)\prod_{j=1}^r d\widetilde{\mathfrak{m}}_c(x_j)
    \\
    &=
    \int_{\mathbb{Z}_p^r}
    \sum_{J\subset\{1,\ldots,r\}}(-1)^{r-|J|}
    \exp\Bigl(\sum_{j\in J}x_j \gamma_j (\sum_{l=j}^r t_l)\Bigr)
    \prod_{j=1}^r d\widetilde{\mathfrak{m}}_c(x_j)
    \\
    &=
    \int_{\mathbb{Z}_p^r}
    \sum_{J\subset\{1,\ldots,r\}}(-1)^{r-|J|}
    \exp\Bigr(\sum_{l=1}^r t_l(\sum_{j\in J \atop j\leqslant l}x_j \gamma_j)\Bigl)
    \prod_{j=1}^r d\widetilde{\mathfrak{m}}_c(x_j)
    \\
    &=
    \sum_{n_1=0}^\infty
    \cdots
    \sum_{n_r=0}^\infty
    \sum_{J\subset\{1,\ldots,r\}}
    (-1)^{r-|J|}
    \int_{\mathbb{Z}_p^r}
    \prod_{l=1}^r 
    \frac{\left(t_l(\sum_{j\in J \atop j\leqslant l}x_j \gamma_j)\right)^{n_l}}{n_l!}
    \prod_{j=1}^r d\widetilde{\mathfrak{m}}_c(x_j)
    \\
    &=
    \sum_{n_1=0}^\infty
    \cdots
    \sum_{n_r=0}^\infty
    \sum_{J\subset\{1,\ldots,r\}}
    (-1)^{r-|J|}
    \int_{\mathbb{Z}_p^r}
    \prod_{l=1}^r 
    (\sum_{\substack{j\in J \\ j\leqslant l}}x_j \gamma_j)^{n_l}
    \prod_{j=1}^r d\widetilde{\mathfrak{m}}_c(x_j)
    \frac{t_1^{n_1}}{n_1!}
    \cdots
    \frac{t_r^{n_r}}{n_r!}.
\end{align*}
Here we consider each coefficient of $\frac{t_1^{n_1}}{n_1!} \cdots \frac{t_r^{n_r}}{n_r!}$ for $n_1,\ldots,n_r\in \mathbb{N}$. If $1\not\in J$ then 
$$\sum_{j\in J \atop j\leqslant 1}x_j \gamma_j$$
is an empty sum which implies $0$. Hence we obtain from \eqref{2-1-2} that 
\begin{align*}
  \bb &(n_1,\ldots,n_r;\gamma_1,\ldots,\gamma_r;c)\\
    &=
    \sum_{J\subset\{1,\ldots,r\}}
    (-1)^{r-|J|}
    \int_{\mathbb{Z}_p^r}
    \prod_{l=1}^r 
    (\sum_{\substack{j\in J\\j\leqslant l}}x_j \gamma_j)^{n_l}
    \prod_{j=1}^r d\widetilde{\mathfrak{m}}_c(x_j)
    \\
    &=
    \sum_{J\subset\{1,\ldots,r\} \atop 1\in J}
    (-1)^{r-|J|}
    \int_{\mathbb{Z}_p^r}
    \prod_{l=1}^r 
    (\sum_{\substack{j\in J\\j\leqslant l}}x_j \gamma_j)^{n_l}
    \prod_{j=1}^r d\widetilde{\mathfrak{m}}_c(x_j)
    \\
    &=
    \sum_{J\subset\{1,\ldots,r\} \atop 1\in J}
    \Bigr(\frac{1-c}{2}\Bigl)^{r-|J|}
    \int_{\mathbb{Z}_p^{|J|}}
    \prod_{l=1}^r 
    (\sum_{\substack{j\in J\\j\leqslant l}}x_j \gamma_j)^{n_l}
    \prod_{j\in J} d\widetilde{\mathfrak{m}}_c(x_j)
\end{align*}
for $n_1,\ldots,n_r\in \mathbb{N}$. 
Thus we complete the proof.
\end{proof}

\begin{proof}[Proof of Theorem \ref{T-6-1}] 
If $\gamma_1\in p\mathbb{Z}_p$, then it follows from Remark \ref{Rem-gamma1} that all functions on the both sides of \eqref{main-1} are zero-functions. Hence \eqref{main-1} holds trivially. Therefore we only consider the case 
$\gamma_1\in \mathbb{Z}_p^\times,\ \gamma_2,\ldots,\gamma_r\in p\mathbb{Z}_p$ 
in \eqref{e-6-2}. 

First we consider the case $\gamma_1=1$. 
Setting $\gamma_1=1$ and $\gamma_2,\ldots,\gamma_r\in p\mathbb{Z}_p$ in \eqref{e-6-2}, 
we have
\begin{align*}
  \bb &(n_1,\ldots,n_r;\gamma_1,\gamma_2,\ldots,\gamma_r;c)\\
    &\ +
    \sum_{J\subset\{1,\ldots,r\} \atop 1\in J}
    \Bigr(\frac{1-c}{2}\Bigl)^{r-|J|}
    \int_{\mathbb{Z}_p^{|J|}}
    \prod_{l=1}^r 
    (\sum_{\substack{j\in J\\j\leqslant l}}x_j \gamma_j)^{n_l}
    \prod_{j\in J} d\widetilde{\mathfrak{m}}_c(x_j)
\end{align*}
for $n_1,\ldots,n_r\in \mathbb{N}$. From the condition $\gamma_1=1$ and $\gamma_2,\ldots,\gamma_r\in p\mathbb{Z}_p$, we can see that
\begin{align*}
(\mathbb{Z}_p^{|J|})'_{\{\gamma_j\}_{j\in J}}&=\mathbb{Z}_p^\times \times \mathbb{Z}_p^{|J|-1}=\mathbb{Z}_p^{|J|}\setminus \left(p\mathbb{Z}_p \times \mathbb{Z}_p^{|J|-1}\right)
\end{align*}
for any $J\subset\{1,\ldots,r\}$ with $1\in J$. 
Therefore we have
\begin{align}
&   \sum_{J\subset\{1,\ldots,r\} \atop 1\in J}
    \Bigr(\frac{1-c}{2}\Bigl)^{r-|J|}\notag\\
&   \qquad \times L_{p,|J|}\Bigl(\Bigl(-\sum_{j_\mu(J)\leqslant l<j_{\mu+1}(J)}n_l\Bigr)_{\mu=1}^{|J|};\bigl(\omega^{\sum_{j_\mu(J)\leqslant l<j_{\mu+1}(J)}n_l}\bigr)_{\mu=1}^{|J|};(\gamma_{j})_{j\in J};c\Bigr)\notag\\
    &
    =
    \sum_{J\subset\{1,\ldots,r\}\atop 1\in J}
    \Bigr(\frac{1-c}{2}\Bigl)^{r-|J|}
    \int_{\mathbb{Z}_p^\times \times \mathbb{Z}_p^{|J|-1}}
    \prod_{l=1}^r 
    (\sum_{\substack{j\in J\\ j\leqslant l}}x_j \gamma_j)^{n_l}
    \prod_{j\in J} d\widetilde{\mathfrak{m}}_c(x_j)\notag\\
    &=
    \sum_{J\subset\{1,\ldots,r\} \atop 1\in J}
    \Bigr(\frac{1-c}{2}\Bigl)^{r-|J|}
    \int_{\mathbb{Z}_p^{|J|}}
    \prod_{l=1}^r 
    (\sum_{\substack{j\in J\\i\leqslant l}}x_j \gamma_j)^{n_l}
    \prod_{j\in J} d\widetilde{\mathfrak{m}}_c(x_j)\notag\\
&   \ \ -p^{\sum_{l=1}^{r}n_l}\sum_{J\subset\{1,\ldots,r\}\atop 1\in J}
    \Bigr(\frac{1-c}{2}\Bigl)^{r-|J|}
    \int_{\mathbb{Z}_p^{|J|}}
    \prod_{l=1}^r 
    (x_1+\sum_{\substack{j\in J\\1<j\leqslant l}}x_j \gamma_j/p)^{n_l}
    \prod_{j\in J} d\widetilde{\mathfrak{m}}_c(x_j)\notag\\
& =  \bb (n_1,\ldots,n_r;1,\gamma_2,\ldots,\gamma_r;c)
    -p^{\sum_{l=1}^{r}n_l}
     \bb (n_1,\ldots,n_r;1,\gamma_2/p,\ldots,\gamma_r/p;c), \label{eq-zero}
\end{align}
which is equal to $0$ when $n_1+\cdots+n_r\not\equiv r\pmod 2$ because of \eqref{parity-2}. 
Let $k_1,\ldots,k_r\in \mathbb{Z}$ with $k_1+\cdots+k_r\not\equiv r\pmod 2$. Then the above consideration gives that
\begin{align}
&   \sum_{J\subsetneq\{1,\ldots,r\}\atop 1\in J}
    \Bigr(\frac{1-c}{2}\Bigl)^{r-|J|}\notag \\
&   \times L_{p,|J|}\Bigl(\Bigl(-\sum_{j_\mu(J)\leqslant l<j_{\mu+1}(J)}n_l\Bigr)_{\mu=1}^{|J|};\bigl(\omega^{\sum_{j_\mu(J)\leqslant l<j_{\mu+1}(J)}k_l}\bigr)_{\mu=1}^{|J|};(\gamma_{j})_{j\in J};c\Bigr)\notag\\
& \ +L_{p,r}(-n_1,\ldots,-n_r;\omega^{k_1},\ldots,\omega^{k_r};\gamma_1,\gamma_2,\ldots,\gamma_{r};c)=0 \label{L-parity}
\end{align}
holds on 
$$S_{\{k_j\}}=\{(n_1,\ldots,n_r) \in \mathbb{N}^{r}\mid n_j\in k_j+(p-1)\mathbb{Z}\ (1\leqslant j \leqslant r)\}.$$
Since $S_{\{k_j\}}$ is dense in $\mathbb{Z}_p^r$, we obtain \eqref{main-1} in the case $\gamma_1=1$. 

Next we consider the case $\gamma_1\in \mathbb{Z}_p^\times$. 
We can easily check that if $\gamma_1\in \mathbb{Z}_p^\times$, 
\begin{align*}
& L_{p,r}((s_j);(\omega^{k_j});(\gamma_j);c) \\
& \quad  =\langle \gamma_1\rangle^{-s_1-\cdots -s_r} \omega^{k_1+\cdots +k_r}(\gamma_1) 
    \ L_{p,r}((s_j);(\omega^{k_j});(\gamma_j/\gamma_1);c),\\
& L_{p,|J|}\Bigl(\Bigl(\sum_{j_\mu(J)\leqslant l<j_{\mu+1}(J)}s_l\Bigr)_{\mu=1}^{|J|};\bigl(\omega^{\sum_{j_\mu(J)\leqslant l<j_{\mu+1}(J)}k_l}\bigr)_{\mu=1}^{|J|};(\gamma_{j})_{j\in J};c\Bigr)\\
& \quad  =\langle \gamma_1\rangle^{-s_1-\cdots -s_r} \omega^{k_1+\cdots +k_r}(\gamma_1) \\
& \qquad \times L_{p,|J|}\Bigl(\Bigl(\sum_{j_\mu(J)\leqslant l<j_{\mu+1}(J)}s_l\Bigr)_{\mu=1}^{|J|};\bigl(\omega^{\sum_{j_\mu(J)\leqslant l<j_{\mu+1}(J)}k_l}\bigr)_{\mu=1}^{|J|};(\gamma_{j}/\gamma_1)_{j\in J};c\Bigr)
\end{align*}
for $J\subsetneq \{1,\ldots,r\}$ with $1\in J$. 
Since we already proved \eqref{main-1} corresponding to the case $\{\gamma_j/\gamma_1\}_{j=1}^{r}$ in the above argument, we consequently obtain \eqref{main-1} in the case $\gamma_1\in \mathbb{Z}_p^\times$ by multiplying 
the both sides by
$$\langle \gamma_1\rangle^{-s_1-\cdots -s_r} \omega^{k_1+\cdots +k_r}(\gamma_1). $$
Thus we obtain the proof of Theorem \ref{T-6-1}.
\end{proof}

\begin{remark}\label{Rem-Parity-result}
Note that each $p$-adic multiple $L$-function appearing on the right-hand side of \eqref{main-1} is of depth lower  than $r$. 
The relation \eqref{main-1} reminds us of the \textit{parity result} for MZVs which implies that 
the MZV whose depth and weight are of different parity can be expressed as a polynomial in MZVs of lower depth with $\mathbb{Q}$-coefficients.
In fact, \eqref{L-parity} shows that 
$L_{p,r}((-n_j);(\omega^{k_j});(\gamma_j);c)$ can be expressed as a polynomial in $p$-adic multiple $L$-values of lower depth than $r$ at non-positive integers with $\mathbb{Q}$-coefficients, when $n_1+\cdots+n_r\not\equiv r\pmod 2$. This can be regarded as a $p$-adic version of the parity result for $p$-adic multiple $L$-values. 
By the density property, we obtain its continuous version, that is, the functional relation \eqref{main-1}.
\end{remark}

The following result corresponds to the case $r=1$ in Theorem \ref{T-6-1}.

\begin{example}\label{Rem-zero-2}
Setting $r=1$ in \eqref{eq-zero}, we obtain from Definition \ref{Def-Kubota-L} that 
\begin{align*}
    &
    \Bigr(\frac{1-c}{2}\Bigl)^{r-1}
    \int_{\mathbb{Z}_p^\times}
    x_1^{n_1}
    d\widetilde{\mathfrak{m}}_c(x_1)\left(=\Bigr(\frac{1-c}{2}\Bigl)^{r-1}
    \left(c^{n_1+1}-1\right)L_p(-n_1;\omega^{n_1+1})\right)\notag\\
& =  \bb (n_1;1;c)
    -p^{n_1}
     \bb (n_1;1;c)=0
\end{align*}
for $n_1\in \mathbb{N}$ satisfying $n_1\not\equiv 1\pmod 2$ namely $n_1+1$ is odd, because of \eqref{parity-2}. Hence, if $k$ is odd then $L_p(-n_1;\omega^{k})=0$ for $n_1\in \mathbb{N}$ satisfying $n_1+1\equiv k\ ({\rm mod}\ p-1)$. This implies 
$$
L_p(s,\omega^k)\equiv 0
$$
when $k$ is odd, the statement of Proposition \ref{Prop-zero}. Thus we can regard Theorem \ref{T-6-1} as a multiple version of Proposition \ref{Prop-zero}.
\end{example}

Next we consider the case $r=2$.

\begin{example} \label{P-Func-eq-1}
Let $k,l\in \mathbb{N}_0$ with $2\nmid (k+l)$,
$\gamma_1\in \mathbb{Z}_p^\times$, $\gamma_2\in p\mathbb{Z}_p$ and $c\in \mathbb{Z}_p^\times $. Then, for $s_1,s_2\in \mathbb{Z}_p$, we obtain from \eqref{main-1} and \eqref{1-p-LF-gamma} that 
\begin{align}
& L_{p,2}(s_1,s_2;\omega^{k},\omega^{l};\gamma_1,\gamma_2;c)\notag\\
& ={\frac{c-1}{2}}\langle \gamma_1\rangle^{-s_1-s_2} \omega^{k+l}(\gamma_1)\left( \langle c\rangle^{1-s_1-s_2}\omega^{k+l+1}(c)-1\right)L_p(s_1+s_2;\omega^{k+l+1}). \label{Func-eq-1}
\end{align}
Note that this functional relation in the case $p>2$, $c=2$ and $\gamma_1=1$ was already proved in \cite{KMT-IJNT} by a different method. 
\end{example}


\begin{remark}\label{R-3-2} 
As we noted above, when $k+l$ is even, $L_p(s;\omega^{k+l+1})$ is the zero-function, so is the right-hand of \eqref{Func-eq-1}. On the other hand, even if $k+l$ is even, the left-hand of \eqref{Func-eq-1} is not necessarily the zero-function. In fact, it follows from \eqref{L2-val} that
\begin{align*}
& L_{p,2}(-1,-1;\omega,\omega;1,\gamma_2;c)=\frac{(1-c^2)^2(1-p)}{4}B_2^2\gamma_2
\end{align*}
for $\gamma_2\in p\mathbb{Z}_p$, which does not vanish if $\gamma_2\not=0$. Therefore this case implies that $L_{p,2}(s_1,s_2;\omega,\omega;1,\gamma_2;c)$ 
is not the zero-function.
Therefore $L_{p,2}(s_1,s_2;\omega^{k},\omega^l;1,\gamma_2;c)$ 
seems to have more information beyond the Kubota-Leopoldt $p$-adic $L$-function.
\end{remark}

Further we consider the case $r=3$.

\begin{example}\label{E-6-3}
Let 
$k_1,k_2,k_3\in \mathbb{Z}$ with $2\mid (k_1+k_2+k_3)$, $\gamma_1\in \mathbb{Z}_p^\times$, $\gamma_2,\gamma_3\in p\mathbb{Z}_p$ and $c\in \mathbb{Z}_p^\times $. Then, for $s_1,s_2,s_3\in \mathbb{Z}_p$, we obtain from \eqref{main-1} that 
\begin{align*}
& {L_{p,3}(s_1,s_2,s_3;\omega^{k_1},\omega^{k_2},\omega^{k_3};\gamma_1,\gamma_2,\gamma_3;c)} \notag\\
& \quad {=\frac{1-c}{2}L_{p,2}(s_1,s_2+s_3;\omega^{k_1},\omega^{k_2+k_3};\gamma_1,\gamma_2;c)} {+\frac{1-c}{2}L_{p,2}(s_1+s_2,s_3;\omega^{k_1+k_2},\omega^{k_3};\gamma_1,\gamma_3;c)} \notag\\
& \quad \quad 
-\left(\frac{1-c}{2}\right)^2\langle \gamma_1\rangle^{-s_1-s_2-s_3}\omega^{k_1+k_2+k_3}(\gamma_1)\\
& \qquad \quad 
\times \left(\langle c\rangle^{1-s_1-s_2-s_3}\omega^{k_1+k_2+k_3+1}(c)-1\right)L_{p}(s_1+s_2+s_3;\omega^{k_1+k_2+k_3+1}).
\end{align*}
Note that from \eqref{1-p-LF-gamma} the third term on the right-hand side vanishes because $L_p(s;\omega^{k})$ is the zero function when $k$ is odd. 
\end{example}

Using Theorem \ref{T-6-1} and Examples \ref{P-Func-eq-1} and \ref{E-6-3}, we can immediately obtain the following result by induction on $r\geqslant 2$. 

\begin{corollary}\label{C-6-4} 
Let $r\in \mathbb{N}_{\geqslant 2}$, $k_1,\ldots,k_r\in \mathbb{Z}$ with $k_1+\cdots+k_r\not\equiv r\pmod 2$, $\gamma_1\in \mathbb{Z}_p$, $\gamma_2,\ldots,\gamma_{r}\in p\mathbb{Z}_p$ and $c\in \mathbb{Z}_p^\times$. Then 
$$L_{p,r} (s_1,\ldots,s_r;\omega^{k_1},\ldots,\omega^{k_r};\gamma_1,\ldots,\gamma_{r};c)$$
can be expressed as a polynomial in $p$-adic $j$-ple $L$-functions for $j\in \{1,2,\ldots,r-1\}$ satisfying $j \not\equiv r \pmod 2$, with $\mathbb{Q}$-coefficients.
\end{corollary}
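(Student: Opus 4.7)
The plan is to proceed by induction on $r \geqslant 2$, using Theorem \ref{T-6-1} as the reduction mechanism. The base case $r=2$ is precisely Example \ref{P-Func-eq-1}: under the parity assumption $k_1+k_2$ odd, the function $L_{p,2}(s_1,s_2;\omega^{k_1},\omega^{k_2};\gamma_1,\gamma_2;c)$ is expressed as an explicit monomial in the Kubota-Leopoldt $p$-adic $L$-function $L_p(s_1+s_2;\omega^{k_1+k_2+1})$, which is a $j$-ple $L$-function with $j=1$ satisfying $1 \not\equiv 2 \pmod 2$.

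For the inductive step, assume the statement holds for every depth strictly less than $r$. Applying Theorem \ref{T-6-1}, we write $L_{p,r}(s_1,\ldots,s_r;\omega^{k_1},\ldots,\omega^{k_r};\gamma_1,\ldots,\gamma_r;c)$ as a finite sum indexed by proper subsets $J \subsetneq \{1,\ldots,r\}$ with $1\in J$, each summand being a scalar multiple of some
\[
L_{p,|J|}\Bigl(\bigl(\textstyle\sum_{j_\mu(J)\leqslant l<j_{\mu+1}(J)} s_l\bigr)_{\mu=1}^{|J|};\bigl(\omega^{\tilde k_\mu}\bigr)_{\mu=1}^{|J|};(\gamma_j)_{j\in J};c\Bigr),
\]
where $\tilde k_\mu := \sum_{j_\mu(J)\leqslant l<j_{\mu+1}(J)} k_l$ and $|J| \in \{1,\ldots,r-1\}$. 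For each such $J$, two cases arise. If $|J| \not\equiv r \pmod 2$, the term is already a $j$-ple $L$-function with $j=|J|$ satisfying the parity requirement of the corollary. If instead $|J| \equiv r \pmod 2$, we observe the crucial identity $\tilde k_1 + \cdots + \tilde k_{|J|} = k_1 + \cdots + k_r$, which by hypothesis is $\not\equiv r \equiv |J| \pmod 2$. Moreover the $\gamma$-conditions persist: $\gamma_1 \in \mathbb{Z}_p$ and $\gamma_j \in p\mathbb{Z}_p$ for every $j \in J \setminus \{1\}$. Hence the inductive hypothesis applies at depth $|J|<r$, yielding a polynomial expression in $j'$-ple $L$-functions for $j' \in \{1,\ldots,|J|-1\}$ with $j' \not\equiv |J| \pmod 2$; since $|J| \equiv r \pmod 2$, this is equivalent to $j' \not\equiv r \pmod 2$. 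Substituting back gives the desired polynomial expression for $L_{p,r}$.

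The main obstacle is essentially bookkeeping rather than a genuine analytical difficulty: one must verify carefully that (i) the parity of the sum of character exponents is preserved under the partition encoded by $J$, (ii) the hypotheses on $(\gamma_j)$ restrict correctly to $(\gamma_j)_{j\in J}$, and (iii) the reduction terminates strictly in the parity class $j \not\equiv r \pmod 2$. All three are immediate from the structural features of \eqref{main-1}, which is why the induction closes cleanly. No new estimates or continuity arguments beyond those in Theorem \ref{T-6-1} are required; Examples \ref{P-Func-eq-1} and \ref{E-6-3} already illustrate the mechanism at depths $2$ and $3$ (note that in Example \ref{E-6-3} the depth-$1$ contribution vanishes precisely because $k_1+k_2+k_3+1$ is odd, in accord with Example \ref{Rem-zero-2}).
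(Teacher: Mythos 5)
Your proposal is correct and is essentially the paper's own argument: the authors prove Corollary \ref{C-6-4} exactly by this induction on $r$, using the decomposition of Theorem \ref{T-6-1} over proper subsets $J\ni 1$ together with Examples \ref{P-Func-eq-1} and \ref{E-6-3}, and your bookkeeping of the parity of $\tilde k_1+\cdots+\tilde k_{|J|}$ and of the $\gamma$-conditions is exactly what makes the induction close. The one spot where your phrase ``the inductive hypothesis applies at depth $|J|<r$'' does not literally apply is $|J|=1$ with $r$ odd (the corollary's hypothesis starts at depth $2$), but there the depth-one term vanishes by Proposition \ref{Prop-zero} (cf.\ Example \ref{Rem-zero-2}), as you yourself observe at the end, so the argument is complete.
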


\

\section{Special values of $p$-adic multiple $L$-functions at positive integers}\label{sec-5}
In our previous section, particularly in Theorem \ref{T-main-1}, we saw that 
the special values of our $p$-adic multiple $L$-functions at non-positive integers
are expressed  in terms of the twisted multiple Bernoulli numbers (Definition \ref{Def-M-Bern}),
which are
the special values of the complex multiple 
zeta-functions of generalized Euler-Zagier-Lerch type 
at non-positive integers
(cf. Theorem \ref{T-multiple}).
In contrast, in this section we will  discuss their special values 
at positive integers.
For this purpose, 
we will introduce a specific $p$-adic function in 
each subsection.
In the complex case, 
the special values 
of multiple zeta function (cf.\ \eqref{MZF-def})
at positive integers  
are given by the special values of multiple polylogarithms 
at unity (see \eqref{zeta=Li}).
Our main result is Theorem \ref{L-Li theorem-2}
where we will show a $p$-adic analogue of the equality \eqref{zeta=Li}:
We will establish a close relationship of  our $p$-adic multiple $L$-functions
with the $p$-adic TMPL's
\footnote{
In this paper,  TMPL stands for twisted multiple polylogarithm.
},
generalizations of  $p$-adic  multiple polylogarithm
introduced by the first-named author \cite{Fu1, F2}
and Yamashita \cite{Y} 
for a study of the $p$-adic realization of certain motivic fundamental group.
It is achieved 
by showing that the special values of $p$-adic multiple $L$-function at positive integers
are described by  
$p$-adic twisted multiple $L$-values;
the special values at unity
of the $p$-adic TMPL's (Definition \ref{Def-TMPL}, see also \cite{Fu1,Y})
constructed by Coleman's $p$-adic iterated integration theory \cite{C}.
Our theorem generalizes a previous work of Coleman \cite{C}.
To connect $p$-adic multiple $L$-functions with $p$-adic TMPL's,
we will introduce $p$-adic rigid TMPL's (Definition \ref{def of pMMPL})
and their partial versions (Definition \ref{def of pPMPL}) as intermediate objects
and investigate their several basic properties mainly in 
Subsections \ref{sec-5-3} and \ref{sec-5-4}. 

\subsection{$p$-adic rigid twisted multiple polylogarithms} \label{sec-5-3}
This subsection is to introduce $p$-adic rigid TMPL's
(Definition \ref{def of pMMPL})
and to give a description of special values of $p$-adic multiple $L$-functions at positive integers
by special values of $p$-adic rigid TMPL's at roots of unity
 (Theorem \ref{L-ell theorem})
which extends Coleman's result \eqref{Coleman equality}.

First, we briefly review the minimum basics of  rigid analysis in our specific case.

\begin{notation}[cf. \cite{BGR} etc.]\label{rigid-basics}
\label{basics on rigid}
Let $\alpha_1, \dots, \alpha_n\in {\mathbb C}_p$ and $\rs_1,\dots,\rs_n\in{\mathbb Q}_{>0}$
and $\rs_0\in{\mathbb Q}_{\geqslant 0}$.
The space
\begin{equation}\label{affinoid presentation}
X=\left\{z\in {\bf P}^1({\mathbb C}_p)\bigm| |z-\alpha_i|_p\geqslant \rs_i \ (i=1,\dots, n),
|z|_p\leqslant 1/\rs_0
\right\}
\end{equation}
is equipped with a structure of {\it affinoid}, a special type of rigid analytic space. 
A {\it rigid analytic function} on $X$ is 
a functions $f(z)$ on $X$
which admits  the convergent expansion
$$
f(z)=\sum_{m\geqslant 0}a_{m}(\infty;f)z^{m}
+\sum_{i=1}^n\sum_{m>0}\frac{a_{m}(\alpha_i;f)}{(z-\alpha_i)^{m}}
$$
with $\mathbb C_p$-coefficients.
The  expressions are actually unique 
(the Mittag-Leffler decompositions), 
which can be  shown from, for example \cite[I.1.3]{FvP}.
We denote the algebra of rigid analytic functions on $X$ by $A^{\text{rig}}(X)$.
\end{notation}

\begin{notation}
For $a$ in $\textbf{P}^{1}(\mathbb{C}_p)$, $\bar a$ means the image ${\rm red}(a)$ by the reduction map 
$${\rm red}:\textbf{P}^{1}(\mathbb{C}_p) \to \textbf{P}^{1}({\overline{\mathbb{F}}_p})\left(=\overline{\mathbb{F}}_p\cup \{\bar \infty\}\right),$$
where $\overline{\mathbb{F}}_p$ is the algebraic closure of ${\mathbb{F}}_p$. 
For a finite subset $D\subset \textbf{P}^{1}(\mathbb{C}_p)$, we define $\bar D={\rm red}(D)\subset \textbf{P}^{1}(\overline{\mathbb{F}}_p)$.
For $a_0 \in \textbf{P}^{1}(\overline{\mathbb{F}}_p)$, 
its tubular neighborhood $]a_0[$ means the inverse image ${\rm red}^{-1}(a_0)$ of the reduction map.
Namely, 
$]\bar a[=\{x\in \textbf{P}^{1}(\mathbb{C}_p)\,|\,|x-a|_p<1\}$
for $a\in {\mathbb{C}}_p$,  $]\bar 0[={\frak M}_{\mathbb{C}_p}$
and $]\bar \infty[=\textbf{P}^{1}(\mathbb{C}_p)\setminus {\mathcal O}_{\mathbb{C}_p}$. 
For a finite subset 
$S\subset  \textbf{P}^{1}(\overline{\mathbb{F}}_p)$,
we define $]S[:={\rm red}^{-1}(S)\subset \textbf{P}^{1}(\mathbb{C}_p)$.
By abuse of notation, 
we denote $A^{\text{rig}}({\bf P}^1({\mathbb C}_p)- ]S[)$
by $A^{\text{rig}}({\bf P}^1\setminus S)$.
\end{notation}

We remind two fundamental properties of rigid analytic functions:
\begin{proposition}[{\cite[Chapter 6]{BGR}, etc.}]\label{two fundamental properties of rigid analytic functions}
Let $X$ be as in \eqref{affinoid presentation}.
Then the following holds.

{\rm (i)}\ {\it Coincidence principle}:
If two rigid analytic functions $f(z)$ and $g(z)$ coincide in a subaffinoid of $X$,
then  they coincide on the whole of $X$. 

{\rm (ii)}\ The algebra $A^{\text{rig}}(X)$ forms a Banach algebra  with the supremum norm.
\end{proposition}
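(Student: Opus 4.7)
The plan is to establish both statements by exploiting the unique Mittag-Leffler decomposition recorded in Notation \ref{rigid-basics}, treating the coefficients $\{a_m(\alpha_i;f)\} \cup \{a_m(\infty;f)\}$ as canonical coordinates on $A^{\text{rig}}(X)$. Throughout I would identify $f \in A^{\text{rig}}(X)$ with this coefficient sequence; the uniqueness of the decomposition, already in the notation, is the backbone of both arguments.

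For part (ii), I would first check that $\|f\|_{\sup} := \sup_{z \in X}|f(z)|_p$ is finite. On $X$, the term $a_m(\alpha_i;f)(z-\alpha_i)^{-m}$ is bounded by $|a_m(\alpha_i;f)|_p \rs_i^{-m}$ and the term $a_m(\infty;f)z^m$ by $|a_m(\infty;f)|_p \rs_0^{-m}$; the convergence of the Mittag-Leffler expansion at the boundary of $X$ forces these bounds to tend to zero as $m \to \infty$, yielding uniform convergence and hence a finite supremum. The non-archimedean norm axioms, including the submultiplicativity $\|fg\|_{\sup} \leqslant \|f\|_{\sup}\|g\|_{\sup}$, follow at once. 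For completeness, given a Cauchy sequence $\{f_n\}$ in supremum norm, the individual coefficient sequences $\{a_m(\alpha_i;f_n)\}_n$ and $\{a_m(\infty;f_n)\}_n$ are Cauchy in ${\mathbb C}_p$ with decay rates in $m$ that persist uniformly in the limit; the limit coefficients therefore assemble into a genuine element $f \in A^{\text{rig}}(X)$ with $f_n \to f$ in supremum norm.

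For part (i), write $h = f - g$ and assume $h$ vanishes on a subaffinoid $Y \subseteq X$. Any non-empty subaffinoid contains an open polydisk $D = \{z \in {\mathbb C}_p \mid |z-\beta|_p < \varepsilon\}$ for some $\beta \in X$ and some $\varepsilon > 0$. After translating so that $\beta = 0$ and shrinking $\varepsilon$ so that every pole $\alpha_i$ lies outside $D$, each term in the Mittag-Leffler expansion of $h$ re-expands as a convergent power series in $z$ on $D$. The resulting single power series vanishes on $D$, so the $p$-adic identity principle for power series forces all of its coefficients to vanish; uniqueness of the Mittag-Leffler decomposition then gives $a_m(\alpha_i;h) = a_m(\infty;h) = 0$ for all $i$ and $m$, whence $h \equiv 0$ on $X$.

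The main obstacle will be that a general subaffinoid of $X$ need not obviously contain an open disk located inside $X$ when $X$ itself has thin pieces (for instance, pieces that degenerate onto boundary circles), nor does the Mittag-Leffler expansion converge on an arbitrary disk without care about which $\alpha_i$ are enclosed. I plan to handle both issues by invoking the connectedness of $X$ for the rigid Grothendieck topology together with a finite admissible covering reducing the global coincidence to the local disk case, as carried out systematically in \cite[Chapters~6--7]{BGR}; alternatively, the identity principle for connected rigid analytic spaces in \cite[I.1.3]{FvP} can be cited directly. Since both (i) and (ii) are standard structural results in rigid analysis, the final proof will mostly consist of a short verification with references to these sources.
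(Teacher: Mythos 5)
The paper offers no proof of this proposition at all --- it is stated as a standard fact with a pointer to \cite[Chapter 6]{BGR} --- so the relevant question is only whether your sketch is sound. Your part (ii) is fine: finiteness of the supremum norm follows from the decay of the Mittag-Leffler coefficients built into the convergence requirement of Notation \ref{rigid-basics}, and completeness follows by passing to coefficientwise limits exactly as you describe; this is the standard identification of the supremum norm with the coefficient (Gauss-type) norm on such a domain.

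In part (i), however, one inference as written does not go through. Knowing that the re-expansion of $h$ as a power series on the disk $D$ has all Taylor coefficients zero tells you only that $h$ vanishes on the largest disk about $\beta$ on which that re-expansion converges, namely up to the nearest excluded disk $|z-\alpha_i|_p\leqslant \delta_i$; it does \emph{not} follow from ``uniqueness of the Mittag-Leffler decomposition'' that each $a_m(\alpha_i;h)$ and $a_m(\infty;h)$ vanishes, because the uniqueness statement concerns the decomposition of a function on all of $X$, not its Taylor coefficients on a proper subdisk, and the local re-expansion mixes all of the $a_m(\alpha_i;h)$ together. Propagating the vanishing from $D$ to the whole of $X$ is precisely the content of the identity theorem for connected affinoids, and that is the step that cannot be bypassed. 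You do concede this in your final paragraph and fall back on connectedness of $X$ plus \cite[Chapters 6--7]{BGR} or \cite[I.1.3]{FvP} --- which is exactly what the paper itself does --- so the overall plan is acceptable, but you should either delete the spurious ``uniqueness of the decomposition'' justification or replace it outright by the citation, since as stated that sentence is the proof's weak link. A separate small caution: a subaffinoid of $X$ can in principle be zero-dimensional (a finite set of points), in which case (i) is false; the statement is implicitly restricted to affinoid subdomains with nonempty interior, which is the only case the paper ever uses, and your disk-extraction step silently assumes this.
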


The following function plays the main role in this subsection.

\begin{definition}\label{def of pMMPL}
Let $n_1,\dots,n_r\in \mathbb{N}$ and $\xi_1,\dots,\xi_{r}\in\mathbb{C}_p$ with $|\xi_j|_p\leqslant 1$ ($1\leqslant j\leqslant r$).
The {\bf $p$-adic rigid TMPL} 
$\ell^{(p)}_{n_1,\dots,n_r}(\xi_1,\dots,\xi_{r};z)$ is defined by the following 
$p$-adic power series:
\begin{equation}\label{series expression for ell}
\ell^{(p)}_{n_1,\dots,n_r}(\xi_1,\dots,\xi_{r},z):=
\underset{(k_1,p)=\cdots=(k_r,p)=1}{\underset{0<k_1<\cdots<k_{r}}{\sum}}
\frac{\xi_1^{k_1}\cdots\xi_r^{k_r}}{k_1^{n_1}\cdots k_r^{n_r}}z^{k_r}
\end{equation}
which converges for $z\in ]\bar{0}[=\{x\in\mathbb{C}_p\bigm| \ |x|_p<1\}$ 
by $|\xi_j|_p\leqslant 1$ for $1\leqslant j\leqslant r$.
\end{definition}

It will be proved 
that  it is rigid analytic in Proposition \ref{rigidness} 
and is furthermore overconvergent in Theorem \ref{rigidness III}.
We remark that when $r=1$, $\ell^{(p)}_{n}(1;z)$
is equal to the $p$-adic polylogarithm
$\ell_n^{(p)}(z)$
in \cite[p.196]{C}.
The following integral expressions 
are generalization of \cite[Lemma 7.2]{C}.

\begin{theorem}\label{integral theorem}
Let $n_1,\dots,n_r\in \mathbb{N}$ and $\xi_1,\dots,\xi_{r}\in\mathbb{C}_p$ with $|\xi_j|_p\leqslant 1$ ($1\leqslant j\leqslant r$).
Set a finite subset $S$ of  $\textbf{P}^{1}(\overline{\mathbb{F}}_p)$ by
\footnote{
Here we ignore the multiplicity.
}
\begin{equation}\label{S0}
S=\{
\overline{\xi_{r}^{-1}},\overline{(\xi_{r-1}\xi_{r})^{-1}},
\dots,\overline{(\xi_1\cdots\xi_{r-1})^{-1}}\}.
\end{equation}
Then the   $p$-adic rigid TMPL
$\ell^{(p)}_{n_1,\dots,n_r}(\xi_1,\dots,\xi_{r};z)$ is 
extended  into $\textbf{P}^{1}({\mathbb{C}}_p)- ]S[$
as a function on $z$
by  the following $p$-adic integral expression:
\begin{align}\label{integral expression}
\ell&^{(p)}_{ n_1,\dots,n_r}  (\xi_1,\dots,\xi_{r};z)= \notag \\
&\int_{(\mathbb{Z}_p^r)'_{\{1\}}}\langle x_1\rangle^{-n_1}\langle x_1+x_2\rangle^{-n_2}\cdots\langle x_1+\cdots+x_r\rangle^{-n_r}\cdot 
\omega(x_1)^{-n_1}\omega(x_1+x_2)^{-n_2}\cdots\omega(x_1+\cdots+x_r)^{-n_r} \notag \\
&\qquad\qquad\qquad d{\frak m}_{\xi_1\cdots\xi_{r}z}(x_1)\cdots d{\frak m}_{\xi_{r} z}(x_{r}), 
\end{align}
where $(\mathbb{Z}_p^r)'_{\{1\}}=\Bigl\{(x_1,\dots,x_r)\in\mathbb{Z}_p^r\Bigm|p\nmid x_1, p\nmid (x_1+x_2),\dots, p\nmid (x_1+\cdots+x_r)\Bigr\}$
(cf. \eqref{region}).
\end{theorem}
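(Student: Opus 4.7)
The plan is to proceed in three stages: first, establish that the integral on the right-hand side of \eqref{integral expression} is well-defined on the claimed domain $\textbf{P}^{1}(\mathbb{C}_p) - ]S[$; second, identify it with the defining power series of $\ell^{(p)}_{n_1,\dots,n_r}$ on the sub-disk $]\bar 0[ = \{z\in\mathbb{C}_p\mid |z|_p<1\}$; and third, extend the identification to the full domain by rigid analyticity and the coincidence principle.

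For the first stage, I would verify that the measures $\mathfrak{m}_{\xi_j\xi_{j+1}\cdots\xi_r z}$ are simultaneously defined exactly when $z\in \textbf{P}^{1}(\mathbb{C}_p) - ]S[$. Since $\mathfrak{m}_\zeta$ requires $|\zeta-1|_p \geqslant 1$, equivalently $\bar\zeta \neq \bar 1$, the joint condition amounts to $\bar z \neq \overline{(\xi_j\cdots\xi_r)^{-1}}$ for each $j$, which is precisely the complement of $]S[$. On the integration region $(\mathbb{Z}_p^r)'_{\{1\}}$, every partial sum $m_l := x_1+\cdots+x_l$ lies in $\mathbb{Z}_p^\times$, so the integrand simplifies via $\langle m_l\rangle^{-n_l}\omega(m_l)^{-n_l} = m_l^{-n_l}$ to the bounded expression $\prod_l m_l^{-n_l}$, and the iterated integral converges to an element of $\mathbb{C}_p$.

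In the second stage, note that $]\bar 0[\subset \textbf{P}^{1}(\mathbb{C}_p) - ]S[$ since $|\xi_j|_p\leqslant 1$ forces $\bar 0\notin S$. For $|z|_p<1$ each $|\xi_j\cdots\xi_r z|_p<1$, and the Koblitz measure admits the explicit expansion $\int_{\mathbb Z_p} f(x)\,d\mathfrak{m}_\zeta(x) = \sum_{x \geqslant 0} f(x)\zeta^x$ (cf.\ \cite[Chap.~2]{Kob}). Applying this identity $r$-fold and then changing variables to $k_l := x_1 + \cdots + x_l$ would convert the iterated integral into the series \eqref{series expression for ell} for $\ell^{(p)}_{n_1,\dots,n_r}(\xi_1,\dots,\xi_r;z)$, modulo careful tracking of the $p$-integrality constraints inherited from $(\mathbb{Z}_p^r)'_{\{1\}}$.

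In the third stage I would establish rigid analyticity of the integral on all of $\textbf{P}^{1}(\mathbb{C}_p) - ]S[$. The step-function approximants of the Koblitz measure at level $p^N$ express the prelimit integral as an explicit rational function in $z$ whose poles lie at the $p^N$-th roots of the various $(\xi_j\cdots\xi_r)^{-1}$, hence inside $]S[$. A uniform lower bound on $|1-(\xi_j\cdots\xi_r z)^{p^N}|_p$ over each affinoid subdomain of $\textbf{P}^{1}(\mathbb{C}_p) - ]S[$ would yield supremum-norm convergence of these rational approximants, so by Banach completeness (Proposition \ref{two fundamental properties of rigid analytic functions}(ii)) the limit is rigid analytic; the coincidence principle (Proposition \ref{two fundamental properties of rigid analytic functions}(i)), together with the identification in stage two on $]\bar 0[$, then forces the integral to agree with the rigid analytic extension of the series on the whole connected domain. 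The main obstacle is this uniform estimate and the attendant Mittag-Leffler bookkeeping near each bad residue class $\overline{(\xi_j\cdots\xi_r)^{-1}}$, where the $p^N$-th roots of $(\xi_j\cdots\xi_r)^{-1}$ accumulate to the centre of the excluded disk; this is the technical heart of the argument and generalises Coleman's single-variable analysis in \cite{C}.
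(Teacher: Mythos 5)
Your overall architecture coincides with the paper's. Stages one and two are exactly the displayed proof of Theorem \ref{integral theorem}: the reduction $\langle x\rangle^{-n}\omega(x)^{-n}=x^{-n}$ on $\mathbb{Z}_p^\times$, the expansion of the iterated integral as a limit of explicit rational functions $g^M_{n_1,\dots,n_r}(\xi_1,\dots,\xi_r;z)$ coming from the level-$p^M$ Riemann sums, and the termwise identification with the series \eqref{series expression for ell} when $|z|_p<1$. Your stage three is the content of the paper's Lemma \ref{congruence} and Proposition \ref{rigidness}: sup-norm convergence of the $g^M$ in the Banach algebra $A^{\text{rig}}({\bf P}^1\setminus S)$, then the coincidence principle to identify the limit with the series on $]\bar 0[$.

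The genuine gap is the mechanism you propose for stage three. A uniform lower bound on $|1-(\xi_j\cdots\xi_r z)^{p^N}|_p$ away from $]S[$ does exist (these quantities are in fact units on $\mathcal{O}_{\mathbb{C}_p}-]S[$), but it only bounds the $g^M$; it does not make them a Cauchy sequence. For $p\nmid k$ and $|\xi_j|_p=|z|_p=1$ the individual summands $\xi^k z^k/k^n$ have absolute value $1$ and do not tend to zero, so no denominator estimate or Mittag--Leffler bookkeeping can produce convergence from size considerations alone; the convergence is an arithmetic cancellation. The paper's engine is the congruence $g^{M+1}\equiv g^M \pmod{p^M}$, proved by writing the new indices as $l'+kp^M$, using $1/(l'+kp^M)^n\equiv 1/l'^{\,n}\pmod{p^M}$, and absorbing the resulting geometric sums $1+w^{p^M}+\cdots+w^{(p-1)p^M}=(1-w^{p^{M+1}})/(1-w^{p^M})$ into the denominators of $g^{M+1}$; a separate computation (the substitution $\varepsilon=1/z$) is required on $]\bar\infty[$, which lies in the claimed domain and which your proposal does not treat. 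This congruence is absent from your argument and is where the theorem is actually won. Two smaller points: your stage-one equivalence between ``all measures defined'' and ``$z\notin\,]S[$'' implicitly uses $|\xi_j|_p=1$ (as do the paper's Lemma \ref{congruence} and Proposition \ref{rigidness}, despite the $\leqslant$ in the statement); and in stage two the change of variables $k_l=x_1+\cdots+x_l$ with $x_l\geqslant 0$ a priori produces the non-strict range $k_1\leqslant\cdots\leqslant k_r$, so the ``careful tracking'' you defer is precisely what must reconcile the integral with the strict inequalities in \eqref{series expression for ell} (the paper's limit expression simply imposes $0<l_j$); this should be made explicit rather than postponed.
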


\begin{proof}
Since $\langle x\rangle\cdot \omega (x)=x\neq 0$
for $x\in \mathbb{Z}_p^\times$,
the right-hand side of \eqref{integral expression} is
\begin{align}
\int_{(\mathbb{Z}_p^r)'_{\{1\}}}
& x_1^{-n_1}(x_1+x_2)^{-n_2}\cdots (x_1+\cdots+x_r)^{-n_r}
d{\frak m}_{\xi_1\cdots\xi_{r}z}(x_1)\cdots d{\frak m}_{\xi_{r} z}(x_{r}) 
\notag\\
&=\lim_{M\to\infty}
\underset{(l_1,p)=\cdots=(l_1+\cdots+l_r,p)=1}{\underset{0<l_1, \dots,l_{r}<p^M}{\sum}}
\frac{\xi_1^{l_1}\xi_2^{l_1+l_2}\cdots\xi_{r}^{l_1+\cdots+l_{r}}}{l_1^{n_1}(l_1+l_2)^{n_2}\cdots (l_1+\cdots+l_r)^{n_r}}z^{l_1+\cdots+l_r} \notag \\
&\qquad \qquad \cdot\frac{1}{1-(\xi_1\cdots\xi_{r} z)^{p^M}}
\cdots 
\frac{1}{1-(\xi_{r-1}\xi_r z)^{p^M}}\cdot
\frac{1}{1-(\xi_r z)^{p^M}}.
\label{limit expression}\\
&=\lim_{M\to\infty} \qquad g^M_{n_1,\dots,n_r}(\xi_1,\dots,\xi_{r};z)
\qquad \qquad\qquad \text{(say).} \notag
\end{align}
By direct calculations it can be shown that it is equal to
the right-hand side of \eqref{series expression for ell} 
when $|z|_p<1$.
\end{proof}

As for $g^M_{n_1,\dots,n_r}(\xi_1,\dots,\xi_{r};z)$
defined in the above proof, we have

\begin{lemma}\label{congruence}
Fix  $n_1,\dots, n_r, M\in \mathbb{N}$ and $\xi_1,\dots,\xi_{r}\in\mathbb{C}_p$ with $|\xi_j|_p=1$ ($1\leqslant j\leqslant r$).
Then, for 
$
z_0\in 
{\bf P}^1({\mathbb C}_p) -]\overline{\xi_{r}^{-1}},\overline{(\xi_{r-1}\xi_r)^{-1}},\dots,\overline{(\xi_1\cdots\xi_{r})^{-1}}[ 
$,  we have
$$
g^M_{n_1,\dots,n_r}(\xi_1,\dots,\xi_{r};z_0)\in{\mathcal O}_{{\mathbb C}_p}
$$
and
$$
g^{M+1}_{n_1,\dots,n_r}(\xi_1,\dots,\xi_{r};z_0)\equiv g^M_{n_1,\dots,n_r}(\xi_1,\dots,\xi_{r};z_0) \pmod{p^M}.
$$
\end{lemma}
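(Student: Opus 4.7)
The plan is to view $g^M_{n_1,\dots,n_r}(\xi_1,\dots,\xi_r;z_0)$ as the level-$M$ Riemann-sum approximation of the $p$-adic iterated integral of Theorem \ref{integral theorem}. Setting $\alpha_i := \xi_i\xi_{i+1}\cdots\xi_r z_0$ for $i = 1,\dots,r$, the computation performed in the proof of that theorem identifies
\[
g^M(z_0) = \sum_{(l_1,\dots,l_r)}\frac{1}{\prod_{j=1}^r(l_1+\cdots+l_j)^{n_j}}\prod_{i=1}^r \mathfrak{m}_{\alpha_i}(l_i+p^M\mathbb{Z}_p),
\]
where the sum runs over the residue tuples prescribed in the definition of $g^M$ and $\mathfrak{m}_{\alpha_i}(l_i+p^M\mathbb{Z}_p)=\alpha_i^{l_i}/(1-\alpha_i^{p^M})$ is Koblitz's measure value from \eqref{Koblitz-measure}. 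All subsequent estimates will be read off of this expression.

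For the integrality I estimate each summand. The hypothesis on $z_0$ together with $|\xi_j|_p=1$ forces, for each $i$, either $|\alpha_i|_p\leqslant 1$ with $\overline{\alpha_i}\neq 1$ in $\overline{\mathbb{F}}_p$, or $|\alpha_i|_p>1$. In the first case, the bijectivity of Frobenius on $\overline{\mathbb{F}}_p$ gives $\overline{\alpha_i}^{p^M}\neq 1$, hence $|1-\alpha_i^{p^M}|_p=1$ and $|\mathfrak{m}_{\alpha_i}(l_i+p^M\mathbb{Z}_p)|_p\leqslant 1$; in the second case, rewriting $\mathfrak{m}_{\alpha_i}(l_i+p^M\mathbb{Z}_p)=-\alpha_i^{l_i-p^M}/(1-\alpha_i^{-p^M})$ with $l_i<p^M$ and $|\alpha_i^{-1}|_p<1$ gives the same bound. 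Combined with $|(l_1+\cdots+l_j)^{-n_j}|_p=1$ coming from the partial-sum coprimality, each summand of $g^M(z_0)$ lies in ${\mathcal O}_{\mathbb{C}_p}$, and hence so does their finite sum.

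For the congruence I use the additivity refinement
\[
\sum_{k=0}^{p-1}\mathfrak{m}_{\alpha_i}(l_i+kp^M+p^{M+1}\mathbb{Z}_p)=\mathfrak{m}_{\alpha_i}(l_i+p^M\mathbb{Z}_p)
\]
to rewrite $g^M(z_0)$ as a sum indexed by the same pairs $(l,k)$ that appear in $g^{M+1}(z_0)$. For each common summand the only change is the replacement of $\prod_j(l_1+\cdots+l_j)^{-n_j}$ by $\prod_j(l_1+\cdots+l_j+p^Mu_j)^{-n_j}$ with $u_j:=k_1+\cdots+k_j$, and the coprimality $(l_1+\cdots+l_j,p)=1$ yields the elementary congruence
\[
(l_1+\cdots+l_j+p^Mu_j)^{-n_j}\equiv (l_1+\cdots+l_j)^{-n_j}\pmod{p^M}.
\]
Together with the integrality of the measure factors at level $M+1$ established above, this places every term of the difference $g^{M+1}(z_0)-g^M(z_0)$ into $p^M{\mathcal O}_{\mathbb{C}_p}$.

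The main obstacle is the combinatorial bookkeeping at the boundary. The definition of $g^M$ imposes the strict positivity $0<l_i<p^M$, whereas the natural refinement from level $M$ to level $M+1$ yields extra contributions to $g^{M+1}(z_0)$ indexed by tuples in which some $l_i$ vanishes (and $k_i\geqslant 1$ forces $l_i+k_ip^M>0$). I must show that each such boundary contribution is itself in $p^M{\mathcal O}_{\mathbb{C}_p}$: after summing over $k_i$ the factor $\sum_{k_i=1}^{p-1}\mathfrak{m}_{\alpha_i}(k_ip^M+p^{M+1}\mathbb{Z}_p)=\mathfrak{m}_{\alpha_i}(p^M\mathbb{Z}_p)-\mathfrak{m}_{\alpha_i}(p^{M+1}\mathbb{Z}_p)$ appears, whose $p$-adic valuation will be analysed via the lift $\beta=\alpha_i^{p^M}$ and the identity $1/(1-\beta)-1/(1-\beta^p)=\beta(1-\beta^{p-1})/((1-\beta)(1-\beta^p))$; the remaining lower-dimensional sum over the indices $j\neq i$ should then be absorbed inductively on $r$ by a repeated application of the same Riemann-sum estimate.
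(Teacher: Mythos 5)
Your refinement of $g^M$ via the additivity of Koblitz's measure, your integrality estimate, and your treatment of the matched terms are all correct and essentially reproduce the paper's own computation (the paper performs the same base-$p$ refinement $l_j=l_j'+kp^M$ and telescopes the geometric sums $1+\beta_i+\cdots+\beta_i^{p-1}$ against $1-\beta_i^{p}$). You have also correctly isolated the real difficulty, which the paper's proof passes over in silence: the tuples of $g^{M+1}$ with some $l_i\equiv 0\pmod{p^M}$ (necessarily $i\geqslant 2$) are not refinements of any tuple of $g^M$. The gap is in your final step. The factor you need to be small, namely $\mathfrak{m}_{\alpha_i}(p^M\mathbb{Z}_p)-\mathfrak{m}_{\alpha_i}(p^{M+1}\mathbb{Z}_p)=\beta_i(1-\beta_i^{p-1})/\bigl((1-\beta_i)(1-\beta_i^{p})\bigr)$ with $\beta_i=\alpha_i^{p^M}$, is a $p$-adic \emph{unit} whenever $\overline{\alpha_i}\in\overline{\mathbb{F}}_p\setminus\mathbb{F}_p$ (then $\overline{\beta_i}\notin\mathbb{F}_p$, so $\overline{\beta_i}^{\,p-1}\neq 1$ and all four factors are units), and it has valuation $p^{M}v_p(\alpha_i)$, which is typically far smaller than $M$, when $|\alpha_i|_p<1$. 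So the boundary contribution is in general not in $p^{M}\mathcal{O}_{\mathbb{C}_p}$, and no finer estimate of this factor can close the argument.

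Indeed the obstruction is genuine: the stated congruence fails. Take $p=2$, $r=2$, $n_1=n_2=1$, $\xi_1=\xi_2=1$, $M=1$, and $z_0=\omega$ a primitive cube root of unity, so that $\overline{z_0}\in\mathbb{F}_4\setminus\mathbb{F}_2$ and $z_0$ lies in the allowed region ${\bf P}^1(\mathbb{C}_2)-\,]\bar 1[$. Then $g^1=0$ (the only candidate $(l_1,l_2)=(1,1)$ violates $p\nmid(l_1+l_2)$), whereas $g^2(\omega)=\bigl(\tfrac{\omega^3}{3}+\tfrac{\omega^5}{15}\bigr)(1-\omega^4)^{-2}=\tfrac{4-\omega}{15(1-\omega)^2}$ is a $2$-adic unit, so $g^2\not\equiv g^1\pmod 2$; the offending terms $(m_1,m_2)=(1,2),(3,2)$ are exactly your boundary tuples with $m_2=p^M$, and the same mechanism produces counterexamples for odd $p$ once $\overline{\xi_i\cdots\xi_r z_0}\notin\mathbb{F}_p$. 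The congruence does hold for the full Riemann sums of the integral over $(\mathbb{Z}_p^r)'_{\{1\}}$, i.e.\ if one also admits $l_j=0$ for $j\geqslant 2$ (then uniform continuity of the integrand and $\mathcal{O}_{\mathbb{C}_p}$-valuedness of the measures give $|R_{M+1}-R_M|_p\leqslant p^{-M}$ directly), but those sums converge to the series indexed by $0<k_1\leqslant\cdots\leqslant k_r$ rather than to $\ell^{(p)}_{n_1,\dots,n_r}$. So the statement itself needs to be repaired before your argument, or the paper's, can be completed.
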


\begin{proof}
When $z_0\in{\bf P}^1({\mathbb C}_p) 
-]\overline{\xi_{r}^{-1}},\overline{(\xi_{r-1}\xi_r)^{-1}},\dots,\overline{(\xi_1\cdots\xi_{r})^{-1}},
\overline{\infty}[$,
namely when $z_0\in{\mathcal O}_{{\mathbb C}_p}
-]S[
$, it is clear that
$g^M_{n_1,\dots,n_r}(\xi_1,\dots,\xi_{r};z_0)\in{\mathcal O}_{{\mathbb C}_p}$.
In the definition of $g^{M+1}_{n_1,\dots,n_r}(\xi_1,\dots,\xi_{r};z)$, 
$l_j$ ($1\leqslant j\leqslant r$) is running in the interval
$(0,p^{M+1})$.
Writing $l_j=l_j^{\prime}+kp^M$ ($0<l_j^{\prime}<p^M$ and $1\leqslant k\leqslant p-1$),
we have

\begin{align*}
g^{M+1}_{n_1,\dots,n_r}&(\xi_1,\dots,\xi_{r};z_0)
\equiv
\underset{(l_1^{\prime} ,p)=\cdots=(l_1^{\prime} +\cdots+l_r^{\prime} ,p)=1}
{\underset{0<l_1^{\prime} , \dots,l_{r}^{\prime} <p^M}{\sum}} 
\frac{\xi_1^{l_1^{\prime} }\xi_2^{l_1^{\prime} +l_2^{\prime} }
\cdots\xi_{r}^{l_1^{\prime} +\cdots+l_{r}^{\prime} }z_0^{l_1^{\prime} +\cdots+l_r^{\prime} }}{l_1^{\prime n_1}(l_1^{\prime} +l_2^{\prime} )^{n_2}\cdots (l_1^{\prime} +\cdots+l_r^{\prime} )^{n_r}} \\ 
&\cdot\{1+(\xi_1\cdots\xi_{r} z_0)^{p^M}+(\xi_1\cdots\xi_{r} z_0)^{2p^M}+\cdots+(\xi_1\cdots\xi_{r} z_0)^{(p-1)p^M}\} \\
&\cdot\{1+(\xi_2\cdots\xi_{r} z_0)^{p^M}+(\xi_2\cdots\xi_{r} z_0)^{2p^M}+\cdots+(\xi_2\cdots\xi_{r} z_0)^{(p-1)p^M}\} \\
&\qquad\qquad\qquad\qquad \cdots \\
&\cdot\{1+(\xi_{r} z_0)^{p^M}+(\xi_{r} z_0)^{2p^M}+\cdots+(\xi_{r} z_0)^{(p-1)p^M}\}  \\
&\cdot\frac{1}{1-(\xi_1\cdots\xi_{r} z)^{p^{M+1}}}
\cdots 
\frac{1}{1-(\xi_{r-1}\xi_r z)^{p^{M+1}}}\cdot
\frac{1}{1-(\xi_r z)^{p^{M+1}}}
\pmod{p^M} \\
&
\qquad\qquad\qquad\qquad
=g^M_{n_1,\dots,n_r}(\xi_1,\dots,\xi_{r};z).
\end{align*}

When $z_0\in]\bar{\infty}[$, put $\varepsilon=\frac{1}{z_0}\in]\bar{0}[$.
By direct calculations,
$g^M_{n_1,\dots,n_r}(\xi_1,\dots,\xi_{r};z_0)\in{\mathcal O}_{{\mathbb C}_p}$.
We have
\begin{align*}
g^{M}_{n_1,\dots,n_r}&(\xi_1,\dots,\xi_{r};z_0)
=
\underset{(l_1,p)=\cdots=(l_1+\cdots+l_r,p)=1}{\underset{0<l_1, \dots,l_{r}<p^M}{\sum}} 
\frac{(\frac{\xi_1\xi_2\cdots\xi_{r}}{\varepsilon})^{l_1}(\frac{\xi_2\cdots\xi_{r}}{\varepsilon})^{l_2}\cdots (\frac{\xi_{r-1}\xi_r}{\varepsilon})^{l_{r-1}}
(\frac{\xi_r}{\varepsilon})^{l_r}}
{l_1^{n_1}(l_1+l_2)^{n_2}\cdots (l_1+\cdots+l_r)^{n_r}} \\
&\qquad\qquad\cdot\frac{1}{1-(\frac{\xi_1\cdots\xi_{r}}{\varepsilon})^{p^M}}
\cdots 
\frac{1}{1-(\frac{\xi_{r-1}\xi_r}{\varepsilon})^{p^M}}\cdot
\frac{1}{1-(\frac{\xi_r}{\varepsilon})^{p^M}} \\
&=(-1)^r
\underset{(l_1,p)=\cdots=(l_1+\cdots+l_r,p)=1}{\underset{0<l_1, \dots,l_{r}<p^M}{\sum}} 
\frac{(\frac{\varepsilon}{\xi_1\xi_2\cdots\xi_{r}})^{p^M-l_1}(\frac{\varepsilon}{\xi_2\cdots\xi_{r}})^{p^M-l_2}\cdots (\frac{\varepsilon}{\xi_{r-1}\xi_r})^{p^M-l_{r-1}}
(\frac{\varepsilon}{\xi_r})^{p^M-l_r}}
{l_1^{n_1}(l_1+l_2)^{n_2}\cdots (l_1+\cdots+l_r)^{n_r}} \\
&\qquad\qquad\cdot\frac{1}{1-(\frac{\varepsilon}{\xi_1\cdots\xi_{r}})^{p^M}}
\cdots 
\frac{1}{1-(\frac{\varepsilon}{\xi_{r-1}\xi_r})^{p^M}}\cdot
\frac{1}{1-(\frac{\varepsilon}{\xi_r})^{p^M}} \\
&=(-1)^r
\underset{(l_1,p)=\cdots=(l_1+\cdots+l_r,p)=1}{\underset{0<l_1, \dots,l_{r}<p^M}{\sum}} 
\frac{(\frac{\varepsilon}{\xi_1\xi_2\cdots\xi_{r}})^{l_1}(\frac{\varepsilon}{\xi_2\cdots\xi_{r}})^{l_2}\cdots (\frac{\varepsilon}{\xi_{r-1}\xi_r})^{l_{r-1}}
(\frac{\varepsilon}{\xi_r})^{l_r}}
{(p^M-l_1)^{n_1}(2p^M-l_1-l_2)^{n_2}\cdots (rp^M-l_1-\cdots-l_r)^{n_r}} \\
&\qquad\qquad\cdot\frac{1}{1-(\frac{\varepsilon}{\xi_1\cdots\xi_{r}})^{p^M}}
\cdots 
\frac{1}{1-(\frac{\varepsilon}{\xi_{r-1}\xi_r})^{p^M}}\cdot
\frac{1}{1-(\frac{\varepsilon}{\xi_r})^{p^M}} \\
&\equiv (-1)^{r+n_1+\cdots+n_r}
\underset{(l_1,p)=\cdots=(l_1+\cdots+l_r,p)=1}{\underset{0<l_1, \dots,l_{r}<p^M}{\sum}} 
\frac{(\frac{\varepsilon}{\xi_1\xi_2\cdots\xi_{r}})^{l_1}(\frac{\varepsilon}{\xi_2\cdots\xi_{r}})^{l_2}\cdots (\frac{\varepsilon}{\xi_{r-1}\xi_r})^{l_{r-1}}
(\frac{\varepsilon}{\xi_r})^{l_r}}
{{l_1}^{n_1}(l_1+l_2)^{n_2}\cdots (l_1+\cdots+l_r)^{n_r}} \\
&\qquad\qquad\cdot\frac{1}{1-(\frac{\varepsilon}{\xi_1\cdots\xi_{r}})^{p^M}}
\cdots 
\frac{1}{1-(\frac{\varepsilon}{\xi_{r-1}\xi_r})^{p^M}}\cdot
\frac{1}{1-(\frac{\varepsilon}{\xi_r})^{p^M}} \pmod{p^M} \\
&= (-1)^{r+n_1+\cdots+n_r}
g^{M}_{n_1,\dots,n_r}(\xi_1^{-1},\dots,\xi_{r}^{-1};\varepsilon).
\end{align*}
Therefore by our previous argument and by $|\xi_j|_p=|\xi_j^{-1}|_p=1$,
it follows that 
\begin{align*}
g^{M+1}_{n_1,\dots,n_r}&(\xi_1,\dots,\xi_{r};z_0)
\equiv  (-1)^{r+n_1+\cdots+n_r}
g^{M+1}_{n_1,\dots,n_r}(\xi_1^{-1},\dots,\xi_{r}^{-1};\varepsilon) \\
&\equiv (-1)^{r+n_1+\cdots+n_r}
g^{M}_{n_1,\dots,n_r}(\xi_1^{-1},\dots,\xi_{r}^{-1};\varepsilon)
\equiv
g^{M}_{n_1,\dots,n_r}(\xi_1,\dots,\xi_{r};z_0)
\pmod{p^M}.
\end{align*}
\end{proof}


Theorem \ref{integral theorem} and Lemma \ref{congruence} imply the following:

\begin{proposition}\label{rigidness}
Fix  $n_1,\dots,n_r\in \mathbb{N}$ and $\xi_1,\dots,\xi_{r}\in\mathbb{C}_p$ with $|\xi_j|_p=1$ ($1\leqslant j\leqslant r$).
By our integral formula \eqref{integral expression},
the  $p$-adic rigid TMPL 
$\ell^{(p)}_{n_1,\dots,n_r}(\xi_1,\dots,\xi_{r};z)$ is a rigid analytic function on
$
{\bf P}^1({\mathbb C}_p) - ]S[
$.
Namely, 
$$
\ell^{(p)}_{n_1,\dots,n_r}(\xi_1,\dots,\xi_{r};z)
\in A^{\rm{rig}}( {\bf P}^1\setminus S
).
$$
\end{proposition}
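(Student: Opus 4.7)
The plan is to realize $\ell^{(p)}_{n_1,\dots,n_r}(\xi_1,\dots,\xi_r;z)$ as the limit in the Banach algebra $A^{\mathrm{rig}}(\mathbf{P}^1\setminus S)$ of the approximating functions $g^M_{n_1,\dots,n_r}(\xi_1,\dots,\xi_r;z)$ introduced in \eqref{limit expression} during the proof of Theorem \ref{integral theorem}. The first step is to observe that each $g^M$ is a rational function in $z$: its numerator is a polynomial in $z$ of degree at most $r(p^M-1)$, while its denominator is a product of the factors $1-(\xi_j\xi_{j+1}\cdots\xi_r z)^{p^M}$ for $j=1,\dots,r$. Since $|\xi_j|_p=1$, the zeros of these denominators all lie in the tubes around $\xi_r^{-1}, (\xi_{r-1}\xi_r)^{-1},\dots,(\xi_1\cdots\xi_r)^{-1}$, that is, in $]S[$. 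Therefore $g^M\in A^{\mathrm{rig}}(\mathbf{P}^1\setminus S)$ for every $M$.

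Next I would upgrade the pointwise congruence of Lemma \ref{congruence} into a uniform one. For each $z_0\in\mathbf{P}^1(\mathbb{C}_p)-]S[$, that lemma gives $g^M(z_0)\in\mathcal{O}_{\mathbb{C}_p}$ and $|g^{M+1}(z_0)-g^M(z_0)|_p\leqslant p^{-M}$. Since these estimates hold at every point of the domain, one obtains
\[
\|g^{M+1}-g^M\|_{\sup}\leqslant p^{-M}
\]
with respect to the supremum norm on $\mathbf{P}^1(\mathbb{C}_p)-]S[$. Thus $\{g^M\}_{M\in\mathbb{N}}$ is a Cauchy sequence in $A^{\mathrm{rig}}(\mathbf{P}^1\setminus S)$, which is a Banach algebra by Proposition \ref{two fundamental properties of rigid analytic functions}(ii). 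Consequently the sequence converges to a rigid analytic function $\Phi\in A^{\mathrm{rig}}(\mathbf{P}^1\setminus S)$.

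Finally, I would identify $\Phi$ with the extension of $\ell^{(p)}_{n_1,\dots,n_r}(\xi_1,\dots,\xi_r;z)$. By \eqref{limit expression}, the pointwise limit of $g^M(z)$ coincides with the $p$-adic integral on the right-hand side of \eqref{integral expression}; hence $\Phi(z)$ agrees with this integral at every $z\in\mathbf{P}^1(\mathbb{C}_p)-]S[$. Moreover, on the open unit disc $]\bar{0}[$, the proof of Theorem \ref{integral theorem} already shows that this limit agrees with the defining power series \eqref{series expression for ell}. The coincidence principle (Proposition \ref{two fundamental properties of rigid analytic functions}(i)) then guarantees that $\Phi$ is the unique rigid analytic extension of that power series to $\mathbf{P}^1(\mathbb{C}_p)-]S[$, and this is precisely the content of the proposition.

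The main point to be careful about is the first step: one must verify that after clearing denominators in $g^M$, the resulting rational function has its pole locus genuinely contained in $]S[$, with no spurious poles arising from the common subexpressions of the form $1-(\xi_j\cdots\xi_r z)^{p^M}$. This is essentially an inspection of the explicit formula in \eqref{limit expression}, and once this is settled the remainder of the argument is a standard Banach algebra completeness plus coincidence principle package.
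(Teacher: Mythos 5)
Your proposal is correct and follows essentially the same route as the paper: the approximants $g^M$ from \eqref{limit expression} are rational functions lying in $A^{\mathrm{rig}}(\mathbf{P}^1\setminus S)$, Lemma \ref{congruence} upgrades to a uniform sup-norm Cauchy estimate, completeness of the Banach algebra (Proposition \ref{two fundamental properties of rigid analytic functions}) yields a rigid analytic limit, and one identifies this limit with $\ell^{(p)}_{n_1,\dots,n_r}$ via the power series on $]\bar{0}[$. Your explicit check that the pole locus of each $g^M$ is contained in $]S[$, and your appeal to the coincidence principle for uniqueness of the extension, are slightly more detailed than the paper's terse wording but add nothing new in substance.
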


\begin{proof}
Since the space ${\bf P}^1({\mathbb C}_p) - ]S[$
is an affinoid and
the algebra 
$A^{\text{rig}}( {\bf P}^1\setminus S)$
forms a Banach algebra by the supremum norm (cf. Notation \ref{basics on rigid}),
by Lemma \ref{congruence}, the rational functions
$$
g^{M}_{n_1,\dots,n_r}(\xi_1,\dots,\xi_{r};z)\in
A^{\text{rig}}( {\bf P}^1\setminus S)
$$
uniformly converge to a rigid analytic function 
$\ell(z)\in A^{\text{rig}}( {\bf P}^1\setminus S)$
when $M$ goes to $\infty$ thanks to Proposition \ref{two fundamental properties of rigid analytic functions}.
It is easy to see that the restriction of $\ell(z)$ into 
$\textbf{P}^{1}({\mathbb{C}}_p)- ]S[$ coincides with \eqref{limit expression}, 
hence with $\ell^{(p)}_{n_1,\dots,n_r}(\xi_1,\dots,\xi_{r};z)$.
Therefore the analytic continuation of  $\ell^{(p)}_{n_1,\dots,n_r}(\xi_1,\dots,\xi_{r};z)$
is given by $\ell(z)$.
\end{proof}

From now on we will employ the same symbol 
$\ell^{(p)}_{n_1,\dots,n_r}(\xi_1,\dots,\xi_{r};z)$
to denote its analytic continuation.

We note that, by \eqref{limit expression},

\begin{lemma}\label{ell at infinity}
For $n_1,\dots,n_r\in \mathbb{N}$ and $\xi_1,\dots,\xi_{r}\in\mathbb{C}_p$ with $|\xi_j|_p=1$ ($1\leqslant j\leqslant r$),
we have
$$
\ell^{(p)}_{n_1,\dots,n_r}(\xi_1,\dots,\xi_{r};\infty)=0.$$
\end{lemma}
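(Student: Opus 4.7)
My plan is to exploit the uniform convergence established in Proposition~\ref{rigidness}: since $\ell^{(p)}_{n_1,\dots,n_r}(\xi_1,\dots,\xi_{r};z)$ is the limit in $A^{\rm{rig}}( {\bf P}^1\setminus S)$ of the explicit rational functions $g^{M}_{n_1,\dots,n_r}(\xi_1,\dots,\xi_{r};z)$ from \eqref{limit expression}, and since evaluation at the point $z=\infty\in {\bf P}^1({\mathbb C}_p)- ]S[$ is continuous on this Banach algebra, it suffices to prove that $g^M_{n_1,\dots,n_r}(\xi_1,\dots,\xi_{r};\infty)=0$ for every $M\in \mathbb{N}$.

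To carry this out, I would reuse the coordinate change $\varepsilon=1/z$ already introduced in the proof of Lemma~\ref{congruence}. The computation there shows that, for $z_0\in\, ]\bar\infty[$,
\[
g^{M}_{n_1,\dots,n_r}(\xi_1,\dots,\xi_{r};z_0)
=(-1)^r
\underset{(l_1,p)=\cdots=(l_1+\cdots+l_r,p)=1}{\underset{0<l_1,\dots,l_r<p^M}{\sum}}
\frac{\prod_{j=1}^{r}\bigl(\varepsilon/(\xi_j\cdots\xi_r)\bigr)^{p^M-l_j}}
{l_1^{n_1}(l_1+l_2)^{n_2}\cdots(l_1+\cdots+l_r)^{n_r}}
\prod_{j=1}^{r}\frac{1}{1-(\varepsilon/(\xi_j\cdots\xi_r))^{p^M}}.
\]
At $z_0=\infty$, i.e.\ $\varepsilon=0$, every summand carries a factor $\varepsilon^{p^M-l_j}$ with $p^M-l_j\geqslant 1$ and therefore vanishes, while the denominators each evaluate to $1$. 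Hence $g^M_{n_1,\dots,n_r}(\xi_1,\dots,\xi_{r};\infty)=0$.

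Passing to the limit $M\to\infty$ along the uniform convergence in $A^{\rm{rig}}({\bf P}^1\setminus S)$ (which includes the point $\infty$ since $\overline{\infty}\notin S$, as all $\xi_j$ satisfy $|\xi_j|_p=1$), this yields $\ell^{(p)}_{n_1,\dots,n_r}(\xi_1,\dots,\xi_{r};\infty)=0$. I do not foresee any serious obstacle here: the argument is essentially a bookkeeping verification that the vanishing $g^M(\cdot;\infty)=0$ survives the rigid-analytic limit, and the key algebraic identity was already isolated in the proof of Lemma~\ref{congruence}. If one wanted a more self-contained presentation, one could alternatively note that each $g^M$ is, by its very construction in \eqref{limit expression}, a polynomial in $z$ divided by a product $\prod_{j=1}^{r}\bigl(1-(\xi_j\cdots\xi_r z)^{p^M}\bigr)$ whose degree in $z$ exceeds the degree of the numerator, which makes the vanishing at $\infty$ transparent without any change of variable.
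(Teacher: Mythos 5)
Your proof is correct and follows essentially the same route as the paper: the paper's own argument simply notes that $g^{M}_{n_1,\dots,n_r}(\xi_1,\dots,\xi_{r};\infty)=0$ by direct computation and then passes to the limit defining $\ell^{(p)}_{n_1,\dots,n_r}$. You merely spell out the "direct computation" (via the substitution $\varepsilon=1/z$, or equivalently the degree count on the rational function $g^M$) and the continuity of evaluation at $\infty$ in $A^{\mathrm{rig}}({\bf P}^1\setminus S)$, both of which are exactly the ingredients the paper leaves implicit.
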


\begin{proof}
By direct computations, $g^{M}_{n_1,\dots,n_r}(\xi_1,\dots,\xi_{r};\infty)=0$.
Then the claim is obtained 
because $\ell^{(p)}_{n_1,\dots,n_r}(\xi_1,\dots,\xi_{r};z)$ is defined to be the limit of
$g^{M}_{n_1,\dots,n_r}(\xi_1,\dots,\xi_{r};z)$.
\end{proof}

The special values of the $p$-adic multiple $L$-function at positive integer points
are described in terms of the special values of
$p$-adic rigid TMPL
at roots of unity:

\begin{theorem}\label{L-ell theorem}
For  $n_1,\dots,n_r\in \mathbb{N}$
and $c\in \mathbb{N}_{>1}$ with $(c,p)=1$,
\begin{equation}\label{L-ell-formula}
L_{p,r}(n_1,\dots,n_r;\omega^{-n_1},\dots,\omega^{-n_r};1,\dots,1;c)= \\
\underset{\xi_1\cdots\xi_r\neq 1, \ \dots, \ \xi_{r-1}\xi_r\neq 1, \ \xi_r\neq 1}{\sum_{\xi_1^c=\cdots=\xi_r^c=1}}
\ell^{(p)}_{n_1,\dots,n_r}(\xi_1,\dots,\xi_r;1).
\end{equation}
\end{theorem}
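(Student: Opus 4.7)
The plan is to apply the $p$-adic integral representation \eqref{integral expression} of Theorem \ref{integral theorem} to each $\ell^{(p)}_{n_1,\dots,n_r}(\xi_1,\dots,\xi_r;z)$ at $z=1$, interchange the finite summation with the integral, and recognize the resulting expression as the $p$-adic multiple $L$-function defined in \eqref{e-6-1}.

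First I would verify that $z=1$ lies in the rigid-analytic domain ${\bf P}^1({\mathbb C}_p)-]S[$ of $\ell^{(p)}_{n_1,\dots,n_r}(\xi_1,\dots,\xi_r;z)$ for every tuple $(\xi_1,\dots,\xi_r)$ appearing in the prescribed summation range. Since $(c,p)=1$, the reduction map is injective on the group $\mu_c$ of $c$-th roots of unity in $\mathbb{C}_p$, so the imposed constraints $\xi_j\xi_{j+1}\cdots\xi_r\neq 1$ for $j=1,\dots,r$ translate to $\overline{(\xi_j\cdots\xi_r)^{-1}}\neq\overline{1}$, placing $1\notin{]S[}$. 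The same injectivity guarantees $|\xi_j\xi_{j+1}\cdots\xi_r-1|_p=1$, so the Koblitz measures $\mathfrak{m}_{\xi_j\cdots\xi_r}$ are well-defined by \eqref{Koblitz-measure}.

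The key manipulation is then the change of variables $\eta_j:=\xi_j\xi_{j+1}\cdots\xi_r$ (with inverse $\xi_j=\eta_j/\eta_{j+1}$ under the convention $\eta_{r+1}=1$), which gives a bijection from the admissible tuples in the sum onto $\{(\eta_1,\dots,\eta_r)\in\mu_c^r:\eta_j\neq 1\ \text{for all } j\}$. Under this bijection, Theorem \ref{integral theorem} at $z=1$ reads
\[
\ell^{(p)}_{n_1,\dots,n_r}(\xi_1,\dots,\xi_r;1)=\int_{(\mathbb{Z}_p^r)'_{\{1\}}}\prod_{j=1}^r\langle\textstyle\sum_{k\leqslant j}x_k\rangle^{-n_j}\omega(\sum_{k\leqslant j}x_k)^{-n_j}\prod_{j=1}^rd\mathfrak{m}_{\eta_j}(x_j).
\]
Summing over admissible tuples and swapping the (finite) sum with the integral, the product of inner sums $\prod_j\sum_{\eta_j\in\mu_c\setminus\{1\}}d\mathfrak{m}_{\eta_j}(x_j)$ equals $\prod_j d\widetilde{\mathfrak{m}}_c(x_j)$ by the defining relation \eqref{KM-measure} of the Mazur measure. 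The resulting integral is precisely the right-hand side of \eqref{e-6-1} with $s_j=n_j$, $k_j=-n_j$, and $\gamma_j=1$, proving the identity.

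The principal point requiring care is the admissibility check ensuring $1\notin{]S[}$, which hinges on the injectivity of reduction modulo $p$ on $\mu_c$ (valid because $(c,p)=1$); without this hypothesis the integral representation cannot be invoked at $z=1$. Once admissibility is secured, the interchange of sum and integral is automatic by finiteness of the sum, and the identification of the product of Koblitz measures with the Mazur measure is a direct appeal to \eqref{KM-measure}.
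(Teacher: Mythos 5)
Your argument is correct and is essentially the paper's own proof run in the opposite direction: the paper starts from the definition \eqref{e-6-1} of $L_{p,r}$, decomposes $(\widetilde{\mathfrak{m}}_c)^r$ into a sum of products $\mathfrak{m}_{\xi_1\cdots\xi_r}\times\cdots\times\mathfrak{m}_{\xi_r}$ over the same admissible tuples, and invokes \eqref{integral expression} at $z=1$, which is exactly your change of variables $\eta_j=\xi_j\cdots\xi_r$ read backwards. Your explicit check that $1\notin{]S[}$ (via injectivity of reduction on $\mu_c$ for $(c,p)=1$) is a worthwhile detail the paper leaves implicit, but it does not alter the approach.
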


\begin{proof}
%
By definition,
\begin{align*}
L_{p,r}(n_1,\dots,&n_r;\omega^{-n_1},\dots,\omega^{-n_r};1,\dots,1;c)= \\
&\int_{(\mathbb{Z}_p^r)'_{\{1\}}}\langle x_1\rangle^{-n_1}\langle x_1+x_2\rangle^{-n_2}\cdots\langle x_1+\cdots+x_r\rangle^{-n_r}  \\
&\qquad\qquad
\cdot\omega(x_1)^{-n_1}\omega(x_1+x_2)^{-n_2}\cdots\omega(x_1+\cdots+x_r)^{-n_r} 
d{\widetilde{\frak m}}_c(x_1)\cdots d{\widetilde{\frak m}}_c(x_r)
\end{align*}
where 
${\widetilde{\frak m}}_c=\underset{\xi\neq 1}{\underset{{\xi^c=1}}{\sum}}{\frak m}_\xi$.
By \eqref{integral expression} and
\begin{align*}
({\widetilde{\frak m}}_c)^r
&=\Bigl\{\underset{\xi_1^{\prime}\neq 1}{\underset{{\xi_1^{\prime c}=1}}{\sum}}{\frak m}_{\xi_1^{\prime}} \Bigr\}\times
\Bigl\{\underset{\xi_2^{\prime}\neq 1}{\underset{{\xi_2^{\prime c}=1}}{\sum}}{\frak m}_{\xi_2^{\prime}} \Bigr\}\times\cdots\times
\Bigl\{\underset{\xi_r^{\prime}\neq 1}{\underset{{\xi_r^{\prime c}=1}}{\sum}}{\frak m}_{\xi_r^{\prime}} \Bigr\}
\\
&=\underset{\xi_1\cdots\xi_r\neq 1,\dots,\xi_{r-1}\xi_r\neq 1,\xi_r\neq 1}{\sum_{\xi_1^c=\cdots=\xi_r^c=1}}
{\frak m}_{\xi_1\cdots\xi_r}\times{\frak m}_{\xi_2\cdots\xi_r}\times
\cdots\times
{\frak m}_{\xi_{r-1}\xi_r}\times{\frak m}_{\xi_r}
\end{align*}
we get  our formula.
\end{proof}

\begin{remark}
It is worthy to note that  the right-hand side of \eqref{L-ell-formula} is $p$-adically
continuous not only with respect to $n_1,\ldots,n_r$ but also with respect to $c$
by Theorem \ref{continuity theorem}.
\end{remark}

As a special case when $r=1$ of Theorem \ref{L-ell theorem},
we recover Coleman's formula in \cite[p.203]{C} below
by Example \ref{example for r=1}.

\begin{example}
For $n\in \mathbb{N}_{>1}$
and $c\in \mathbb{N}_{>1}$ with $(c,p)=1$,
\begin{equation}\label{Coleman equality}
(c^{1-n}-1)\cdot L_{p}(n;\omega^{1-n})
=\underset{\xi\neq 1}{\sum_{\xi^c=1}}\ell^{(p)}_{n}(\xi;1).
\end{equation}
\end{example}

When $r=2$, we have 

\begin{example}
For  $n_1,n_2\in \mathbb{N}$
and $c\in \mathbb{N}_{>1}$ with $(c,p)=1$,
\begin{equation*}
L_{p,2}(n_1,n_2;\omega^{-n_1},\omega^{-n_2};1,1;c)= \\
\underset{\xi_1\xi_2\neq 1, \ \xi_2\neq 1}{\sum_{\xi_1^c=\xi_2^c=1}}
\ell^{(p)}_{n_1,n_2}(\xi_1,\xi_2;1).
\end{equation*}
\end{example}

\subsection{$p$-adic partial twisted multiple polylogarithms}\label{sec-5-4}
We will prove that
$p$-adic rigid TMPL's  (Definition \ref{def of pMMPL})
are overconvergent in Theorem \ref{rigidness II}.
In order to do that,
$p$-adic partial TMPL's will be introduced in Definition \ref{def of pPMPL}
and their properties will be investigated.

First,we recall overconvergent functions and rigid cohomologies
in our particular  case
(consult \cite{Ber} for a general theory)
%
%
%
%

\begin{notation}\label{overconvergent functions and associated cohomologies}
Let
$S=\{s_0,\dots,s_d\}$ (all $s_i$'s are distinct)
be a finite subset of $\textbf{P}^{1}(\overline{\mathbb{F}}_p)$.
An {\it overconvergent function} of  $\textbf{P}^{1}\setminus S$
is a function belonging to the ${\mathbb C}_p$-algebra
$$
A^\dag(  {\bf P}^1\setminus S)
:=
\underset{\lambda\to 1^{-}}{\rm ind\text{-}lim} \ A^\text{rig}(U_\lambda)
$$
where
$U_\lambda$ is the affinoid  
``obtained by removing all closed discs
of radius $\lambda$ around $\hat s_i$ (: a lift of $s_i$)
from
${\bf P}^1({\mathbb C}_p)$'', i.e.
\begin{equation}\label{removing all closed discs}
U_\lambda:={\bf P}^1({\mathbb C}_p)\setminus \bigcup_{0\leqslant i \leqslant d} 
z_i^{-1}\left(\{\alpha\in{\mathbb C}_p\bigm||\alpha|_p\leqslant\lambda\}\right)
\end{equation}
and $z_i$ is a local parameter
\begin{equation}\label{local parameter}
z_i:]s_i[\overset{\sim}{\to} ]\bar{0}[.
\end{equation}
(It is noted that the above $\hat s_i$ is equal to $z_i^{-1}(0)$.)
An overconvergent function of  $ {\bf P}^1\setminus S$ is, in short, 
a function 
which can be analytically extended into
an affinoid which is bigger than
${\bf P}^1({\mathbb C}_p)- ]s_0,s_1,\dots,s_d[$.
We note that the definition of the space of
overconvergent functions 
does not depend on the
choice of  local parameter $z_i$.

\end{notation}

The following lemmas are quite useful.

\begin{lemma}
Assume $s_0=\bar\infty$ and take $\hat{s_0}=\infty$.
Then 
we have a description:
\begin{equation}\label{description of A1}
\begin{split}
A^\dag(  {\bf P}^1\setminus S) \simeq
\Biggl\{f(z)=\sum_{r\geqslant 0}a_{r}(\hat s_0;f)z^{r}
+\sum_{l=1}^d\sum_{m>0}\frac{a_{m}(\hat s_l;f)}{(z-\hat{s_l})^{m}}
\in{\mathbb C}_p[[z,\frac{1}{z-\hat{s_l}}]] \Biggm| 
&\\ 
\frac{|a_{m}(\hat s_l;f)|_p}{\lambda^{m}}\to 0 \  (m\to \infty) 
 \ \text{  for some  } 0<\lambda <1 
\quad (0\leqslant l \leqslant & d) 
\Biggr\} . \\
\end{split}
\end{equation}
While we have
\begin{equation}\label{description of A2}
\begin{split}
A^{\text{rig}}({\bf P}^1\setminus S) \simeq
\Biggl\{f(z)=&\sum_{m\geqslant 0}a_{m}(\hat s_0;f)z^{m}
+\sum_{l=1}^d\sum_{m>0}\frac{a_{m}(\hat s_l;f)}{(z-\hat{s_l})^{m}}
\in{\mathbb C}_p[[z,\frac{1}{z-\hat{s_l}}]] \Biggm| \\
&
|a_{m}(\hat s_l;f)|_p\to 0 \  (m\to \infty) 
\text{ for }  0\leqslant l \leqslant d
\Biggr\}. 
\end{split}
\end{equation}
\end{lemma}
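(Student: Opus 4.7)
The plan is to derive both descriptions from the Mittag-Leffler decomposition for rigid analytic functions (cited from \cite{FvP} in Notation \ref{rigid-basics}) combined with an analysis of the convergence radii on the affinoids involved. For any affinoid $X\subset{\bf P}^1({\mathbb C}_p)$ obtained by removing a finite collection of open or closed discs, every $f\in A^{\text{rig}}(X)$ admits a unique expansion as a polynomial-type part at $\infty$ plus principal parts around each finite removed disc, and the uniqueness makes the map $f\mapsto\{a_m(\hat s_l;f)\}$ well-defined. The first step is to fix the local parameters $z_l$ at each $s_l$ and to interpret the tubular neighbourhoods: for finite $s_l$ with lift $\hat s_l\in{\mathbb C}_p$, $]s_l[=\{|z-\hat s_l|_p<1\}$, and $]\bar\infty[=\{|z|_p>1\}$; removing these yields the affinoid $\mathbf{P}^1({\mathbb C}_p)-]S[$ underlying $A^{\text{rig}}(\mathbf{P}^1\setminus S)$.

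For \eqref{description of A2}, I would observe that convergence on $\mathbf{P}^1({\mathbb C}_p)-]S[$ amounts to convergence of the power series $\sum a_m(\hat s_0;f)z^m$ on $|z|_p\leqslant 1$ and of each principal part $\sum a_m(\hat s_l;f)/(z-\hat s_l)^m$ on $|z-\hat s_l|_p\geqslant 1$; both conditions reduce to $|a_m(\hat s_l;f)|_p\to 0$ as $m\to\infty$. Conversely, any formal sum satisfying this condition defines an element of $A^{\text{rig}}(\mathbf{P}^1\setminus S)$, because the uniform convergence of the partial sums on $\mathbf{P}^1({\mathbb C}_p)-]S[$ follows from the non-archimedean supremum norm estimate. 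For \eqref{description of A1}, I would similarly analyse $U_\lambda$, which differs from $\mathbf{P}^1({\mathbb C}_p)-]S[$ by enlarging the complementary region: each finite removed closed disc is shrunk to radius $\lambda<1$, and the disc at infinity becomes $|z|_p<1/\lambda$. Convergence of each principal part $\sum a_m/(z-\hat s_l)^m$ on $|z-\hat s_l|_p\geqslant\lambda'$ for some $\lambda<\lambda'<1$ translates to $|a_m(\hat s_l;f)|_p/\lambda^m\to 0$, and the power series at infinity converges on $|z|_p\leqslant 1/\lambda'$ precisely when $|a_m(\hat s_0;f)|_p(1/\lambda)^m\to 0$, i.e.\ $|a_m(\hat s_0;f)|_p/\lambda^m\to 0$. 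The direct-limit definition $A^\dag=\varinjlim_{\lambda\to 1^-}A^{\text{rig}}(U_\lambda)$ allows the single parameter $\lambda<1$ to absorb both types of conditions.

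The main subtle point will be the compatibility of the Mittag-Leffler decompositions across the family $\{U_\lambda\}_{\lambda\to 1^-}$: one must verify that the coefficients $a_m(\hat s_l;f)$ assigned to a given overconvergent $f$ are independent of which $\lambda$ is chosen to represent it, and that the transition maps $A^{\text{rig}}(U_{\lambda'})\hookrightarrow A^{\text{rig}}(U_{\lambda})$ are identified with the identity on coefficient sequences. This follows formally from the uniqueness of the Mittag-Leffler decomposition on each $U_\lambda$ together with the injectivity and compatibility of restriction maps between affinoids sharing a common admissible subset, both consequences of the coincidence principle recalled in Proposition \ref{two fundamental properties of rigid analytic functions}(i). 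Once this is in place, the equivalence of the coefficient conditions with membership in the corresponding function space reduces, as above, to the elementary comparison between the supremum norm on $U_\lambda$ and the decay rate $\lambda^{-m}$ of the coefficients.
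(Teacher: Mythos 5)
Your proposal is correct and follows essentially the same route as the paper, whose proof consists of the single remark that both descriptions ``follow from the definitions'' (i.e.\ the Mittag--Leffler decomposition of Notation \ref{rigid-basics} applied to the affinoids ${\bf P}^1({\mathbb C}_p)-]S[$ and $U_\lambda$, together with the direct-limit definition of $A^\dag$); you have simply written out the convergence-radius bookkeeping and the compatibility across $\lambda$ that the authors leave implicit.
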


\begin{proof}
They follow from the definitions (cf. Notation \ref{basics on rigid}).
\end{proof}

We note that 
$$
A^\dag(  {\bf P}^1\setminus S)\subset
A^{\text{rig}}({\bf P}^1\setminus S) .
$$
The following is one of the most important properties of overconvergent functions.
\begin{lemma}\label{useful example}
Let $f(z)\in A^\dag(  {\bf P}^1\setminus S)$.
Under the above assumption,
there exists a (unique up to constant)
solution $F(z)\in A^\dag(  {\bf P}^1\setminus S)$ of the differential equation
$$
dF(z)=f(z)dz
$$
if and only if the {\it residues} of the differential $1$-form $f(z)dz$,
i.e. $a_1(\hat s_l;f)$ ($1\leqslant l\leqslant d$) are all $0$.
\end{lemma}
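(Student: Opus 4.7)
My strategy is to use the Mittag-Leffler description \eqref{description of A1} to reduce the equation $dF=f\,dz$ to a term-by-term integration problem, and then verify that the resulting formal primitive remains overconvergent. Writing
$$
f(z)=\sum_{r\geqslant 0}a_{r}(\hat s_0;f)\,z^{r}+\sum_{l=1}^{d}\sum_{m\geqslant 1}\frac{a_{m}(\hat s_l;f)}{(z-\hat s_l)^{m}},
$$
I observe that differentiating any series of Mittag-Leffler type never produces a summand of the form $1/(z-\hat s_l)^{1}$: indeed $\frac{d}{dz}(z-\hat s_l)^{-m}=-m(z-\hat s_l)^{-(m+1)}$ has exponent $\geqslant 2$, and $\frac{d}{dz}z^{r+1}=(r+1)z^{r}$ stays polynomial. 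Hence if some $F\in A^{\dag}(\textbf{P}^{1}\setminus S)$ satisfies $dF=f\,dz$, uniqueness of the Mittag-Leffler decomposition forces $a_{1}(\hat s_l;f)=0$ for every $1\leqslant l\leqslant d$, which proves necessity.

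Conversely, assume all residues $a_{1}(\hat s_l;f)$ vanish. I propose the candidate primitive
$$
F(z):=\sum_{r\geqslant 0}\frac{a_{r}(\hat s_0;f)}{r+1}\,z^{r+1}\ -\ \sum_{l=1}^{d}\sum_{m\geqslant 2}\frac{a_{m}(\hat s_l;f)}{(m-1)(z-\hat s_l)^{m-1}},
$$
which, once shown to define an element of $A^{\dag}(\textbf{P}^{1}\setminus S)$, formally satisfies $dF/dz=f$ term by term (the vanishing of the residues is precisely the condition under which the $m=1$ summands, whose naive primitives $\log(z-\hat s_l)$ would destroy overconvergence, are absent). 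It therefore remains to establish that $F$ belongs to $A^{\dag}(\textbf{P}^{1}\setminus S)$.

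\textbf{The main obstacle} is handling the $p$-adic factorial denominators $m-1$ and $r+1$ while preserving the geometric decay required by \eqref{description of A1}. By hypothesis there exists $\lambda\in(0,1)$ with $|a_{m}(\hat s_l;f)|_{p}/\lambda^{m}\to 0$ for every $0\leqslant l\leqslant d$. Fix any $\lambda'\in(\lambda,1)$. Using the elementary bound $|n|_{p}\geqslant 1/n$ valid for $n\in\mathbb{N}_{>0}$, I obtain
$$
\frac{|a_{m}(\hat s_l;f)/(m-1)|_{p}}{\lambda'^{\,m-1}}\leqslant (m-1)\,\frac{|a_{m}(\hat s_l;f)|_{p}}{\lambda^{m}}\cdot\Bigl(\frac{\lambda}{\lambda'}\Bigr)^{m-1}\cdot\lambda,
$$
and an analogous estimate for the coefficients of the $z^{r+1}/(r+1)$ terms. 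Since $n(\lambda/\lambda')^{n}\to 0$ as $n\to\infty$ and $|a_{m}(\hat s_l;f)|_{p}/\lambda^{m}$ is bounded in $m$, both right-hand sides tend to $0$. Thus the Mittag-Leffler coefficients of $F$ still satisfy the decay condition of \eqref{description of A1} (with parameter $\lambda'$ in place of $\lambda$), giving $F\in A^{\rm rig}(U_{\lambda'})\subset A^{\dag}(\textbf{P}^{1}\setminus S)$.

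Finally, uniqueness up to an additive constant will follow by the same coefficient-matching argument applied to the zero differential: any $G\in A^{\dag}(\textbf{P}^{1}\setminus S)$ with $dG/dz=0$ must have every non-constant Mittag-Leffler coefficient equal to zero, hence $G$ is a constant.
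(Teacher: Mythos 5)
Your proof is correct and follows essentially the same route as the paper's: term-by-term integration of the Mittag--Leffler expansion, with the denominators $m-1$ (resp.\ $r+1$) absorbed by enlarging $\lambda$ to $\lambda'$ via the bound $1/|n|_p\leqslant n$, which is exactly the paper's estimate phrased as $\mathrm{ord}_p(n)=O(\log n/\log p)$. You additionally spell out the ``only if'' direction by coefficient matching, which the paper dismisses as easy, and the same matching argument correctly gives uniqueness up to a constant.
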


\begin{proof}
When $a_1(\hat s_l;f)$ ($1\leqslant l\leqslant d$) are all $0$,
integrations of $f(z)$ in \eqref{description of A1} 
are formally given by the following power series
$$
\sum_{r\geqslant 1}\frac{a_{r-1}(\hat s_0;f)}{r}\cdot z^{r}
+\sum_{l=1}^d\sum_{m>0}\frac{a_{m+1}(\hat s_l;f)}{-m}\cdot\frac{1}{(z-\hat{s_l})^{m}}+
\text{constant}.
$$
Then by replacing the $\lambda$  by $\lambda'$ such that $\lambda<\lambda'<1$
and using $\text{ord}_p(n)=O(\log n/\log p )$,
we get
$$
\frac{|a_{m+1}(\hat s_l;f)|_p}{|m|_p}\cdot \frac{1}{\lambda^{\prime m}}\to 0 \quad  (m\to \infty).
$$
Whence $F(z)$ belongs to  $A^\dag(  {\bf P}^1\setminus S)$.
The \lq if'-part is obtained.
The \lq only if'-part is easy.
\end{proof}

The lemma actually is a consequence of the fact that
$\dim H^1_\dag( {\bf P}^1\setminus S)=d$.

The following function is a main object in this subsection.

\begin{definition}\label{def of pPMPL}
Let  $n_1,\dots,n_r\in \mathbb{N}$ and $\xi_1,\dots,\xi_{r}\in\mathbb{C}_p$ with $|\xi_j|_p\leqslant 1$ ($1\leqslant j\leqslant r$).
Let  $\alpha_1,\dots,\alpha_r\in \mathbb{N}$ with $0<\alpha_j<p$
 ($1\leqslant j\leqslant r$).
The {\bf $p$-adic partial TMPL} 
$\ell^{\equiv (\alpha_1,\dots,\alpha_r),(p)}_{n_1,\dots,n_r}(\xi_1,\dots\xi_{r};z)$ is defined by the following 
$p$-adic power series:
\begin{equation}\label{series expression for partial double}
\ell^{\equiv (\alpha_1,\dots,\alpha_r),(p)}_{n_1,\dots,n_r}(\xi_1,\dots,\xi_{r};z):=
\underset{k_1\equiv \alpha_1,\dots ,k_r\equiv\alpha_r \bmod p
}{\underset{0<k_1<\cdots <k_r}{\sum}}
\frac{\xi_1^{k_1}\cdots\xi_{r}^{k_{r}}}{k_1^{n_1}\cdots k_r^{n_r}}z^{k_r}
\end{equation}
which converges for $z\in ]\bar{0}[$.
\end{definition}

Similarly to \eqref{limit expression}, we have a limit expression of them.

\begin{proposition}
Let  $n_1,\dots,n_r\in \mathbb{N}$,
$\xi_1,\dots,\xi_{r}\in\mathbb{C}_p$ with $|\xi_j|_p\leqslant 1$ ($1\leqslant j\leqslant r$)
and $\alpha_1,\dots,\alpha_r\in \mathbb{N}$ with $0<\alpha_j<p$
($1\leqslant j\leqslant r$).
When $z\in]\bar 0[$, the function $\ell^{\equiv (\alpha_1,\dots,\alpha_r),(p)}_{n_1,\dots,n_r}(\xi_1,\dots,\xi_{r};z)$ is expressed as
\begin{align}\label{limit expression for partial double}
\ell^{\equiv (\alpha_1,\dots,\alpha_r),(p)}_{n_1,\dots,n_r}&
(\xi_1,\dots,\xi_{r};z) 
=\lim_{M\to\infty}
\underset{l_1\equiv \alpha_1,l_1+l_2\equiv\alpha_2,\dots ,l_1+\cdots+l_r\equiv\alpha_r \bmod p}{\underset{0<l_1, \dots,l_{r}<p^M}{\sum}}
\frac{\xi_1^{l_1}\xi_2^{l_1+l_2}\cdots\xi_{r}^{l_1+\cdots+l_{r}}z^{l_1+\cdots+l_r}}
{l_1^{n_1}(l_1+l_2)^{n_2}\cdots (l_1+\cdots+l_r)^{n_r}} \notag \\
&\qquad \qquad \cdot\frac{1}{1-(\xi_1\cdots\xi_{r} z)^{p^M}}
\cdots 
\frac{1}{1-(\xi_{r-1}\xi_r z)^{p^M}}\cdot
\frac{1}{1-(\xi_r z)^{p^M}}.
\end{align}
\end{proposition}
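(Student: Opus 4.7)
The plan is to follow the same pattern of direct $p$-adic computation as in the derivation of the limit expression \eqref{limit expression} from \eqref{integral expression} in the proof of Theorem \ref{integral theorem}. Concretely, since $|z|_p<1$ and $|\xi_j|_p\leqslant 1$, I would begin by expanding each geometric factor as a convergent series
\[
\frac{1}{1-(\xi_j \cdots \xi_r z)^{p^M}} = \sum_{m_j \geqslant 0} (\xi_j \cdots \xi_r z)^{m_j p^M},
\]
and introduce the substitution $K_j := l_j + m_j p^M$. As $(l_j, m_j)$ ranges over $\{1, \ldots, p^M-1\} \times \mathbb{N}_0$, the variable $K_j$ runs through all positive integers not divisible by $p^M$, and the congruence condition $l_1+\cdots+l_j \equiv \alpha_j \bmod p$ transfers unchanged to $K_1+\cdots+K_j \equiv \alpha_j \bmod p$ (since $p\mid p^M$).

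After this substitution the numerators match those of \eqref{series expression for partial double} exactly (both equal $\xi_1^{K_1}\xi_2^{K_1+K_2}\cdots\xi_r^{K_1+\cdots+K_r}z^{K_1+\cdots+K_r}$), but the denominators differ: the limit expression carries $(l_1+\cdots+l_j)^{n_j}$ whereas the target series has $(K_1+\cdots+K_j)^{n_j}$. The key step is to control this discrepancy as $M\to\infty$. The hypothesis $0<\alpha_j<p$ forces both $l_1+\cdots+l_j$ and $K_1+\cdots+K_j$ to be $p$-adic units; since they differ by $(m_1+\cdots+m_j)p^M$, their $n_j$-th powers, and hence the reciprocals thereof, agree modulo $p^M$.

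A termwise comparison via the ultrametric inequality then yields
\[
\left| \ell^{\equiv (\alpha_1,\dots,\alpha_r),(p)}_{n_1,\dots,n_r}(\xi_1,\dots,\xi_{r};z) - F_M(z) \right|_p \leqslant \max \left\{|z|_p^{p^M},\ p^{-M} \cdot |z|_p^{r}\right\},
\]
where $F_M(z)$ denotes the $M$-th finite sum inside \eqref{limit expression for partial double}. The first bound absorbs the terms that appear in the defining series of $\ell^{\equiv}$ but not in $F_M$, namely those with some $K_j$ a multiple of $p^M$: for any such term, the total exponent $K_1+\cdots+K_r$ is at least $p^M$. The second bound absorbs the denominator discrepancy discussed above. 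Both tend to $0$ as $M\to\infty$, which proves the stated equality. The only real obstacle is the denominator discrepancy, and it is resolved precisely by the coprimality to $p$ inherited from $\alpha_j \not\equiv 0 \bmod p$.
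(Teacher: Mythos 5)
Your proof is correct and supplies exactly the ``direct calculations'' that the paper leaves unstated: expanding the geometric factors, reindexing by $K_j=l_j+m_jp^M$, noting that the omitted terms (some $K_j$ divisible by $p^M$) contribute at most $|z|_p^{p^M}$, and using that $\alpha_j\not\equiv 0\bmod p$ makes each partial sum $l_1+\cdots+l_j$ a $p$-adic unit so the denominator discrepancy is $O(p^{-M})$. This is the same computation pattern the paper itself uses to pass between \eqref{integral expression} and \eqref{limit expression}, so your argument matches the intended route.
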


\begin{proof}
It can be proved by direct calculations.
\end{proof}

It is worthy to note that here the condition 
$\alpha_j\neq 0$ ($1\leqslant j\leqslant r$)
is necessary to make the limit convergent.

\begin{proposition}\label{rigid extension}
Let  $n_1,\dots,n_r\in \mathbb{N}$, 
$\xi_1,\dots,\xi_{r}\in\mathbb{C}_p$ with $|\xi_j|_p= 1$ ($1\leqslant j\leqslant r$)
and $\alpha_1,\dots,\alpha_r\in \mathbb{N}$ with $0<\alpha_j<p$
($1\leqslant j\leqslant r$).
Set $S$
as in \eqref{S0}.
Then the function $\ell^{\equiv (\alpha_1,\dots,\alpha_r),(p)}_{n_1,\dots,n_r}(\xi_1,\dots,\xi_{r};z)$
is analytically extended into 
${\bf P}^1({\mathbb C}_p) - ]S[
$
as a rigid analytic function.
Namely, 
$$\ell^{\equiv (\alpha_1,\dots,\alpha_r),(p)}_{n_1,\dots,n_r}(\xi_1,\dots,\xi_{r};z)\in 
A^{\text{rig}}( {\bf P}^1\setminus S).
$$
\end{proposition}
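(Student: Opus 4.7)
The plan is to follow the same strategy as in the proof of Proposition \ref{rigidness}. Denote by
\begin{equation*}
h^M_{n_1,\dots,n_r;\alpha_1,\dots,\alpha_r}(\xi_1,\dots,\xi_r;z)
\end{equation*}
the rational function of $z$ appearing on the right-hand side of \eqref{limit expression for partial double} before taking the limit. Its poles (in $z$) are located only at the roots of $1-(\xi_j\xi_{j+1}\cdots\xi_r z)^{p^M}$ for $1\leqslant j\leqslant r$, whose reductions modulo $p$ lie in $S$ (since $|\xi_j|_p=1$). Hence $h^M_{n_1,\dots,n_r;\alpha_1,\dots,\alpha_r}(\xi_1,\dots,\xi_r;z)$ belongs to $A^{\text{rig}}({\bf P}^1\setminus S)$, and moreover $h^M_{n_1,\dots,n_r;\alpha_1,\dots,\alpha_r}(\xi_1,\dots,\xi_r;z_0)\in{\mathcal O}_{{\mathbb C}_p}$ for $z_0\in {\bf P}^1({\mathbb C}_p)-\,]S[$.

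The key step is to establish the congruence
\begin{equation*}
h^{M+1}_{n_1,\dots,n_r;\alpha_1,\dots,\alpha_r}(\xi_1,\dots,\xi_r;z_0)\equiv h^{M}_{n_1,\dots,n_r;\alpha_1,\dots,\alpha_r}(\xi_1,\dots,\xi_r;z_0)\pmod{p^M}
\end{equation*}
for all $z_0\in {\bf P}^1({\mathbb C}_p)-\,]S[$. For $z_0\in{\mathcal O}_{{\mathbb C}_p}-\,]S[$, I would decompose each summation variable $l_j$ running over $(0,p^{M+1})$ as $l_j=l_j'+k_jp^M$ with $0<l_j'<p^M$ and $0\leqslant k_j\leqslant p-1$, exactly as in the proof of Lemma \ref{congruence}. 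Crucially, since $M\geqslant 1$, the residue conditions $l_1\equiv\alpha_1, l_1+l_2\equiv\alpha_2, \dots \pmod p$ depend only on the $l_j'$, so they survive the decomposition, and the $k_j$-sums produce the geometric series which are exactly what one needs to match the denominators at level $M$.

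The trickier part, as in Lemma \ref{congruence}, is treating $z_0\in\,]\bar\infty[$. Setting $\varepsilon=1/z_0\in\,]\bar 0[$ and applying the substitution $l_j\mapsto p^M-l_j$ in the sum, one converts $h^M_{n_1,\dots,n_r;\alpha_1,\dots,\alpha_r}(\xi_1,\dots,\xi_r;z_0)$ into an expression of the same shape as $h^M_{n_1,\dots,n_r;\alpha_1',\dots,\alpha_r'}(\xi_1^{-1},\dots,\xi_r^{-1};\varepsilon)$ up to sign, where the new residue parameters $\alpha_j'$ are determined by $\alpha_j\bmod p$ (explicitly $l_1+\cdots+l_j\equiv \alpha_j$ becomes $(p^M-l_1)+\cdots+(p^M-l_j)\equiv -\alpha_j\pmod p$). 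Since $|\xi_j^{-1}|_p=1$ as well, the first case applies to the converted expression and yields the desired congruence modulo $p^M$. This residue-inversion step is the main obstacle, because it requires keeping careful track of how the congruence conditions transform under $l_j\mapsto p^M-l_j$ and verifying that the resulting new residue data $(\alpha_1',\dots,\alpha_r')$ still satisfies $0<\alpha_j'<p$ so that the congruence inside the sum, together with the factor coming from Fermat's little theorem in the form $k^{n}\equiv (p^M-k)^n\cdot(-1)^n\pmod{p^M}$, is valid.

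Granting the congruence, the sequence $\{h^M_{n_1,\dots,n_r;\alpha_1,\dots,\alpha_r}(\xi_1,\dots,\xi_r;z)\}_M$ is Cauchy in the supremum norm of the Banach algebra $A^{\text{rig}}({\bf P}^1\setminus S)$ (cf.\ Proposition \ref{two fundamental properties of rigid analytic functions}~(ii)), hence converges to an element of $A^{\text{rig}}({\bf P}^1\setminus S)$. Restricting to $z\in\,]\bar 0[$ and comparing with \eqref{series expression for partial double} via \eqref{limit expression for partial double} shows that this limit agrees with $\ell^{\equiv(\alpha_1,\dots,\alpha_r),(p)}_{n_1,\dots,n_r}(\xi_1,\dots,\xi_r;z)$ on an open subset, so by the coincidence principle (Proposition \ref{two fundamental properties of rigid analytic functions}~(i)) it is the unique rigid analytic extension.
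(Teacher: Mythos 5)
Your proof is correct, but it takes a genuinely different route from the paper's. The paper proves Proposition \ref{rigid extension} in one stroke: it invokes the finite character-sum identity
\begin{equation*}
\ell^{\equiv (\alpha_1,\dots,\alpha_r),(p)}_{n_1,\dots,n_r}(\xi_1,\dots,\xi_{r};z)
=\frac{1}{p^r}\sum_{\rho_1^p=\cdots=\rho_r^p=1} \rho_1^{-\alpha_1}\cdots\rho_r^{-\alpha_r}\,
\ell^{(p)}_{n_1,\dots,n_r}(\rho_1\xi_1,\dots,\rho_{r}\xi_{r}; z),
\end{equation*}
valid on $]\bar 0[$ by orthogonality of characters of $(\mathbb{Z}/p)^r$, and observes that each summand on the right is already rigid analytic on ${\bf P}^1({\mathbb C}_p)-\,]S[$ by Proposition \ref{rigidness} (the singular locus is unchanged because $\overline{\rho_j\xi_j}=\overline{\xi_j}$, as $|\rho_j-1|_p<1$). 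You instead rerun the entire Cauchy-sequence machinery of Lemma \ref{congruence} and Proposition \ref{rigidness} directly on the constrained truncated sums $h^M$; your key observations — that the residue conditions depend only on $l_j'\bmod p$ and so survive the decomposition $l_j=l_j'+k_jp^M$, that the denominators remain $p$-adic units because $0<\alpha_j<p$, and that under $l_j\mapsto p^M-l_j$ the residue data transforms to $\alpha_j'=p-\alpha_j$ which again satisfies $0<\alpha_j'<p$ — are all sound, and the argument goes through. (One cosmetic quibble: the congruence $(p^M-k)^n\equiv(-1)^nk^n\pmod{p^M}$ is just the binomial expansion, not Fermat's little theorem.) What each approach buys: yours is self-contained and makes no appeal to the untwisted case, at the cost of repeating a delicate computation; the paper's is essentially free given the earlier work, and moreover the identity \eqref{partial ell-ell} it rests on is needed again later (in Remark \ref{easy remark} and Theorem \ref{ell-Li theorem}), so establishing it here does double duty.
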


\begin{proof}
%
The relation
\begin{equation}\label{partial ell-ell}
\ell^{\equiv (\alpha_1,\dots,\alpha_r),(p)}_{n_1,\dots,n_r}(\xi_1,\dots,\xi_{r};z)
=\frac{1}{p^r}\sum_{\rho_1^p=\cdots=\rho_r^p=1} \rho_1^{-\alpha_1}\cdots\rho_r^{-\alpha_r}
\ell^{(p)}_{n_1,\dots,n_r}(\rho_1\xi_1,\dots,\rho_{r}\xi_{r}; z)
\end{equation}
holds on $]\bar 0[$.
Since $\ell^{(p)}_{n_1,\dots,n_r}(\rho_1\xi_1,\dots,\rho_{r}\xi_{r}; z)$'s
are all rigid analytic on the above space
by Proposition \ref{rigidness},
the function $\ell^{\equiv (\alpha_1,\dots,\alpha_r),(p)}_{n_1,\dots,n_r}(\xi_1,\dots,\xi_{r};z)$
can be extended there as a rigid analytic function
to keep the equation.
\end{proof}

From now on we will employ the same symbol
$\ell^{\equiv (\alpha_1,\dots,\alpha_r),(p)}_{n_1,\dots,n_r}(\xi_1,\dots,\xi_{r};z)$
to denote its analytic continuation.

The following formulas are necessary to prove our Theorem \ref{rigidness II}.

\begin{lemma}\label{differential equations}
Let  $n_1,\dots,n_r\in \mathbb{N}$, 
$\xi_1,\dots,\xi_{r}\in\mathbb{C}_p$ with $|\xi_j|_p= 1$ ($1\leqslant j\leqslant r$)
and $\alpha_1,\dots,\alpha_r\in \mathbb{N}$ with $0<\alpha_j<p$
($1\leqslant j\leqslant r$).

{\rm (i)}\ For $n_r\neq 1$, 
$$\frac{d}{dz}\ell^{\equiv (\alpha_1,\dots,\alpha_r),(p)}_{n_1,\dots,n_r}(\xi_1,\dots,\xi_{r};z)
=\frac{1}{z}\ell^{\equiv (\alpha_1,\dots,\alpha_r),(p)}_{n_1,\dots,n_{r-1},n_r-1}(\xi_1,\dots,\xi_{r};z).
$$

{\rm (ii)}\ For $n_r=1$ and $r\neq 1$,
$$\frac{d}{dz}\ell^{\equiv (\alpha_1,\dots,\alpha_r),(p)}_{n_1,\dots,n_r}(\xi_1,\dots,\xi_{r};z)=
\begin{cases}
&\frac{\xi_r(\xi_r z)^{\alpha_r-\alpha_{r-1}-1}}{1-(\xi_r z)^p}
\ell^{\equiv (\alpha_1,\dots,\alpha_{r-1}),(p)}_{n_1,\dots,n_{r-1}}(\xi_1,\dots,\xi_{r-2},\xi_{r-1};\xi_r z)  \\
&\qquad\qquad\qquad\qquad\qquad\qquad \text{if}\quad \alpha_r>\alpha_{r-1} , \\
&\frac{\xi_r(\xi_r z)^{\alpha_r-\alpha_{r-1}+p-1}}{1-(\xi_r z)^p}\ell^{\equiv (\alpha_1,\dots,\alpha_{r-1}),(p)}_{n_1,\dots,n_{r-1}}(\xi_1,\dots,\xi_{r-2},\xi_{r-1};\xi_r z)  \\
&\qquad\qquad\qquad\qquad\qquad\qquad \text{if}\quad \alpha_r\leqslant\alpha_{r-1}. \\
\end{cases}
$$

(iii)\ For $n_r=1$ and $r=1$ with $\xi_1=\xi$ and  $\alpha_1=\alpha$,
 $$\frac{d}{dz}\ell^{\equiv \alpha, (p)}_{1}(\xi;z)=
\frac{\xi (\xi z)^{\alpha-1}}{1-(\xi z)^p}.
$$

\end{lemma}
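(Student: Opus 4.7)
The strategy is to prove all three formulas by termwise differentiation of the defining power series \eqref{series expression for partial double} on the disc $]\bar{0}[$, and then invoke the coincidence principle (Proposition \ref{two fundamental properties of rigid analytic functions}\,(i)) together with the rigid extension provided by Proposition \ref{rigid extension} to upgrade the equalities to identities of rigid analytic functions on ${\bf P}^1({\mathbb C}_p)-]S[$.

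For (i), differentiating $z^{k_r}$ in each summand simply drops one power of $k_r$ into the denominator and multiplies by $\frac{1}{z}$, so the identity is immediate on $]\bar 0[$ and then propagates by coincidence.  For (iii), differentiation yields the single series $\sum_{k\equiv\alpha\bmod p,\,k>0}\xi^{k}z^{k-1}=\xi\sum_{k}(\xi z)^{k-1}$; writing $k=\alpha+ip$ with $i\geqslant 0$ one recognizes a geometric series in $(\xi z)^p$ with initial term $(\xi z)^{\alpha-1}$, producing exactly the stated closed form.

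The heart of the argument is (ii), which is the only step I expect to require some care.  Termwise differentiation on $]\bar 0[$ gives
\[
\frac{d}{dz}\ell^{\equiv(\alpha_1,\dots,\alpha_r),(p)}_{n_1,\dots,n_r}(\xi_1,\dots,\xi_r;z)
=\underset{k_j\equiv\alpha_j\,(\bmod\,p)}{\sum_{0<k_1<\cdots<k_r}}\frac{\xi_1^{k_1}\cdots\xi_r^{k_r}}{k_1^{n_1}\cdots k_{r-1}^{n_{r-1}}}z^{k_r-1}.
\]
Fixing $k_1<\cdots<k_{r-1}$ and summing the inner series over $k_r=k_{r-1}+j$ with $j>0$ and $j\equiv\alpha_r-\alpha_{r-1}\,(\bmod\,p)$, the smallest admissible $j$ equals $\alpha_r-\alpha_{r-1}$ when $\alpha_r>\alpha_{r-1}$ and equals $\alpha_r-\alpha_{r-1}+p$ when $\alpha_r\leqslant\alpha_{r-1}$ (using $0<\alpha_j<p$). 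Factoring $\xi_r^{k_r}z^{k_r-1}=\xi_r\,(\xi_r z)^{k_r-1}$ turns the $j$-sum into a geometric series in $(\xi_r z)^{p}$ whose evaluation is $\frac{\xi_r(\xi_r z)^{k_{r-1}+j_0-1}}{1-(\xi_r z)^p}$ with $j_0$ the above minimal exponent.  After pulling out the $z$-independent $\frac{\xi_r(\xi_r z)^{j_0-1}}{1-(\xi_r z)^p}$, the remaining sum is visibly the series defining $\ell^{\equiv(\alpha_1,\dots,\alpha_{r-1}),(p)}_{n_1,\dots,n_{r-1}}(\xi_1,\dots,\xi_{r-1};\xi_r z)$, which yields (ii) on $]\bar 0[$.

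The only delicate point is the interchange of the two summations in the previous paragraph, but since $|\xi_j|_p=1$ and $|z|_p<1$ the double series is absolutely convergent in $\mathbb{C}_p$ and Fubini is automatic; this, and the case splitting on $\alpha_r>\alpha_{r-1}$ versus $\alpha_r\leqslant \alpha_{r-1}$, is where one must be most careful.  Finally, for each of (i)--(iii) both sides lie in the Banach algebra $A^{\rm rig}({\bf P}^1\setminus S)$ by Proposition \ref{rigid extension} (the prefactors $\frac{\xi_r(\xi_r z)^{\ast}}{1-(\xi_r z)^p}$ and $\frac{\xi(\xi z)^{\alpha-1}}{1-(\xi z)^p}$ are visibly rigid on ${\bf P}^1({\mathbb C}_p)-]\overline{\xi_r^{-1}}[$ and ${\bf P}^1({\mathbb C}_p)-]\overline{\xi^{-1}}[$ respectively, which are contained in the rigid domain of the corresponding right-hand sides), so coincidence on $]\bar 0[$ promotes the identities to the full rigid domain.
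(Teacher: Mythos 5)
Your computation is correct and is exactly the ``direct computation'' that the paper's one-line proof alludes to: termwise differentiation of the series \eqref{series expression for partial double} on $]\bar 0[$, with the geometric-series resummation in $(\xi_r z)^p$ and the case split on the minimal residue $j_0=\alpha_r-\alpha_{r-1}$ versus $\alpha_r-\alpha_{r-1}+p$ handling part (ii). Your additional step of promoting the identities from $]\bar 0[$ to the full rigid domain via the coincidence principle is sound and, if anything, slightly more careful than what the paper records.
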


\begin{proof}
They can be proved by direct computations.
\end{proof}

By Lemma \ref{ell at infinity} and \eqref{partial ell-ell},

\begin{remark}\label{easy remark}
For $n_1,\dots,n_r\in \mathbb{N}$, 
$\xi_1,\dots,\xi_{r}\in\mathbb{C}_p$ with $|\xi_j|_p= 1$ ($1\leqslant j\leqslant r$)
and $\alpha_1,\dots,\alpha_r\in \mathbb{N}$ with $0<\alpha_j<p$
($1\leqslant j\leqslant r$),
the following hold:

%
(i)\  $\ell^{\equiv (\alpha_1,\dots,\alpha_r),(p)}_{n_1,\dots,n_r}(\xi_1,\dots,\xi_{r};0)
=\ell^{\equiv (\alpha_1,\dots,\alpha_r),(p)}_{n_1,\dots,n_r}(\xi_1,\dots,\xi_{r};\infty)=0$.

(ii)\  $\ell^{(p)}_{n_1,\dots,n_r}(\xi_1,\dots,\xi_{r};z)=
\sum_{0<\alpha_1,\dots, \alpha_r<p}\ell^{\equiv (\alpha_1,\dots,\alpha_r),(p)}_{n_1,\dots,n_r}(\xi_1,\dots,\xi_{r};z).$
\end{remark}

Next we discuss a new property of our functions.

\begin{theorem}\label{rigidness II}
Let  $n_1,\dots,n_r\in \mathbb{N}$, 
$\xi_1,\dots,\xi_{r}\in\mathbb{C}_p$ with $|\xi_j|_p= 1$ ($1\leqslant j\leqslant r$)
and $\alpha_1,\dots,\alpha_r\in \mathbb{N}$ with $0<\alpha_j<p$
($1\leqslant j\leqslant r$).
Set $S$ as in \eqref{S0}.
The function 
$\ell^{\equiv (\alpha_1,\dots,\alpha_r),(p)}_{n_1,\dots,n_r}(\xi_1,\dots,\xi_{r}; z)$
is an overconvergent function of 
${\bf P}^1\setminus S$.
Namely, 
$$\ell^{\equiv (\alpha_1,\dots,\alpha_r),(p)}_{n_1,\dots,n_r}(\xi_1,\dots,\xi_{r};z)\in
 A^\dag( {\bf P}^1\setminus S
).$$
\end{theorem}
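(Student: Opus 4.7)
The plan is to induct on the weight $w := n_1 + \cdots + n_r$. At each step, we differentiate $\ell^{\equiv (\alpha_1,\dots,\alpha_r),(p)}_{n_1,\dots,n_r}(\xi_1,\dots,\xi_r;z)$ using Lemma \ref{differential equations}, verify that the resulting $1$-form is overconvergent and has vanishing residues, and then invoke Lemma \ref{useful example} to produce an overconvergent primitive. The coincidence principle of Proposition \ref{two fundamental properties of rigid analytic functions}(i), applied to the original power-series definition on $]\bar 0[$, then identifies this primitive with the rigid analytic extension from Proposition \ref{rigid extension}, yielding overconvergence.

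For the base case $r=1$, $n_1=1$, Lemma \ref{differential equations}(iii) gives the rational $1$-form $\frac{\xi(\xi z)^{\alpha-1}}{1-(\xi z)^p}\,dz$, whose poles are the $p$-th roots of $\xi^{-1}$, all located in $]\overline{\xi^{-1}}[$; this is visibly overconvergent on $\mathbf{P}^1\setminus\{\overline{\xi^{-1}}\}$. Its residue at $\overline{\xi^{-1}}$ in the sense of Lemma \ref{useful example} is the sum of ordinary residues over these poles, equal to $-\frac{1}{p}\sum_{\rho^p=1}\rho^{\alpha}$, which vanishes thanks to the constraint $0<\alpha<p$. The resulting overconvergent primitive $F$, normalized by $F(0)=0$, matches $\ell^{\equiv\alpha,(p)}_1(\xi;z)$ on $]\bar 0[$ by direct comparison with the series of Definition \ref{def of pPMPL}.

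For the inductive step, Lemma \ref{differential equations}(i)--(ii) expresses the derivative as a product of a partial TMPL of strictly lower weight with an explicit rational factor. The lower-weight partial TMPL is overconvergent by the inductive hypothesis; the rational factor is either $1/z$ (the apparent pole at the origin being cancelled by the vanishing of the partial TMPL at $z=0$, see Remark \ref{easy remark}(i)) or of the form $\frac{\xi_r(\xi_r z)^{e}}{1-(\xi_r z)^p}$ with $0<e<p$, whose poles lie inside $]\overline{\xi_r^{-1}}[\,\subset\,]S[$. Hence the derivative is an overconvergent $1$-form on $\mathbf{P}^1\setminus S$. The main obstacle is verifying that its residues at each point of $S$ vanish. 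The core inputs for this are: (a) the decomposition \eqref{partial ell-ell}, which writes each partial TMPL as a $\mathbb{C}_p$-linear combination of untwisted $p$-adic rigid TMPL's with shifted arguments, thereby reducing each local residue to simpler contributions; (b) the algebraic identity $\sum_{\rho^p=1}\rho^{\alpha_j}=0$ imposed by $0<\alpha_j<p$, which produces systematic cancellations in these sums; and (c) the boundary vanishings $\ell^{\equiv(\alpha_1,\dots,\alpha_r),(p)}_{n_1,\dots,n_r}(\xi_1,\dots,\xi_r;0) = \ell^{\equiv(\alpha_1,\dots,\alpha_r),(p)}_{n_1,\dots,n_r}(\xi_1,\dots,\xi_r;\infty) = 0$ from Remark \ref{easy remark}(i), which together with the global residue formula on $\mathbf{P}^1$ constrain the individual residues. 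Once the residue-vanishing is established, Lemma \ref{useful example} supplies an overconvergent primitive, which the coincidence principle identifies with $\ell^{\equiv(\alpha_1,\dots,\alpha_r),(p)}_{n_1,\dots,n_r}(\xi_1,\dots,\xi_r;z)$ on the whole of $\mathbf{P}^1\setminus\,]S[$.
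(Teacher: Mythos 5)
Your skeleton---induct on the weight, differentiate via Lemma \ref{differential equations}, check that the resulting $1$-form is overconvergent with vanishing residues, integrate by Lemma \ref{useful example}, and identify the primitive with the partial TMPL via the coincidence principle---is exactly the paper's skeleton, and your base case (summing the ordinary residues of $\frac{\xi(\xi z)^{\alpha-1}}{1-(\xi z)^p}\,dz$ over the $p$ poles inside $]\overline{\xi^{-1}}[$ and using $0<\alpha<p$) is a correct, if different, route to what the paper obtains by moving the puncture to $\infty$ through $w=1/(\xi z-1)$. The genuine gap is in the inductive step, at exactly the point you label ``the main obstacle'': the vanishing of the residues of $f(z)\,dz$ at the points of $S$. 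None of your three proposed inputs delivers it. The decomposition \eqref{partial ell-ell} merely re-expresses the partial TMPL through full rigid TMPLs whose local behaviour on $]S[$ is equally unknown; the identity $\sum_{\rho^p=1}\rho^{\alpha_j}=0$ controls only the poles of the explicit rational factors (which lie over $\overline{\xi_r^{-1}}$ or $\bar 0$), not the singular contributions of the lower-weight factor at the remaining points $\overline{(\xi_j\cdots\xi_r)^{-1}}$; and the global residue theorem on ${\bf P}^1$, together with the vanishing at $0$ and $\infty$, yields only the single relation $\sum_l a_1(\hat s_l;f)=0$, which cannot force each individual residue to vanish once $S$ has two or more points.

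The paper closes this gap by a mechanism that requires no residue computation at all: by Proposition \ref{rigid extension} the function $\ell^{\equiv (\alpha_1,\dots,\alpha_r),(p)}_{n_1,\dots,n_r}(\xi_1,\dots,\xi_{r};z)$ is \emph{already known} to lie in $A^{\text{rig}}( {\bf P}^1\setminus S)$, and by Lemma \ref{differential equations} it is a single-valued rigid-analytic primitive of $f(z)\,dz$ on that whole punctured space. Comparing the expansions \eqref{description of A1} and \eqref{description of A2}, a form admitting such a primitive can have no term $a_1(\hat s_l;f)\frac{dz}{z-\hat s_l}$ at any puncture (a nonzero residue would force a logarithm, which is not rigid analytic), so $a_1(\hat s_l;f)=0$ for all $l$; likewise, since $\bar 0\notin S$, the primitive is given by a power series on $]\bar 0[$ and hence $f$ has no principal part at the origin. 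Only after these observations does Lemma \ref{useful example} apply. You should replace your items (a)--(c) by this argument: as written, the residue-vanishing step is asserted rather than proved, and the induction does not go through without it.
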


\begin{proof}
It is achieved by induction on weight $n_1+\cdots+n_r$.

(i). Assume that the weight is equal to $1$, i.e. $r=1$ and $n_1=1$.
By 
 changing variable $w(z)=\frac{1}{\xi z-1}$, we see that 
${\bf P}^1 \setminus\{{\bar \xi^{-1}}\}$
is identified with ${\bf P}^1 \setminus\{{\bar \infty}\}$.
By direct calculations,
it can be checked that  
$\frac{(\xi z)^{\alpha-1}}{1-(\xi z)^p}\cdot\frac{dz}{dw}$ as a function on $w$
belongs to $A^\dag({\bf P}^1\setminus\{{\bar \infty}\})$.
(We note that it  also follows from the fact that  that it is a rational function on $w$
whose poles are all of the form $w=\frac{1}{\zeta_p-1}$ with $\zeta_p\in\mu_p$.)
So by Lemma \ref{useful example}, 
there exists a unique (modulo constant) function
$F(w)$ in $A^\dag({\bf P}^1
\setminus\{{\bar \infty}\})$
such that 
$$
\frac{dF}{dw}=\frac{\xi (\xi z)^{\alpha-1}}{1-(\xi z)^p}\cdot\frac{dz}{dw}.
$$
Therefore, there exists a unique function $F(z)$ in $A^\dag({\bf P}^1
\setminus\{{\bar \xi^{-1}}\})$
such that 
$$
F(0)=0 \quad \text{and} \quad
\frac{dF(z)}{dz}=\frac{\xi (\xi z)^{\alpha-1}}{1-(\xi z)^p}.
$$

By Proposition \ref{rigid extension} and Lemma \ref{differential equations} (iii),
 $\ell^{\equiv \alpha,(p)}_{1}(\xi;z)$ is also a unique function in
$A^{\text{rig}}( {\bf P}^1
\setminus \{\bar\xi^{-1}\} )$
satisfying the above properties and
$F(z)|_{{\bf P}^1({\mathbb C}_p) -]{\bar \xi^{-1}}[}$ belongs to
$A^{\text{rig}}( {\bf P}^1 
 \setminus \{ {\bar \xi^{-1}}\} )$.
Thus  we have
$$
F(z)|_{{\bf P}^1({\mathbb C}_p) -]{\bar \xi^{-1}}[}\equiv\ell^{\equiv \alpha,(p)}_{1}(\xi;z).
$$
So by the coincidence principle of rigid analytic functions
we can say that  $\ell^{\equiv \alpha,(p)}_{1}(\xi;z)$ can be uniquely
extended into a rigid analytic space bigger than
${\bf P}^1({\mathbb C}_p) -]{\bar \xi^{-1}}[ $
by $F(z)\in A^\dag({\bf P}^1
\setminus\{{\bar \xi^{-1}}\} )$.

(ii). Assume that  $n_r \neq 1$.
We put
\begin{equation*}
S_\infty=S\cup \{\bar\infty\}
\qquad \text{ and } \qquad
S_{\infty,0}=S\cup \{\bar\infty\}\cup\{\bar 0\}
\end{equation*}
and take a lift $\{\hat{s_0}, \hat{s_1}, \dots, \hat{s_d}\}$  
of $S_{\infty,0}$  with
\begin{equation*}
\hat{s_0}=\infty \qquad \text{ and } \qquad
\hat{s_1}=0.
\end{equation*}

By our assumption
$$
\ell^{\equiv (\alpha_1,\dots,\alpha_r),(p)}_{n_1,\dots,n_{r-1},n_r-1}(\xi_1,\dots,\xi_{r-1},\xi_r;z)\in A^\dag( {\bf P}^1\setminus S
)
$$
and by $\frac{dz}{z}\in 
\Omega^{\dag,1}({\bf P}^1
\setminus\{\overline{\infty},\overline{0}\})$,
we have 
\begin{equation*}
\ell^{\equiv (\alpha_1,\dots,\alpha_r),(p)}_{n_1,\dots,n_{r-1},n_r-1}(\xi_1,\dots,\xi_{r-1},\xi_r;z)\frac{dz}{z}\in
\Omega^{\dag,1}( {\bf P}^1\setminus S_{\infty,0}
 ).
\end{equation*}
Here $\Omega^{\dag,1}( {\bf P}^1\setminus \tilde{S})$ 
($\tilde{S}$: a finite subset of $\textbf{P}^{1}(\overline{\mathbb{F}}_p)$)
denotes the space of rigid differential $1$-forms there, i.e.
$\Omega^{\dag,1}( {\bf P}^1\setminus \tilde{S})=
A^{\dag,1}( {\bf P}^1\setminus \tilde{S})dz$.

Put
\begin{equation}\label{put}
f(z):=\frac{1}{z}\ell^{\equiv (\alpha_1,\dots,\alpha_r),(p)}_{n_1,\dots,n_{r-1},n_r-1}(\xi_1,\dots,\xi_{r-1},\xi_r; z)\in
A^{\dag}( {\bf P}^1\setminus S_{\infty,0}
).
\end{equation}

Since $\ell^{\equiv (\alpha_1,\dots,\alpha_r),(p)}_{n_1,\dots,n_{r-1},n_r}(\xi_1,\dots,\xi_{r};z)$
belongs to
$
A^{\text{rig}}( {\bf P}^1\setminus S
) 
\Bigl(\subset
A^{\text{rig}}( {\bf P}^1\setminus S_{\infty,0}
 ) 
\Bigr)
$
by Proposition \ref{rigid extension}
and it satisfies the differential equation in Lemma \ref{differential equations}.(i),
i.e. its differential  is equal to $f(z)$,
we have particularly 
\begin{equation}\label{expansion at 0}
a_m(\hat s_1;f)=0 \qquad (m>0)
\end{equation}
(recall $\hat{s_1}=0$) and
\begin{equation}\label{other residues}
a_1(\hat s_l;f)=0 \qquad (2\leqslant l\leqslant d)
\end{equation}
by \eqref{description of A1} and \eqref{description of A2}.

By \eqref{put} and \eqref{expansion at 0},
$$
f(z)\in A^{\dag}( {\bf P}^1\setminus S_\infty
).
$$

By \eqref{other residues} and Lemma \ref{useful example},
there exists a unique function
$F(z)$ in 
$
A^{\dag}( {\bf P}^1\setminus S_\infty
)
$,
i.e. a function $F(z)$ which is rigid analytic on an affinoid  $V$ of
$$
{\bf P}^1({\mathbb C}_p) - ]S_\infty[  \quad = \quad
{\bf P}^1({\mathbb C}_p) - ]\bar{\infty},S[
$$
such that 
\begin{equation}\label{differential property}
F(0)=0 \quad \text{and} \quad
dF(z)=f(z)dz.
\end{equation}

Since $\ell^{\equiv (\alpha_1,\dots,\alpha_r),(p)}_{n_1,\dots,n_{r-1},n_r}(\xi_1,\dots,\xi_{r}; z)$
is also a unique function in
$
A^{\text{rig}}( {\bf P}^1\setminus S
) 
$
satisfying \eqref{differential property},
the restrictions of both $F(z)$ and
$\ell^{\equiv (\alpha_1,\dots,\alpha_r),(p)}_{n_1,\dots,n_{r-1},n_r}(\xi_1,\dots,\xi_{r}; z)$
into the subspace
${\bf P}^1({\mathbb C}_p) -]S_\infty[
$ must coincide, i.e.
$$
F(z)\Bigm|_{{\bf P}^1({\mathbb C}_p) - ]S_\infty[
}
\equiv
\ell^{\equiv (\alpha_1,\dots,\alpha_r),(p)}_{n_1,\dots,n_{r-1},n_r}(\xi_1,\dots,\xi_{r};z)\Bigm|_{{\bf P}^1({\mathbb C}_p) - ]S_\infty[
}.
$$

Hence by the coincidence principle of rigid analytic functions,
there is a rigid analytic function $G(z)$ on the union of $V$ and
${\bf P}^1({\mathbb C}_p) - ]S[$
whose restriction to $V$ is equal to $F(z)$
and whose restriction to 
${\bf P}^1({\mathbb C}_p) -]S[
$
is equal to
$\ell^{\equiv (\alpha_1,\dots,\alpha_r),(p)}_{n_1,\dots,n_{r-1},n_r}(\xi_1,\dots,\xi_{r}; z)$.
So we can say that  
$$\ell^{\equiv (\alpha_1,\dots,\alpha_r),(p)}_{n_1,\dots,n_{r-1},n_r}(\xi_1,\dots,\xi_{r};z)\in
A^{\text{rig}}( {\bf P}^1\setminus S
) 
$$
can be rigid analytically
extended into a bigger rigid analytic space
by $G(z)$.
Namely, 
$$\ell^{\equiv (\alpha_1,\dots,\alpha_r),(p)}_{n_1,\dots,n_r}(\xi_1,\dots,\xi_{r};z)\in
 A^\dag( {\bf P}^1\setminus S
).$$


(iii). Assume that $n_r=1$ ($r\geqslant 2$).
Put
$$
\beta(z):=
\begin{cases}
\frac{\xi_r (\xi_r z)^{\alpha_r-\alpha_{r-1}-1}}{1-(\xi_r z)^p} &\text{if}\quad \alpha_r>\alpha_{r-1} ,\\
\frac{\xi_r (\xi_r z)^{\alpha_r-\alpha_{r-1}+p-1}}{1-(\xi_r z)^p}
&\text{if}\quad \alpha_r\leqslant\alpha_{r-1} .\\
\end{cases}
$$
By our assumption
$$
\ell^{\equiv (\alpha_1,\dots,\alpha_{r-1}),(p)}_{n_1,\dots,n_{r-1}}(\xi_1,\dots,\xi_{r-2},\xi_{r-1};\xi_r z) 
\in A^\dag( {\bf P}^1 
\setminus\{
\overline{\xi_{r}^{-1}},\dots,\overline{(\xi_1\cdots\xi_{r})^{-1}}\} )
$$
and by
$\beta(z)dz\in
\Omega^{\dag,1}({\bf P}^1
\setminus\{\overline{\infty},\overline{\xi_r^{-1}}\})$,
we have
\begin{equation*}
\ell^{\equiv (\alpha_1,\dots,\alpha_{r-1}),(p)}_{n_1,\dots,n_{r-1}}(\xi_1,\dots,\xi_{r-2},\xi_{r-1};\xi_r z) \cdot \beta(z)dz
\in\Omega^{\dag,1}( {\bf P}^1 \setminus S_\infty
).
\end{equation*}
Put
$$
f(z):=\ell^{\equiv (\alpha_1,\dots,\alpha_{r-1}),(p)}_{n_1,\dots,n_{r-1}}(\xi_1,\dots,\xi_{r-2},\xi_{r-1};\xi_r z) \cdot \beta(z)
\in
A^{\dag}( {\bf P}^1 \setminus S_\infty
).
$$
Then  it follows from the same arguments as the ones in (ii) that
$$\ell^{\equiv (\alpha_1,\dots,\alpha_r),(p)}_{n_1,\dots,n_r}(\xi_1,\dots,\xi_{r}; z)\in
 A^\dag( {\bf P}^1\setminus S
).$$
\end{proof}


We saw in Proposition \ref{rigidness} that
$\ell^{(p)}_{n_1,\dots,n_r}(\xi_1,\dots,\xi_{r}; z)$
is a rigid analytic function, namely
$
\ell^{(p)}_{n_1,\dots,n_r}(\xi_1,\dots,\xi_{r}; z)
\in A^{\text{rig}}( {\bf P}^1\setminus S
).
$
But actually we can say more:

\begin{theorem}\label{rigidness III}
Let  $n_1,\dots,n_r\in \mathbb{N}$, 
$\xi_1,\dots,\xi_{r}\in\mathbb{C}_p$ with $|\xi_j|_p= 1$ ($1\leqslant j\leqslant r$).
Set $S$ as in \eqref{S0}.
The function 
$\ell^{(p)}_{n_1,\dots,n_r}(\xi_1,\dots,\xi_{r}; z)$
is an overconvergent function of 
${\bf P}^1\setminus S
$.
Namely, 
$$\ell^{(p)}_{n_1,\dots,n_r}(\xi_1,\dots,\xi_{r}; z)\in
 A^\dag( {\bf P}^1\setminus S
).$$

%
\end{theorem}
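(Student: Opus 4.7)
The plan is to derive Theorem \ref{rigidness III} as an almost immediate consequence of Theorem \ref{rigidness II} combined with the decomposition recorded in Remark \ref{easy remark}(ii). First I will recall that, on the open unit disk $]\bar{0}[$ where the defining series \eqref{series expression for ell} and \eqref{series expression for partial double} both converge, the series identity
\begin{equation*}
\ell^{(p)}_{n_1,\dots,n_r}(\xi_1,\dots,\xi_{r};z)
=\sum_{\substack{0<\alpha_1,\dots,\alpha_r<p}}
\ell^{\equiv(\alpha_1,\dots,\alpha_r),(p)}_{n_1,\dots,n_r}(\xi_1,\dots,\xi_{r};z)
\end{equation*}
is valid termwise, since the condition $(k_j,p)=1$ in \eqref{series expression for ell} partitions exactly into the residue classes $k_j\equiv\alpha_j\bmod p$ with $1\leqslant\alpha_j\leqslant p-1$.

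Next I would note that both sides of this equality are already known to be rigid analytic on the whole of ${\bf P}^1({\mathbb C}_p)\setminus \,]S[$: the left-hand side by Proposition \ref{rigidness}, and each summand on the right-hand side by Proposition \ref{rigid extension}. Hence, by the coincidence principle for rigid analytic functions (Proposition \ref{two fundamental properties of rigid analytic functions}(i)), the identity propagates from $]\bar{0}[$ to the entire affinoid ${\bf P}^1({\mathbb C}_p)\setminus \,]S[$.

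Finally, Theorem \ref{rigidness II} guarantees that each summand on the right-hand side lies in $A^\dag({\bf P}^1\setminus S)$, and this algebra is closed under finite ${\mathbb C}_p$-linear combinations. Therefore the left-hand side also belongs to $A^\dag({\bf P}^1\setminus S)$, which is exactly the overconvergence statement we wish to prove. Since all real work has already been carried out in Theorem \ref{rigidness II} (whose induction on the weight $n_1+\cdots+n_r$ and use of Lemma \ref{useful example} is the true technical heart), there is no remaining obstacle here; the present theorem is essentially a corollary extracted by assembling the partial pieces.
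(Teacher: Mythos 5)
Your proposal is correct and follows essentially the same route as the paper: the paper's proof likewise invokes Theorem \ref{rigidness II} for each partial TMPL and then sums them via Remark \ref{easy remark}(ii), using the fact that $A^\dag({\bf P}^1\setminus S)$ is an algebra. Your extra step of re-justifying the decomposition on the full affinoid via the coincidence principle is harmless but not needed, since Remark \ref{easy remark}(ii) is already stated for the analytic continuations.
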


\begin{proof}
By our previous proposition, we have
$$\ell^{\equiv (\alpha_1,\dots,\alpha_r),(p)}_{n_1,\dots,n_r}(\xi_1,\dots,\xi_{r}; z)\in
 A^\dag( {\bf P}^1\setminus S
).$$
Then by Remark \ref{easy remark} (ii), we have
$\ell^{(p)}_{n_1,\dots,n_r}(\xi_1,\dots,\xi_{r};z)\in A^\dag( {\bf P}^1\setminus S)$
because $A^\dag( {\bf P}^1\setminus S)$ forms an algebra.
\end{proof}

\begin{example}
Especially when $r=1$
we have
$$\ell^{(p)}_n(1;z)\in A^\dag( {\bf P}^1\setminus \{\bar 1\}).$$
Actually in \cite[Proposition 6.2]{C}, Coleman showed that
$$
\ell^{(p)}_n(1;z)\in A^{\text{rig}}(\tilde X)
$$
with
$\tilde X=\left\{z\in {\bf P}^1({\mathbb C}_p)\bigm| |z-1|_p >p^{\frac{-1}{p-1}}\right\}$.
\end{example}

\begin{remark}
In \cite[Definition 2.13]{F2}, the first named author introduced the overconvergent
$p$-adic MPL $Li^\dag_{n_1,\dots,n_r}(z)$.
Asking a relationship between 
this $Li^\dag_{n_1,\dots,n_r}(z)$ and our $\ell^{(p)}_{n_1,\dots,n_r}(\xi_1,\dots,\xi_{r};z)$ would be one of the questions which we are interested in.

\end{remark}

\subsection{$p$-adic twisted multiple polylogarithms}\label{sec-5-5}
In \cite{Fu1, Y} 
$p$-adic TMPL's (see Definition \ref{Def-TMPL}) were introduced 
as a multiple generalization
of Coleman's $p$-adic polylogarithm \cite{C} and
$p$-adic multiple $L$-value is introduced as its special value at $1$. 
The aim of 
this subsection is to clarify a relationship between $p$-adic rigid TMPL's
(see Definition \ref{def of pMMPL})
and $p$-adic TMPL's  in Theorem \ref{ell-Li theorem}
and to establish a relationship between 
special values at positive integers of our $p$-adic multiple $L$-functions
(see Definition \ref{Def-pMLF})
and the $p$-adic twisted multiple $L$-values,
the special values of $p$-adic TMPL's at $1$,
in Theorem \ref{L-Li theorem}.
The theorem extends the previous result
\eqref{Coleman L-Li} of Coleman in depth $1$ case shown in \cite{C}.

First we recall Coleman's $p$-adic iterated integration theory \cite{C}
in our particular case.
For other nice expositions of his theory,
see \cite[Section 5]{B}, \cite[Subsection 2.2.1]{Br} and \cite[Subsection 2.1]{Fu1}.
The integration here is different from the integration using a certain $p$-adic measure
which is explained in Subsection \ref{sec-3-1}.

\begin{notation}
Fix $\varpi\in\mathbb C_p$.
The {\it $p$-adic logarithm} $\log^\varpi$ associated to 
the {\it branch} $\varpi$ means
the locally rigid analytic group homomorphism
$\mathbb C_p^\times\to\mathbb C_p$
with the usual Taylor expansion  for $\log$ on $]\bar 1[=1+{\frak M}_{\mathbb C_p}$.
It is uniquely characterized by $log^\varpi(p)=\varpi$
because ${\mathbb C}_p^\times\simeq 
p^{\mathbb Q}\times \mu_{\infty}\times (1+{\frak M}_{\mathbb C_p})$.
We call this $\varpi\in\mathbb C_p$ the {\it branch parameter}
of the $p$-adic logarithm. 

Let
$S=\{s_0,\dots,s_d\}$ (all $s_i$'s are distinct)
be a finite subset of $\textbf{P}^{1}(\overline{\mathbb{F}}_p)$.
Define
\[
A^\varpi_{\mathrm{loc}}({\bf P}^1\setminus S)
:=\prod_{x\in {\bf P}^1({\overline{\mathbb{F}}_p}) }A^\varpi_{log}(U(x)),\qquad
\Omega^\varpi_{\mathrm{loc}}({\bf P}^1\setminus S)
:=\prod_{x\in  {\bf P}^1({\overline{\mathbb{F}}_p}) }\Omega^\varpi_{log}(U(x))
\]
where
\begin{align*}
&A^\varpi_{log}(U(x)):=
\begin{cases}
A^\text{rig}(]x[) &  x\not\in S, \\
\underset{\lambda\to 1^-}{\lim}
A^\text{rig}\Bigl(]x[ \ \cap U_\lambda\Bigr)\Bigl[log^\varpi(z_i)\Bigr] &
x=s_i \quad (0\leqslant i \leqslant d),
\end{cases}\\
&\Omega^\varpi_{log}(U(x)):=
\begin{cases}
A^{\text{rig}}(]x[)\cdot dz_x
&  x\not\in S, \\
A^\varpi_{log}(U(x))dz_i &
x=s_i \quad (0\leqslant i \leqslant d).
\end{cases}\\
\end{align*}
Here $U_\lambda$ is the affinoid in \eqref{removing all closed discs},
$z_i$ is a local parameter \eqref{local parameter}
and $z_x$ is a local parameter of $]x[$. 
We note that $log^\varpi(z_i)$ is a locally analytic function defined on
$]s_i[-z_i^{-1}(0)$
whose differential is $\frac{dz_i}{z_i}$ and it is transcendental over 
$A^\text{rig}\Bigl(]s_i[ \ \cap U_\lambda\Bigr)$
and
$$
\underset{\lambda\to 1^-}{\lim}
A^\text{rig}\Bigl(]s_i[ \ \cap U_\lambda\Bigr)
\cong\Bigl\{f(z_i)=\sum\limits_{n=-\infty}^{\infty}a_nz_i^n  \ (a_n\in\mathbb C_p)
\text{ converging on }\lambda<|z_i|_p<1 \text{ for some } 0\leqslant \lambda<1\Bigr\}
$$
(see \cite{Be1}).
We remark that these definitions of $A^\varpi_{log}(U(x))$ and $\Omega^\varpi_{log}(U(x))$
are independent of any choice of local parameters 
modulo standard isomorphisms.
By taking a component-wise derivative, we obtain a $\mathbb C_p$-linear map
$d:A^\varpi_{\mathrm{loc}}({\bf P}^1\setminus S)\to
\Omega^\varpi_{\mathrm{loc}}({\bf P}^1\setminus S)$.
We may regard $A^\dag({\bf P}^1\setminus S)$
and $\Omega^{\dag,1}({\bf P}^1\setminus S)$
in Notation \ref{overconvergent functions and associated cohomologies}
to be a subspace of $A^\varpi_{\mathrm{loc}}({\bf P}^1\setminus S)$ 
and $\Omega^\varpi_{\mathrm{loc}}({\bf P}^1\setminus S)$ 
respectively.

By the work of Coleman \cite{C},  we have an 
$A^\dag({\bf P}^1\setminus S)$-subalgebra 
$$
A^\varpi_{\mathrm{Col}}({\bf P}^1\setminus S)$$
of $A^\varpi_{\mathrm{loc}}({\bf P}^1\setminus S)$,
which we call {\it the ring of Coleman functions} 
attached to a branch parameter $\varpi\in\mathbb C_p$,
and a $\mathbb C_p$-linear map
$$
\int_{(\varpi)}:A^\varpi_{\mathrm{Col}}({\bf P}^1\setminus S)\underset
{A^\dag({\bf P}^1\setminus S)}{\otimes}
\Omega^{\dag,1}({\bf P}^1\setminus S)
\to A^\varpi_{\mathrm{Col}}({\bf P}^1\setminus S)\Bigm/ \mathbb C_p\cdot 1
$$
satisfying 
$d\bigm|_{A^\varpi_{\mathrm{Col}}({\bf P}^1\setminus S)}\circ\int_{(\varpi)}=
id_{A^\varpi_{\mathrm{Col}}({\bf P}^1\setminus S)\otimes
\Omega^{\dag,1}({\bf P}^1\setminus S)}$,
which we call {\it the $p$-adic (Coleman) integration}
attached to a branch parameter $\varpi\in\mathbb C_p$.
We often drop  the subscript ${}_{(\varpi)}$.

Since
$A^\varpi_{\mathrm{Col}}({\bf P}^1\setminus S)$ is a subspace
of $A^\varpi_{\mathrm{loc}}({\bf P}^1\setminus S)$,
each element $f$ of
$A^\varpi_{\mathrm{Col}}({\bf P}^1\setminus S)$ can be seen as a map
$f:U\to \mathbb C_p$
by  $f|_{U\cap ]x[}\in A^\varpi_{\mathrm{log}}(U_x)$ for each residue $]x[$,
where $U$ is an affinoid bigger than
${\bf P}^1({\mathbb C}_p)- ]S[$.
This is how we regard $f$ as a function.
\end{notation}

Here we recall two important properties of Coleman functions below. 
%
%
%

\begin{proposition}[Branch Independency Principle {\cite[Proposition 2.3]{Fu1}}]
\label{branch independency principle}
For $\varpi_1,\varpi_2\in\mathbb C_p$
define the isomorphisms
$$
\iota_{\varpi_1,\varpi_2}:A^{\varpi_1}_{\mathrm{loc}}({\bf P}^1\setminus S)
\overset{\sim}{\to}A^{\varpi_2}_{\mathrm{loc}}({\bf P}^1\setminus S) \quad \text{ and } \quad
\tau_{\varpi_1,\varpi_2}:\Omega^{\varpi_1}_{\mathrm{loc}}({\bf P}^1\setminus S)\overset{\sim}{\to}\Omega^{\varpi_2}_{\mathrm{loc}}({\bf P}^1\setminus S)
$$
which is obtained
by replacing each $log^{\varpi_1}(z_{s_i})$ by $log^{\varpi_2}(z_{s_i})$
for $1\leqslant i\leqslant d$. Then
$$\iota_{\varpi_1,\varpi_2}(A^{\varpi_1}_{\mathrm{Col}}({\bf P}^1\setminus S))=A^{\varpi_2}_{\mathrm{Col}}({\bf P}^1\setminus S),$$
$$\tau_{\varpi_1,\varpi_2}(A^{\varpi_1}_{\mathrm{Col}}({\bf P}^1\setminus S)\otimes\Omega^{\dag,1}({\bf P}^1\setminus S))
=A^{\varpi_2}_{\mathrm{Col}}({\bf P}^1\setminus S)\otimes\Omega^{\dag,1}({\bf P}^1\setminus S)$$ 
and
$$\iota_{\varpi_1,\varpi_2}\circ\int_{(\varpi_1)}=\int_{(\varpi_2)}\circ\tau_{\varpi_1,\varpi_2} \mod \mathbb C_p\cdot 1.$$
Namely the following diagram is commutative:
\[
\begin{CD}
A^{\varpi_1}_{\mathrm{Col}}({\bf P}^1\setminus S)\underset{A^\dag({\bf P}^1\setminus S)}{\otimes}\Omega^{\dag,1}({\bf P}^1\setminus S)
@>{\tau_{\varpi_1,\varpi_2}}>>
A^{\varpi_2}_{\mathrm{Col}}({\bf P}^1\setminus S)\underset{A^\dag({\bf P}^1\setminus S)}{\otimes}\Omega^{\dag,1}({\bf P}^1\setminus S) \\
@V{\int_{(\varpi_1)}}VV
@VV{\int_{(\varpi_2)}}V \\
A^{\varpi_1}_{\mathrm{Col}}({\bf P}^1\setminus S)\Bigm/ \mathbb C_p\cdot 1
@>{\iota_{\varpi_1,\varpi_2}}>>
A^{\varpi_2}_{\mathrm{Col}}({\bf P}^1\setminus S)\Bigm/ \mathbb C_p\cdot 1. \\
\end{CD}
\]
\end{proposition}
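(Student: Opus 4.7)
My strategy is to exploit the fact that $\log^\varpi(z_i)$ enters the theory only as a formal transcendental element whose differential is $\frac{dz_i}{z_i}$ regardless of the value of $\varpi$. First I would verify the analogous commutativity for the derivative. On each residue disk $]s_i[$ with $s_i\in S$, any element of $A^\varpi_{\mathrm{log}}(U(s_i))$ has the form $\sum_{k\geqslant 0}f_k(z_i)\cdot(\log^\varpi(z_i))^k$ with locally rigid analytic coefficients $f_k$, and its differential is
\[
\sum_{k\geqslant 0}df_k(z_i)\cdot(\log^\varpi(z_i))^k+\sum_{k\geqslant 1}k\,f_k(z_i)\cdot(\log^\varpi(z_i))^{k-1}\cdot\frac{dz_i}{z_i}.
\]
Since the coefficients $f_k$ and the differential $\frac{dz_i}{z_i}$ are rigid-analytic data, independent of $\varpi$, the naive square
\[
\begin{CD}
A^{\varpi_1}_{\mathrm{loc}}({\bf P}^1\setminus S)@>\iota_{\varpi_1,\varpi_2}>>A^{\varpi_2}_{\mathrm{loc}}({\bf P}^1\setminus S) \\
@VdVV @VVdV \\
\Omega^{\varpi_1}_{\mathrm{loc}}({\bf P}^1\setminus S)@>\tau_{\varpi_1,\varpi_2}>>\Omega^{\varpi_2}_{\mathrm{loc}}({\bf P}^1\setminus S)
\end{CD}
\]
commutes on the nose, without any ambiguity modulo constants.

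The second step is to show that $\iota_{\varpi_1,\varpi_2}$ carries $A^{\varpi_1}_{\mathrm{Col}}({\bf P}^1\setminus S)$ bijectively onto $A^{\varpi_2}_{\mathrm{Col}}({\bf P}^1\setminus S)$, and simultaneously that $\iota_{\varpi_1,\varpi_2}\circ\int_{(\varpi_1)}\equiv\int_{(\varpi_2)}\circ\tau_{\varpi_1,\varpi_2}\pmod{\mathbb C_p\cdot 1}$ on its domain. I would proceed by induction on the \emph{integration depth}, i.e.\ the minimum number of Coleman primitives needed to produce a given element starting from the overconvergent subring $A^\dag({\bf P}^1\setminus S)$. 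The base case is vacuous because $\iota_{\varpi_1,\varpi_2}$ and $\tau_{\varpi_1,\varpi_2}$ restrict to the identity on $A^\dag({\bf P}^1\setminus S)$ and $\Omega^{\dag,1}({\bf P}^1\setminus S)$ respectively. For the induction step, suppose $F=\int_{(\varpi_1)}\omega$ with $\omega\in A^{\varpi_1}_{\mathrm{Col}}\otimes\Omega^{\dag,1}$. By the induction hypothesis $\tau_{\varpi_1,\varpi_2}(\omega)\in A^{\varpi_2}_{\mathrm{Col}}\otimes\Omega^{\dag,1}$, and by the first step
\[
d\bigl(\iota_{\varpi_1,\varpi_2}(F)\bigr)=\tau_{\varpi_1,\varpi_2}(dF)=\tau_{\varpi_1,\varpi_2}(\omega).
\]
Thus $\iota_{\varpi_1,\varpi_2}(F)$ is a primitive of $\tau_{\varpi_1,\varpi_2}(\omega)$ inside $A^{\varpi_2}_{\mathrm{loc}}$, and the uniqueness of Coleman primitives modulo $\mathbb C_p\cdot 1$ will force the asserted identity, provided we know that $\iota_{\varpi_1,\varpi_2}(F)$ actually lies in $A^{\varpi_2}_{\mathrm{Col}}$.

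The main obstacle is precisely this last point: verifying that $\iota_{\varpi_1,\varpi_2}$ respects the admissibility conditions defining Coleman's subring $A^\varpi_{\mathrm{Col}}$ inside $A^\varpi_{\mathrm{loc}}$. Coleman's construction selects, at each step, those local primitives whose polynomial-in-log expansions on adjacent residue disks are compatible in a specific way; the compatibility is formulated in terms of Laurent coefficients of overconvergent $1$-forms and of the polynomial degrees in the formal symbol $\log^\varpi(z_i)$. Since $\iota_{\varpi_1,\varpi_2}$ is by definition the ring isomorphism that renames the formal symbols $\log^{\varpi_1}(z_i)\leftrightarrow\log^{\varpi_2}(z_i)$ while fixing all rigid-analytic coefficients, these conditions are manifestly preserved. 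Once this bookkeeping is settled, the induction closes and the commutativity of the diagram follows from the uniqueness of Coleman primitives modulo constants.
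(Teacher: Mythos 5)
The paper does not actually prove this proposition: it is quoted verbatim from \cite[Proposition 2.3]{Fu1}, so there is no in-paper argument to compare yours against. Judged on its own terms, your skeleton is the right one and is essentially the one used in \cite{Fu1}: the square involving $d$ commutes on the nose because $d\log^{\varpi}(z_i)=\frac{dz_i}{z_i}$ for every branch, the maps $\iota_{\varpi_1,\varpi_2}$ and $\tau_{\varpi_1,\varpi_2}$ restrict to the identity on $A^\dag$ and $\Omega^{\dag,1}$, and one then inducts on the number of iterated Coleman primitives, invoking uniqueness of the primitive modulo $\mathbb C_p\cdot 1$.

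However, the step you yourself flag as the main obstacle --- that $\iota_{\varpi_1,\varpi_2}(F)$ again lies in $A^{\varpi_2}_{\mathrm{Col}}$ --- is exactly where your argument stops being a proof, because you mischaracterize what singles out the Coleman subring inside $A^{\varpi}_{\mathrm{loc}}$. It is not a ``compatibility of polynomial-in-log expansions on adjacent residue disks'' (the residue disks are pairwise disjoint, and no such local Laurent-coefficient condition appears in the construction). The Coleman primitive is selected by \emph{Frobenius equivariance}: among all local primitives, one takes the unique one (modulo constants) compatible with the action of a lift of Frobenius. So the point that actually has to be verified is that $\iota_{\varpi_1,\varpi_2}$ commutes with the Frobenius structure on $A^{\varpi}_{\mathrm{loc}}$. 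This is true, and for a branch-independent reason: a Frobenius lift sends $z_i$ to $z_i^{p}u$ with $u$ a $1$-unit on the relevant annulus, so $\phi^*\log^{\varpi}(z_i)=p\log^{\varpi}(z_i)+\log(u)$, where $\log(u)$ is given by the ordinary convergent Taylor series and hence does not involve $\varpi$; consequently the renaming map intertwines the two Frobenius actions, carries Frobenius-equivariant primitives to Frobenius-equivariant primitives, and the induction closes. Without this (or some equivalent appeal to the defining property of $A^{\varpi}_{\mathrm{Col}}$), the assertion that the admissibility conditions are ``manifestly preserved'' is a claim, not an argument, and it is the entire content of the proposition.
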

%
%

\begin{proposition}[Coincidence Principle {\cite[Chapter IV]{C}}]\label{coincidence principle}
Put $\varpi\in\mathbb C_p$.
Let $f\in A^\varpi_{\mathrm{Col}}({\bf P}^1\setminus S)$.
Suppose that $f|_U\equiv 0$ for 
an admissible open subset $U$ (cf.\cite{B,C})
of ${\bf P}^1(\mathbb C_p)$
(that is,
$U$ is of the form 
$
\left\{z\in {\bf P}^1({\mathbb C}_p)\bigm| |z-\alpha_i|_p> \rs_i \ (i=1,\dots, n),
|z|_p< 1/\rs_0
\right\}
$
for some $\alpha_1, \dots, \alpha_n\in {\mathbb C}_p$ and $\rs_1,\dots,\rs_n\in{\mathbb Q}_{>0}$
and $\rs_0\in{\mathbb Q}_{\geqslant 0}$).
Then $f=0$.
\end{proposition}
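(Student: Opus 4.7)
The plan is to prove the statement by induction on the \emph{depth} of $f$ as a Coleman function, where the depth measures the minimal number of iterated Coleman integrations needed to construct $f$ from elements of $A^\dag({\bf P}^1\setminus S)$. In Coleman's construction, a function of depth $n+1$ arises as $\int(\sum_i g_i \omega_i)$ for some $g_i$ of depth $\leqslant n$ and $\omega_i \in \Omega^{\dag,1}({\bf P}^1\setminus S)$.

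For the base case (depth $0$), we have $f\in A^\dag({\bf P}^1\setminus S)\subset A^{\mathrm{rig}}({\bf P}^1\setminus S)$, so Proposition \ref{two fundamental properties of rigid analytic functions}(i) applied to any affinoid enlargement containing ${\bf P}^1({\mathbb C}_p)- ]S[$ gives the conclusion directly from $f|_U\equiv 0$, since $U$ contains a subaffinoid.

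For the inductive step I would proceed in two stages. \textbf{Stage 1:} Show that $f$ vanishes on each residue disc $]x[$ with $x\notin S$. On such a disc, $f|_{]x[}\in A^{\mathrm{rig}}(]x[)$ (no logarithm appears since $x\notin S$), so if $]x[\cap U\neq \emptyset$ rigid coincidence on the disc forces $f|_{]x[}=0$. To pass to residue discs disjoint from $U$, one uses that $f$ is the Coleman integral of $\sum_i g_i\omega_i$ modulo a constant, together with the inductive hypothesis applied to the depth-$(n-1)$ Coleman functions obtained from $f$ via differentiation and branch-independent algebraic manipulations; combined with the fact that Coleman's integration is defined precisely to be path-independent between residue discs, this propagates the vanishing across all residue discs avoiding $S$. \textbf{Stage 2:} At residue discs $]s_i[$ around singular points, $f|_{]s_i[}$ admits a unique expansion $\sum_{k=0}^{N} a_k(z_i)\,\bigl(\log^\varpi(z_i)\bigr)^k$ with $a_k$ locally rigid analytic on an annulus $\lambda<|z_i|_p<1$, since $\log^\varpi(z_i)$ is transcendental over that ring. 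Vanishing of $f$ on the portion of $U$ meeting $]s_i[$, together with linear independence of the powers of $\log^\varpi(z_i)$, forces each $a_k$ to vanish on an open annulus, hence everywhere by rigid coincidence on the annulus.

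The main obstacle will be Stage 1 — specifically, the propagation of vanishing from residue discs meeting $U$ to the remaining residue discs. This is not formal: it rests on the central property of Coleman's construction that the integration constants on different residue discs are linked in a uniquely determined way, reflecting a kind of rigid-analytic connectedness. Rather than rebuilding Coleman's apparatus, I would reduce Stage 1 to this property by differentiating $f$ to reach lower-depth Coleman functions, applying the induction hypothesis to those, and then invoking that a depth-$n$ Coleman primitive is uniquely determined by its derivative and its value on a single residue disc — which is exactly the content of \cite[Chapter IV]{C}. The result then follows by combining Stages 1 and 2, noting that every residue disc (whether over $S$ or not) has been shown to carry $f\equiv 0$.
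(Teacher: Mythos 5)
The paper does not prove this proposition: it is quoted from Coleman's Chapter IV as a citation, and the surrounding text only records its consequence that a locally constant Coleman function is globally constant. So there is no in-paper argument to compare against; what can be assessed is whether your reduction to Coleman's theory is sound. In outline it is the standard one --- induct on the length of the iterated integration, differentiate to drop the depth, and invoke the uniqueness of Coleman primitives --- and your closing appeal to ``a depth-$n$ Coleman primitive is uniquely determined by its derivative and its value on a single residue disc'' is precisely the nontrivial theorem of Coleman's Chapter IV (equivalently, $\ker d\cap A^\varpi_{\mathrm{Col}}({\bf P}^1\setminus S)=\mathbb C_p\cdot 1$, which is what makes the map $\int_{(\varpi)}$ into $A^\varpi_{\mathrm{Col}}({\bf P}^1\setminus S)\bigm/\mathbb C_p\cdot 1$ well defined in the paper's Notation). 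Deferring to that is legitimate, but it is where all the content lives: you have not proved the proposition so much as shown that it follows from Coleman's uniqueness theorem, which is exactly what the paper's citation already asserts.

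Two concrete soft spots. First, in Stage 1 you propose to apply the induction hypothesis to ``the depth-$(n-1)$ Coleman functions obtained from $f$ via differentiation.'' If you mean the coefficients $g_i$ in $df=\sum_i g_i\omega_i$, this fails: the hypothesis $f|_U\equiv 0$ only gives vanishing of the total form $df$ on $U$, not of the individual $g_i$. You must first use that $\Omega^{\dag,1}({\bf P}^1\setminus S)$ is free of rank one over $A^\dag({\bf P}^1\setminus S)$ (generated by $dz$ away from $\bar\infty$, with the obvious change of coordinate there) to write $df=h\,dz$ with a single $h\in A^\varpi_{\mathrm{Col}}({\bf P}^1\setminus S)$ of lower depth; then $h|_U=0$, induction gives $h=0$, hence $df=0$. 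Second, once $df=0$ the conclusion is immediate from Coleman's uniqueness ($f\in\mathbb C_p\cdot 1$, and $f|_U=0$ forces the constant to be $0$), which makes your Stage 2 superfluous; as written, Stage 2 is also incomplete, since the admissible open $U$ need not meet any singular residue disc $]s_i[$, so ``vanishing of $f$ on the portion of $U$ meeting $]s_i[$'' can be vacuous and then gives no control over the coefficients $a_k$ of the powers of $\log^\varpi(z_i)$ there. Restructured around the single chain $f|_U\equiv 0\Rightarrow h=0\Rightarrow df=0\Rightarrow f\in\mathbb C_p\cdot 1\Rightarrow f=0$, your argument is correct and is the standard derivation of the identity principle from Coleman's uniqueness theorem.
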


It follows from this proposition that a locally constant 
Coleman function is globally constant.

%

\begin{notation}\label{Coleman-notation}
Fix a branch $\varpi\in\mathbb C_p$ and let
$\omega\in A^\varpi_{\mathrm{Col}}({\bf P}^1\setminus S)\underset{A^\dag({\bf P}^1\setminus S)}{\otimes}\Omega^{\dag,1}({\bf P}^1\setminus S)$.
Then by Coleman's integration theory, there exists
(uniquely modulo constant) a Coleman function 
$F_\omega\in A^\varpi_{\mathrm{Col}}({\bf P}^1\setminus S)$ such that
$\int\omega\equiv F_\omega$ (modulo constant).
For $x,y\in]{\bf P}^1\setminus S[$, 
we define 
$$\int_x^y\omega:=
F_\omega(y)-F_\omega(x).$$
It is clear that $\int_x^y\omega$ does not depend on any choice
of $F_\omega$ (although it may depend on the choice of a branch $\varpi\in\mathbb C_p$).
If both $F_\omega(x)$ and $F_\omega(y)$ make sense
when $x$ or $y$ belongs to  $]S_0[$,
we also denote $F_\omega(y)-F_\omega(x)$ by $\int_x^y\omega$.
By letting $x$ fixed and $y$ vary, we regard $\int_x^y\omega$ as the Coleman function
which is characterized by $dF_\omega=\omega$ and $F_\omega(x)=0$.
We note that
\begin{equation}
\iota_{\varpi_1,\varpi_2}\left(\int_{(\varpi_1),x}^y\omega\right)
=\int_{(\varpi_2),x}^y\tau_{\varpi_1,\varpi_2} (\omega).
\end{equation}
\end{notation}

We will apply Coleman's theory to the function defined below,
which is the main object in this subsection.

\begin{definition}\label{Def-TMPL}
Let $n_1,\dots,n_r\in \mathbb{N}$,
$\xi_1,\dots,\xi_{r}\in\mathbb{C}_p$
with $|\xi_j|_p\leqslant 1$ ($1\leqslant j\leqslant r$).
The {\bf $p$-adic TMPL} 
$Li^{(p)}_{n_1,\dots,n_r}(\xi_1,\dots,\xi_r; z)$ is defined by the following 
$p$-adic power series:
\begin{equation}\label{series expression}
Li^{(p)}_{n_1,\dots,n_r}(\xi_1,\dots,\xi_r ;z):=
{\underset{0<k_1<\cdots<k_{r}}{\sum}}
\frac{\xi_1^{k_1}\cdots\xi_{r}^{k_{r}} z^{k_r}}{k_1^{n_1}\cdots k_r^{n_r}}
\end{equation}
which converges for $z\in ]\bar{0}[$ 
by $|\xi_j|_p\leqslant 1$ for $1\leqslant j\leqslant r$.
\end{definition}

\begin{remark} \ 
\begin{enumerate}
\item We note that when $r=1$ and $\xi=1$, $Li^{(p)}_{n}(1;z)$
is equal to Coleman's $p$-adic polylogarithm
$\ell_n(z)$ in \cite[p.\,195]{C}.
\item
The special case when $\xi_1=\cdots=\xi_r=1$ is investigated
by the first-named author in \cite{F2} where
$Li^{(p)}_{n_1,\dots,n_r}(1,\dots,1;z)$ is
introduced as 
$p$-adic multiple polylogarithm. 
\item
Yamashita \cite{Y} treats the case when all $\xi_j$ are roots of unity
whose orders are prime to $p$.
\end{enumerate}
\end{remark}

The following can be proved by  direct computations.

\begin{lemma}\label{differential equations fo Li}
Let $n_1,\dots,n_r\in \mathbb{N}$,
$\xi_1,\dots,\xi_{r}\in\mathbb{C}_p$
with $|\xi_j|_p\leqslant 1$ ($1\leqslant j\leqslant r$).
\begin{enumerate}[{\rm (i)}]
\item $\displaystyle{
\frac{d}{dz}Li^{(p)}_{1}(\xi_1;z)=\frac{\xi_1}{1-\xi_1z}.}$
\item $\displaystyle{
\frac{d}{dz}Li^{(p)}_{n_1,\dots,n_r}(\xi_1,\dots,\xi_r ;z)=
\begin{cases}
\frac{1}{z}Li^{(p)}_{n_1,\dots,n_{r-1},n_r-1}(\xi_1,\dots,\xi_r;z)
&\text{if}\quad n_r\neq 1 ,\\
\frac{\xi_r}{1-\xi_r z} Li^{(p)}_{n_1,\dots,n_{r-1}}(\xi_1,\dots,\xi_{r-2},\xi_{r-1}\xi_r;z)&\text{if}\quad n_r=1. 
\end{cases}}$
\end{enumerate}
\end{lemma}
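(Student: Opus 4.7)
The plan is to verify each identity by direct term-by-term differentiation of the defining power series \eqref{series expression}, which converges absolutely on $]\bar 0[$ and hence can be differentiated inside the region of convergence. The identities are purely formal statements about power series on $]\bar 0[$, so once they are verified there, they hold as stated.

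For (i), I would write $Li^{(p)}_1(\xi_1;z)=\sum_{k\geqslant 1}\xi_1^k z^k/k$, differentiate term by term to obtain $\sum_{k\geqslant 1}\xi_1^k z^{k-1}=\xi_1\sum_{m\geqslant 0}(\xi_1 z)^m$, and then recognize the geometric series as $\xi_1/(1-\xi_1 z)$.

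For (ii) in the case $n_r\neq 1$, term-by-term differentiation pulls down a factor of $k_r$ from $z^{k_r}$ and transforms $z^{k_r}$ into $z^{k_r-1}$; factoring $1/z$ out and combining the $k_r$ with $k_r^{n_r}$ to give $k_r^{n_r-1}$ immediately produces $\frac{1}{z}Li^{(p)}_{n_1,\dots,n_{r-1},n_r-1}(\xi_1,\dots,\xi_r;z)$. In the case $n_r=1$, the summand has no $k_r$ in the denominator, so differentiation gives $\sum_{0<k_1<\cdots<k_r}\frac{\xi_1^{k_1}\cdots\xi_r^{k_r}}{k_1^{n_1}\cdots k_{r-1}^{n_{r-1}}}z^{k_r-1}$. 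I would then perform the change of variables $m:=k_r-k_{r-1}\geqslant 1$, split $\xi_r^{k_r}=\xi_r^{k_{r-1}}\xi_r^m$ and $z^{k_r-1}=z^{k_{r-1}}z^{m-1}$, and separate the sum as the product of $\sum_{0<k_1<\cdots<k_{r-1}}\frac{\xi_1^{k_1}\cdots\xi_{r-2}^{k_{r-2}}(\xi_{r-1}\xi_r)^{k_{r-1}}z^{k_{r-1}}}{k_1^{n_1}\cdots k_{r-1}^{n_{r-1}}}$ and $\sum_{m\geqslant 1}\xi_r^m z^{m-1}=\xi_r/(1-\xi_r z)$. Recognizing the first factor as $Li^{(p)}_{n_1,\dots,n_{r-1}}(\xi_1,\dots,\xi_{r-2},\xi_{r-1}\xi_r;z)$ yields the claimed identity.

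There is no genuine obstacle here: the hypotheses $|\xi_j|_p\leqslant 1$ ensure that all manipulations (term-by-term differentiation, rearrangement of the iterated sum into a Cauchy product) are absolutely justified on $]\bar 0[$, and the separation step in the $n_r=1$ case works because the inner sum over $m$ is independent of the remaining indices once the substitution is made.
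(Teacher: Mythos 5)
Your proof is correct and is exactly the ``direct computation'' the paper invokes without writing out: term-by-term differentiation of the series \eqref{series expression}, with the substitution $k_r=k_{r-1}+m$ in the $n_r=1$ case to split off the geometric factor $\xi_r/(1-\xi_r z)$ and absorb $\xi_r^{k_{r-1}}$ into the $(r-1)$st twist. Nothing further is needed.
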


The  definition below is suggested by
the differential equations above.

\begin{theorem-definition}\label{Coleman function theorem}
Fix a branch of the $p$-adic logarithm by $\varpi\in\mathbb C_p$.
Let $n_1,\dots,n_r\in \mathbb{N}$,
$\xi_1,\dots,\xi_{r}\in\mathbb{C}_p$
with $|\xi_j|_p\leqslant 1$ ($1\leqslant j\leqslant r$).
Put 
$$
S_r:=\{\bar 0,\bar\infty, \overline{(\xi_r)^{-1}},\overline{(\xi_{r-1}\xi_r)^{-1}},\dots,\overline{(\xi_1\cdots\xi_r)^{-1}}\}
\subset  {\bf{P}}^{1}(\overline{\mathbb{F}}_p).
$$
Then we have the Coleman function attached to  $\varpi\in\mathbb C_p$
$$
Li^{(p),\varpi}_{n_1,\dots,n_r}(\xi_1,\dots,\xi_r; z)\in
A^\varpi_{\text{Col}}
({\bf P}^1\setminus S_r
)
$$
which is constructed
by the following iterated integrals:
\begin{align}
&Li^{(p),\varpi}_{1}(\xi_1;z)=-\log^\varpi (1-\xi_1z)=\int_0^z\frac{\xi_1}{1-\xi_1t}dt, 
\label{intLi1}
\\
&Li^{(p),\varpi}_{n_1,\dots,n_r}(\xi_1,\dots,\xi_r; z)=
\begin{cases}
\int_0^zLi^{(p),\varpi}_{n_1,\dots,n_{r-1},n_r-1}(\xi_1,\dots,\xi_r; t)\frac{dt}{t} &\text{if}\quad n_r\neq 1 ,\\
\int_0^zLi^{(p),\varpi}_{n_1,\dots,n_{r-1}}(\xi_1,\dots,\xi_{r-2},\xi_{r-1}\xi_r; t)\frac{\xi_r dt}{1-\xi_r t} &\text{if}\quad n_r=1. 
\end{cases}
\label{intLi2} 
\end{align}
\end{theorem-definition}

\begin{proof}
It is achieved by the induction on weight $w:=n_1+\cdots+ n_r$.

When $w=1$,  the integration starting from $0$ makes sense
because the differential form $\frac{\xi_1}{1-\xi_1t}dt$  has no pole at $t=0$.
Hence we have \eqref{intLi1}.

When $w>1$ and $n_r>1$, by our induction assumption, 
$Li^{(p),\varpi}_{n_1,\dots,n_r-1}(\xi_1,\dots,\xi_r; 0)$ 
is equal to $0$ because it is an integration from $0$ to $0$.
So $Li^{(p),\varpi}_{n_1,\dots,n_r-1}(\xi_1,\dots,\xi_r; t)$
has a zero of order at least $1$.
So the integrand of the right-hand side of \eqref{intLi2} has no pole at $t=0$.
The integration \eqref{intLi2} starting from $0$ makes sense.
The case when $n_r=1$ can be proved in a same  (or an easier) way.
%
\end{proof}

\begin{remark}\label{S_r-0}
It is easy to see that
$Li^{(p),\varpi}_{n_1,\dots,n_r}(\xi_1,\dots,\xi_r ;z)
=Li^{(p)}_{n_1,\dots,n_r}(\xi_1,\dots,\xi_r; z)$
when $|z|_p<1$ for all branch $\varpi\in\mathbb C_p$.
Thus the Coleman function 
$Li^{(p),\varpi}_{n_1,\dots,n_r}(\xi_1,\dots,\xi_r; z)
\in A^\varpi_{\text{Col}}
({\bf P}^1\setminus S_r
)$
is defined on an affinoid bigger than
${\bf P}^1({\mathbb C}_p) - ]{S_r}\setminus\{\overline{0}\} [.$
\end{remark}

\begin{proposition}\label{branch independent region}
Fix a branch  $\varpi\in\mathbb C_p$.
Let $n_1,\dots,n_r\in \mathbb{N}$,
$\xi_1,\dots,\xi_{r}\in\mathbb{C}_p$
with $|\xi_j|_p\leqslant 1$ ($1\leqslant j\leqslant r$).
Then the restriction of the $p$-adic TMPL
$Li^{(p),\varpi}_{n_1,\dots,n_r}(\xi_1,\dots,\xi_r ;z)$
into
${\bf P}^1({\mathbb C}_p) - ]{S_r}\setminus\{\overline{0}\} [$
does not depend on any choice of  the branch $\varpi\in\mathbb{C}_p$.
\end{proposition}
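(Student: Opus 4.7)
The plan is to bridge the two algebras $A^{\varpi_1}_{\mathrm{Col}}({\bf P}^1\setminus S_r)$ and $A^{\varpi_2}_{\mathrm{Col}}({\bf P}^1\setminus S_r)$ via the isomorphism $\iota_{\varpi_1,\varpi_2}$ from the Branch Independency Principle (Proposition \ref{branch independency principle}), show using the Coincidence Principle (Proposition \ref{coincidence principle}) that
\[
\iota_{\varpi_1,\varpi_2}\bigl(Li^{(p),\varpi_1}_{n_1,\dots,n_r}(\xi_1,\dots,\xi_r;z)\bigr)=Li^{(p),\varpi_2}_{n_1,\dots,n_r}(\xi_1,\dots,\xi_r;z),
\]
and then observe that on each residue disk sitting inside the target region ${\bf P}^1({\mathbb C}_p) - ]S_r\setminus\{\bar 0\}[$ the local expansion of $Li^{(p),\varpi}_{n_1,\dots,n_r}$ carries no $\log^{\varpi}$ terms, so that $\iota_{\varpi_1,\varpi_2}$ acts as the identity there and the branch-independence of values follows.

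Concretely, I would form the difference
\[
F:=\iota_{\varpi_1,\varpi_2}\bigl(Li^{(p),\varpi_1}_{n_1,\dots,n_r}(\xi_1,\dots,\xi_r;z)\bigr)-Li^{(p),\varpi_2}_{n_1,\dots,n_r}(\xi_1,\dots,\xi_r;z)\in A^{\varpi_2}_{\mathrm{Col}}({\bf P}^1\setminus S_r).
\]
By Remark \ref{S_r-0}, both $Li^{(p),\varpi_1}_{n_1,\dots,n_r}$ and $Li^{(p),\varpi_2}_{n_1,\dots,n_r}$ agree on the admissible open $]\bar 0[=\{|z|_p<1\}$ with the common convergent power series \eqref{series expression}; this power series lies in $A^{\mathrm{rig}}(]\bar 0[)$ and is therefore fixed by $\iota_{\varpi_1,\varpi_2}$, so $F\equiv 0$ on $]\bar 0[$. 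Invoking the Coincidence Principle (Proposition \ref{coincidence principle}) then yields $F=0$ as an element of $A^{\varpi_2}_{\mathrm{Col}}({\bf P}^1\setminus S_r)$, which is precisely the identity $\iota_{\varpi_1,\varpi_2}(Li^{(p),\varpi_1}_{n_1,\dots,n_r})=Li^{(p),\varpi_2}_{n_1,\dots,n_r}$.

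To pass from this algebraic identity to the desired equality of values, I would pick $z_0\in{\bf P}^1({\mathbb C}_p) - ]S_r\setminus\{\bar 0\}[$, lying in some residue disk $]x[$ with $x\in{\bf P}^1(\overline{\mathbb F}_p)\setminus(S_r\setminus\{\bar 0\})$, and inspect the local expansion of $Li^{(p),\varpi_1}_{n_1,\dots,n_r}$ at $]x[$. For $x\notin S_r$ this expansion lives in $A^{\mathrm{rig}}(]x[)$ by the very definition of the Coleman algebra (no $\log^{\varpi_1}$ factor is permitted), while for $x=\bar 0$ Remark \ref{S_r-0} identifies it with the power series \eqref{series expression}, again free of $\log^{\varpi_1}$ terms. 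In either case $\iota_{\varpi_1,\varpi_2}$ acts trivially on the local expansion, so evaluation at $z_0$ commutes with $\iota_{\varpi_1,\varpi_2}$, giving $Li^{(p),\varpi_1}_{n_1,\dots,n_r}(\xi_1,\dots,\xi_r;z_0)=Li^{(p),\varpi_2}_{n_1,\dots,n_r}(\xi_1,\dots,\xi_r;z_0)$.

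The main conceptual care is in the set-up: since $Li^{(p),\varpi_1}_{n_1,\dots,n_r}$ and $Li^{(p),\varpi_2}_{n_1,\dots,n_r}$ a priori inhabit different algebras they cannot be subtracted directly, and the branch-change isomorphism $\iota_{\varpi_1,\varpi_2}$ is the indispensable bridge. The key technical check is that the restriction to $]\bar 0[$ of both Coleman functions is an honest rigid-analytic power series with identical coefficients, so that $F$ genuinely vanishes on an admissible open subset of ${\bf P}^1({\mathbb C}_p)$ as required by Proposition \ref{coincidence principle}. This rests entirely on Remark \ref{S_r-0}, which is itself an elementary consequence of the iterated-integral construction in Theorem-Definition \ref{Coleman function theorem} starting from $z=0$.
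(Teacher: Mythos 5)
Your argument is correct, but it follows a genuinely different route from the paper's. The paper proves the proposition by induction on the weight $w=n_1+\cdots+n_r$, working directly with the iterated-integral construction of Theorem-Definition \ref{Coleman function theorem}: at each step the only pole of the $1$-form inside the region ${\bf P}^1({\mathbb C}_p)-]S_r\setminus\{\overline{0}\}[$ is the pole of $\frac{dt}{t}$ at $t=0$, and Remark \ref{S_r-0} shows the restriction to $]\bar 0[$ is the honest power series \eqref{series expression}, so no $\log^\varpi$-term (the only place the branch could enter) is ever produced; for $n_r=1$ the poles of $\frac{\xi_r\,dt}{1-\xi_r t}$ lie outside the region, so the same conclusion holds. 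You instead work globally: you transport $Li^{(p),\varpi_1}$ into the $\varpi_2$-Coleman algebra via $\iota_{\varpi_1,\varpi_2}$, observe that the difference with $Li^{(p),\varpi_2}$ vanishes on the admissible open $]\bar 0[$ (again by Remark \ref{S_r-0}, since $\iota_{\varpi_1,\varpi_2}$ fixes a log-free local expansion), kill it with the Coincidence Principle, and then note that on every residue disk meeting the region the local component is log-free, so $\iota_{\varpi_1,\varpi_2}$ commutes with evaluation there. Both proofs ultimately rest on Remark \ref{S_r-0}; yours buys a one-shot, structural argument at the cost of invoking the full Branch Independency Principle (to know $\iota_{\varpi_1,\varpi_2}$ maps one Coleman algebra onto the other) and of being careful that "vanishing on $]\bar 0[$" makes sense for the relevant local components, while the paper's induction is more elementary and stays entirely within the recursive construction. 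Your bookkeeping on these points is adequate, so I see no gap.
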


\begin{proof}
It is achieved by the induction on weight $w:=n_1+\cdots+ n_r$.
Take $z\in {\bf P}^1({\mathbb C}_p) - ]{S_r}\setminus\{\overline{0}\} [$.

When $w=1$,  it is easy to see that
$Li^{(p),\varpi}_{1}(\xi_1;z)=-\log^\varpi (1-\xi_1z)$ is free from
any choice of  the branch $\varpi\in\mathbb{C}_p$.

Consider the case when $w>1$ and $n_r>1$.
Due to the existence of a pole of $\frac{dt}{t}$ at $t=0$,
the integration \eqref{intLi2} might have a \lq log-term' on  its restriction into $]\bar 0[$.
However it is easy to see that the restriction of 
$Li^{(p),\varpi}_{n_1,\dots,n_r}(\xi_1,\dots,\xi_r; t)$
into $]\bar 0[$ has no log-term
because it is given by the series expansion \eqref{series expression}
by Remark \ref{S_r-0}.
By our induction assumption,
the restriction of
$Li^{(p),\varpi}_{n_1,\dots,n_r-1}(\xi_1,\dots,\xi_r; t)$
into ${\bf P}^1({\mathbb C}_p) - ]{S_r}\setminus\{\overline{0}\} [$
is independent of any choice of the branch $\varpi\in\mathbb{C}_p$.
Therefore $Li^{(p),\varpi}_{n_1,\dots,n_r}(\xi_1,\dots,\xi_r; t)$
has no log-term and does not depend on any choice of the branch.

The above proof also works for the case when $w>1$ and $n_r=1$.
It is noted that the branch independency follows since
two poles of $\frac{\xi_r dt}{1-\xi_r t} $ are outside of the region
${\bf P}^1({\mathbb C}_p) - ]{S_r}\setminus\{\overline{0}\} [$.
\end{proof}

The $p$-adic rigid TMPL
$\ell^{(p)}_{n_1,\dots,n_r}(\xi_1,\dots,\xi_{r}; z)$
in Subsection \ref{sec-5-3}
is described by our $p$-adic TMPL
$Li^{(p),\varpi}_{n_1,\dots,n_r}(\xi_1,\dots,\xi_r ;z)$ as follows:

\begin{theorem}\label{ell-Li theorem}
Fix a branch  $\varpi\in\mathbb C_p$.
Let $n_1,\dots,n_r\in \mathbb{N}$,
$\xi_1,\dots,\xi_{r}\in\mathbb{C}_p$
with $|\xi_j|_p= 1$ ($1\leqslant j\leqslant r$).
The equality
\begin{align}
\ell&^{(p)}_{n_1,\dots,n_r}(\xi_1,\dots,\xi_r ;z)  \notag \\
&=\frac{1}{p^r}\sum_{0<\alpha_1,\dots,\alpha_r<p}\sum_{\rho_1^p=\cdots=\rho_r^p=1}
\rho_1^{-\alpha_1}\cdots\rho_r^{-\alpha_r}
Li^{(p),\varpi}_{n_1,\dots,n_r}(\rho_1\xi_1,\dots,\rho_r\xi_r ;z) 
\label{ell-Li formula} 
\end{align}
holds for $z\in{\bf P}^1({\mathbb C}_p) - ]{S_r}\setminus\{\overline{0}\} [.$
\end{theorem}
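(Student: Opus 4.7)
My plan is to prove the identity first on the small disk $|z|_p<1$ by a direct series manipulation based on the orthogonality of $p$-th roots of unity, and then to extend it to the full domain ${\bf P}^1({\mathbb C}_p)-]S_r\setminus\{\bar 0\}[$ by invoking the coincidence principle for Coleman functions (Proposition \ref{coincidence principle}).

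On $z\in]\bar 0[$, both sides are given by absolutely convergent series: the left-hand side by \eqref{series expression for ell}, and on the right-hand side each Coleman function $Li^{(p),\varpi}_{n_1,\dots,n_r}(\rho_1\xi_1,\dots,\rho_r\xi_r;z)$ coincides with its defining power series \eqref{series expression} (branch-independently, by Remark \ref{S_r-0}). Interchanging the order of summation in the right-hand side and collecting the factors indexed by $j$, the inner sum becomes
$$\prod_{j=1}^{r}\left(\frac{1}{p}\sum_{0<\alpha_j<p}\sum_{\rho_j^p=1}\rho_j^{k_j-\alpha_j}\right).$$
By standard orthogonality, the innermost sum over $\rho_j$ equals $p$ if $k_j\equiv\alpha_j\pmod p$ and $0$ otherwise, so each factor collapses to the indicator $\mathbf{1}_{p\nmid k_j}$. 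Hence the right-hand side is exactly $\ell^{(p)}_{n_1,\dots,n_r}(\xi_1,\dots,\xi_r;z)$ on $]\bar 0[$.

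For the analytic continuation, I would first observe that every $p$-th root of unity $\rho_j$ satisfies $|\rho_j-1|_p\leqslant p^{-1/(p-1)}<1$, so $\bar\rho_j=\bar 1$ and the singular set of $Li^{(p),\varpi}_{n_1,\dots,n_r}(\rho_1\xi_1,\dots,\rho_r\xi_r;z)$ coincides with $S_r$ of Theorem-Definition \ref{Coleman function theorem} regardless of the choice of $\rho_j$'s. Thus every summand on the right-hand side lies in $A^{\varpi}_{\mathrm{Col}}({\bf P}^1\setminus S_r)$, and by Proposition \ref{branch independent region} the restriction to ${\bf P}^1({\mathbb C}_p)-]S_r\setminus\{\bar 0\}[$ is independent of the branch. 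On the other hand, by Theorem \ref{rigidness III}, the left-hand side belongs to $A^{\dag}({\bf P}^1\setminus(S_r\setminus\{\bar 0,\bar\infty\}))$, whose natural inclusion into $A^{\varpi}_{\mathrm{Col}}({\bf P}^1\setminus S_r)$ realizes it as a Coleman function on the same domain. The difference of the two sides is therefore a Coleman function on ${\bf P}^1({\mathbb C}_p)-]S_r\setminus\{\bar 0\}[$ which vanishes on the admissible open subset $]\bar 0[$; Proposition \ref{coincidence principle} then forces it to vanish identically, yielding \eqref{ell-Li formula}.

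The argument is essentially mechanical once the correct functional framework is set up. The only real subtlety is bookkeeping the singular sets: one has to verify that the reduction $\bar\rho_j=\bar 1$ keeps $S_r$ unchanged under $\xi_j\leftrightarrow\rho_j\xi_j$, and that the disk $]\bar 0[$ is an admissible open in ${\bf P}^1({\mathbb C}_p)-]S_r\setminus\{\bar 0\}[$ so that the coincidence principle applies. Once these checks are made, the orthogonality computation closes the proof.
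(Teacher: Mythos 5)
Your proof is correct and follows essentially the same two-step strategy as the paper: verify the identity on $]\bar 0[$ by the orthogonality of $p$-th roots of unity (the paper organizes this same computation through the partial TMPL decomposition of Remark \ref{easy remark}(ii) and \eqref{partial ell-ell}), then extend to the full domain via the coincidence principle, using Theorem \ref{rigidness III} for the left-hand side and Theorem-Definition \ref{Coleman function theorem} for the right. Your explicit check that $\bar\rho_j=\bar 1$ keeps the singular set $S_r$ unchanged is a detail the paper leaves implicit, and it is a worthwhile verification.
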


\begin{proof}
By the power series expansion
\eqref{series expression for partial double}
and \eqref{series expression} and Remark \ref{easy remark}.(ii),
it is easy to see that the equality holds on $]\bar 0[$.
By Theorem \ref{rigidness III},
the left-hand side belongs to 
$A^\dag({\bf P}^1
\setminus{S_r})$
($\subset A^\varpi_{\text{Col}}({\bf P}^1 \setminus{S_r})$)
and by Theorem-Definition \ref{Coleman function theorem},
the right-hand side  belongs to $A^\varpi_{\text{Col}}({\bf P}^1  \setminus{S_r})$.
Therefore
by the coincidence principle (Proposition \ref{coincidence principle}),
the equality actually holds on the whole space of
${\bf P}^1({\mathbb C}_p) - ]{S_r}\setminus\{\overline{0}\} [$,
actually on an affinoid bigger than the space.
\end{proof}

The following is a reformulation of
the equation in Theorem \ref{ell-Li theorem}.

\begin{theorem}\label{ell-Li reformulation prop}
Fix a branch  $\varpi\in\mathbb C_p$.
Let $n_1,\dots,n_r\in \mathbb{N}$,
$\xi_1,\dots,\xi_{r}\in\mathbb{C}_p$
with $|\xi_j|_p= 1$ ($1\leqslant j\leqslant r$).
The equality
\begin{align}
&\ell^{(p)}_{n_1,\dots,n_r}(\xi_1,\dots,\xi_r; z) \notag \\
&=Li^{(p),\varpi}_{n_1,\dots,n_r}(\xi_1,\dots, \xi_r; z) 
+\sum_{d=1}^r\left(-\frac{1}{p}\right)^d
\sum_{1\leqslant i_1<\cdots<i_d\leqslant r}\sum_{\rho_{i_1}^p=1}\cdots\sum_{\rho_{i_d}^p=1}
Li^{(p),\varpi}_{n_1,\dots,n_r}\Bigl(\bigl((\prod_{l=1}^d\rho_{i_l}^{\delta_{i_lj}})\xi_j\bigr); z\Bigr)
\label{ell-Li reformulation} 
\end{align}
holds for $z\in{\bf P}^1({\mathbb C}_p) - ]{S_r}\setminus\{\overline{0}\} [$, 
where $\delta_{ij}$ is the Kronecker delta. 
\end{theorem}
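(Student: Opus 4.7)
The plan is to deduce Theorem \ref{ell-Li reformulation prop} directly from Theorem \ref{ell-Li theorem} by performing the summation over the residues $\alpha_1,\dots,\alpha_r$ explicitly. The key elementary identity is that for any $p$-th root of unity $\rho$ (i.e.\ $\rho^p=1$) one has
\begin{equation*}
\sum_{0<\alpha<p}\rho^{-\alpha}=\begin{cases} p-1 & (\rho=1),\\ -1 & (\rho\ne 1),\end{cases}
\end{equation*}
which can be written uniformly as $\sum_{0<\alpha<p}\rho^{-\alpha}=p\,\delta_{\rho,1}-1$ for $\rho^p=1$ (using $\sum_{\alpha=0}^{p-1}\rho^{-\alpha}=p\,\delta_{\rho,1}$).

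First, I would apply this identity to each of the $r$ inner sums in Theorem \ref{ell-Li theorem}. Concretely, for a function $f(\xi_j)$ of the $j$-th variable, one obtains
\begin{equation*}
\frac{1}{p}\sum_{0<\alpha_j<p}\sum_{\rho_j^p=1}\rho_j^{-\alpha_j}\,f(\rho_j\xi_j)
= f(\xi_j)-\frac{1}{p}\sum_{\rho_j^p=1}f(\rho_j\xi_j).
\end{equation*}
Introducing, for each $j$, the operator $T_j$ that sends a function $F(\xi_1,\dots,\xi_r)$ to $F(\xi_1,\dots,\xi_r)-\frac{1}{p}\sum_{\rho_j^p=1}F(\xi_1,\dots,\rho_j\xi_j,\dots,\xi_r)$, Theorem \ref{ell-Li theorem} exactly says
\begin{equation*}
\ell^{(p)}_{n_1,\dots,n_r}(\xi_1,\dots,\xi_r;z)=(T_1\circ T_2\circ\cdots\circ T_r)\bigl[Li^{(p),\varpi}_{n_1,\dots,n_r}(\xi_1,\dots,\xi_r;z)\bigr],
\end{equation*}
since the $T_j$'s commute and separately act on distinct variables.

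Next, I would expand the composition $T_1\cdots T_r=\prod_{j=1}^{r}\bigl(I_j-\tfrac{1}{p}P_j\bigr)$, where $I_j$ is the identity and $P_j$ is the summation operator $\sum_{\rho_j^p=1}$ substituting $\rho_j\xi_j$ in the $j$-th slot. The binomial-type expansion produces
\begin{equation*}
\prod_{j=1}^{r}\Bigl(I_j-\tfrac{1}{p}P_j\Bigr)=\sum_{J\subseteq\{1,\dots,r\}}\Bigl(-\tfrac{1}{p}\Bigr)^{|J|}\prod_{j\in J}P_j,
\end{equation*}
and writing $J=\{i_1<\cdots<i_d\}$ and separating out the empty-set term $J=\emptyset$ gives precisely the right-hand side of \eqref{ell-Li reformulation}: the factor $\prod_{l=1}^{d}\rho_{i_l}^{\delta_{i_l j}}$ in the $j$-th argument encodes that $\xi_{j}$ is replaced by $\rho_{j}\xi_{j}$ exactly when $j\in J$, and left alone otherwise.

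The main obstacle is really just bookkeeping: one must verify that the indexing $\prod_{l=1}^{d}\rho_{i_l}^{\delta_{i_l j}}$ correctly matches the output of the operator product, which is a routine check via the Kronecker delta. No analytic subtleties arise because the identity in Theorem \ref{ell-Li theorem} already holds as an equality of Coleman functions on ${\bf P}^1({\mathbb C}_p)- ]S_r\setminus\{\bar 0\}[$, and the reformulation is a purely algebraic rearrangement of a finite sum. Thus the conclusion of Theorem \ref{ell-Li reformulation prop} is obtained on the same domain, with no need to invoke the coincidence principle again.
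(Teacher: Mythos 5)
Your proof is correct, but it takes a different route from the paper's. The paper proves Theorem \ref{ell-Li reformulation prop} by a self-contained power-series computation: starting from the defining series \eqref{series expression for ell} of $\ell^{(p)}_{n_1,\dots,n_r}$, it inserts the indicator of $(k_i,p)=1$ in the form $1-\frac{1}{p}\sum_{\rho_i^p=1}\rho_i^{k_i}$ and expands the product over $i$, which yields the inclusion--exclusion sum over subsets $\{i_1<\cdots<i_d\}$ directly at the level of the series (hence a priori only on $]\bar 0[$, with the extension to ${\bf P}^1({\mathbb C}_p)- ]S_r\setminus\{\bar 0\}[$ left implicit via the coincidence of Coleman functions). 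You instead take Theorem \ref{ell-Li theorem} as the starting point and carry out the $\alpha$-summations using $\sum_{0<\alpha<p}\rho^{-\alpha}=p\,\delta_{\rho,1}-1$, packaged as the commuting operators $T_j=I_j-\frac{1}{p}P_j$; expanding $\prod_j T_j$ gives the same subset sum. The underlying combinatorics (inclusion--exclusion over $\{1,\dots,r\}$ weighted by $(-1/p)^d$) is identical, but it is applied to the twist parameters $\xi_j$ rather than to the summation indices $k_i$. Your version has the small advantage that the identity is inherited on the full domain ${\bf P}^1({\mathbb C}_p)- ]S_r\setminus\{\bar 0\}[$ directly from Theorem \ref{ell-Li theorem}, so no further appeal to the coincidence principle is needed at this step; the paper's version is independent of Theorem \ref{ell-Li theorem} but establishes the identity only where the series converge. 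Both arguments are sound.
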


\begin{proof}
It is a consequence of the following direct computation:
\begin{align*}
\ell^{(p)}_{n_1,\dots,n_r}&(\xi_1,\dots,\xi_{r}; z)
=
\underset{(k_1,p)=\cdots=(k_r,p)=1}{
{\underset{0<k_1<\cdots<k_{r}}{\sum}}}
\frac{\xi_1^{k_1}\cdots\xi_{r}^{k_{r}} z^{k_r}}{k_1^{n_1}\cdots k_r^{n_r}} 
=
{\underset{0<k_1<\cdots<k_{r}}{\sum}}
\left\{\prod_{i=1}^r
\left(1-\frac{1}{p}\sum_{\rho_i^p=1}\rho_i^{k_i}\right)
\frac{\xi_i^{k_i}}{k_i^{n_i}} \right\} z^{k_r}\\
=&Li^{(p)}_{n_1,\dots,n_r}(\xi_1,\dots,\xi_r; z)
+\sum_{d=1}^r\left(-\frac{1}{p}\right)^d
\sum_{1\leqslant i_1<\cdots<i_d\leqslant r}\sum_{\rho_{i_1}^p=1}\cdots\sum_{\rho_{i_d}^p=1}
Li^{(p)}_{n_1,\dots,n_r}\Bigl(\bigl((\prod_{l=1}^d\rho_{i_l}^{\delta_{i_lj}})\xi_j\bigr); z\Bigr).
\end{align*}
\end{proof}

\begin{example}
In the case when $r=1$ and $\xi_1=1$, \eqref{ell-Li reformulation} gives
$$
\ell_n^{(p)}(1;z)=Li_n^{(p)}(1;z)-\frac{1}{p}\sum_{\rho^p=1}Li_n^{(p)}(\rho;z).
$$
Combining this with
the distribution relation (cf.\cite{C})
\begin{equation}\label{distribution relation}
\frac{1}{r}\sum_{\eta\in\mu_r}Li^{(p)}_n(\eta; z)=
\frac{1}{r^n}Li_n^{(p)}(1;z^r)
\end{equation}
for $r\geqslant 1$,
we recover Coleman's formula 
$$
\ell_n^{(p)}(1;z)=Li_n^{(p)}(1;z)-\frac{1}{p^n}Li_n^{(p)}(1;z^p)
$$
shown in \cite[Section VI]{C}.
\end{example}

We define the $p$-adic twisted multiple $L$-values by
the special values of the $p$-adic TMPL's at unity
under a certain condition below.

\begin{theorem-definition}
Fix a branch  $\varpi\in\mathbb C_p$.
Let $n_1,\dots,n_r\in \mathbb{N}$,
$\rho_1,\dots,\rho_r\in\mu_p$ and
$\xi_1,\dots,\xi_r\in\mu_c$ with $(c,p)=1$.
Assume that 
\begin{equation}\label{assumption for pMLV}
\xi_1\cdots\xi_r\neq 1, \quad \xi_2\cdots\xi_r\neq 1,\quad
\dots, \quad  \xi_{r-1}\xi_r\neq 1,\quad \xi_r\neq 1.
\end{equation}
Then the special value of
\begin{equation}\label{function for pMLV}
Li^{(p),\varpi}_{n_1,\dots,n_r}(\rho_1\xi_1,\dots,\rho_r\xi_r; z)
\end{equation}
at $z=1$
is well-defined.
Furthermore it is free from any choice of the branch $\varpi\in\mathbb C_p$ and
it belongs to ${\mathbb Q}_p(\mu_{cp})$.
We call the value by {\bf $p$-adic twisted multiple $L$-value} 
and denote it by
\begin{equation}\label{FYpMLV}
Li^{(p)}_{n_1,\dots,n_r}(\rho_1\xi_1,\dots,\rho_r\xi_r).
\end{equation}
\end{theorem-definition}

\begin{proof}
By our assumption \eqref{assumption for pMLV},
$$1\in{\bf P}^1({\mathbb C}_p) - ]{S_r}\setminus\{\overline{0}\} [.$$
Hence the special value of \eqref{function for pMLV} at $z=1$ makes sense by Remark \ref{S_r-0}.

The branch independency of the special value follows from Proposition \ref{branch independent region}.

The points $0$ and $1$ and all the differential 1-forms
$\frac{\xi_r dt}{1-\xi_rt}$, 
$\frac{(\xi_{r-1}\xi_r) dt}{1-(\xi_{r-1}\xi_r)t}$, $\dots$,
$\frac{(\xi_1\cdots\xi_r) dt}{1-(\xi_1\cdots\xi_r)t}$
are defined over ${\mathbb{Q}}_p(\mu_{cp})$.
Then by the Galois equivalency stated in \cite{BdJ} Remark 2.3,
the special value of \eqref{function for pMLV}
for $z\in {\mathbb{Q}}_p(\mu_{cp})$ is
invariant under the action of the absolute Galois group 
${\rm Gal}(\overline{\mathbb{Q}}_p/{\mathbb{Q}}_p(\mu_{cp}))$
if we take $\varpi\in {\mathbb{Q}}_p(\mu_{cp})$.
 Since  we have shown that the special values at $z=1$  is free from the choice of $\varpi$,
the value \eqref{FYpMLV} must belong to ${\mathbb Q}_p(\mu_{cp})$.
\end{proof}

\begin{remark} \ 
\begin{enumerate}
\item
The $p$-adic multiple zeta values are introduced as
the special values at $z=1$ of $p$-adic MPL
$Li^{(p),\varpi}_{n_1,\dots,n_r}(1,\dots,1;z)$,
the function \eqref{function for pMLV} with all $\rho_i=1$  and $\xi_i=1$, 
and their basic properties are investigated   by the first-named author in  \cite{Fu1}.
\item
The $p$-adic multiple $L$-values introduced by Yamashita \cite{Y} are special values
at $z=1$ of  the function
\eqref{function for pMLV} with all $\rho_i=1$ and
$(\xi_r,n_r)\neq (1,1)$.
Unver's \cite{U} cyclotomic $p$-adic multi-zeta values might be closely related
to his values.
\end{enumerate}
\end{remark}

By Theorems \ref{L-ell theorem} and \ref{ell-Li theorem}, we have the following.
A very nice point here is that our assumption \eqref{assumption for pMLV}
appeared as the  condition of the summation in equation \eqref{L-ell-formula}. 

\begin{theorem}\label{L-Li theorem}
For  $n_1,\dots,n_r\in \mathbb{N}$
and $c\in \mathbb{N}_{>1}$ with $(c,p)=1$,
\begin{align*}
L_{p,r} & (n_1,\dots,n_r;\omega^{-n_1},\dots,\omega^{-n_r};1,\dots,1;c)= \notag \\
&\frac{1}{p^r}\sum_{0<\alpha_1,\dots,\alpha_r<p}\sum_{\rho_1^p=\cdots=\rho_r^p=1}
\underset{\xi_1\cdots\xi_r\neq 1,\dots,\xi_{r-1}\xi_r\neq 1,\xi_r\neq 1}{\sum_{\xi_1^c=\cdots=\xi_r^c=1}}
\rho_1^{-\alpha_1}\cdots\rho_r^{-\alpha_r} \cdot 
Li^{(p)}_{n_1,\dots,n_r}(\rho_1\xi_1,\dots,\rho_r\xi_r).
\end{align*}
%
\end{theorem}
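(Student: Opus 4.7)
The plan is to derive Theorem \ref{L-Li theorem} as a direct synthesis of Theorem \ref{L-ell theorem} (which links $p$-adic multiple $L$-values to special values of $p$-adic rigid TMPL's at unity) and Theorem \ref{ell-Li theorem} (which expresses $p$-adic rigid TMPL's in terms of $p$-adic TMPL's via an averaging procedure over $p$-th roots of unity).

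First I would invoke Theorem \ref{L-ell theorem}, which gives
\[
L_{p,r}(n_1,\dots,n_r;\omega^{-n_1},\dots,\omega^{-n_r};1,\dots,1;c)
=\underset{\xi_1\cdots\xi_r\neq 1,\ \dots,\ \xi_r\neq 1}{\sum_{\xi_1^c=\cdots=\xi_r^c=1}}\ell^{(p)}_{n_1,\dots,n_r}(\xi_1,\dots,\xi_r;1).
\]
Next I would substitute the identity of Theorem \ref{ell-Li theorem} specialized to $z=1$ into each summand, yielding
\[
\ell^{(p)}_{n_1,\dots,n_r}(\xi_1,\dots,\xi_r;1)
=\frac{1}{p^r}\sum_{0<\alpha_1,\dots,\alpha_r<p}\sum_{\rho_1^p=\cdots=\rho_r^p=1}\rho_1^{-\alpha_1}\cdots\rho_r^{-\alpha_r}\, Li^{(p),\varpi}_{n_1,\dots,n_r}(\rho_1\xi_1,\dots,\rho_r\xi_r;1).
\]
Interchanging the order of summation then gives the desired formula, provided I can pass from $Li^{(p),\varpi}_{n_1,\dots,n_r}(\rho_1\xi_1,\dots,\rho_r\xi_r;1)$ to the unadorned notation $Li^{(p)}_{n_1,\dots,n_r}(\rho_1\xi_1,\dots,\rho_r\xi_r)$ introduced in Theorem-Definition (FYpMLV).

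The one point requiring care is verifying that $z=1$ is a legitimate evaluation point for each $Li^{(p),\varpi}_{n_1,\dots,n_r}(\rho_1\xi_1,\dots,\rho_r\xi_r;z)$ appearing in the sum and that the resulting value is branch-independent. Here is where the condition $\xi_1\cdots\xi_r\neq 1,\ \xi_2\cdots\xi_r\neq 1,\dots,\xi_r\neq 1$ in the outer sum from Theorem \ref{L-ell theorem} is crucial: since $\rho_j\in\mu_p$ while $\xi_j\in\mu_c$ with $(c,p)=1$, for any tuple $(\rho_j)$ the products $\rho_{i}\xi_{i}\cdots\rho_r\xi_r=(\rho_i\cdots\rho_r)(\xi_i\cdots\xi_r)$ can never equal $1$ (otherwise, raising to a suitable power coprime to $p$ would force $\xi_i\cdots\xi_r=1$, contradicting the assumption). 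Consequently each tuple $(\rho_1\xi_1,\dots,\rho_r\xi_r)$ satisfies the hypothesis \eqref{assumption for pMLV}, so that $1\in \mathbf{P}^1(\mathbb{C}_p)- ]S_r\setminus\{\bar 0\}[$ and the value at $z=1$ is both well-defined and branch-independent by Theorem-Definition (FYpMLV). Hence replacing $Li^{(p),\varpi}_{n_1,\dots,n_r}(\rho_1\xi_1,\dots,\rho_r\xi_r;1)$ by $Li^{(p)}_{n_1,\dots,n_r}(\rho_1\xi_1,\dots,\rho_r\xi_r)$ is legitimate, completing the proof.

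The main obstacle is thus not the algebraic manipulation itself but the careful verification that the non-triviality conditions on $(\xi_j)$ pass through the mixing with $p$-th roots $(\rho_j)$ so as to keep the argument $1$ away from the punctures of the relevant $p$-adic TMPL's; the coprimality $(c,p)=1$ is essential here. Once that is secured, the proof reduces to substitution and reindexing.
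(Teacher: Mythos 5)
Your proposal is correct and follows exactly the route the paper takes: the paper derives Theorem \ref{L-Li theorem} by combining Theorem \ref{L-ell theorem} with Theorem \ref{ell-Li theorem} evaluated at $z=1$, noting (as you do) that the non-triviality conditions in the summation of \eqref{L-ell-formula} are precisely the hypothesis \eqref{assumption for pMLV} guaranteeing that the value at $z=1$ is well-defined and branch-independent. Your extra check that mixing with the $p$-th roots $\rho_j$ does not disturb these conditions is a sound elaboration of the same point.
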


The above theorem is reformulated to the following,
which is comparable to Theorem \ref{T-main-1}.

\begin{theorem}\label{L-Li theorem-2}
For  $n_1,\dots,n_r\in \mathbb{N}$
and $c\in \mathbb{N}_{>1}$ with $(c,p)=1$,
\begin{align}
L_{p,r} & (n_1,\dots,n_r;\omega^{-n_1},\dots,\omega^{-n_r};1,\dots,1;c) \notag \\
&=
\underset{\xi_1\neq 1}{\sum_{\xi_1^c=1}} \cdots
\underset{\xi_r\neq 1}{\sum_{\xi_r^c=1}} 
Li^{(p)}_{n_1,\dots,n_r}\left(\frac{\xi_1}{\xi_2},\frac{\xi_2}{\xi_3},\dots,\frac{\xi_{r}}{\xi_{r+1}}\right)   \notag \\
&+\sum_{d=1}^r\left(-\frac{1}{p}\right)^d
\sum_{1\leqslant i_1<\cdots<i_d\leqslant r}\sum_{\rho_{i_1}^p=1}\cdots\sum_{\rho_{i_d}^p=1}
\underset{\xi_1\neq 1}{\sum_{\xi_1^c=1}} \cdots
\underset{\xi_r\neq 1}{\sum_{\xi_r^c=1}} 
Li^{(p)}_{n_1,\dots,n_r}\Bigl(\bigl(\frac{\prod_{l=1}^d\rho_{i_l}^{\delta_{i_lj}}\xi_j}{\xi_{j+1}}\bigr)\Bigr),
\end{align}
where we put $\xi_{r+1}=1$.
\end{theorem}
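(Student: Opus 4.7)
The strategy is to combine Theorem \ref{L-ell theorem}, which expresses the left-hand side as a sum of special values at $z=1$ of $p$-adic rigid TMPL's, with Theorem \ref{ell-Li reformulation prop} (equation \eqref{ell-Li reformulation}), which rewrites each such value as a $\mathbb{Q}$-linear combination of $p$-adic TMPL values. Apart from these two ingredients, the only manipulation needed is a bijective change of summation variables that converts the product-constraint $\xi_j\xi_{j+1}\cdots\xi_r\neq 1$ appearing in Theorem \ref{L-ell theorem} into the per-index constraint $\xi_j\neq 1$ appearing in the current statement.

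First I would invoke Theorem \ref{L-ell theorem} with summation variables temporarily renamed to $\alpha_1,\dots,\alpha_r$, and then perform the bijection of $\mu_c^r$ onto itself given by
\[
\xi_j := \prod_{j'=j}^{r}\alpha_{j'}, \qquad \text{equivalently}\quad \alpha_j = \xi_j/\xi_{j+1},
\]
with the convention $\xi_{r+1}=1$. Under this change, $\alpha_j^c=1$ is equivalent to $\xi_j^c=1$, and the constraint $\alpha_j\alpha_{j+1}\cdots\alpha_r\neq 1$ transforms exactly into $\xi_j\neq 1$. The left-hand side therefore becomes
\[
\underset{\xi_1\neq 1}{\sum_{\xi_1^c=1}}\cdots\underset{\xi_r\neq 1}{\sum_{\xi_r^c=1}}\ell^{(p)}_{n_1,\dots,n_r}\Bigl(\frac{\xi_1}{\xi_2},\dots,\frac{\xi_r}{\xi_{r+1}};1\Bigr).
\]
I would then apply equation \eqref{ell-Li reformulation} to each summand at $z=1$ with twists $(\xi_j/\xi_{j+1})_{j=1}^r$; the identity $(\prod_l \rho_{i_l}^{\delta_{i_l j}})(\xi_j/\xi_{j+1})=(\prod_l \rho_{i_l}^{\delta_{i_l j}}\xi_j)/\xi_{j+1}$ then yields exactly the formula asserted in the theorem.

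The main (though very mild) obstacle is the bookkeeping verification that every $Li^{(p)}$-value appearing on the right-hand side is well-defined as a $p$-adic twisted multiple $L$-value, i.e., that the non-unity condition \eqref{assumption for pMLV} holds for the corresponding argument tuple. A telescoping computation gives
\[
\prod_{j'=j}^{r}\frac{(\prod_l \rho_{i_l}^{\delta_{i_l j'}})\xi_{j'}}{\xi_{j'+1}} \;=\; \xi_j\prod_{l:\ i_l\geqslant j}\rho_{i_l},
\]
and since $\xi_j\in\mu_c\setminus\{1\}$, $\prod_{l:\ i_l\geqslant j}\rho_{i_l}\in\mu_p$, and $\gcd(c,p)=1$, this element cannot equal $1$. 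This simultaneously justifies the applicability of \eqref{ell-Li reformulation} at $z=1$ (which requires $z$ to lie in ${\bf P}^1({\mathbb C}_p) -]S_r\setminus\{\bar 0\}[$) and the branch-independence that allows us to drop the superscript $\varpi$ and write the results as the values $Li^{(p)}_{n_1,\dots,n_r}$ defined in the theorem-definition preceding the statement.
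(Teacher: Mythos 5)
Your proposal is correct and follows essentially the same route as the paper, whose proof of Theorem \ref{L-Li theorem-2} consists precisely of citing Theorem \ref{L-ell theorem} and Theorem \ref{ell-Li reformulation prop}. The change of variables $\xi_j\mapsto\xi_j/\xi_{j+1}$ converting the product constraints into per-index constraints, and the telescoping check that $\mu_c\cap\mu_p=\{1\}$ guarantees condition \eqref{assumption for pMLV}, are exactly the bookkeeping steps the paper leaves implicit, and you have verified them correctly.
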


\begin{proof}
It follows from 
Theorem \ref{L-ell theorem} and Theorem \ref{ell-Li reformulation prop}.
\end{proof}

Thus the positive integer values  of the $p$-adic multiple $L$-function
are described as linear combinations of 
the special values of the $p$-adic TMPL \eqref{function for pMLV}
at roots of unity.
This might be regarded as a $p$-adic analogue of the equality \eqref{zeta=Li}.

As  a special case of Theorem \ref{L-Li theorem}
we have the following.
When $r=1$, we have 

\begin{example}
For $n\in \mathbb{N}$
and $c\in \mathbb{N}_{>1}$ with $(c,p)=1$,
we have
\begin{equation*}
(c^{1-n}-1)\cdot L_{p}(n;\omega^{1-n})
=\frac{1}{p}\sum_{0<\alpha<p}\sum_{\rho^p=1}
\underset{\xi\neq 1}{\sum_{\xi^c=1}}
\rho^{-\alpha} Li^{(p)}_{n}(\rho\xi)
\end{equation*}
by Example \ref{example for r=1}.
This formula and  \eqref{distribution relation}
recover Coleman's equation \cite[(4)]{C}
\begin{equation}\label{Coleman L-Li}
L_p(n;\omega^{1-n})=(1-\frac{1}{p^n})Li_n^{(p)}(1).
\end{equation}
\end{example}

When $r=2$, we have
\begin{example}
For  $a,b\in \mathbb{N}$
and $c\in \mathbb{N}_{>1}$ with $(c,p)=1$,
\begin{align*} 
& L_{p,2}(a,b;\omega^{-a},\omega^{-b}; 1,1; c) \\
&=\frac{1}{p^2}\sum_{0<\alpha,  \beta <p} \ 
\sum_{\rho_1,\rho_2\in\mu_p} \ 
\underset{\xi_1\xi_2\neq 1,\xi_2\neq 1}{\sum_{\xi_1,\xi_2\in\mu_c}}
\rho_1^{-\alpha}\rho_2^{-\beta}Li_{a,b}(\rho_1\xi_1,\rho_2\xi_2) \\
&=
\underset{\xi_1\neq 1}{\sum_{\xi_1^c=1}} \ \underset{\xi_2\neq 1}{\sum_{\xi_2^c=1}} 
Li^{(p)}_{a,b}\left(\frac{\xi_1}{\xi_2},\xi_2 \right) 
-\frac{1}{p}\sum_{\rho^p=1} \ 
\underset{\xi_1\neq 1}{\sum_{\xi_1^c=1}} \  \underset{\xi_2\neq 1}{\sum_{\xi_2^c=1}} 
\left\{Li^{(p)}_{a,b}\left(\frac{\rho\xi_1}{\xi_2},\xi_2
\right) +
Li^{(p)}_{a,b}\left(\frac{\xi_1}{\xi_2},{\rho\xi_2}\right)\right\}
\\
&\qquad\qquad +\frac{1}{p^2}\sum_{\rho_{1}^p=1} \ \sum_{\rho_{2}^p=1} \
\underset{\xi_1\neq 1}{\sum_{\xi_1^c=1}} \ \underset{\xi_2\neq 1}{\sum_{\xi_2^c=1}} 
Li^{(p)}_{a,b}\left(\frac{\rho_1\xi_1}{\xi_2},{\rho_2\xi_2}\right).
\end{align*}
\end{example}

In our forthcoming paper \cite{FKMT02}, the above Theorem \ref{L-Li theorem-2}
will be extended into that for indices $(n_j)$ with non-all-positive integers. 
\ 

\noindent
{\bf Acknowledgements.}\ 
The authors wish to express their thanks to the Isaac Newton Institute for Mathematical Sciences, Cambridge, and the Max Planck Institute for Mathematics, Bonn, where parts of 
this work have been carried out.

\


\bibliographystyle{amsplain}


\ 

\begin{flushleft}
\begin{small}
H. Furusho\\
Graduate School of Mathematics \\
Nagoya University\\
Furo-cho, Chikusa-ku \\
Nagoya 464-8602, Japan\\
{furusho@math.nagoya-u.ac.jp}

\ 

Y. Komori\\
Department of Mathematics \\
Rikkyo University \\
Nishi-Ikebukuro, Toshima-ku\\
Tokyo 171-8501, Japan\\
komori@rikkyo.ac.jp

\ 

K. Matsumoto\\
Graduate School of Mathematics \\
Nagoya University\\
Furo-cho, Chikusa-ku \\
Nagoya 464-8602, Japan\\
kohjimat@math.nagoya-u.ac.jp

\ 

H. Tsumura\\
Department of Mathematics and Information Sciences\\
Tokyo Metropolitan University\\
1-1, Minami-Ohsawa, Hachioji \\
Tokyo 192-0397, Japan\\
tsumura@tmu.ac.jp

\end{small}
\end{flushleft}

\end{document}